\DeclareMathAlphabet{\mathpzc}{OT1}{pzc}{m}{it}
\tikzset{
    dot/.style={circle,draw,fill,inner sep=1pt},
    arrow/.style={->,thick,shorten <=2pt,shorten >=2pt},
    every label/.append style = {font = \small}
    }
\newtheorem{theorem}{Theorem}[section]
\newtheorem*{theorem*}{Theorem}
\newtheorem{prop}[theorem]{Proposition}
\newtheorem*{prop*}{Proposition}
\newtheorem{cor}[theorem]{Corollary}
\newtheorem{lemma}[theorem]{Lemma}
\theoremstyle{definition}
\newtheorem{defn}[theorem]{Definition}
\newtheorem*{defn*}{Definition}
\newtheorem*{const*}{Construction}
\newtheorem*{warning*}{Warning}
\newtheorem{ex}[theorem]{Example}
\newtheorem{remark}[theorem]{Remark}
\newtheorem{notation}[theorem]{Notation}
\newtheorem{construction}[theorem]{Construction}
\DeclareFontFamily{U}{min}{}
\DeclareFontShape{U}{min}{m}{n}{<-> udmj30}{}
\newcommand\cA{\mathscr A} 
\newcommand\cB{\mathscr B} 
\newcommand\cC{\mathscr C} 
\newcommand\cD{\mathscr D}
\newcommand\cE{\mathscr E} 
\newcommand\cF{\mathscr F}
\newcommand\cG{\mathscr G}
\newcommand\cI{\mathscr I}
\newcommand\cK{\mathscr K}
\newcommand\cL{\mathscr L}
\newcommand\cO{\mathscr O}
\newcommand\cP{\mathscr P}
\newcommand\cS{\mathscr S}
\newcommand\cT{\mathscr T}
\newcommand\cU{\mathscr U}
\newcommand\cV{\mathscr V}
\newcommand\cW{\mathscr W}
\newcommand\rB{\mathrm B}
\newcommand\NN{\mathbb N} \newcommand\bN\NN
\newcommand\mc{\mathrm{mc}}
\newcommand\edm{\mathrm{End}}
\newcommand\good{\mathrm{good}}
\newcommand\art{\mathrm{art}}
\newcommand\id{\mathrm{id}}
\newcommand\mor{\mathrm{Hom}}
\newcommand\nat{\mathrm{Nat}}
\newcommand\ima{\mathrm{Im}}
\newcommand\inj{\mathrm{inj}}
\newcommand\op{\mathrm{op}}
\newcommand\cat{\mathrm{Cat}}
\newcommand\comp{\mathrm{comp}}
\newcommand\lcomp{\mathrm{lcomp}}
\newcommand\ccat{\mathpzc{Cat}}
\newcommand\arr{\mathrm{Ar}}
\newcommand\thr{\mathrm{Th}}
\newcommand\spanc{\mathrm{Span}}
\newcommand\colim{\mathrm{colim}}
\newcommand\bydef{\overset{\mathrm{def}}{=}}
\newcommand\corr{\mathrm{Corr}}
\newcommand\cell{\mathrm{Cell}}
\newcommand\modl{\mathrm{Mod}}
\newcommand\fib{\mathrm{Fib}}
\newcommand\bo{\mathrm{i.o.}}
\newcommand\li{\mathrm{l.i.}}
\newcommand\cart{\mathrm{coCart}}
\newcommand\ccart{\mathpzc{coCart}}
\newcommand\lax{\mathrm{lax}}
\newcommand\con{\mathrm{Cone}}
\newcommand\maxm{\mathrm{max}}
\newcommand\un{\mathrm{unit}}
\newcommand\nun{\mathrm{nun}}
\newcommand\bimod{\mathrm{Bimod}}
\newcommand\bn{\langle n\rangle}
\newcommand\bm{\langle m\rangle}
\newcommand\bl{\langle l\rangle}
\newcommand\wrr{{\overset{\sim}{\rightarrow}}}
\newcommand\twar{\mathrm{TwAr}}
\newcommand\ladj{\mathrm{ladj}}
\newcommand\radj{\mathrm{radj}}
\newcommand\eso{\mathrm{eso}}
\newcommand\act{\mathrm{act}}
\newcommand\sur{\mathrm{surj}}
\newcommand\Act{\mathrm{Act}}
\newcommand\inrt{\mathrm{int}}
\newcommand\el{\mathrm{el}}
\newcommand\ob{\mathrm{Ob}}
\newcommand\seg{\mathrm{Seg}}
\newcommand\coc{\mathrm{coCone}}
\newcommand{\mElo}[2]{\mathpzc{#1}^{\mathrm{el}}_{#2/}}
\title{completeness for monads and theories}
\author{Roman Kositsyn}
\date{April 2021}
\begin{document}

\maketitle
\begin{abstract}
    We generalize the correspondence between theories and monads with arities of \cite{berger2012monads} to $\infty$-categories. Additionally, we introduce the notion of complete theories that is unique to the $\infty$-categorical case and provide a completion construction for a certain class of theories. Along the way we also develop the necessary technical material related to the flagged bicategory of correspondences and lax functor in the $\infty$-categorical context.
\end{abstract}

\tableofcontents

\section{Introduction}\label{sect:one}
The principal goal of this paper is to generalize the results of \cite{berger2012monads} to the $\infty$-categorical case, so we start this introduction with a brief review of the objects of study and results of \cite{berger2012monads}. That paper is concerned with providing an isomorphism between two apparently quite distinct categories - the category of \textit{nervous theories} and the category of \textit{monads with arities}. A \textit{category with arities} is a presentable category $\cE$ together with a full subcategory $i_\cA:\cA\hookrightarrow\cE$ such that the corresponding nerve functor $\nu_\cA:\cE\rightarrow\cP(\cA)$ is fully faithful, where $\nu_\cA(X)(A)\bydef\mor_\cE(i_\cA(A),X)$. A functor $F:\cE\rightarrow\cF$ betwen categories with arities $(\cE,\cA)$ and $(\cF,\cB)$ is called \textit{arity-respecting} if it takes $\cA$-colimits to colimits in $\cF$. A monad with arities is then simply a monad in the bicategory of categories and functors with arities (and arbitrary natural transformations). A nervous theory is a bijective-on-objects functor $j:\cA\rightarrow\Theta$ such that $j^*j_!$ restricts to an endofunctor of $\cE\subset\cP(\cA)$. The main result of \cite{berger2012monads} provides an equivalence between the category of nervous theories and the category of monads with arities. A common source of these objects are cartesian monads on presheaf categories (studied in \cite{weber2004generic} for ordinary categories and in \cite{chu2019homotopy} in the $\infty$-categorical contexts) which have canonical arities, however the monad/theory correspondence extends to more general cases that include Lawvere theories \cite{lawvere1963functorial} and the free groupoid monad described in the last section of \cite{berger2012monads}.\par
We generalize \cite{berger2012monads} in three principal directions. Firstly, we extend it to the general $\infty$-categorical context. This is not as straightforward as one might think - already the notion of a nervous theory presents some issues. More specifically, the problem is with the notion of a bijective-on-objects functor. The reason they appear in \cite{berger2012monads} is essentially because there is an equivalence between monads in the category of correspondences with the underlying object given by a category $C$ and bijective-on-objects functors $F:C\rightarrow D$. Our model of choice for $\infty$-categories are complete Segal spaces. With some care one can prove this equivalence for an arbitrary $\infty$-category $\cC$ (which is one of the results of \cref{sect:five}), however the corresponding Segal space $\cD$ will generally not be complete. Addressing this issue behooves us to define the category of correspondences in the setting of Segal spaces and prove some basic results about it, which we do in \cref{sect:two} and \cref{sect:three}.\par
Another way in which we generalize \cite{berger2012monads} is our use of lax functors. Lax functors were introduced in \cite{street1972two} and they roughly correspond to functors that respect compositions and identities only up to a non-invertible 2-morphism. In particular, a lax functor from a point to a bicategory $B$ is the same thing as a monad in $B$. Since monads in the bicategory of categories with arities correspond to nervous theories, one may inquire about the class of objects corresponding to lax functors from a more general category $\cC$, and we give an answer to this question in the form of the notion of a $\cC$-\textit{theory} introduced in \cref{sect:six}.\par
Our final contribution is the notion of \textit{completeness} of a $\cC$-theory. This notion is unique to the $\infty$-categorical setting and it generalizes the notion of completeness of ordinary Segal spaces. We adapted it from \cite{chu2019homotopy} in which it was introduced in a rather general context of cartesian monads and algebraic patterns. Namely, the main result of \cite{chu2019homotopy} provides an equivalence between saturated algebraic patterns and complete cartesian monads on presheaf categories and also posits that the inclusion of the category of complete cartesian monads into the category of all cartesian monads admits a left adjoint called \textit{completion} and that the categories of algebras for a monad and for its completion are isomorphic. Although The notion of completeness makes sense for a general $\cC$-theory $\cT$, we are only able to provide the completion construction for a rather restrictive class of theories which we call \textit{good} (those include the theories associated to cartesian monads on presheaf categories). We now describe the contents of each section in more detail.\par
In \cref{sect:two} we construct the bicategory of correspondences $\corr$ (which are also sometimes called profunctors, distributors and bimodules). This bicategory was originally introduced in \cite{benabou1973distributeurs} and numerous accounts of it have been written since. The $\infty$-categorical version appears in \cite[Section 2.3.1.]{lurie2009higher}, \cite{ayala2017fibrations}, \cite{haugseng2019segal} and \cite{haugseng2015bimodules}, our only novelty here is a consistent use of the formalism of algebraic patterns. Near the end of the section we also introduce an explicit description of the full subcategory $\spanc$ of $\corr$ on constant Segal spaces.\par
In \cref{sect:three} we prove certain basic properties of the bicategory of correspondences. All of those properties can be found for example in \cite{street1980fibrations} for ordinary categories, however we could not find appropriate references for all of them in the $\infty$-categorical context. Our main result here is the combination of \cref{prop:fun_comp} and \cref{prop:corr_bicat} that state that the bicategory of Segal spaces, functors and natural transformations $\cat$ as well as its opposite $\cat^\op$ can be seen as a faithful subcategory of $\corr$ through a functor that sends every category to itself and every functor $F$ to the correspondence $F_!$ (resp. the correspondence $F^*$). Another important result in this section is \cref{cor:corr_comp} that provides an isomorphism between the category of correspondences between $\cC$ and $\cD$ and the category of correspondences between their completions. In particular, this allows us to connect our notion of correspondences to the more classical one used in \cite{lurie2009higher} and \cite{ayala2017fibrations}.\par
In \cref{sect:four} we introduce the notion of a lax functor from a Segal space $\cC$ to a twofold Segal space $\cB$. In defining the lax functors we follow the approach of \cite{gaitsgory2017study}, namely we view $\cC$ and $\cB$ as coCartesian fibrations over $\Delta^\op$ and define a (unital) lax functor to be a morphism $\cC\rightarrow\cB$ of categories over $\Delta^\op$ that takes coCartesian morphisms lying over inert (resp. over inert and surjective) morphisms to coCartesian morphisms. To make working with this notion more convenient, we introduce for an $\infty$-category $\cC$ with a subcategory $\cD$ such that $\cC_0\cong\cD_0$ an algebraic pattern $\ccart^\cD(\cC)$ whose category of Segal spaces has objects $(\cE\rightarrow\cC)\in\cat_{/\cC}$ that admit coCartesian liftings for all morphisms $f\in\cD$ and morphisms given by functors over $\cC$ that preserve those liftings. We then use this to prove that the categories of lax and unital lax functors from a given $\cC$ are corepresentable by a twofold Segal spaces $L^\lax\cC$ and $L^{\lax,\un}\cC$ respectively. Near the end of the section we introduce the notion of lax limits and lax colimits. The classical reference for this notion is \cite{street1976limits}, the notion was also introduced in \cite{gepner2015lax} in the special case of the bicategory $\cat^\comp$ and more recently in \cite{gagna2020fibrations} for a general bicategory $\cB$, however we make no effort to connect those definitions to ours. \par
\Cref{sect:five} is probably the most technical part of the paper. In it we first introduce for a given Segal space $\cC$ the category $\cat^\wrr_{/\cC}$ whose objects are morphisms $(\cD\xrightarrow{f}\cE)$ in $\cat_{/\cC}$ such that for every $c\in\cC$ we $f_c$ induces an isomorphism $(\cD_c)_0\xrightarrow{\sim}(\cE_c)_0$ and such that for every morphism $f:c\rightarrow c'$ in $\cC$ we also have $\cD_f\xrightarrow{\sim}\cE_f$. Observe that in the special case $\cC\cong*$ this category is simply the category of isomorphism-on-objects functors $F:\cD\rightarrow\cE$. The first main result of this section identifies $\cat^\wrr_{/\cC}$ with a subcategory $\mor_\cat^{\lax,!}(\cC,\corr)$ of the category of lax functors from $\cC$ to $\corr$ having the same objects but only those natural transformations $\alpha$ for which each component $\alpha_c$ is a correspondence of the form $f_{c,!}$. In particular, it follows that to every lax functor $F:\cC\rightsquigarrow\corr$ we can attach the Segal space $\cE$. Our second main result describes $\cE$ as a lax colimit of the functor $F$. At the end of the section we discuss the notion of completeness. We define an object $(\cD\xrightarrow{f}\cE)$ of $\cat^\wrr_{/\cC}$ \textit{left-complete} if $\cD$ is a complete Segal space and \textit{complete} if both $\cD$ and $\cE$ are complete. The fact that the inclusion $\cat^{\lcomp,\wrr}_{/\cC}\hookrightarrow\cat^\wrr_{/\cC}$ of the full subcategory on left-complete objects admits a left adjoint is relatively elementary, however the fact that $\cat^{\comp,\wrr}_{/\cC}\hookrightarrow\cat^{\lcomp,\wrr}_{/\cC}$ also admits a left adjoint $L^\bo$ is much more difficult and is a subject of \cref{cor:adj_comp}.\par
In the final \cref{sect:six} we introduce the main objects of interest of this paper - the category of $\cC$-theories for a complete Segal space $\cC$ and the category of correspondences with arities $\corr^\art$. We must note here that our setup is slightly different from \cite{berger2012monads}. First of all, we do not require the category $\cE$ to be presentable and instead consider an arbitrary full subcategory $\cE_\cC\hookrightarrow\mor_\cat(\cC,\cS)$. Moreover, the category of morphisms from $(\cE_\cC,\cC)$ to $(\cE_\cD,\cD)$ is given by a full subcategory of $\mor_\corr(\cC,\cD)$ that take $\cE_\cC$ to $\cE_\cD$. This might look different from \cite{berger2012monads}, however it is in fact not: observe that if we are given a categories with arities $(\cE,\cA)$ and $(\cF,\cB)$ in the sense of \cite{berger2012monads} then any arity-respecting functor $F:\cE\rightarrow\cF$ defines a colimit preserving functor $\cA\overset{i_\cA}{\hookrightarrow}\cE\xrightarrow{F}\cF\xrightarrow{\nu_\cB}\cP(\cB)$ which can be seen a a correspondence from $\cA$ to $\cB$, and moreover the functor $F$ is uniquely defined by this correspondence. We also define a $\cC$-theory to be an object $(\cT_0\xrightarrow{t}\cT_1)$ of $\cat^{\lcomp,\wrr}_{/\cC}$ for which each $\cT_{0,c}$ for all $c\in\cC$ is a category with arities, each correspondence $\cT_f$ for a morphism $f$ in $\cC$ respects arities and also each monad $t^*_c t_{c,!}$ respects arities. We also define the category of \textit{models} by means of the following pullback square
\[
\begin{tikzcd}[row sep=huge, column sep=huge]
\modl_\cT(\cS)\arrow[r]\arrow[d]&\mor_\cat(\cT_1,\cS)\arrow[d]\\
\coprod_{c\in\cC}\cE_c\arrow[r, "\coprod_{c\in\cC} i_c"]&\coprod_{c\in\cC}\mor_\cat(\cT_{0,c},\cS)
\end{tikzcd}.
\]
The main result of this section is \cref{thm:art} that describes an equivalence between the category of $\cC$-theories and the category of lax functors $F:\cC\rightsquigarrow\corr^\art$ and natural transformations of the form $(f_c)_!$. It moreover defines an isomorphism between $\modl_\cT(\cS)$ and $\coc^\lax_{\corr^\art}(*,F_\cT)$. In the special case $\cC\cong*$ this reduces to the isomorphism between the algebras over a monad with arities and the category of models for the corresponding theory. Finally, we consider the notion of completeness for $\cC$-theories. A $\cC$-theory is called complete if the underlying object of $\ccat^\wrr_{/\cC}$ is complete. Unlike in the case of algebraic patterns of \cite{chu2019homotopy}, the inclusion $\thr^\comp(\cC)\hookrightarrow\thr(\cC)$ does not admit a left adjoint. However \cref{thm:comp} states that the inclusion $\thr^\comp(\cC)\hookrightarrow\thr^\good(\cC)$ does, where $\thr^\good(\cC)$ denotes the subcategory of good $\cC$-theories described in \cref{def:good}. The $*$-theories attached to algebraic patterns are good, and it follows from the universal properties that our completion construction corresponds to the completion construction of \cite{chu2019homotopy}.
\paragraph{\textbf{Notations and conventions}:} Our standard reference for $\infty$-categories is \cite{lurie2009higher}, however we will also use the equivalence between quasicategories and complete Segal spaces (see e.g. \cite{joyal2007quasi}) without further mention. We will typically call $\infty$-categories simply categories except in cases where it may lead to confusion. We denote by $\cS$ the category of spaces and by $\cP(\cC)$ the category of presheaves on a category $\cC$. We assume the reader is familiar with the terminology and results of \cite{chu2019homotopy}. For an algebraic pattern $\cO$ we will typically denote $\mathrm{O}$ the category $\seg_\cO(\cS)$ of Segal $\cO$-spaces. In particular, we will denote by $\ccat$ the algebraic pattern of Segal spaces (i.e. $\Delta^\op$ with the usual active/inert factorization and with elementary objects given by $[0]$ and $[1]$) and by $\cat$ the category of Segal spaces. The category of complete Segal spaces will instead be denoted $\cat^\comp$. We will use the terms "twofold Segal" space and "flagged bicategory" interchangeably (the latter being the terminology introduced in \cite{ayala2018flagged}).
\paragraph{\textbf{Acknowledgements}} I am grateful to Artem Prikhodko and Grigory Kondyrev for reading the manuscript and for their helpful comments.
\section{Definition of Corr}\label{sect:two}
In this section we present the construction of the flagged bicategory $\corr$ whose objects are Segal spaces and whose morphisms are given by correspondences. Our main result is thus \cref{prop:corr} which explicitly constructs (the underlying Segal space of) $\corr$. We then continue to give an explicit description of the bicategory $\spanc$ which we prove in \cref{prop:span_subcat} is a full subcategory of $\corr$ on constant Segal spaces. 
\begin{prop}\label{prop:slice}
Consider an algebraic pattern $\cO$ together with a Segal $\cO$-space $X$ which we view as a presheaf on $\cO^\op$. Then there is an algebraic pattern $\cO_{/X}$ which is extendable (resp. saturated) if $\cO$ is, such that Segal spaces for $\cO_{/X}$ correspond to Segal spaces for $\cO$ endowed with a morphism to X. Any morphism $f:X\rightarrow Y$ of Segal spaces induces an extendable Segal morphism $f:\cO_{X/}\rightarrow\cO_{Y/}$.
\end{prop}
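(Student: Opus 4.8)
The plan is to construct the pattern $\cO_{/X}$ explicitly and then verify the three claims (the existence and equivalence of Segal spaces, the inheritance of extendability/saturation, and the functoriality in $X$) in turn. Recall that an algebraic pattern consists of an underlying category equipped with an inert/active factorization system and a chosen full subcategory of elementary objects. Viewing the Segal $\cO$-space $X$ as a presheaf on $\cO^\op$, I would take the underlying category of $\cO_{/X}$ to be the category of elements $\int_{\cO} X$ of $X$, whose objects are pairs $(o, x)$ with $o \in \cO$ and $x \in X(o)$, and whose morphisms $(o,x) \to (o',x')$ are morphisms $\phi \colon o \to o'$ together with a witness that $X(\phi)$ carries $x'$ to $x$ (or $x$ to $x'$, depending on variance — since $X$ is a presheaf on $\cO^\op$, i.e.\ a covariant functor on $\cO$, the natural direction gives a coCartesian fibration $\int_{\cO} X \to \cO$). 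The inert/active factorization is then lifted from $\cO$: a morphism in $\int_{\cO} X$ is declared inert (resp.\ active) precisely when its image in $\cO$ is, which is well-defined because the projection is a fibration and the factorization system on $\cO$ pulls back along it. The elementary objects of $\cO_{/X}$ are those pairs $(o,x)$ with $o$ elementary in $\cO$.

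With this construction in hand, the first claim — that Segal $\cO_{/X}$-spaces correspond to Segal $\cO$-spaces equipped with a morphism to $X$ — I would prove via the standard Grothendieck/straightening correspondence. A presheaf on the category of elements $\int_\cO X$ is the same datum as a presheaf $Y$ on $\cO$ together with a map $Y \to X$, by the equivalence $\mathcal{P}(\int_\cO X) \simeq \mathcal{P}(\cO)_{/X}$; the content is then to check that $Y \to X$ is Segal for $\cO_{/X}$ if and only if $Y$ is Segal for $\cO$. This reduces to unwinding that the Segal condition at an object $(o,x)$ of $\cO_{/X}$ is indexed by the elementary objects mapping into it inertly, and matching these up with the inert maps out of $o$ in $\cO$ using that $X$ itself already satisfies the Segal condition, so that the fiber of $Y \to X$ inherits exactly the needed limit decomposition. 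The Segal-ness of $X$ is the crucial input that makes the slice inert decomposition of $(o,x)$ coincide with the $\cO$-level decomposition.

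For the inheritance of extendability and saturation, I would verify the defining conditions directly at the level of elements. Extendability concerns the structure of active morphisms and the factorizations through elementaries; since every relevant factorization in $\cO_{/X}$ projects to the corresponding one in $\cO$ and the fibration is coCartesian, the factorizations lift uniquely, so extendability of $\cO$ transports to $\cO_{/X}$. Saturation is an analogous but more delicate condition about the category of elementary objects over a given object being suitably connected or cofinal, and I would check it by comparing the relevant comma categories in $\cO_{/X}$ with their images in $\cO$, again using the fibration property to produce the needed lifts. Finally, for the functoriality statement: a morphism $f \colon X \to Y$ induces a functor $\int_\cO X \to \int_\cO Y$ over $\cO$ by pushing forward witnesses along $f$, this functor manifestly respects the inert/active factorization and sends elementaries to elementaries (both being detected on the $\cO$-component), hence is a morphism of patterns, and one checks it is a Segal morphism and extendable by the same fibration-lifting arguments.

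I expect the main obstacle to be the first claim, specifically the precise matching of the Segal conditions: one must show not merely that $\mathcal{P}(\int_\cO X) \simeq \mathcal{P}(\cO)_{/X}$ but that this equivalence restricts to the Segal subcategories on both sides. The subtlety is that the Segal condition for $\cO_{/X}$ is phrased in terms of limits over the elementary objects of the slice pattern lying over a given $(o,x)$, and identifying this indexing category with the one for $\cO$ at $o$ requires using that $X$ is already a Segal $\cO$-space so that its value at $o$ is itself the limit over the elementaries under $o$; only then does the fiberwise Segal condition for $Y$ glue correctly against $X$. Getting this compatibility stated and checked cleanly, rather than the mere existence of the pattern structure, is where the real work lies.
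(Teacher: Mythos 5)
Your proposal follows essentially the same route as the paper: the paper's $\cO_{/X}$ is exactly the Segal fibration (category of elements) you describe, with the pattern structure lifted along the projection to $\cO$, the equivalence $\cP((\cO_{/X})^\op)\cong\cP(\cO^\op)_{/X}$ playing the same role, and the identification $(\cO_{/X})^\el_{(o,x)/}\cong\cO^\el_{o/}$ (valid because $X$ is Segal) doing precisely the work of matching the two Segal conditions that you single out as the crux. The only cosmetic difference is that the paper outsources extendability and the well-definedness of the lifted pattern structure to the Segal-fibration results of Chu--Haugseng (their Corollary 9.17 and Proposition 9.5) instead of verifying them by hand as you propose.
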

\begin{proof}
Define $\cO_{/X}$ to be a Segal fibration associated to the functor
\[(X:\cO\rightarrow\cS):i\mapsto X(I)\cong\underset{E\in\cO^\el_{I/}}{\lim}X(E),\]
where the last isomorphism follows since $X$ is assumed to be a Segal $\cO$-space. It then follows from \cite[Corollary 9.17]{chu2019homotopy} that it admits a structure of an extendable algebraic pattern if $\cO$ is extendable.\par
To prove the statement about Segal sheaves on $\cO_{/X}$, recall from \cite[Lemma 4.1]{ayala2017fibrations} that for a category $\cC$ and $p\in\cP(\cC)$ we have
\[\cP(\cC_{/P})\cong\cP(\cC)_{/P}.\]
Applying it to our case we see that every Segal $\cO_{/X}$-space $F$ automatically admits a morphism to $X$. \par
Now we need to prove that a morphism $(\cF\xrightarrow{f} X)$ viewed as a presheaf on $(\cO_{/X})^\op$ is a Segal space if and only if $\cF$ is a Segal space for $\cO$. First, assume that $\cF$ is a Segal $\cO$-space and fix an object $c\in\cO$ and $x\in X(c)$. Then since by definition $(\cO_{/X})^\el_{x/}\cong\cO^\el_{c/}$ we need to prove that the fiber $\cF_x$ of $f$ over $x$ is also a Segal $\cO$-space. To do this consider the following pullback square
\[
\begin{tikzcd}[row sep=huge, column sep=huge]
\cF_x\arrow[d]\arrow[r]&\cF(c)\cong\underset{(c\rightarrowtail e)\in\cO^\el_{c/}}{\lim}\cF(e)\arrow[d,"f"]\\
\underset{(c\rightarrowtail e)\in\cO^\el_{c/}}{\lim}*\cong*\arrow[r,"x"]&X(c)\cong\underset{(c\rightarrowtail e)\in\cO^\el_{c/}}{\lim}X(e)
\end{tikzcd}.
\]
The claim now follows from the commutativity of limits.\par
Assume conversely that $f$ is a Segal $\cO_{/X}$-space. To prove that $\cF$ is then a Segal $\cO$-space it would suffice to prove the commutativity of the following diagram
\[
\begin{tikzcd}[row sep=huge, column sep=huge]
\cP((\cO_{/X})^{\el,\op})\arrow[d,"p^\el_!"]\arrow[r,"i_{X,*}"]&\cP((\cO_{/X})^{\inrt,\op})\arrow[d,"p^\inrt_!"]\\
\cP(\cO^{\el,\op})\arrow[r,"i_*"]&\cP(\cO^{\inrt,\op})
\end{tikzcd}.
\]
Since the left vertical maps is a right fibrations, we have by \cite[Corollary 7.17]{chu2019homotopy} the following commutative diagram
\[
\begin{tikzcd}[row sep=huge]
{}&\cP(i^*i_*(\cO_{/X})^{\el,\op})\cong\cP((\cO_{/X})^{\el,\op})\arrow[ld, "\sim" swap]\arrow[r,"i_{X,*}"]&\cP(i_*(\cO_{/X})^{\el,\op})\cong\cP((\cO_{/X})^{\inrt,\op})\arrow[dd,"p^\inrt_!"]\\
\cP((\cO_{/X})^{\el,\op})\arrow[d,"p^\el_!"]\\
\cP(\cO^{\el,\op})\arrow[rr,"i_*"]&{}&\cP(\cO^{\inrt,\op})
\end{tikzcd}
\]
where the isomorphism $i^*i_*(\cO_{/X})^{\el}\cong(\cO_{/X})^{\el}$ follows since $i$ is a fully faithful inclusion and the isomorphism $i_*(\cO_{/X})^{\el,\op}\cong(\cO_{/X})^{\inrt}$ follows since $X$ is a Segal space.\par
Finally, given a morphism $f:X\rightarrow Y$ we have a natural morphism $f_!:\cO_{/X}\rightarrow\cO_{/Y}$ given by sending $(h_I\rightarrow X)\in\mor_\cO(h_i,X)\cong X(I)$ to $h_I\rightarrow X\xrightarrow{f}Y$. The fact that $f_!$ is extendable once again follows from \cite[Proposition 9.5]{chu2019homotopy}. To prove that it is Segal simply observe that the natural morphism $\mElo{(\cO_{/X})}{(h_I\rightarrow X)}\rightarrow\mElo{(\cO_{/Y})}{f_!(h_I\rightarrow X)}$ is an isomorphism.
\end{proof}
\begin{cor}\label{cor:slice}
For a saturated pattern $\cO$ and a Segal sheaf $X$ there is a natural equivalence 
\[\cO_{/X}\cong\underset{(o\rightarrow X)\in\cO_{/X}}{\lim}\cO_{/o}.\]
\end{cor}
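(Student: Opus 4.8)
The plan is to establish the stated equivalence not at the level of underlying categories (where it fails: already for $\cO\cong *$ and $X$ a set the underlying category $\cO_{/X}$ is discrete on $X$, whereas a naive limit in $\cat$ would collapse it) but at the level of categories of Segal sheaves, which is the form in which the limit genuinely holds and through which the patterns $\cO_{/o}$, $\cO_{/X}$ over $\cO$ are to be compared. Throughout I would use \cref{prop:slice} to identify $\seg_{\cO_{/Y}}(\cS)\cong\seg_\cO(\cS)_{/Y}$ naturally in the Segal sheaf $Y$; under this identification a map $Y\to Y'$ induces the base-change functor $\seg_\cO(\cS)_{/Y'}\to\seg_\cO(\cS)_{/Y}$, which is the functoriality relevant to a limit. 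For a representable $h_o$, which is a Segal sheaf because $\cO$ is saturated, the slice pattern $\cO_{/h_o}$ is the ordinary slice $\cO_{/o}$, so the diagram $(o\to X)\mapsto\cO_{/o}$ of the statement corresponds to $(o\to X)\mapsto\seg_\cO(\cS)_{/h_o}$ with the base-change transition maps, and the asserted equivalence becomes
\[ \seg_{\cO_{/X}}(\cS)\;\cong\;\underset{(o\to X)\in\cO_{/X}}{\lim}\,\seg_{\cO_{/o}}(\cS), \]
the projections being base change along the maps $h_o\to X$.

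The single input I would record is density: viewing $X$ as a presheaf, the Yoneda embedding is dense, so
\[ X\;\cong\;\underset{(o\to X)\in\cO_{/X}}{\colim}\,h_o \]
in $\cP(\cO)$, the colimit being indexed by the category of elements $\cO_{/X}$. I stress that this colimit serves only as an input; the conclusion below is honestly the limit in the statement, obtained by applying descent to this decomposition.

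Next I would invoke the van Kampen/descent property. Since $\cP(\cO)$ is an $\infty$-topos its colimits are universal and satisfy descent \cite[Theorem 6.1.3.9]{lurie2009higher}, so applying the slice functor $\cP(\cO)_{/(-)}$ to the density colimit turns it into a limit, $\cP(\cO)_{/X}\cong\lim_{(o\to X)}\cP(\cO)_{/h_o}$, with the base-change functors as transition maps. Rewriting each term by the identification $\cP(\cO_{/Y})\cong\cP(\cO)_{/Y}$ of \cite[Lemma 4.1]{ayala2017fibrations} yields $\cP(\cO_{/X})\cong\lim_{(o\to X)}\cP(\cO_{/o})$, which is the desired equivalence before imposing any Segal condition.

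Finally I would cut the equivalence down to Segal sheaves, and this is the hard part. By \cref{prop:slice} a presheaf on $\cO_{/Y}$ is Segal exactly when the corresponding object of $\cP(\cO)_{/Y}$ is fibrewise Segal over $Y$, and the elementaries are matched fibrewise via $(\cO_{/Y})^{\el}_{(c\to Y)/}\cong\cO^{\el}_{c/}$. The obstacle is to verify that the descent equivalence is compatible with these Segal conditions on the two sides — equivalently, that the fibrewise Segal condition is local for the base-change functors — so that the equivalence carries Segal sheaves to compatible families of Segal sheaves and conversely, and hence restricts to the full subcategories of Segal objects. It is precisely here that saturation of $\cO$ is used, both to guarantee that the representables $h_o$ are Segal and that the density colimit is computed correctly, and that the fibrewise description of elementaries from \cref{prop:slice} enters. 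Granting this, restricting to Segal objects produces the displayed equivalence $\seg_{\cO_{/X}}(\cS)\cong\lim_{(o\to X)}\seg_{\cO_{/o}}(\cS)$, which is the asserted $\cO_{/X}\cong\lim_{(o\to X)}\cO_{/o}$; a secondary, purely bookkeeping point is to confirm that the transition maps arising from descent are the base-change functors, i.e. the functoriality dual to the forgetful maps $\cO_{/o}\to\cO_{/X}$, so that the variance agrees with the indexing in the statement.
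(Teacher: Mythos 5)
Your outline coincides with the paper's proof: the paper also takes the Yoneda density colimit $X\cong\colim_{(o\to X)}h_o$, applies the fact that $\cP(\cO^\op)$ is an $\infty$-topos (descent) to get $\cP(\cO^\op_{/X})\cong\cP(\cO^\op)_{/X}\cong\lim_{(o\to X)}\cP(\cO^\op_{/o})$, and then restricts to Segal objects; and your reading of the statement as an equivalence detected on Segal sheaf categories matches how the corollary is actually used later (e.g.\ in \cref{prop:wrr_colim}). The problem is that you stop exactly where the proof has to end: you declare the compatibility of the descent equivalence with the Segal conditions to be ``the hard part'' and then write ``Granting this''. As submitted, that is a gap — the one substantive verification of the corollary is asserted, not proved.

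The missing step is short, and it is worth seeing why no locality-of-Segal-conditions machinery is needed. By \cref{prop:slice}, a presheaf $(\cF\to Y)$ on $(\cO_{/Y})^\op$ is Segal precisely when its fibers are: the Segal condition at an object $(c\xrightarrow{y}Y)$ reads $\cF(y)\cong\lim_{E\in\cO^\el_{c/}}\cF(E|_y)$ via the identification $(\cO_{/Y})^\el_{y/}\cong\cO^\el_{c/}$, so it is a condition checked fiberwise over points of $Y$. The descent equivalence sends $(\cF\to X)$ to the family $(\gamma^*\cF\to h_o)$, and it commutes with passing to fibers. Hence one direction is immediate from \cref{prop:slice} ($\gamma^*\cF$ is Segal when $\cF$ is), and for the converse, every fiber $\cF_x$ for $x\in X(c)$ is a fiber of $\gamma_x^*\cF$ where $\gamma_x\colon h_c\to X$ classifies $x$; so if all $\gamma^*\cF$ are Segal then all fibers of $\cF$ are, which is exactly the Segal condition for $\cF$ as an $\cO_{/X}$-presheaf. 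This is the paper's two-line argument. One smaller miscalibration: saturation is not needed for the density colimit to ``be computed correctly'' (density holds for any presheaf); it is used only to ensure each representable $h_o$ is itself a Segal $\cO$-space, so that $\cO_{/o}$ carries a pattern structure and the right-hand side of the statement makes sense.
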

\begin{proof}
By Yoneda lemma $X\cong \underset{(o\rightarrow X)\in\cO^\op_{/X}}{\colim}\:h_{o}$, so since $\cP(\cO^\op)$ is an $\infty$-topos we see that 
\[\cP(\cO^\op_{/X})\cong \cP(\cO^\op)_{/X}\cong \underset{(o\rightarrow X)\in\cO^\op_{/X}}{\lim}\cP(\cO^\op_{/o}).\]
Observe that since $\cO$ is saturated, every $o\in\cO$ is itself a Segal $\cO$-space, so $\cO_{/o}$ is also an algebraic pattern. Now we need to prove that the equivalence $\cP(\cO^\op_{/X})\cong\underset{(o\rightarrow X)\in\cO^\op_{/X}}{\lim}\cP(\cO^\op_{/o})$ respects the subcategory of Segal spaces. First, observe that for a Segal $\cO_{/X}$-space $(\cF\xrightarrow{f}X)$ and  $(o\xrightarrow{\gamma}X)$ it follows from \cref{prop:slice} that the restriction $\gamma^*\cF$ is a Segal $\cO_{/o}$-space. Conversely, assume that  $(\cF\xrightarrow{f}X)$ is such that all $\gamma^*\cF$ are Segal spaces. Then it follows that for every $o\in\cO$ and $x\in X(o)\cong \mor_{\cP(\cO^\op)}(h_o,X)$ we have that the fiber $\cF_x$ satisfies the Segal condition, which is exactly what we need for $\cF$ to be a Segal $\cO_{/X}$-space.
\end{proof}
\begin{defn}\label{def:cell}
Call a morphism $f:[n]\rightarrow [m]$ in $\Delta$ \textit{cellular} if $f(i+1)\leq f(i)+1$. Denote by $\mathpzc{Cat}^{\mathrm{Cell}}_{/[m]}$ the full subcategory of $(\Delta_{/[m]})^\op$ on cellular morphisms.
\end{defn}
\begin{prop}\label{prop:cell_def}
$\mathpzc{Cat}^{\mathrm{Cell}}_{/[m]}$ can be given a structure of an extendable algebraic pattern such that the inclusion $i_m^{\mathrm{Cell}}:\mathpzc{Cat}^{\mathrm{Cell}}_{/[m]}\hookrightarrow \mathpzc{Cat}_{/[m]}$ is an extendable morphism of algebraic patterns. Moreover, the morphism $i^\cell_{m,!}$ restricts to a morphism
\[i^\cell_{m,!}:\cat^\cell_{/[m]}\rightarrow\cat_{/[m]}\]
between the corresponding categories of Segal objects.
\end{prop}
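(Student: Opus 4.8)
The plan is to transport the algebraic pattern structure from $\ccat_{/[m]}$ along the full inclusion, verify that it descends, and then isolate the one genuinely non-formal point, which is extendability. Recall that $\ccat_{/[m]}$ is the slice pattern of \cref{prop:slice} associated to $\ccat=\Delta^\op$ and $X=[m]$; its underlying category is $(\Delta_{/[m]})^\op$, its elementary objects are the vertices $[0]\to[m]$ and all edges $[1]\to[m]$, and a morphism $\sigma\to\tau$ is given by a map $\alpha\colon[n']\to[n]$ in $\Delta$ with $\sigma\alpha=\tau$, which is inert iff $\alpha$ is a convex inclusion (so that $\tau$ is a restriction of $\sigma$ to a subinterval) and active iff $\alpha$ preserves endpoints.

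First I would equip $\ccat^\cell_{/[m]}$ with the evident inherited structure: a morphism is declared inert (resp.\ active) precisely when its image in $\ccat_{/[m]}$ is, and the elementary objects are the elementary objects of $\ccat_{/[m]}$ that are cellular, i.e.\ the vertices together with the edges of length $\le 1$. The only point requiring argument is that the inert--active factorization of $\ccat_{/[m]}$ restricts, i.e.\ that the intermediate term of the factorization of a map between cellular simplices is again cellular. But that intermediate term is exactly the restriction of the \emph{source} $\sigma$ to the convex subinterval spanned by the image of $\alpha$, and each edge of such a restriction is an edge of $\sigma$, hence of length $\le 1$; so it is cellular. Orthogonality of the two classes is inherited, so this produces an algebraic pattern, and $i^\cell_m$ is a morphism of patterns by construction. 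The same observation shows $i^\cell_m$ is a Segal morphism: the inert maps out of a cellular $\sigma$ land on its spine, and since the spine edges of a cellular simplex again have length $\le 1$ the comparison $(\ccat^\cell_{/[m]})^\el_{\sigma/}\to(\ccat_{/[m]})^\el_{\sigma/}$ is an isomorphism onto the spine of $\sigma$, which is exactly the Segal condition for $i^\cell_m$ (and already guarantees that restriction preserves Segal objects).

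The substance of the proposition, and the step I expect to be the main obstacle, is extendability, and this is where cellularity genuinely interacts with active maps. Cellularity is \emph{not} preserved by active maps: composing a cellular simplex with an endpoint-preserving surjection can merge several unit edges into a long edge. Consequently $i^\cell_{m,!}$ must build the values on the long (non-cellular) edges of $[m]$ out of cellular data, and the content of extendability is precisely that it does so compatibly with the Segal limits. I would verify the criterion of \cite[Proposition 9.5]{chu2019homotopy} by analysing, for a simplex $\tau$ of $[m]$, the comma category of cellular simplices over $\tau$ governing $(i^\cell_{m,!}F)(\tau)$ (on a cellular $\tau$ this comma category has a terminal object, so $i^\cell_{m,!}F$ restricts back to $F$): each edge $\{a,b\}$ of $\tau$ admits a canonical unit-step refinement $\{a,a+1,\dots,b\}$, and I would show that these refinements render the relevant comma categories cofinal, so that $(i^\cell_{m,!}F)(\tau)$ reassembles as the Segal limit of $F$ along the spine of the total refinement of $\tau$; the same refinement analysis verifies the extendability condition of \cite{chu2019homotopy} for the pattern $\ccat^\cell_{/[m]}$ itself. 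With extendability of $i^\cell_m$ in hand, and $\ccat_{/[m]}$ extendable by \cref{prop:slice}, the preservation theorem for extendable morphisms of \cite{chu2019homotopy} gives that $i^\cell_{m,!}$ carries Segal $\ccat^\cell_{/[m]}$-spaces to Segal $\ccat_{/[m]}$-spaces, i.e.\ that it restricts to $i^\cell_{m,!}\colon\cat^\cell_{/[m]}\to\cat_{/[m]}$. The cofinality of these cellular-refinement comma categories is the crux of the whole argument.
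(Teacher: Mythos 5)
The first half of your argument --- transporting the pattern structure along the full inclusion, checking closure under the active/inert factorization by noting that the intermediate object is the restriction of a cellular morphism to a subinterval, and deducing the relative Segal conditions from $(\ccat^\cell_{/[m]})^\el_{f/}\cong(\ccat_{/[m]})^\el_{i^\cell_m(f)/}$ --- is exactly the paper's argument and is correct. The gap is at the step you yourself call the crux: the unit-step refinements are \emph{not} cofinal in the comma categories computing $i^\cell_{m,!}$, so $(i^\cell_{m,!}F)(\tau)$ is not the Segal limit of $F$ along the spine of the total refinement of $\tau$.

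To see this, take $m=2$ and let $\tau$ be the edge $\{0,2\}$ of $[2]$. The relevant colimit may be taken over the active slice $(\ccat^\cell_{/[2]})^\act_{/\tau}$, which the paper identifies with $\Delta^\op_{-\infty}\times\Delta^\op\times\Delta^\op_{\infty}$: an object is a surjective cellular morphism $c:[k]\rightarrow[2]$, recorded by its three fibers. The unit-step refinement $\id_{[2]}$ corresponds to $([0],[0],[0])$; the outer factors $\Delta^\op_{-\infty}$ and $\Delta^\op_{\infty}$ do have final objects, but $\Delta^\op$ has none, so the single object $\{\id_{[2]}\}$ is not cofinal. Concretely, a morphism from $(c,\beta)$ to the unit-step refinement is a section $\gamma$ of $c$ with $\gamma(0)=0$ and $\gamma(2)=k$, and the set of such sections is the fiber $c^{-1}(1)$; for the cellular morphism $[3]\rightarrow[2]$ with values $(0,1,1,2)$ this set has two elements, hence is not contractible. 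Consequently $(i^\cell_{m,!}F)(\tau)$ is a geometric realization --- a bar construction over the middle copies of $\Delta^\op$ --- and not a finite limit, and this is the entire point of the construction: it is what makes composition in $\corr$ a relative tensor product. For the cellular Segal space associated to a composable pair $\cC\xrightarrow{f}\cD\xrightarrow{g}\cE$, your formula would output $(\cC_0\times_{\cD_0}\cD_1)\times_{\cD_0}(\cD_0\times_{\cE_0}\cE_1)$, whereas the actual left Kan extension collapses the bar construction to $\cC_0\times_{\cE_0}\cE_1\cong(g\circ f)_!(\id_{[1]})$ as in \cref{prop:fun_comp}; already for $\cC\cong\cE\cong *$ and $\cD\cong[1]$ these are a two-point space versus a point. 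Finally, even once the correct cofinal subcategory $(\Delta^\op)^{m-n}$ is identified, extendability is still not proved: one must show that these $\Delta^\op$-colimits distribute over the $(\ccat_{/[m]})^\el_{f/}$-limits, which the paper does by passing to $\cS_{/\prod_i\cF_i}$ and invoking $\times$-admissibility via \cite[Lemma 7.15]{chu2019homotopy}. Your outline omits this distributivity step entirely, and the cofinality claim that would have made it unnecessary is false.
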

\begin{proof}
To define the required structure we declare the subcategory of elementary objects and the subcategories of active and inert morphisms of $\mathpzc{Cat}^{\mathrm{Cell}}_{/[m]}$ to be the preimages of the respective subcategories of $\mathpzc{Cat}_{/[m]}$ under $i^\mathrm{Cell}$. To prove that this gives $\mathpzc{Cat}^{\mathrm{Cell}}_{/[m]}$ the structure of an algebraic pattern it suffices to prove that it is closed under the active/inert decomposition. In other words, we need to prove that if in the commutative diagram
\[
\begin{tikzcd}[row sep=huge, column sep=huge]
{[n]}\arrow[r, two heads, "a"]\arrow[rd, "f"]&{[l]}\arrow[r, tail, "i"]\arrow[d, "g"]&{[k]}\arrow[ld, "h" swap]\\
{}&{[m]}
\end{tikzcd}
\]
both $f$ and $h$ are cellular, then $g$ is also cellular. This follows since $i$ identifies $[l]$ with a subinterval of $[k]$.\par
To prove the extendability of $\ccat^\cell_{/[m]}$ and $i_m^\cell$, we first observe that if $f:[n]\rightarrow [m]$ is a cellular morphism, then so are its restrictions to all elementary subintervals $i:[1]\hookrightarrow [n]$. Together with the fact that $\mathpzc{Cat}^{\mathrm{Cell}}_{/[m]}$ is a full subcategory of $\mathpzc{Cat}_{/[m]}$ this implies that $\mElo{(\mathpzc{Cat}^{\mathrm{Cell}}_{/[m]})}{f}\cong\mElo{(\mathpzc{Cat}_{/[m]})}{i_m^{\mathrm{Cell}}(f)}$. Since the active/inert factorization for $\ccat^\cell_{/[m]}$ is induced from that of $\ccat$, we also see that for any active morphism $\phi:i^\cell_m(f)\twoheadrightarrow g$ we have $\mathpzc{Cat}^{\mathrm{Cell},\el}_{/[m]}(\phi)\cong\mathpzc{Cat}^\el_{/[m]}(\phi)$. This proves the relative Segal condition for both $\ccat^\cell_{/[m]}$ and $i_m^\cell$.\par 
We will now prove that for every $f\in\mathpzc{Cat}_{/[m]}$ we have that $(\mathpzc{Cat}^{\mathrm{Cell}}_{/[m]})^\act_{/f}$ is a Segal category. This will in particular imply that $\ccat^\cell_{/[m]}$ is an extendable algebraic pattern. Since we know that $(\ccat_{/[m]})^\act_{/f}$ is a Segal category it suffices to prove the following statement: given a commutative diagram 
\[
\begin{tikzcd}[row sep=huge, column sep=huge]
{[n]}\arrow[rr,two heads, "a"]\arrow[rd, "f"]&{}&{[l]}\arrow[ld, "g" swap]\\
{}&{[m]}
\end{tikzcd}
\]
in which for every interval $[i,i+1]\in[n]$ we have that $g|_{\{a(i),a(i)+1,...,a(i+1)\}}$ is cellular, it follows that $g$ itself is cellular. This is obvious since $a$ by definition preserves the endpoints.\par
It now remains to prove that $(\mathpzc{Cat}^{\mathrm{Cell}}_{/[m]})^\act_{/f}$-colimits distribute over $(\ccat_{/[m]})^\el_{f/}$-limits. To prove this we first make the condition more explicit. First factor $f$ as $[n]\xrightarrow{f_s}[\overline{n}]\xrightarrow{f_i}[m]$ where $f_i$ is injective and $f_s$ is surjective and let $s_j$ be the cardinality of the fiber of $f_s$ over $j$ for $0\leq j\leq s$. Without loss of generality, we can assume $f(0)=0$ and $f(n)=m$. First, denote by $(\ccat^\cell_{/[m]})^{\act,\un}_{/f}$ the subcategory of $(\ccat^\cell_{/[m]})^\act_{/f}$ on those active morphisms $f\overset{a}{\twoheadrightarrow}g$ whose restriction to any elementary subinterval $[s,s+1]\subset [n]$ that is sent to a point by $f$ is identity, we claim that it is cofinal. Indeed, using the Segal condition for $(\ccat^\cell_{/[m]})^\act_{/f}$ proved above we see that
\[(\ccat^\cell_{/[m]})^\act_{/f}\cong\bigtimes_{(f\rightarrowtail e)\in\overline{(\ccat_{/[m]})^\el}_{f/}}(\ccat^\cell_{/[m]})^\act_{/e},\]
where $\overline{(\ccat_{/[m]})^\el}_{f/}$ denotes the full subcategory of $(\ccat_{/[m]})^\el_{f/}$ on those morphisms $(f\rightarrowtail e)$ for which $e$ is an elementary morphism of the form $[1]\overset{i}{\rightarrowtail} [n]$. Observe that for every elementary subinterval $e$ that is sent to the identity in $[m]$ we have
\[(\ccat^\cell_{/[m]})^\act_{/e}\cong \Delta^{\act,\op}_{/[1]}.\]
It follows that
\[(\ccat^\cell_{/[m]})^\act_{/f}\cong(\Delta^{\act,\op}_{/[1]})^{\sum_{i=0}^{\overline{n}} s_i}\times (\ccat^\cell_{/[m]})^{\act,\un}_{/f}.\]
Observe that $\Delta^{\act,\op}_{/[1]}$ has a final object, and so the subcategory $(\ccat^\cell_{/[m]})^{\act,\un}_{/f}$ is indeed cofinal.\par
It follows that we can assume that $f$ is injective. Now let $[j,j+1]$ be a subinterval of $[n]$, observe that 
\[(\ccat^\cell_{/[m]})^{\act,\un}_{/[j,j+1]}\cong\Delta^\op_{-\infty}\times(\Delta^\op)^{f(j+1)-f(j)-1}\times\Delta^\op_\infty,\]
where $\Delta^\op_{-\infty}$ (resp. $\Delta^\op_{\infty}$) denotes the subcategory of $\Delta^\op$ on morphisms that preserve the minimal (resp. maximal) element. Indeed, an object of $(\ccat^\cell_{/[m]})^{\act,\un}_{/[j,j+1]}$ can be identified with a surjective cellular morphism to $[f(j),f(j+1)]\subset[m]$. An object like this is uniquely determined by its fibers over each $i\in[f(j),f(j+1)]$, and those fibers are all nonempty, so it can be identified with an object of $(\Delta^\op)^{f(j+1)-f(j)+1}$. An arbitrary morphism between those objects can also be identified with a morphism in $(\Delta^\op)^{f(j+1)-f(j)+1}$, observe that this morphism is active if and only if its restriction to the first component belongs to $\Delta^\op_{-\infty}$ and its restriction to the last component to $\Delta^\op_{\infty}$, which concludes the proof of the claim. Since both $\Delta^\op_{-\infty}$ and $\Delta^\op_{\infty}$ have a final object we see that $(\ccat^\cell_{/[m]})^{\act,\un}_{/f}$ has a cofinal subcategory of the form $(\Delta^\op)^{m-n}$.\par
Now we will need to introduce some notation. Let $\cF$ be a Segal $\ccat^\cell_{/[m]}$-space, denote by $\cF_i$ the value of $\cF$ on a morphism $[0]\rightarrow[m]$ sending $0$ to $i$ and by $\cF_{s,t}$ the value on a morphism $[1]\rightarrow[m]$ sending $0$ to $s$ and $1$ to $t$. Further, denote 
\[\mathop \times_{\cF_j}^{n_j}\cF_{j,j}\bydef \overbrace{(\cF_{j,j}\times_{\cF_j}\cF_{j,j}\times_{\cF_j}...\times_{\cF_j}\cF_{j,j})}^\text{$n_j$ times}\]
and for $\overline{n^i}=\{n^i_1,...,n^i_{f(i)-f(i-1)-1}\}\in\Delta^{f(i)-f(i-1)-1}$
\[\overline{\cF}(\overline{n^i})\bydef (\mathop \times_{\cF_{f(i-1)+1}}^{n^i_1}\cF_{f(i-1)+1,f(i-1)+1})\times_{\cF_{f(i-1)+1,f(i-1)+2}}...\times_{\cF_{f(i)-1,f(i)}}(\mathop \times_{\cF_{f(i)-1}}^{n^i_{f(i)-f(i-1)-1}}\cF_{f(i)-1,f(i)-1}).\]
With these notations, we need to prove that
\begin{align*}
    i^\cell_{m,!}\cF\cong &\underset{\{\overline{n}^i\}_{i\in\{1,2,...,n\}}\in\prod_{i=1}^n (\Delta^\op)^{f(i)-f(i-1)-1}}{\colim}\overline{\cF}(\overline{n^1})\times_{\cF_{f(1)}}...\times_{\cF_{f(n-1)}}\overline{\cF}(\overline{n^n})\\
    \cong& (\underset{\overline{n}^1\in(\Delta^\op)^{f(1)-f(0)-1}}{\colim}\overline{\cF}(\overline{n^1}))\times_{\cF_{f(1)}}...\times_{\cF_{f(n-1)}}(\underset{\overline{n}^n\in(\Delta^\op)^{f(n)-f(n-1)-1}}{\colim}\overline{\cF}(\overline{n^n})).
\end{align*}
Using that $\cS$ is an $\infty$-topos, we see that this colimit diagram can be viewed as a colimit diagram in $\cS_{/\prod_{i=1}^{m-1}\cF_i}$ and also that the limits appearing in $i^\cell_{m,!}\cF(f)$ are given by finite products in $\cS_{/\prod_{i=1}^{m-1}\cF_i}$. It will now suffice to prove that $(\Delta^\op)^{m-n}$-colimits distribute over finite products in $\cS_{/\prod_{i=1}^{m-1}\cF_i}$, which follows from \cite[Lemma 7.15]{chu2019homotopy} since $\cS_{/\prod_{i=1}^{m-1}\cF_i}$ is $\times$-admissible.
\end{proof}
\begin{notation}
We will denote by  $\mathrm{Cat}^{\mathrm{Cell}}_{/[m]}$ the category $\mathrm{Seg}_{\mathpzc{Cat}^\mathrm{Cell}_{/[m]}}(\cS)$.
\end{notation}
\begin{construction}\label{constr:corr_aux}
Define $(\ccat^\cell_{/[m]})_a$ to be a category with objects given by cellular morphisms \[s:\sqcup_{j=1}^k[n_j]\rightarrow[m]\]
and define morphisms between two objects to be morphisms of categories over $[m]$. We will call a morphism \textit{inert} if its restriction to all disjoint components is inert in the usual sense and denote $(\ccat^\cell_{/[m]})_a^\inrt$ the full subcategory on inert morphisms. We call an object $e:[l]\rightarrow[m]$ \textit{elementary} if $l=0$ or $l=1$ and denote $j:(\ccat^\cell_{/[m]})_a^\el\hookrightarrow(\ccat^\cell_{/[m]})_a^\inrt$ the inclusion of a full subcategory on elementary objects.
\end{construction}
\begin{lemma}
$\cat^\cell_{/[m]}$ is isomorphic to the category of presheaves on $(\ccat^\cell_{/[m]})_a$ whose restriction to $(\ccat^\cell_{/[m]})_a^\inrt$ is the right Kan extension of its restriction to $(\ccat^\cell_{/[m]})_a^\el$.
\end{lemma}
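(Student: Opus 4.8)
The plan is to realise the equivalence by right Kan extension along the inclusion of single-interval objects. Write $\cA := (\ccat^\cell_{/[m]})_a$, with inert and elementary subcategories $\cA^\inrt$ and $\cA^\el$, let $\cD \subseteq \cP(\cA)$ denote the full subcategory described in the statement, and let $\iota:\cA^{(1)}\hookrightarrow\cA$ be the full subcategory spanned by the objects with $k=1$, i.e.\ by single cellular morphisms $[n]\to[m]$. Since morphisms in $\cA$ are functors over $[m]$ and every $[n]$ is connected, one identifies $\cA^{(1)}$ with $(\ccat^\cell_{/[m]})^\op$, compatibly with the elementary objects and inert morphisms. Hence $\cP(\cA^{(1)})\cong\cP((\ccat^\cell_{/[m]})^\op)$, and under this identification $\cat^\cell_{/[m]}=\seg_{\ccat^\cell_{/[m]}}(\cS)$ is exactly the subcategory of those presheaves whose inert restriction is the right Kan extension of its restriction to the elementaries; this is nothing but the Segal condition $F([n])\cong\lim_{\mathrm{spine}}F$ read off along the spine of each $[n]$, whose elementary slice in the pattern is precisely that spine.

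First I would analyse the right adjoint $\iota_*$ of the restriction $\iota^*:\cP(\cA)\to\cP(\cA^{(1)})$. For $s=\sqcup_{j=1}^k[n_j]$ the category indexing $\iota_*G(s)$ consists of single intervals equipped with a map to $s$ over $[m]$; by connectedness it splits as $\sqcup_{j=1}^k (\cA^{(1)})_{/[n_j]}$, and each summand has the terminal object $\id_{[n_j]}$. Therefore $\iota_*G(s)\cong\prod_{j=1}^k G([n_j])$, so $\iota_*$ produces presheaves that send disjoint unions to products. Because $\iota$ is fully faithful, $\iota_*$ is fully faithful and $\iota^*\iota_*\cong\id$.

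Next I would verify the two halves of the equivalence. If $F\in\cD$, then evaluating its defining condition at a single interval $[n]$ shows that $\iota^*F$ satisfies the Segal condition, so $\iota^*F\in\cat^\cell_{/[m]}$; moreover the unit $F\to\iota_*\iota^*F$ is an equivalence, since on $s=\sqcup_j[n_j]$ both sides equal $\prod_j F([n_j])$, the left-hand side by the decomposition of the elementary objects lying under $s$ into the spines of the $[n_j]$. Conversely, for $G\in\cat^\cell_{/[m]}$ the presheaf $\iota_*G$ restricts on elementaries to $G$ itself, and the Segal condition gives $\iota_*G(s)\cong\prod_j G([n_j])\cong\prod_j\lim_{\mathrm{spine}}G$, which is exactly the value at $s$ of the right Kan extension of $\iota_*G|_{\cA^\el}$ along $\cA^\el\hookrightarrow\cA^\inrt$; hence $\iota_*G\in\cD$. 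Together these show that $\iota^*$ and $\iota_*$ restrict to mutually inverse equivalences between $\cD$ and $\cat^\cell_{/[m]}$.

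The main obstacle is precisely this last reconciliation: the condition defining $\cD$ constrains only the inert part of a presheaf, through the elementary objects, whereas $\iota_*$ is a Kan extension taken along all single intervals, active morphisms included, so the two a priori differ. The point is that the Segal condition carried by the single-interval part bridges them, collapsing the limit over all single intervals over $s$ to the limit over its spine; making this comparison precise, together with checking that the elementary and inert structure of $\cA$ is compatible with the disjoint-union decomposition, is the only genuinely delicate step. Everything else---functoriality, naturality and coherence---is then supplied automatically by the Kan-extension adjunction, with no need to construct comparison maps by hand.
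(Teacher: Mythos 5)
Your proposal is correct and takes essentially the same route as the paper: both proofs work with restriction and right Kan extension along the full inclusion of single-interval objects into $(\ccat^\cell_{/[m]})_a$, compute the Kan extension componentwise using connectedness of each $[n]$ (the paper via a coinitial subcategory of component inclusions, you via terminal objects in the sliced summands), and then use the decomposition of the elementary slice of $s=\sqcup_j[n_j]$ into those of its components to show that Segal spaces and presheaves satisfying the stated condition correspond under $\iota^*$ and $\iota_*$. The only discrepancy is one of convention: the paper reads the morphisms of $(\ccat^\cell_{/[m]})_a$ so that the single-interval subcategory is $\ccat^\cell_{/[m]}$ itself (its presheaves being covariant functors on the pattern), whereas your identification $\cA^{(1)}\cong(\ccat^\cell_{/[m]})^\op$ uses the opposite reading; the two set-ups differ only by taking opposites and the argument is unchanged.
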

\begin{proof}
Observe that there is a natural inclusion of a full subcategory $i:\ccat^\cell_{/[m]}\hookrightarrow(\ccat^\cell_{/[m]})_a$. We will first prove that $i_*$ takes Segal $\ccat^\cell_{/[m]}$-spaces to presheaves satisfying the conditions of the lemma and that $i^*$ is its inverse. By definition
\[i_*\cF(s)\cong\underset{(s\rightarrow ix)\in s/i}{\lim}\cF(x).\]
Denote by $(s/i)^\mathrm{incl}\hookrightarrow s/i$ the full subcategory on inclusions of the form $[n_i]\hookrightarrow\sqcup_{j=1}^k[n_j]$, it is obviously coinitial in $s/i$ and consists of $k$ disjoint objects. It follows that 
\[i_*\cF(s)\cong \mathop{\times}\limits_{j=1}^k\cF(s_j).\]
It is also easy to see that 
\[(\ccat^\cell_{/[m]})^\el_{a,s/}\cong \mathop{\times}\limits_{j=1}^k(\ccat^\cell_{/[m]})^\el_{a,s_j/},\]
so the claim follows. Now observe that the isomorphism $i^*i_*\cong\id$ follows since $i$ is a full subcategory inclusion and $i_*i^*\cF\cong\cF$ for presheaves satisfying the conclusion of the lemma follows from the above considerations.
\end{proof}
\begin{cor}\label{cor:iso}
There is a natural isomorphism
\[\mathrm{Cat}^{\mathrm{Cell}}_{/[m]}\cong\underset{e\in\mElo{Cat}{[m]}}{\lim}\mathrm{Cat}^{\mathrm{Cell}}_{/e}.\]
\end{cor}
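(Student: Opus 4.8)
The plan is to read this corollary as the categorified Segal condition for the pattern $\mathpzc{Cat}^{\mathrm{Cell}}_{/[m]}$: a cellular Segal space over $[m]$ ought to be nothing but a composable string of Segal spaces joined by correspondences, and it ought to be reconstructible from its restrictions to the vertices and edges of $[m]$, which are exactly the objects of $\mElo{Cat}{[m]}$. Accordingly I would first produce the comparison functor. Each $e\in\mElo{Cat}{[m]}$ is a subinterval inclusion $e\hookrightarrow[m]$ with $e$ equal to $[0]$ or $[1]$, and since the composite of a cellular morphism with a subinterval inclusion is again cellular, the last part of \cref{prop:slice} supplies a Segal morphism of patterns $\mathpzc{Cat}^{\mathrm{Cell}}_{/e}\to\mathpzc{Cat}^{\mathrm{Cell}}_{/[m]}$. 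Restricting Segal spaces along these produces functors $\cat^\cell_{/[m]}\to\cat^\cell_{/e}$ that are natural in $e$, hence assemble into the desired comparison $\cat^\cell_{/[m]}\to\lim_{e\in\mElo{Cat}{[m]}}\cat^\cell_{/e}$.

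Next I would identify the target concretely. Every morphism into $[0]$ or $[1]$ is automatically cellular, so $\mathpzc{Cat}^{\mathrm{Cell}}_{/[0]}=\mathpzc{Cat}_{/[0]}\cong\mathpzc{Cat}$ and $\mathpzc{Cat}^{\mathrm{Cell}}_{/[1]}=\mathpzc{Cat}_{/[1]}$, whence $\cat^\cell_{/[0]}\cong\cat$ and $\cat^\cell_{/[1]}\cong\cat_{/[1]}$. The nonidentity morphisms of $\mElo{Cat}{[m]}$ send each edge $\{i-1,i\}$ to its two endpoints and induce the two fiber functors $\cat_{/[1]}\to\cat$, so a point of the limit is a family $(X_0,\dots,X_m)$ of Segal spaces together with correspondences $K_i\in\cat_{/[1]}$ whose endpoint-restrictions recover $X_{i-1}$ and $X_i$ — that is, precisely a composable string of correspondences.

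To prove the comparison is an equivalence I would feed in the explicit model of the preceding Lemma, which identifies $\cat^\cell_{/[m]}$ with the presheaves on $(\ccat^\cell_{/[m]})_a$ whose restriction to the inert part is the right Kan extension of their restriction to the elementary objects. The right Kan extension condition says exactly that the value on any inert object is the limit over the elementary objects lying under it, so such a presheaf is freely assembled from its values on cells subject to the evident compatibilities, which is literally the data of a point of $\lim_e\cat^\cell_{/e}$. In the same spirit one can argue topos-theoretically, along the lines of the proofs of \cref{prop:slice} and \cref{cor:slice}: the Segal condition exhibits $[m]$ as the spine colimit $\colim_{e\in(\mElo{Cat}{[m]})^\op}e$ in $\cat$, and slicing the ambient presheaf category over a colimit yields the corresponding limit of slices, the point being to check that this passage preserves the Segal subcategories.

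The main obstacle is precisely this last compatibility, and it is sharper than in \cref{cor:slice}: the elementary objects of $(\ccat^\cell_{/[m]})_a$ include the degenerate edges $[1]\to[m]$ collapsing onto a single vertex, whereas the indexing category $\mElo{Cat}{[m]}$ retains only the nondegenerate cells. I would dispatch this by noting that the right Kan extension condition forces the value on each degenerate edge to agree with the value on the vertex it collapses to, so degenerate cells carry no independent data and the limit over the full elementary site collapses onto the limit over the spine. Checking that this collapse is compatible with the active morphisms — equivalently, that the gluing of correspondences recorded by $\lim_e\cat^\cell_{/e}$ matches the Segal structure of $\cat^\cell_{/[m]}$ — is the crux, and I expect it to rest on the cofinality computations already established in the proof of \cref{prop:cell_def}.
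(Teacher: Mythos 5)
Your overall route is the paper's: build the comparison functor by restriction along the pattern morphisms $\ccat^\cell_{/e}\to\ccat^\cell_{/[m]}$ given by postcomposition with subinterval inclusions, identify the terms of the limit, and deduce the equivalence from the preceding Lemma; the paper does exactly this, quoting the Lemma together with the isomorphism $(\ccat^\cell_{/[m]})_a\cong\lim_{e\in\mElo{Cat}{[m]}}(\ccat^\cell_{/e})_a$, which respects elementary objects and inert morphisms. However, the step you yourself single out as the crux --- the treatment of degenerate edges --- is wrong, and this is a genuine gap. You claim the right Kan extension condition forces the value on a degenerate edge $[1]\to\{i\}\subset[m]$ to agree with the value on the vertex $\{i\}$. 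It cannot: a degenerate edge is itself an \emph{elementary} object of $(\ccat^\cell_{/[m]})_a$ (elementarity in \cref{constr:corr_aux} asks only $l\in\{0,1\}$), and since the right Kan extension is taken along a fully faithful inclusion, the condition of the Lemma is automatic on elementary objects and constrains only the non-elementary inert ones; it imposes no relation among values on elementaries. Semantically the claim is also false: the value on the degenerate edge at $i$ is the space $\cF_{i,i}$ of \emph{morphisms} of the Segal space lying over the vertex $i$ (exactly the $\cF_{j,j}$ used throughout the proof of \cref{prop:cell_def}), while the value on the vertex is its space of objects. Identifying them would force every vertex fiber to be a constant Segal space, contradicting your own, correct, description in your second paragraph of a point of the limit as a string of honest Segal spaces $X_0,\dots,X_m$ joined by correspondences.

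The correct way to handle the mismatch is not to collapse the degenerate cells but to sort them into the terms of the limit. A degenerate edge at $i$ factors through $\{i\}\hookrightarrow[m]$, hence lies in the full subcategory $(\ccat^\cell_{/\{i\}})_a\subset(\ccat^\cell_{/[m]})_a$; its value is the morphism-space part of the vertex datum $X_i\in\cat^\cell_{/[0]}\cong\cat$, and it also appears in the two neighbouring edge terms, where the fiber products constituting the limit identify the three copies. This bookkeeping is precisely what the paper's isomorphism of auxiliary categories records: \emph{all} of $(\ccat^\cell_{/[m]})_a$ --- degenerate cells, non-elementary objects and active morphisms included --- decomposes over the spine, because a cellular morphism from a disjoint union of intervals to $[m]$ is the same thing as a compatible family of its restrictions to the preimages of the vertices and edges of $[m]$. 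With that isomorphism in hand the corollary follows formally from the Lemma, and no cofinality input is needed: the cofinality computations of \cref{prop:cell_def} concern the distributivity required for extendability and for $i^\cell_{m,!}$, not this gluing.
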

\begin{proof}
It follows from the previous lemma in view of the obvious isomorphism
\begin{equation}\label{eq:one}
    (\mathpzc{Cat}^{\mathrm{Cell}}_{/[m]})_a\cong\underset{e\in\mElo{Cat}{[m]}}{\lim}(\mathpzc{Cat}^{\mathrm{Cell}}_{/e})_a
\end{equation}
which moreover respects elementary objects and inert morphisms.
\end{proof}
\begin{prop}\label{prop:corr}
There is a $\mathpzc{Cat}$-Segal space $\mathrm{Corr}$ whose space of objects is isomorphic to $\mathrm{Cat}^\sim\cong(\mathrm{Cat}^{\mathrm{Cell}}_{/[0]})^\sim$ and whose space of morphisms is isomorphic to $(\mathrm{Cat}^{\mathrm{Cell}}_{/[1]})^\sim$.
\end{prop}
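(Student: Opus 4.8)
The plan is to define $\corr$ as the simplicial space $[m]\mapsto(\cat^\cell_{/[m]})^\sim$ and to read off the Segal condition from \cref{cor:iso}. Thus I would first promote $[m]\mapsto\cat^\cell_{/[m]}$ to a functor $\Delta^\op\to\cat$ (valued in large categories), pass to maximal subgroupoids $(-)^\sim$ to land in spaces, verify the Segal condition, and finally compute the values at $[0]$ and $[1]$.

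The first and least formal step is to organize the categories $\cat^\cell_{/[m]}$ into a single functor on $\Delta^\op$. For a map $\phi\colon[k]\to[m]$ in $\Delta$ the intended transition functor $\cat^\cell_{/[m]}\to\cat^\cell_{/[k]}$ is base change: viewing an object of $\cat^\cell_{/[m]}$ as a category over $[m]$ by means of $i^\cell_{m,!}$ from \cref{prop:cell_def}, send $(\cE\to[m])$ to the pullback $(\cE\times_{[m]}[k]\to[k])$. Two things need checking here. First, that a pullback of a cellular object is again cellular; this I would deduce from the observation in the proof of \cref{prop:cell_def} that cellularity is detected on elementary subintervals, together with the fact that the fibre of the pullback over each edge of $[k]$ is a restriction of $\cE$ along a subinterval of $[m]$. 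Second, that these base-change functors compose coherently; rather than check the pseudofunctor axioms by hand, I would assemble the $\cat^\cell_{/[m]}$ into one Cartesian fibration over $\Delta$---using the functoriality of the auxiliary categories $(\ccat^\cell_{/[m]})_a$ of \cref{constr:corr_aux} together with their Kan-extension description of $\cat^\cell_{/[m]}$---and straighten it. This coherence bookkeeping is where I expect the real work to lie; everything after it is essentially forced.

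With $\corr_\bullet:=(\cat^\cell_{/\bullet})^\sim\colon\Delta^\op\to\cS$ in hand, the Segal condition is immediate. A $\ccat$-Segal space is exactly a functor $\Delta^\op\to\cS$ satisfying $\corr_m\cong\lim_{e\in\ccat^\el_{[m]/}}\corr_e$, and since the core functor $(-)^\sim$ is right adjoint to the inclusion $\cS\hookrightarrow\cat$ it preserves limits; applying it to
\[\cat^\cell_{/[m]}\cong\underset{e\in\ccat^\el_{[m]/}}{\lim}\cat^\cell_{/e}\]
from \cref{cor:iso} yields precisely the required equivalence. As $\ccat^\el_{[m]/}$ is the spine of $[m]$, this limit is the iterated fibre product $\corr_1\times_{\corr_0}\cdots\times_{\corr_0}\corr_1$. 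The one point demanding care is that the equivalence of \cref{cor:iso} be natural in $[m]$ and agree with the Segal map induced by the transition functors of the previous step, rather than holding merely objectwise.

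It remains to identify the low-dimensional values. Since every map $[n]\to[0]$ in $\Delta$ is vacuously cellular we have $\ccat^\cell_{/[0]}=\ccat_{/[0]}\cong\ccat$, whence $\cat^\cell_{/[0]}\cong\cat$ and $\corr_0\cong\cat^\sim$; the identity $\corr_1=(\cat^\cell_{/[1]})^\sim$ is the definition. This exhibits $\corr$ as the desired $\ccat$-Segal space.
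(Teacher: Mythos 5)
Your skeleton (define $\corr$ levelwise as $\cat^{\cell}_{/[m]}$, transition maps by base change, Segal condition from \cref{cor:iso}, cores preserve limits, identify $\corr_0$ and $\corr_1$) is the same as the paper's, and your transition functors do agree with the paper's: under the equivalence $\seg_{\ccat_{/[m]}}(\cS)\cong\cat_{/[m]}$ of \cref{prop:slice}, restriction of presheaves along postcomposition with $f$ is exactly pullback of categories over $[m]$. But there is a genuine gap at the step you call the first check --- that base change preserves ``cellularity'' --- and it is precisely the step you dismiss in one line. Your proposed criterion, that cellularity is detected on elementary subintervals, cannot work: every morphism of $\Delta$ with target $[0]$ or $[1]$ is cellular, so $\ccat^{\cell}_{/[1]}=\ccat_{/[1]}$ and the restriction of any object of $\cat_{/[k]}$ to an elementary edge is automatically ``cellular''; your criterion would therefore declare every object of $\cat_{/[k]}$ to lie in the essential image of $i^{\cell}_{k,!}$, which is false. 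For instance the walking arrow placed over the long edge $02$ of $[2]$ is not in the image of $i^{\cell}_{2,!}$: its restriction to cellular morphisms has empty mapping space from the fibre over $0$ to the fibre over $2$ (any cellular string joining them must pass through the empty fibre over $1$), so the counit of $(i^{\cell}_{2,!},i^{\cell,*}_2)$ is not an equivalence on it. Note also that the observation you cite from \cref{prop:cell_def} concerns morphisms in $\Delta$ (restrictions of a cellular map to edges of its \emph{source}), not objects of $\cat_{/[m]}$, and your auxiliary claim that the fibre of $\cE\times_{[m]}[k]$ over an edge of $[k]$ is a restriction of $\cE$ along a subinterval of $[m]$ fails in exactly the interesting cases, namely when $\phi$ skips a vertex (the edge maps to the non-cellular edge $\phi(i)\phi(i{+}1)$) or collapses one (it maps to an identity).

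What membership in the image of $i^{\cell}_{k,!}$ really says is that the value on each long edge $ij$, $j>i+1$, is the bar-construction colimit built from the elementary data; so cellularity of the pullback along a face map amounts to associativity of composition of correspondences, and along a degeneracy to unitality (composing with the identity correspondence costs nothing). This is the mathematical heart of the proposition, and it is where the paper spends essentially its entire proof: one shows that the natural functor $i_m^{\mathrm{Cell}}/f/g\rightarrow i_m^{\mathrm{Cell}}/f\circ g$ is cofinal, by decomposing $f$ into face maps (handled via \cref{lem:pullback} and a final-object argument) and degeneracy maps (handled by reducing to the cofinality of $[n]\mapsto[n+1]\colon\Delta^\op\rightarrow\Delta^\op_{-\infty}$, which is \cite[Lemma 6.1.3.16.]{lurie2009higher}, together with a contractibility computation for the functor $\Delta^\op\times\Delta^\op\rightarrow\Delta^\op$, $([k],[l])\mapsto[k+l+1]$). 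None of this appears in your proposal. Conversely, the part you single out as ``where the real work lies'' --- coherence of the transition functors --- is free in the paper's formulation: the transition maps are restrictions of presheaves along the strictly functorial postcomposition functors $\ccat_{/[n]}\rightarrow\ccat_{/[m]}$, so no straightening of a fibration built from $(\ccat^{\cell}_{/[m]})_a$ is needed; the only thing to verify is that these restrictions preserve the cellular Segal subcategories, which is the cofinality statement above.
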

\begin{proof}
We first define $\mathrm{Corr}([m])\bydef\mathrm{Cat}^{\mathrm{Cell}}_{/[m]}$. The isomorphism of \cref{cor:iso} then implies that this indeed defines a $\mathpzc{Cat}^\inrt$-Segal space. We will now prove that for every $f:[n]\rightarrow[m]$ the natural morphism $f^*:\seg_{\ccat_{/[m]}}(\cS)\rightarrow\seg_{\ccat_{/[n]}}(\cS)$ restricts to $f^*:\mathrm{Seg}_{\mathpzc{Cat}^\mathrm{Cell}_{/[m]}}(\cS)\rightarrow\mathrm{Seg}_{\mathpzc{Cat}^\mathrm{Cell}_{/[n]}}(\cS)$.\par
We first write the required condition more explicitly. Given $\cF\in\mathrm{Seg}_{\mathpzc{Cat}^\mathrm{Cell}_{/[m]}}(\cS)$ and a morphism $g:[k]\rightarrow[n]$ we have
\[f^*\cF(g)\cong\cF(f\circ g)\cong\underset{\widetilde{c}\in i_m^\mathrm{Cell}/f\circ g}{\colim}\cF(\widetilde{c}),\]
where the last isomorphism follows since $\cF$ is a $\mathpzc{Cat}^\mathrm{Cell}_{/[m]}$-Segal space. For it to be a $\mathpzc{Cat}^\mathrm{Cell}_{/[n]}$-Segal space we need
\[f^*\cF(g)\cong\underset{c\in i_n^\mathrm{Cell}/g}{\colim}\cF(f\circ c)\cong\underset{c\in i_n^\mathrm{Cell}/g}{\colim}\underset{c_0\in i_m^\mathrm{Cell}/f\circ c}{\colim}\cF(c_0)\bydef\underset{c_0\in i_m^\mathrm{Cell}/f/g}{\colim}\cF(c_0),\]
where $i_m^\mathrm{Cell}/f/g$ is by definition a Cartesian fibration over $i_n^\mathrm{Cell}/g$ associated to a functor sending $c\rightarrow g$ to $i_m^\mathrm{Cell}/f\circ c$. The objects of $i_m^\mathrm{Cell}/f/g$ can be identified with pairs $(c\xrightarrow{\alpha} g,c_0\xrightarrow{\beta} f\circ c)$ where both $c$ and $c_0$ are cellular. There is a natural functor from $m:i_m^\mathrm{Cell}/f/g\rightarrow i_m^\mathrm{Cell}/f\circ g$ given by sending $(c\xrightarrow{\alpha} g,c_0\xrightarrow{\beta} f\circ c)$ to the composition $c_0\xrightarrow{\beta}f\circ c\xrightarrow{f*\alpha}f\circ g$ and we will prove that $m$ is cofinal.\par
To accomplish this, first recall the classical fact that every morphism $f:[n]\rightarrow [m]$ in $\Delta$ is expressible as a finite composition of face and degeneracy maps $\sigma_i$ and $\delta_j$. It will be enough to prove our statement for any of those maps individually. We start with the face map $\delta_j:[n]\rightarrow[n+1]$. The object of $i_m^\mathrm{Cell}/\delta_j/g$ can be identified with a pair of commutative diagrams
\[
\begin{tikzcd}[row sep=huge, column sep=huge]
{[l]}\arrow[dr, "g"]\arrow[r, "\alpha"]&{[k_1]\arrow[d, "c_1"]}\\
{}&{[n]}
\end{tikzcd},
\begin{tikzcd}[row sep=huge, column sep=huge]
{[k_1]}\arrow[r, "\beta"]\arrow[d, "c_1"]&{[k_2]}\arrow[d, "c_2"]\\
{[n]}\arrow[r, "\delta_j"]&{[n+1]}
\end{tikzcd}.
\]
For a given object $[l]\xrightarrow{\gamma}[k]$ of $i_m^\mathrm{Cell}/\delta_j\circ g$ an object of $m/\gamma$ can be identified with a commutative diagram
\[
\begin{tikzcd}[row sep=huge, column sep=huge]
{[l]}\arrow[dr, "g"]\arrow[r, "\alpha"]&{[k_1]}\arrow[d, "c_1"]\arrow[r, "\beta"]&{[k_2]}\arrow[d, "c_2"]\arrow[r, "\eta"]&{[k]}\arrow[dl, "c" swap]\\
{}&{[n]}\arrow[r, "\delta_j"]&{[n+1]}
\end{tikzcd}
\]
in which $\eta\circ \beta\circ \alpha\cong\gamma$ and a morphism is given by a commutative diagram 
\[
\begin{tikzcd}[row sep=huge, column sep=huge]
{}&{[k_1]}\arrow[r, "\beta"]\arrow[dd, "a_1"]&{[k_2]}\arrow[dd, "a_2"]\arrow[dr, "\eta"]\\
{[l]}\arrow[ur, "\alpha"]\arrow[dr, "\alpha'"]&{}&{}&{[k]}\\
{}&{[k_1']}\arrow[r, "\beta'"]&{[k_2']}\arrow[ur, "\eta'"]
\end{tikzcd}
\]
over $[n+1]$. Observe that we can factor $[k_1]\xrightarrow{\gamma}[k]$ canonically as the top row in the following diagram 
\[
\begin{tikzcd}[row sep=huge, column sep=huge]
{[l]}\arrow[r]\arrow[dr, "g"]&\delta_j^*{[k]}\arrow[d, "\delta_j^* c"]\arrow[r]&{[k]}\arrow[d,"c"]\arrow[r, equal]&{[k]}\arrow[dl, "c" swap]\\
{}&{[n]}\arrow[r, "\delta_j"]&{[n+1]}
\end{tikzcd},
\]
where the square is a pullback square. Observe that $\ima(c)\cap\ima(\delta_j)\neq\varnothing$ since otherwise it would not admit a morphism from $\delta_j\circ g$. Since $\delta_j$ is injective it follows from \cref{lem:pullback} that $\delta_j^*[k]$ is indeed an object of $\Delta$, moreover it is easy to see that the pullback $\delta_j^* c$ is also cellular. It follows that we can identify the diagram with an object of $m/\gamma$. Moreover, by construction this object is final, so in particular it follows that $m/\gamma$ is contractible and $m:i_{n+1}^\mathrm{Cell}/\delta_j/g\rightarrow i_{n+1}^\mathrm{Cell}/\delta_j\circ g$ is cofinal.\par
The case of a degeneracy map $\sigma_i:[n]\rightarrow[n-1]$ is somewhat more difficult. First, observe that the composition of cellular maps is cellular and that $\sigma_i$ is a cellular map. It follows that in this case the functor $m$ restricts to $\widetilde{m}:i_{n}^\mathrm{Cell}/g\rightarrow i_{n-1}^\mathrm{Cell}/\delta_j\circ g$, and it suffices to prove that this functor is cofinal. Observe that we can decompose every $\alpha:c\rightarrow g$ as $c\overset{i_\alpha}{\rightarrowtail}c_\alpha\overset{a_\alpha}{\twoheadrightarrow}g$. It follows that the inclusions $i^\act:(i_{n}^\mathrm{Cell}/g)^\act\hookrightarrow i_{n}^\mathrm{Cell}/g$ is cofinal. Moreover, $\widetilde{m}$ restricts to $\widehat{m}:(i_{n}^\mathrm{Cell}/g)^\act\rightarrow(i_{n-1}^\mathrm{Cell}/\delta_j\circ g)^\act$ and it is enough to prove that $\widehat{m}$ is cofinal. Next, observe that, since $\sigma_i^*$ is a Segal morphism, we have
\[\sigma_i^*\cF(g)\cong\underset{(g\rightarrowtail e)\in(\mathpzc{Cat}^\mathrm{Cell}_{/[n]})^\el_{/g}}{\lim}\sigma_i^*\cF(e),\]
so we can assume that $g$ is a morphism from $e$ to $[n]$ where $e$ is an elementary object of $\mathpzc{Cat}$. The case of $e=[0]$ is trivial, so we can assume that $g$ is a morphism $[1]\rightarrow [n]$. Denote the minimal subinterval of $[n]$ containing the image of $g$ by $[n_g]$, then since there is at most one active morphism to $[1]$ from any object of $\mathpzc{Cat}$, it follows that $(i_{n}^\mathrm{Cell}/g)^\act\cong\mathpzc{Cat}^{\mathrm{Cell},\mathrm{Surj}}_{/[n_g]}$, where $\ccat^{\cell,\sur}_{/[n_g]}$ denotes the subcategory of $\mathpzc{Cat}^{\mathrm{Cell}}_{/[n_g]}$ on surjective cellular morphisms $[m]\rightarrow[n_g]$ and active morphisms between them, and that a similar isomorphism holds for $\sigma_i\circ g$. Observe that the isomorphism of \cref{cor:iso} restricts to the isomorphism
\[\mathpzc{Cat}^{\mathrm{Cell},\mathrm{Surj}}_{/[n_g]}\cong\underset{e\in\mElo{Cat}{[m]}}{\lim}\mathpzc{Cat}^{\mathrm{Cell},\mathrm{Surj}}_{/e}.\]
Observe that the restriction of $\widehat{m}$ to $\mathpzc{Cat}^{\mathrm{Cell},\mathrm{Surj}}_{/e}$ is isomorphic to identity if $e$ does not intersect with $\{i,i+1\}$ inside $n_g$. It follows that it suffices to prove that $\widehat{m}$ is cofinal when restricted to only those components that intersect $\{i,i+1\}$.\par
Now we have three cases to consider: $i=0$, $i=n-1$ and $0<i<n-1$. The first and second case are entirely similar, so we only consider one of them. Assume $i=0$, then we are reduced to the following situation: $n=2$ and $g$ is the unique active morphism from $[1]$ to $[2]$. Observe that in this case $\mathrm{Cat}^{\mathrm{Cell},\mathrm{Surj}}_{/[2]}\cong \Delta^\op_{-\infty}\times\Delta^\op\times \Delta^\op_\infty$ and $\mathrm{Cat}^{\mathrm{Cell},\mathrm{Surj}}_{/[1]}\cong \Delta^\op_{-\infty}\times \Delta^\op_\infty$. The functor $\widehat{m}$ then sends $(a,n,b)$ to $(a+n+1,b)$. Observe that we have the following diagram
\[
\begin{tikzcd}[row sep=huge, column sep=huge]
\Delta^\op_{-\infty}\times\Delta^\op\times \Delta^\op_\infty\arrow[rr,"{(a,n,b)}\rightarrow{(a+n+1,b)}"]&{}&\Delta^\op_{-\infty}\times \Delta^\op_\infty\\
\Delta^\op\arrow[u, "n\rightarrow{(0,n,0)}"]\arrow[rr,"{n\rightarrow n+1}"]&{}&\Delta^\op_{-\infty}\arrow[u, "a\rightarrow{(a,0)}" swap]
\end{tikzcd}.
\]
Here both vertical morphisms are cofinal (since both $\Delta^\op_\infty$ and $\Delta^\op_{-\infty}$ have a final object), so in order to prove that the top horizontal morphism is cofinal it suffices to prove that the bottom horizontal morphism is cofinal. This follows from \cite[Lemma 6.1.3.16.]{lurie2009higher}.\par
The proof in the third case is similar. Assume $0<i<n$, then we are reduced to the following situation: $n=3$ and $g$ is the unique active morphism from $[1]$ to $[3]$. Observe that in this case $\mathrm{Cat}^{\mathrm{Cell},\mathrm{Surj}}_{/[3]}\cong \Delta^\op_{-\infty}\times\Delta^\op\times\Delta^\op\times \Delta^\op_\infty$ and $\mathrm{Cat}^{\mathrm{Cell},\mathrm{Surj}}_{/[2]}\cong \Delta^\op_{-\infty}\times\Delta^\op\times \Delta^\op_\infty$. The functor $\widehat{m}$ then sends $(a,n,m,b)$ to $(a,n+m+1,b)$. Observe that we have the following diagram
\[
\begin{tikzcd}[row sep=huge, column sep=huge]
\Delta^\op_{-\infty}\times\Delta^\op\times\Delta^\op\times \Delta^\op_\infty\arrow[rr,"{(a,n,m,b)}\rightarrow{(a,m+n+1,b)}"]&{}&\Delta^\op_{-\infty}\times\Delta^\op\times \Delta^\op_\infty\\
\Delta^\op\times\Delta^\op\arrow[u, "{(n,m)}\rightarrow{(0,n,m,0)}"]\arrow[rr,"{{(n,m)}\rightarrow n+m+1}"]&{}&\Delta^\op\arrow[u, "n\rightarrow{(0,n,0)}" swap]
\end{tikzcd}.
\]
Here both vertical morphisms are again cofinal, so in order to prove that the top horizontal morphism is cofinal it suffices to prove that the bottom horizontal morphism is cofinal. Denote by $F:\Delta\times\Delta\rightarrow\Delta$ the morphism sending $([k],[l])$ to $[k+l+1]$, we need to prove it is coinitial, i.e. we need to prove that for any $[n]\in\Delta$ the overcategory $\Delta_{F/[n]}$ is contractible. An object of $\Delta_{F/[n]}$ is given by the data of natural numbers $k$ and $l$, a morphism $\alpha:[1]\rightarrow[n]$ and a pair of morphisms $f:[k]\rightarrow [0,\alpha(0)]$ and $g:[l]\rightarrow [\alpha(1),n]$. Denote by $i^\id:\Delta_{F/[n]}^\id\hookrightarrow\Delta_{F/[n]}$ the inclusion of the full subcategory on those objects for which both $f$ and $g$ are identities. It admits an obvious left adjoint $L:\Delta_{F/[n]}\rightarrow\Delta_{F/[n]}^\id$ sending $(k,l,\alpha,f,g)$ to $(\alpha(0),(n-\alpha(1)),\alpha,\id_{[\alpha(0)]},\id_{[n-\alpha(1)]})$. In particular, there is a natural isomorphism \[N(\Delta_{F/[n]})\xrightarrow{\sim}N(\Delta_{F/[n]}^\id)\]
on geometric realizations, so it is enough to prove that $\Delta_{F/[n]}^\id$ is contractible. The objects of $\Delta_{F/[n]}^\id$ are easily seen to be uniquely determined by $\alpha:[1]\rightarrow[n]$, and there is a unique morphism from $\alpha$ to $\beta$ if and only if $\alpha(0)\leq\beta(0)\leq\beta(1)\leq\alpha(1)$. In particular, it admits an initial object given by the unique active morphism $a:[1]\twoheadrightarrow[n]$, so it follows that $\Delta_{F/[n]}^\id$ is contractible.
\end{proof}
\begin{notation}
We will denote a morphism from $\cC$ to $\cD$ in $\mathrm{Corr}$ by $\cC\nrightarrow \cD$ and call it a \textit{correspondence} from $\cC$ to $\cD$. We will also frequently denote the space $M(\id_{[1]})$ by the same letter.
\end{notation}
\begin{remark}\label{rem:bicat}
Observe that both the space of objects and the space of morphisms of $\mathrm{Corr}$ are themselves categories, and moreover the unit and multiplication maps for $\mathrm{Corr}$ are functors between those categories. This endows $\mathrm{Corr}$ with a structure of a double Segal space. If we only consider those morphisms whose restriction to the category of objects are identity we get a structure of a twofold Segal space, and we will typically view $\mathrm{Corr}$ as endowed with this structure.
\end{remark}
\begin{remark}\label{rem:corr_corr}
Given a morphism $h:[k]\rightarrow[s]$, we can consider the induced functor from $\ccat^\cell_{/[s]}$ to $\ccat^\cell_{/[k]}$ as given by correspondence
\[
h^*_\cell\bydef\begin{tikzcd}[row sep=huge, column sep=huge]
\ccat^\cell_{/[s]}\arrow[dr, "i^\cell_{s,!}"]&{}&{}&\ccat^\cell_{/[k]}\\
{}&\ccat_{/[s]}\arrow[r, "h^*"]&\ccat_{/[k]}\arrow[ur, "i_k^{\cell,*}"]
\end{tikzcd}.
\]
Observe that when viewed as a functor $(\ccat^\cell_{/[k]}\times(\ccat^\cell_{/[s]})^\op)\rightarrow\cS$ it sends a pair of cellular morphisms $[m]\xrightarrow{c_k}[k]$ and $[n]\xrightarrow{c_s}[s]$ to the space of vertical morphisms $f$ making the following diagram commute
\[
\begin{tikzcd}[row sep=huge, column sep=huge]
{[m]}\arrow[d, dotted, "f"]\arrow[r, "c_k"]&{[k]}\arrow[d, "h"]\\
{[n]}\arrow[r, "c_s"]&{[s]}
\end{tikzcd}.
\]
\end{remark}
\begin{notation}\label{not:inrt}
Given $[m]\in\Delta$, denote by $\ccat^\inrt_{/[m]}$ the algebraic pattern whose underlying category is $(\Delta^\inrt_{/[m]})^\op$, whose inert subcategory contains all morphism and whose active subcategory is trivial. Declare an object $[k]\rightarrowtail[m]$ to be elementary if $k\in\{0,1\}$. Denote by $i_m^\inrt:\ccat^\inrt_{/[m]}\hookrightarrow\ccat^\cell_{/[m]}$ the natural inclusion.
\end{notation}
\begin{prop}
$i_m^\inrt$ is an extendable morphism of algebraic patterns.
\end{prop}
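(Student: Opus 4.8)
The plan is to verify directly the conditions defining an extendable morphism of algebraic patterns from \cite{chu2019homotopy}, taking advantage of the fact that the active morphisms of $\ccat^\inrt_{/[m]}$ are all identities and that the comma categories that arise are corepresentable. First I would check that $i^\inrt_m$ is a morphism of patterns at all. Every morphism of $\ccat^\inrt_{/[m]}$ is by convention inert, and it is sent to an inert morphism of $\ccat^\cell_{/[m]}$: a morphism between two inert objects $S,S'\rightarrowtail[m]$ is the inclusion of a subinterval $S'\subseteq S$, which is cellular and inert in $\ccat^\cell_{/[m]}$. Since the only active morphisms of the source are identities they are preserved trivially, and an elementary object $[k]\rightarrowtail[m]$ with $k\in\{0,1\}$ is again elementary in $\ccat^\cell_{/[m]}$.

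Next I would establish the relative Segal condition by identifying elementaries under an object on both sides with the same spine. For $o=(S\rightarrowtail[m])$ the category $(\ccat^\inrt_{/[m]})^\el_{o/}$ is the spine of the interval $S$, i.e. the poset of its vertices $\{s\}$ and edges $\{s,s+1\}$. The identical computation in the target shows that $(\ccat^\cell_{/[m]})^\el_{i^\inrt_m(o)/}$ is this same poset, because an inert morphism with elementary source out of $S\rightarrowtail[m]$ is exactly the restriction of $S$ to a vertex or an edge. Hence $i^\inrt_m$ induces an equivalence on elementaries under every object, which is the relevant piece of the relative Segal condition.

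The heart of the argument, and the input that makes the remaining extendability conditions formal, is the computation of the left Kan extension $i^\inrt_{m,!}$. For a cellular morphism $g:[n]\to[m]$ I would show that the comma category $i^\inrt_m/g$ is the poset of subintervals $S\subseteq[m]$ with $\ima(g)\subseteq S$, ordered by reverse inclusion. Because $g$ is monotone this poset has a terminal object, namely the smallest such interval $[g(0),g(n)]$; therefore for any Segal $\ccat^\inrt_{/[m]}$-space $\cF$ the colimit computing $i^\inrt_{m,!}\cF(g)$ is corepresentable and equal to $\cF([g(0),g(n)])$. Since all of these colimits are corepresentable, the distributivity of active colimits over elementary limits required for extendability holds automatically.

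It then remains to verify that $i^\inrt_{m,!}\cF$ is a Segal $\ccat^\cell_{/[m]}$-space, which I would do by comparing $\cF([g(0),g(n)])$ with the spine limit of $i^\inrt_{m,!}\cF$ along the edges of $[n]$. Cellularity forces $g(i+1)\in\{g(i),g(i)+1\}$, so each nondegenerate edge of $[n]$ is sent to a unit edge of $[g(0),g(n)]$ and, by monotonicity, every unit edge of $[g(0),g(n)]$ occurs exactly once; the degenerate edges contribute fiber products of $\cF(\{g(i)\})$ over itself and drop out. Thus the spine product telescopes to the spine of $[g(0),g(n)]$, which equals $\cF([g(0),g(n)])$ by the Segal condition for $\cF$. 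The main obstacle is keeping careful track of the opposite-category conventions when describing $i^\inrt_m/g$ and making the telescoping precise; once the terminal objects are identified the extendability conditions reduce to this bookkeeping.
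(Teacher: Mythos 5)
Your proposal is correct and takes essentially the same approach as the paper: the paper likewise inherits the relative Segal condition from $\ccat$ and observes that the relevant comma categories are trivial, phrasing your terminal object $[g(0),g(n)]$ as the statement that $(\ccat^\inrt_{/[m]})^\act_{/f}$ is a singleton containing the inert part of the active/inert factorization of $f$, from which the Segal and distributivity conditions for extendability are immediate. Your closing telescoping verification that $i^\inrt_{m,!}\cF$ is again a Segal $\ccat^\cell_{/[m]}$-space is a consequence of extendability that the paper obtains from the general formalism rather than checking by hand, so it is extra (though correct and harmless) work.
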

\begin{proof}
Observe that $i_m^\inrt$ has unique lifting of inert morphisms. The relative Segal condition follows from the relative Segal condition in $\ccat$. Observe that for any $f\in\ccat^\cell_{/[m]}$ the category $(\ccat^\inrt_{/[m]})^\act_{/f}$ is a singleton $\{f_i\}$ containing the inert part of the active/inert decomposition of $f$. In particular, it is a Segal space.
\end{proof}
\begin{prop}\label{prop:span}
For every $f:[n]\rightarrow[m]$ the induced morphism $f^*:\seg_{\ccat_{/[m]}}(\cS)\rightarrow\seg_{\ccat_{/[n]}}(\cS)$ preserves the image of $i_m^\cell\circ i_m^\inrt$.
\end{prop}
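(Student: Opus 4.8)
The plan is to give an intrinsic, fiberwise characterization of the image of $i^\cell_m\circ i^\inrt_m$ and then deduce the statement formally. Concretely, I claim that an object $\cF\in\seg_{\ccat_{/[m]}}(\cS)$ lies in the image of $(i^\cell_m\circ i^\inrt_m)_!$ if and only if $\cF$ lies in $\cat^\cell_{/[m]}$ (the image of $i^\cell_{m,!}$, which by \cref{prop:corr} is where the whole discussion takes place) and every vertex fiber $\cF_i\bydef\{i\}^*\cF$ is a constant Segal space. This is precisely the assertion that $\cF$ is an $[m]$-string of spans of spaces, i.e. an object of $\spanc([m])$. Granting this characterization the proposition is immediate: for $f:[n]\rightarrow[m]$ and $\cF$ in the image, \cref{prop:corr} already gives $f^*\cF\in\cat^\cell_{/[n]}$, while the fibers are computed by the purely formal identity $(f^*\cF)_j=\{j\}^*f^*\cF\cong(f\circ\{j\})^*\cF\cong\cF_{f(j)}$, so every vertex fiber of $f^*\cF$ is again a constant Segal space and $f^*\cF$ lies in the image at level $n$.

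It remains to prove the characterization, for which I would compare the two sides on the elementary objects of $\ccat^\cell_{/[m]}$. These are the vertices $i$, the inert edges $\{i,i+1\}$, and the degenerate edges $d_i:[1]\rightarrow[m]$, the last being the only elementaries not in the image of the fully faithful inclusion $i^\inrt_m$. For the forward direction, recall from the extendability of $i^\inrt_m$ established above that $(i^\inrt_m)_!$ is computed by pointwise left Kan extension and preserves Segal objects; since $i^\inrt_m$ is fully faithful and sends vertices and inert edges to vertices and inert edges, the value of $(i^\inrt_m)_!X$ on such an elementary equals that of $X$, and is in particular a space on vertices. For the backward direction, given $\cF\in\cat^\cell_{/[m]}$ with constant fibers, set $X\bydef(i^\inrt_m)^*\cF$ and check that the counit $(i^\inrt_m)_!X\rightarrow\cF$ is an equivalence by testing on elementaries: it is an equivalence on vertices and inert edges by full faithfulness, so the only remaining point is the degenerate edges.

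The degenerate edges are the crux. On the one hand, the comma category computing the left Kan extension $((i^\inrt_m)_!X)(d_i)$ is equivalent to the poset of inert subintervals of $[m]$ containing the vertex $i$; since the one-point subinterval $\{i\}$ receives a unique map from every such subinterval it is a terminal object, so the colimit collapses and $((i^\inrt_m)_!X)(d_i)\cong X(\{i\})\cong\cF_i$. On the other hand, $\cF(d_i)$ is the space of morphisms of the fiber Segal space $\cF_i$, and since $\cF_i$ is constant this is again $\cF_i$; conversely the equality $\cF(d_i)\cong\cF_i$ forces the fibers to be constant, which is why the characterization is sharp. Thus the counit is an equivalence on all elementaries, hence an equivalence of Segal spaces, completing the characterization. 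The main obstacle is exactly this degenerate-edge identification: one must know that $(i^\inrt_m)_!$ is the honest pointwise left Kan extension (which is what the extendability of $i^\inrt_m$ buys us) and then carry out the cofinality argument pinning down the terminal object $\{i\}$ of the relevant comma category.
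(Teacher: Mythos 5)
Your proof is correct, but it takes a genuinely different route from the paper's. The paper characterizes the essential image of $i^\cell_m\circ i^\inrt_m$ globally: $\cF$ lies in the image if and only if $\cF(g)\cong\cF(g_i)$ for every $g$, where $g\cong g_i\circ g_a$ is the active/inert factorization. The proposition then follows from a short factorization chase: writing $f\cong f_i\circ f_a$, $h\cong h_i\circ h_a$, and taking the active/inert factorization $f_a\circ h_i\cong\widetilde{h}\circ\widetilde{f}$, one sees that $f\circ h$ and $f\circ h_i$ have the same inert part $f_i\circ\widetilde{h}$, whence $f^*\cF(h)\cong f^*\cF(h_i)$; notably this argument never invokes \cref{prop:corr}. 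You instead characterize the image fiberwise, as cellular Segal objects whose vertex fibers are constant, after which the proposition is essentially formal: preservation of the cellular image is exactly \cref{prop:corr}, and $(f^*\cF)_j\cong\cF_{f(j)}$ is trivial. Your justification of the characterization is sound: extendability of $i^\inrt_m$ makes $(i^\inrt_m)_!$ the pointwise left Kan extension preserving Segal objects, full faithfulness handles vertices and inert edges, and at a degenerate edge $d_i$ the comma category is (the opposite of) the poset of subintervals of $[m]$ containing $i$, so $\{i\}$ is terminal and the counit at $d_i$ becomes the degeneracy $\cF(\{i\})\rightarrow\cF(d_i)$, which is an equivalence precisely when the fiber is constant. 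The trade-offs: the paper's criterion is asserted as an observation (a full justification would itself require cofinality work of the kind done in \cref{prop:corr}), but its deduction is self-contained; your route front-loads the honest Kan-extension analysis but then reuses \cref{prop:corr} instead of repeating a factorization argument, and your fiberwise criterion is exactly the one the paper deploys later in \cref{prop:span_subcat} to identify $\spanc$ inside $\corr$, so your argument in effect unifies \cref{prop:span} with that result.
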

\begin{proof}
Observe that $\cF\in\ccat_{/[m]}$ is in the essential image of $i_m^\cell\circ i_m^\inrt$ if and only if $\cF(g)\cong\cF(g_i)$ where $[l]\overset{g_a}{\twoheadrightarrow}[\overline{l}]\overset{g_i}{\rightarrowtail}[m]$ is the active/inert factorization of $g$. By definition $f^*\cF(h)\cong \cF(f\circ h)$, so let $f\cong f_i\circ f_a$ and $h\cong h_i\circ h_a$ be the active/inert factorizations in $\Delta$ and consider the following commutative diagram
\[
\begin{tikzcd}[row sep=huge, column sep=huge]
{[k]}\arrow[d, two heads, "h_a"]\\
{[\overline{k}]}\arrow[d, tail, "h_i"]\arrow[r, two heads, "\widetilde{f}"]&{[\widetilde{n}]}\arrow[d, tail, "\widetilde{h}"]\\
{[n]}\arrow[r, two heads, "f_a"]&{[\overline{n}]}\arrow[r, tail, "f_i"]&{[m]}
\end{tikzcd}.
\]
Then we see that $f^*\cF(h)\cong \cF(f_i\circ \widetilde{h}\circ \widetilde{f}\circ h_a)\cong \cF(f_i\circ \widetilde{h})$ since $\cF$ is in the image of of $i_m^\cell\circ i_m^\inrt$. Similarly, $f^*\cF(h_i)\cong \cF(f_i\circ \widetilde{h}\circ \widetilde{f})\cong \cF(f_i\circ \widetilde{h})$, so in particular $f^*\cF(h)\cong f^*\cF(h_i)$, so $f^*\cF$ also lies in the image of $i_n^\cell\circ i_n^\inrt$.
\end{proof}
\begin{cor}\label{cor:span}
There is a flagged bicategory $\spanc$ whose space of objects is given by $(\cS)^\sim$ and whose category of morphisms is given by $\cP((\ccat^\inrt_{[1]})^\el)\cong \seg_{\ccat^\inrt_{/[1]}}(\cS)$.
\end{cor}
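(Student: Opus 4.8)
The plan is to exhibit $\spanc$ as a sub-double-Segal-space of $\corr$, cut out levelwise by the essential image of the inclusions $i_m^\inrt$, and then to read off its objects and morphisms. First I would set $\spanc([m])$ to be the full subcategory of $\corr([m])\bydef\mathrm{Cat}^\cell_{/[m]}$ spanned by the essential image of the functor induced by the composite pattern morphism $i_m^\cell\circ i_m^\inrt$. By the analysis in the proof of \cref{prop:span}, an object $\cF$ lies in this image exactly when $\cF(g)\cong\cF(g_i)$ for every $g$, where $g\cong g_i\circ g_a$ is the active/inert factorization; and since $i_m^\inrt$ is an extendable morphism of algebraic patterns, this full subcategory is canonically identified with $\seg_{\ccat^\inrt_{/[m]}}(\cS)$.

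Next comes closure under the structure maps. For every $f\colon[n]\to[m]$, \cref{prop:span} asserts precisely that the pullback $f^*$ preserves the image of $i_m^\cell\circ i_m^\inrt$, so the structure maps of the double Segal space $\corr$ restrict to $\spanc$. As each $\spanc([m])$ is a full subcategory it is automatically closed under the internal (Segal-direction) simplicial operators, so $\spanc$ is a genuine sub-double-Segal-space of $\corr$; restricting to those morphisms that are the identity on objects, as in \cref{rem:bicat}, equips it with the structure of a flagged bicategory, provided the Segal condition holds.

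That Segal condition is the main point, and where I expect the real work to lie. By \cref{cor:iso}, $\corr$ satisfies the spine decomposition $\corr([m])\cong\corr([1])\times_{\corr([0])}\cdots\times_{\corr([0])}\corr([1])$, and since each $\spanc([k])\hookrightarrow\corr([k])$ is fully faithful with the transition maps restricted from $\corr$, it suffices to check that $\cF\in\corr([m])$ lies in $\spanc([m])$ if and only if each of its spine restrictions lies in $\spanc([1])$. Concretely, one must show that the image condition $\cF(g)\cong\cF(g_i)$ is detected on elementary subintervals. This should follow because the active/inert factorization of a cellular $g$ restricts compatibly to each elementary piece (exactly the phenomenon already exploited in \cref{prop:cell_def} and \cref{prop:span}), so that under the identification of \cref{cor:iso} the image condition on $[m]$ matches the conjunction of the image conditions on the edges. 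This yields the analogue of \cref{cor:iso} for $\ccat^\inrt$ and hence the Segal condition for $\spanc$.

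Finally I would compute the objects and morphisms directly from the patterns. The pattern $\ccat^\inrt_{/[0]}$ is a point, since the only inert morphism $[k]\rightarrowtail[0]$ has $k=0$, whence $\spanc([0])\cong\seg_{\ccat^\inrt_{/[0]}}(\cS)\cong\cS$ and its space of objects is $\cS^\sim$. For $[m]=[1]$ every object $[k]\rightarrowtail[1]$ has $k\in\{0,1\}$ and is therefore elementary, so the Segal condition on $\ccat^\inrt_{/[1]}$ is vacuous and $\spanc([1])\cong\seg_{\ccat^\inrt_{/[1]}}(\cS)\cong\cP((\ccat^\inrt_{/[1]})^\el)$, as claimed.
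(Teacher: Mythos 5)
Your proposal is correct and takes essentially the same route as the paper: the paper's proof is literally the instruction to repeat the construction of $\corr$ from \cref{prop:corr} levelwise, with \cref{prop:span} supplying closure under the structure maps and \cref{rem:bicat} supplying the flagged-bicategory structure, which is exactly the skeleton you follow. The only difference is that you make explicit the steps the paper leaves implicit — the identification of the essential image of $i_m^\cell\circ i_m^\inrt$ with $\seg_{\ccat^\inrt_{/[m]}}(\cS)$, the inert analogue of \cref{cor:iso} needed for the Segal condition, and the computation of the $[0]$- and $[1]$-levels — all of which are carried out correctly.
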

\begin{proof}
Using \cref{prop:span} we construct $\spanc$ just as $\corr$ in \cref{prop:corr} and endow it with the structure of a flagged bicategory as in \cref{rem:bicat}.
\end{proof}
\begin{prop}\label{prop:span_subcat}
$\spanc$ can be identified with a full subcategory of $\corr$ on $\infty$-groupoids.
\end{prop}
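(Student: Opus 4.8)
The plan is to realize $\spanc$ as the essential image of the inert data inside $\corr$ and to identify that image levelwise. Both $\corr$ and $\spanc$ are $\ccat$-Segal spaces, so by the (outer) Segal condition each $\corr([m])$ and $\spanc([m])$ decomposes as an iterated fibre product of the $[1]$-levels over the $[0]$-levels, and passing to a full subcategory on a subspace of objects commutes with such fibre products. It therefore suffices to produce a comparison morphism of twofold Segal spaces $\spanc\to\corr$ and to check that it is a fully faithful embedding on the $[0]$- and $[1]$-levels, with the expected essential images. The comparison comes from the pattern inclusions $i_m^\inrt:\ccat^\inrt_{/[m]}\hookrightarrow\ccat^\cell_{/[m]}$ of \cref{not:inrt}: the restriction $(i_m^\inrt)^*:\corr([m])\to\spanc([m])$ admits a left adjoint $i_{m,!}^\inrt$, and since $i_m^\inrt$ is a fully faithful inclusion of patterns the unit $(i_m^\inrt)^*\,i_{m,!}^\inrt\cong\id$ is an equivalence, so $i_{m,!}^\inrt$ is fully faithful. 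That these assemble into a morphism of $\ccat$-Segal spaces is exactly \cref{prop:span}, which guarantees that the structure maps $f^*$ of $\corr$ preserve the essential image of $i_m^\cell\circ i_m^\inrt$ and restrict to the structure maps of $\spanc$.

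On the $[0]$-level the pattern $\ccat^\inrt_{/[0]}$ has a single (elementary) object, so $\spanc([0])\cong\cS$ while $\corr([0])\cong\cat$, and unwinding $i_{0,!}^\inrt$ shows that a space $X$ is sent to the constant simplicial space on $X$, i.e. to the $\infty$-groupoid $X$ regarded as a Segal space. Equivalently, applying \cref{prop:span} with $m=0$, a Segal space $\cF$ lies in the essential image exactly when $\cF_n\cong\cF_0$ for all $n$, that is, when $\cF$ is constant. Thus on objects the comparison is the fully faithful inclusion $\cS^\sim\hookrightarrow\cat^\sim$ whose image is the $\infty$-groupoids.

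The content of the proposition is at the $[1]$-level, and this is the step I expect to be the main obstacle. Write $X,Y$ for the source and target of a correspondence $\cZ\in\corr([1])$, recovered as the fibres $X_n\cong\cZ([n]\xrightarrow{0}[1])$ and $Y_n\cong\cZ([n]\xrightarrow{1}[1])$, and put $M\cong\cZ(\id_{[1]})$ for the heteromorphism space. By \cref{prop:span}, $\cZ$ lies in the essential image of $i_{1,!}^\inrt$ iff $\cZ(g)\cong\cZ(g_i)$ for every cellular $g:[n]\to[1]$, and I would analyse this according to the image of $g$. When $\ima(g)=\{0\}$ (respectively $\{1\}$) the inert part is the endpoint inclusion and the condition reads $X_n\cong X_0$ (respectively $Y_n\cong Y_0$), forcing $X$ and $Y$ to be $\infty$-groupoids; conversely, for $g$ surjecting onto $[1]$ the condition expresses iterated pullbacks such as $X_1\times_{X_0}M\times_{Y_0}Y_1\cong M$, which hold automatically once $X_1\cong X_0$ and $Y_1\cong Y_0$. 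Hence the essential image of $\spanc([1])$ is precisely the full subcategory of $\corr([1])$ on correspondences whose source and target are $\infty$-groupoids, i.e. the fibre of $\corr([1])\to\corr([0])\times\corr([0])$ over $\cS^\sim\times\cS^\sim$. Together with the object-level computation and the full faithfulness of the $i_{m,!}^\inrt$, this exhibits $\spanc\to\corr$ as a fully faithful morphism of twofold Segal spaces identifying $\spanc$ with the full subcategory of $\corr$ on the $\infty$-groupoids.
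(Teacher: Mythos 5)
Your proposal is correct and follows essentially the same route as the paper: both arguments reduce to the $[1]$-level, use the essential-image criterion $\cF(g)\cong\cF(g_i)$ from \cref{prop:span}, observe that constant maps $[n]\to\{i\}$ force the source and target to be $\infty$-groupoids, and then dispose of the surjective cellular maps via the Segal condition $\cF(g)\cong\cF(g_0)\times_{\cF(0)}\cF(\id_{[1]})\times_{\cF(1)}\cF(g_1)\cong\cF(\id_{[1]})$. Your extra scaffolding (the comparison functor $i^\inrt_{m,!}$ and its full faithfulness, which rests on the extendability of $i_m^\inrt$ established just after \cref{not:inrt}, and the levelwise reduction) only makes explicit what the paper leaves implicit.
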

\begin{proof}
Indeed, by definition every object of $\spanc$ is a groupoid. Conversely, if $\cF\in\seg_{\ccat^\cell_{/[1]}}\cong \corr_1$ is such that $\cF(f)\cong\cF(i)$, where $i\in\{0,1\}$ and $f:[n]\rightarrow[1]$ is a morphism sending every object of $[n]$ to $i$, then we see that for arbitrary $g:[l]\rightarrow[m]$ we either have $\cF(g)\cong \cF(i)$ or $\cF(g)\cong\cF(g_0)\times_{\cF(0)}\cF(\id_{[1]})\times_{\cF(1)}\cF(g_1)\cong \cF(\id_{[1]})$ where $g_i$ is a fiber of $g$ over $i$ (we merely applied the Segal condition). In particular, we have that $\cF(g)\cong \cF(g_i)$, so $\cF$ belongs to $\spanc_1$.
\end{proof}
\begin{remark}
It is possible to show that our flagged bicategory $\spanc$ is isomorphic to the faithful subcategory of $\spanc_2(\cS)$ in the notation of \cite{haugseng2018iterated} having the same objects and morphisms, but only the 2-morphisms of the form 
\[
\begin{tikzcd}[row sep=huge, column sep=huge]
{}&S\arrow[dl, "f" swap]\arrow[dr, "g"]\arrow[d, equal]\\
X&S\arrow[d, "s"]&Y\\
{}&T\arrow[ul, "f'" swap]\arrow[ur, "g'"]
\end{tikzcd}
\]
\end{remark}
\section{Basic properties of Corr}\label{sect:three}
Our goal in this section is to prove certain basic properties of $\corr$, most of which would be used in later chapters. We begin by defining correspondences $f^*$ and $f_!$ for every morphism $f$ in $\cat$ in \cref{not_cat} and proving that this defines functors $\cat\rightarrow\corr$ in \cref{prop:fun_comp} and \cref{prop:corr_bicat}. We also prove that they are adjoint to each other in \cref{prop:adj}.\par
Our next important result is \cref{cor:corr_comp} that uses the construction of completion from \cite{ayala2018flagged} to provide for complete Segal spaces $\cC$ and $\cD$ an isomorphism between our category of correspondences $\mor_\corr(\cC,\cD)$ and $\cP(\cC\times\cD^\op)$, thus proving that it extends the classical definition. Finally, in \cref{prop:nat} we give a rather technical description of the category $\mor_\corr(\cD,*)$ for $\cD\in\cat_{/[n]}$ that will be used in later sections.
\begin{construction}
Given a pair of Segal spaces $\cC$ and $\cD$, consider the functor $\Delta^\op\times\Delta^\op\rightarrow\cS$ sending $([n],[m])$ to $\cC_n\times\cD_m$. Denote by $\fib(\cC,\cD)$ the corresponding right fibration. Denote by $j:\fib(\cC,\cD)^\infty\hookrightarrow\fib(\cC,\cD)$ the inclusion of the subcategory containing only morphisms lying over $\Delta^\op_\infty\times\Delta^\op_{-\infty}$ and call those morphisms \textit{inert}. Finally, declare objects lying over $([0],[0])$ \textit{elementary} and denote by $i:\fib(\cC,\cD)^\el\hookrightarrow\fib(\cC,\cD)^\infty$ the inclusion of the full subcategory on inert morphisms. Define $\widetilde{\corr}(\cC,\cD)$ to be the category of presheaves $\cF$ on $\fib(\cC,\cD)^\op$ such that $j^*\cF\cong i_*i^*j^*\cF$.
\end{construction}
\begin{prop}\label{prop:corr_mor}
$\widetilde{\corr}(\cC,\cD)\cong\corr(\cC,\cD)$ and the forgetful functor $\widetilde{\corr}(\cC,\cD)\rightarrow\cP(\fib(\cC,\cD)^{\el,\op})$ is monadic.
\end{prop}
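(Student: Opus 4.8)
The plan is to produce the two halves of the statement separately, exploiting the explicit description of $\corr(\cC,\cD)$ coming from \cref{prop:corr}. Recall that $\corr(\cC,\cD)$ is the space of morphisms $\cC\nrightarrow\cD$ in $\corr$, which by construction is a fiber of $\mathrm{Corr}([1])\cong\cat^\cell_{/[1]}$ over the object $(\cC,\cD)\in(\cat^\sim)^{\times 2}$ of sources and targets. So the first task is to unwind both sides into presheaf categories on comparable indexing categories and exhibit a natural equivalence $\widetilde\corr(\cC,\cD)\cong\corr(\cC,\cD)$.

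First I would identify the indexing category $\fib(\cC,\cD)$ with the relevant localization of $\cat^\cell_{/[1]}$ over the fixed pair $(\cC,\cD)$. The key observation is that a cellular morphism $[n]\to[1]$ decomposes $[n]$ into its fibers over $0$ and over $1$ together with the single ``crossing'' edge, so that specifying a Segal $\cat^\cell_{/[1]}$-space with prescribed restrictions $\cC$ over $0$ and $\cD$ over $1$ is the same datum as a presheaf on the right fibration classified by $([n],[m])\mapsto\cC_n\times\cD_m$. Concretely, the subcategory $\Delta^\op_\infty\times\Delta^\op_{-\infty}$ built into the definition of the inert morphisms of $\fib(\cC,\cD)$ is precisely what matches the surjective cellular picture used in the proof of \cref{prop:corr} (where $\cat^{\cell,\sur}_{/[n_g]}$ was analyzed via copies of $\Delta^\op_\infty$ and $\Delta^\op_{-\infty}$). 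The Segal condition $j^*\cF\cong i_*i^*j^*\cF$ cutting out $\widetilde\corr(\cC,\cD)$ is then a direct transcription of the condition defining $\seg_{\ccat^\cell_{/[1]}}(\cS)$ after fixing the endpoints, so the equivalence of categories follows once the indexing categories and their elementary/inert structures are matched. I expect this matching of patterns to be the main obstacle: one must check that the elementary objects (those over $([0],[0])$) and the inert morphisms on both sides correspond under the identification, and that the right Kan extension condition is literally the relative Segal condition, which requires carefully tracking the cofinality arguments already established.

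For the monadicity claim, I would apply the Barr--Beck--Lurie monadicity theorem to the forgetful functor $U:\widetilde\corr(\cC,\cD)\to\cP(\fib(\cC,\cD)^{\el,\op})$. This $U$ admits a right adjoint given by right Kan extension along $i$, so it suffices to verify that $U$ is conservative and preserves $U$-split geometric realizations (equivalently, all the relevant limits, since $\cP(\fib(\cC,\cD)^{\el,\op})$ is a presheaf topos). Conservativity is immediate: $\widetilde\corr(\cC,\cD)$ is the full subcategory of presheaves whose inert restriction is the right Kan extension of the elementary restriction, so a morphism inducing an equivalence on elementary objects already induces an equivalence on inert objects, hence on all of $\fib(\cC,\cD)$. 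The preservation of the requisite colimits follows because the defining condition $j^*\cF\cong i_*i^*j^*\cF$ is expressed through limits over the elementary slices $\fib(\cC,\cD)^\el_{x/}$ which are finite (they compute finite fiber products along the Segal condition, exactly as in the chains $\cF_{j,j}\times_{\cF_j}\cdots\times_{\cF_j}\cF_{j,j}$ of \cref{prop:cell_def}), and finite limits commute with $U$-split realizations in an $\infty$-topos. The hardest point here is confirming that the subcategory $\widetilde\corr(\cC,\cD)$ is closed under these colimits, i.e.\ that the right Kan extension condition is preserved; this reduces to the distributivity of $\cat^\cell_{/[1]}$-colimits over elementary limits, which is the extendability established in \cref{prop:cell_def} via the $\times$-admissibility of the relevant slice of $\cS$ and \cite[Lemma 7.15]{chu2019homotopy}.
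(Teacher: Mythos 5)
Your handling of the first claim follows essentially the paper's own route: you unwind a Segal $\ccat^\cell_{/[1]}$-space with prescribed fibers $\cC$ and $\cD$ into a presheaf on $\fib(\cC,\cD)$ via the decomposition of a cellular morphism $[n]\rightarrow[1]$ into its two fibers and the crossing edge, which is the same identification the paper makes (phrased there as: a correspondence is a functor $\Delta^\op\times\Delta^\op\rightarrow\cS$ with values $\cC_n\times_{\cC_0}M\times_{\cD_0}\cD_m$, matched against $\widetilde{\corr}(\cC,\cD)$ using $\mor_\cat(X,\cS)\cong\cS_{/X}$ and the finality of $([0],[0])$ in $\Delta^\op_\infty\times\Delta^\op_{-\infty}$). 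The genuine gap is in the monadicity argument. You write that $U$ ``admits a right adjoint given by right Kan extension along $i$, so it suffices to verify that $U$ is conservative and preserves $U$-split geometric realizations.'' The Barr--Beck--Lurie criterion requires a \emph{left} adjoint to $U$; a right adjoint is irrelevant to it. This is not a cosmetic slip, because producing the left adjoint is precisely the substantive content of the paper's proof: the paper shows that $j_!i_*$ (right Kan extension from elementary to inert objects, followed by left Kan extension to all of $\fib(\cC,\cD)$) actually lands in the full subcategory $\widetilde{\corr}(\cC,\cD)$, and this requires the cofinality analysis of $\fib(\cC,\cD)^{\infty}_{/(c_m,d_n)}$ culminating in the explicit formula sending $M\in\cS_{/\cC_0\times\cD_0}$ to $([n],[m])\mapsto\cC_n\times_{\cC_0}\cC_1\times_{\cC_0}M\times_{\cD_0}\cD_1\times_{\cD_0}\cD_m$ (a formula the paper reuses in \cref{cor:dual}). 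You could instead try to get the left adjoint abstractly, by arguing that $\widetilde{\corr}(\cC,\cD)$ is an accessible reflective localization of the ambient presheaf category and composing the reflector with left Kan extension from elementaries, but that argument appears nowhere in your proposal; as written, your application of Barr--Beck does not get off the ground.

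A secondary problem: your verification that $U$ preserves $U$-split realizations rests on the principle that ``finite limits commute with $U$-split realizations in an $\infty$-topos.'' Finiteness is not what makes this true --- pullbacks do not commute with geometric realizations in $\cS$ in general. What saves the argument is splitness: split simplicial objects have absolute colimits, preserved by \emph{any} functor, in particular by the right Kan extension $i_*$. So the correct statement is that for a simplicial object $X_\bullet$ in $\widetilde{\corr}(\cC,\cD)$ with $i^*j^*X_\bullet$ split, one has $i_*|i^*j^*X_\bullet|\cong|i_*i^*j^*X_\bullet|\cong|j^*X_\bullet|$, so the pointwise realization again satisfies the defining condition $j^*\cF\cong i_*i^*j^*\cF$. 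Your conservativity argument, by contrast, is correct: since $j$ is bijective on objects and objects of $\widetilde{\corr}(\cC,\cD)$ have inert restriction right Kan extended from elementaries, an equivalence on elementary values forces an equivalence of presheaves.
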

\begin{proof}
We first observe that a correspondence $M:\cC\nrightarrow\cD$ can be equivalently described as a functor $F:\Delta^\op\times\Delta^\op\rightarrow\cS$ such that $F([n],[m])\cong \cC_n\times_{\cC_0}M\times_{\cD_0}\cD_m$, where $M\rightarrow\cD_0\times\cC_0$ is the underlying space of the correspondence. Using the equivalence $\mor_\cat(X,\cS)\cong\cS_{/X}$ for $\infty$-groupoids and the fact that $([0],[0])$ is final in $\Delta^\op_\infty\times\Delta^\op_{-\infty}$, we see that those are exactly the presheaves satisfying the conditions of the lemma. To prove the monadicity claim we will prove that $j_!i_*$ restricts to a functor $\cP(\fib(\cC,\cD)^{\el,\op})\rightarrow\widetilde{\corr}(\cC,\cD)$, the monadicity would then follow as in \cite[Proposition 8.1]{chu2019homotopy}.\par
To prove it observe that 
\[j_!i_*M(c_m,d_n)\cong\underset{((c'_l,d'_k)\rightarrow(c_m,d_n))\in\fib(\cC,\cD)^\infty_{/(c_m,d_n)}}{\colim}i_*M(c'_l,d'_n),\]
where $\fib(\cC,\cD)^\infty_{/(c_m,d_n)}$ is the category with objects given by morphisms to $(c_m,d_n)$ and morphisms by morphisms over $(c_m,d_n)$ in $\fib(\cC,\cD)^\infty$. First, observe that all active morphisms belong to $\Delta^\op_\infty\times\Delta^\op_{-\infty}$, so using the fact that $\fib(\cC,\cD)$ admits an active/inert factorization we see that it suffices to take the colimit over the full subcategory $\fib(\cC,\cD)^{\inrt,\infty}_{/(c_m,d_n)}\hookrightarrow\fib(\cC,\cD)^{\infty}_{/(c_m,d_n)}$ on inert morphisms $((c'_l,d'_k)\rightarrowtail(c_m,d_n))$.\par
Observe that any inert morphism that does nor lie over $\Delta^\op_\infty\times\Delta^\op_{-\infty}$ can be uniquely decomposed as $c_\infty\circ f$ where $f$ is a morphism lying over $\Delta^\op_\infty\times\Delta^\op_{-\infty}$ and $c_\infty$ is a morphism lying over $s_\infty:([m],[n])\rightarrow([m+1],[n+1])$ which (considered as a morphism in $\Delta\times\Delta$) sends $([m],[n])$ isomorphically onto a subinterval $([1,m],[1,n])$. This means we can take the colimit over $\fib(\cC,\cD)^{s_\infty,\id}_{/(c_m,d_n)}$ which is the full subcategory of $\fib(\cC,\cD)^{\inrt,\infty}_{/(c_m,d_n)}$ on morphisms lying over $s_\infty$ or $\id$. Finally, observe that there is a morphism $\id\rightarrow s_\infty$ in $\Delta^\op\times\Delta^\op_{/([m],[n])}$, so the category $\fib(\cC,\cD)^{s_\infty}_{/(c_m,d_n)}$ of morphisms lying over $s_\infty$ is cofinal in $\fib(\cC,\cD)^{s_\infty,\id}_{/(c_m,d_n)}$. By untangling definitions, we finally see that $j_!i_*$ sends $M\in\cS_{/\cC_0\times\cD_0}$ to the presheaf sending $([n],[m])$ to $\cC_n\times_{\cC_0}\cC_1\times_{\cC_0}M\times_{\cD_0}\cD_1\times_{\cD_0}\cD_m$, which is obviously and element of $\widetilde{\corr}(\cC,\cD)$.
\end{proof}
\begin{cor}\label{cor:dual}
There is an isomorphism 
\[\corr(\cC,\cD)\cong\corr(*,\cC^\op\times\cD)\]
\end{cor}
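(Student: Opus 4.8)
The plan is to deduce the equivalence entirely from the monadic presentation of \cref{prop:corr_mor}. Applying that proposition both to the pair $(\cC,\cD)$ and to the pair $(*,\cC^\op\times\cD)$ exhibits $\corr(\cC,\cD)$ and $\corr(*,\cC^\op\times\cD)$ as $\infty$-categories monadic over the corresponding presheaf categories on elementary objects. Now $\fib(\cC,\cD)^\el$ is the fiber of the right fibration over $([0],[0])$, namely $\cC_0\times\cD_0$, and likewise $\fib(*,\cC^\op\times\cD)^\el$ is the fiber $*_0\times(\cC^\op\times\cD)_0\cong\cC_0\times\cD_0$, using $(\cC^\op)_0\cong\cC_0$. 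Hence both monadic functors have the same base $\cB\bydef\cS_{/\cC_0\times\cD_0}$, canonically identified, and it suffices to identify the two induced monads as monads on $\cB$: since both categories are monadic over $\cB$, isomorphic monads yield equivalent categories of algebras over $\cB$, which is the desired conclusion.

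First I would compute the two monads using the explicit description of the free functor $j_!i_*$ from the proof of \cref{prop:corr_mor}. Restricting the free correspondence to the elementary object $([0],[0])$ gives, for $(\cC,\cD)$, the endofunctor $T_1(M)\cong\cC_1\times_{\cC_0}M\times_{\cD_0}\cD_1$, and for $(*,\cC^\op\times\cD)$ the endofunctor $T_2(M)\cong M\times_{\cE_0}\cE_1$ with $\cE\bydef\cC^\op\times\cD$. Using $\cE_0\cong\cC_0\times\cD_0$ and $\cE_1\cong\cC_1\times\cD_1$, together with the fact that a fiber product over a product splits as an iterated fiber product, one obtains a canonical isomorphism $T_2(M)\cong\cC_1\times_{\cC_0}M\times_{\cD_0}\cD_1\cong T_1(M)$ of underlying endofunctors; on the left-hand factor this isomorphism is exactly the order-reversing identification $(\cC^\op)_1\cong\cC_1$.

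It then remains to check that this identification intertwines the monad structures, and this is where I expect the only real subtlety to lie. The units agree since both arise from the degeneracy $s_0$, which is unaffected by the reversal $\cC\cong(\cC^\op)^\op$. For the multiplications the essential point is the interaction of the left factor with the passage to $\cC^\op$: composition is induced by $d_1\colon\cC_2\to\cC_1$, which is invariant under order reversal since it omits the middle vertex, whereas the Segal decomposition $\cC_2\cong\cC_1\times_{\cC_0}\cC_1$ identifies the two composable factors in the \emph{opposite} order for $\cC^\op$ compared with $\cC$. This transposition is precisely the one built into the canonical isomorphism $T_1\cong T_2$ on the $\cC_1$-factor, so that the left $\cC$-action encoded by $T_1$ matches the $\cC^\op$-factor of the $\cE$-action encoded by $T_2$, while the $\cD$-factor matches verbatim. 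This gives $T_1\cong T_2$ as monads on $\cB$, and passing to algebras via \cref{prop:corr_mor} produces the isomorphism $\corr(\cC,\cD)\cong\corr(*,\cC^\op\times\cD)$; the hard part is just the bookkeeping that tracks how the contravariance of the left action is absorbed by $(-)^\op$, everything else being a direct unwinding of \cref{prop:corr_mor}.
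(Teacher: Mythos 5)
Your proposal is correct and takes essentially the same route as the paper's own proof: both apply \cref{prop:corr_mor} to exhibit $\corr(\cC,\cD)$ and $\corr(*,\cC^\op\times\cD)$ as monadic over the common base $\cS_{/\cC_0\times\cD_0}$ and then identify the two free-correspondence monads $M\mapsto\cC_1\times_{\cC_0}M\times_{\cD_0}\cD_1$. The only difference is one of detail: where the paper asserts the monads are ``manifestly equivalent,'' you carry out the verification of the unit and multiplication compatibility, correctly noting that the transposition of composable factors forced by the Segal decomposition of $(\cC^\op)_2$ is exactly what the passage to $\cC^\op$ absorbs.
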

\begin{proof}
First, observe that $\fib(\cC,\cD)^\el\cong\fib(*,\cD\times\cC)^\op\cong\cS_{/\cC_0\times\cD_0}$. It follows from the proof of \cref{prop:corr_mor} that the free correspondence monad on $\fib(\cC,\cD)^\el$ is given by $M\mapsto\cC_1\times_{\cC_0}M\times_{\cD_0}\cD_1$ and the claim follows since these monads are manifestly equivalent for $\widetilde{\corr}(\cC,\cD)$ and $\widetilde{\corr}(*,\cD\times\cC^\op)$.
\end{proof}
\begin{construction}\label{constr:tensor}
Observe that given two algebraic patterns $\cO$ and $\cE$, their product $\cO\times\cE$ also obtains a natural structure of an algebraic pattern such that the category of Segal $\cO\times \cE$-spaces is isomorphic to the category of Segal $\cO$-objects in $\seg_{\cE}(\cS)$. In particular, this applies to $\ccat\times[n]$ whose Segal spaces are given by strings of morphisms $\cC_0\rightarrow\cC_1\rightarrow...\rightarrow\cC_n$ of $\ccat$-Segal spaces. We define a functor $L_1:\ccat^\cell_{/[n]}\rightarrow\cP(\ccat\times[n])$ (resp. $L_2:\ccat^\cell_{/[n]}\rightarrow\cP(\ccat\times[n])$) sending $\alpha:[m]\rightarrow[n]$ to $\alpha^{-1}(\{0\})\hookrightarrow\alpha^{-1}([0,1])\hookrightarrow...\hookrightarrow \alpha^{-1}([n])=[m]$ (resp. to $\alpha^{-1}(\{n\})\hookrightarrow\alpha^{-1}([n-1,1])\hookrightarrow...\hookrightarrow \alpha^{-1}([n])=[m]$). 
\end{construction}
\begin{prop}
Both $L_i$ and their right adjoints $R_i$ restrict to functors between the respective categories of Segal objects.
\end{prop}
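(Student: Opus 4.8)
The plan is to regard $L_i$ and $R_i$ as the adjoint pair obtained by freely cocompleting the functor of \cref{constr:tensor}: the realization $L_i:\mor_\cat(\ccat^\cell_{/[n]},\cS)\rightarrow\mor_\cat(\ccat\times[n],\cS)$ is the unique colimit-preserving extension of $\alpha\mapsto L_i(\alpha)$, and its right adjoint $R_i$ is the nerve $R_i(G)(\alpha)\bydef\mor(L_i(\alpha),G)$. I verify the two preservation statements separately. The crucial geometric input is that for a cellular $\alpha:[m]\rightarrow[n]$ one has $L_1(\alpha)_k\cong h_{\alpha^{-1}([0,k])}$, a representable (hence Segal) space; I write the argument only for $L_1$, the case of $L_2$ being obtained by replacing the initial subintervals $[0,k]\subseteq[n]$ with the terminal ones $[k,n]$, i.e. by precomposing with the order-reversing automorphism of $[n]$.

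For the left adjoint, fix a Segal object $\cF$ and write it as the canonical colimit $\cF\cong\colim_{(\alpha,x)}h_\alpha$ of representables over its category of elements. Since $L_1$ preserves colimits, the $k$-th term of the string $L_1(\cF)$ is $L_1(\cF)_k\cong\colim_{(\alpha,x)}h_{\alpha^{-1}([0,k])}$, a colimit in $\cP(\Delta)$. Now $h_{\alpha^{-1}([0,k])}\cong h_{[m]}\times_{[n]}h_{[0,k]}$, and because colimits in the $\infty$-topos $\cP(\Delta)$ are universal we may extract the pullback from the colimit to obtain $L_1(\cF)_k\cong\bigl(\colim_{(\alpha,x)}h_{[m]}\bigr)\times_{[n]}h_{[0,k]}$. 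A direct computation with the pointwise left Kan extension formula identifies $\colim_{(\alpha,x)}h_{[m]}$ with the total space of $i^\cell_{n,!}\cF$, which is a Segal space by the extendability established in \cref{prop:cell_def}. As Segal spaces are closed under pullback in $\cP(\Delta)$ and $h_{[0,k]}\rightarrow h_{[n]}$ is a map of Segal spaces, each $L_1(\cF)_k$ is Segal; these assemble, as $k$ runs over $[n]$, into the string $k\mapsto i^\cell_{n,!}\cF\times_{[n]}h_{[0,k]}$, an object of $\seg_{\ccat\times[n]}(\cS)$.

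For the right adjoint I check the Segal condition for $R_1(G)$ directly. For $\alpha\in\ccat^\cell_{/[n]}$ the elementary objects over it, $(\ccat^\cell_{/[n]})^\el_{\alpha/}$, form the spine diagram of $[m]$, via the identification of elementary objects over $\alpha$ with those in $\ccat_{/[m]}$ used in the proof of \cref{prop:cell_def}. Unwinding the definition of $R_1$, the Segal condition $R_1(G)(\alpha)\cong\lim_{e}R_1(G)(e)$ becomes $\mor(L_1(\alpha),G)\cong\mor\bigl(\colim_{e}L_1(e),G\bigr)$, so it suffices to show that the canonical map $\colim_{e}L_1(e)\rightarrow L_1(\alpha)$ is a Segal equivalence in $\mor_\cat(\ccat\times[n],\cS)$, for then mapping into the Segal object $G$ turns it into an equivalence. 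This is checked levelwise in $[n]$: at level $k$ the map is exactly the spine inclusion $\mathrm{Sp}(\alpha^{-1}([0,k]))\hookrightarrow h_{\alpha^{-1}([0,k])}$, which is a Segal equivalence by the very definition of the Segal condition for simplices. Hence $R_1(G)$ is a Segal $\ccat^\cell_{/[n]}$-space.

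The main obstacle is the left-adjoint statement: a colimit-preserving functor has no reason to respect the limit-type Segal condition, and the argument succeeds only because the realization colimit can be rewritten, through the universality of colimits in $\cP(\Delta)$, as a pullback of the already-Segal object $i^\cell_{n,!}\cF$. Thus the entire weight of the $L_i$-half rests on the extendability of $i^\cell_{n,!}$ from \cref{prop:cell_def}; once the colimit is recognized as such a pullback, closure of Segal spaces under limits concludes the proof. The $R_i$-half, by contrast, is a routine consequence of the spine description of the elementary objects together with the fact that $L_i$ carries these spine coverings to Segal equivalences.
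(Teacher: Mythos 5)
Your proof is correct and takes essentially the same route as the paper: the identity $L_1(\cF)_k\cong p_!\bigl(i^\cell_{n,!}\cF\bigr)\times_{h_{[n]}}h_{[0,k]}$ that you extract via universality of colimits is precisely the paper's factorization of $\{k\}^*\circ L_1$ as the composite $p_!\circ i_k^*\circ i^\cell_{n,!}$, so both arguments make \cref{prop:cell_def} carry the weight of the $L_i$-half. Likewise for $R_i$ the paper's (very terse) argument is the one you spell out: $L_1$ carries the spine decomposition of $\alpha$ to a levelwise Segal equivalence, hence mapping into a Segal object yields the Segal condition for $R_1(G)$.
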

\begin{proof}
We will only prove the claim for $L_1$ and $R_1$, leaving a similar proof in the other case to the reader. First, observe that we can equivalently describe $\ccat^\cell_{/[n]}\xrightarrow{L_1}\ccat\times[n]\xrightarrow{\{j\}^*}\ccat$ as given by the correspondence
\[
\begin{tikzcd}[row sep=huge, column sep=huge]
{}&\ccat_{/[n]}\arrow[r, "i^*_j"]&\ccat_{/[j]}\arrow[dr, "p_!"]\\
\ccat^\cell_{/[n]}\arrow[ur, "i^\cell_{n,!}"]&{}&{}&\ccat
\end{tikzcd},
\]
where $i_j:[j]\hookrightarrow[n]$ denotes the natural inclusion of $[j]$ as an initial segment and $p:[j]\rightarrow[0]$ is the unique morphism. Since all of the morphisms involved restrict to morphisms between the corresponding Segal categories, we see that the same is true of $L_1$ as well. Conversely, given $f:\cC\rightarrow\cD$ and $\alpha:[m]\rightarrow[n]$ we see that 
\[R_1 f(\alpha)\cong\mor_{\Delta\times [n]}(L_1\alpha,f).\] 
From the description of $L_1$ we see that \[L_1f(\alpha)\cong\underset{e\in\ccat^{\cell,\el}_{\alpha/}}{\colim}L_1f(e),\]
so $R_1$ indeed satisfies the required property.
\end{proof}
\begin{notation}\label{not_cat}
Given $F:\cC\rightarrow\cD$ we will denote by $f_!:\cC\nrightarrow\cD$ the correspondence $R_1 f$ and by $f^*:\cD\nrightarrow\cC$ the correspondence $R_2f$.
\end{notation}
\begin{prop}\label{prop:fun_comp}
The morphism sending $f:\cC\rightarrow\cD$ to $f^*$ (resp. to $f_!$) extends to a functor $\cat\rightarrow \corr$.
\end{prop}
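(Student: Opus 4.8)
The plan is to exhibit the desired assignment as a morphism of twofold Segal spaces. Recall from \cref{prop:corr} and \cref{cor:iso} that $\corr$ is encoded as the functor $\Delta^\op\rightarrow\cat$ sending $[n]$ to $\cat^\cell_{/[n]}$, with structure maps $\phi^*$ for $\phi\colon[m]\rightarrow[n]$ described explicitly in \cref{rem:corr_corr}. Dually, the bicategory $\cat$ is encoded as the functor $[n]\mapsto\seg_{\ccat\times[n]}(\cS)$, whose value is the category of strings $\cC_0\rightarrow\dots\rightarrow\cC_n$ of Segal spaces (\cref{constr:tensor}) and whose structure maps are the evident restrictions. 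A morphism of twofold Segal spaces between them is precisely a natural transformation of these two functors $\Delta^\op\rightarrow\cat$; in particular, once such a natural transformation is produced, compatibility with the Segal (i.e.\ composition) structure is automatic, since the Segal condition is a property of the objects and not a structure on the morphism. Levelwise I would take the functor $R_1^{(n)}\colon\seg_{\ccat\times[n]}(\cS)\rightarrow\cat^\cell_{/[n]}$ right adjoint to $L_1^{(n)}$ as in \cref{constr:tensor}, so that on $1$-morphisms $R_1^{(1)}(f)=f_!$ in the sense of \cref{not_cat} and on $0$-morphisms $R_1^{(0)}=\id$; the assignment $f\mapsto f^*$ is handled symmetrically by the family $R_2^{(n)}$.

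Everything therefore reduces to naturality of $R_1^{(n)}$ in $[n]\in\Delta^\op$, i.e.\ to producing for each $\phi\colon[m]\rightarrow[n]$ a coherent equivalence $\phi^*\circ R_1^{(n)}\simeq R_1^{(m)}\circ\phi^*$. First I would check the corresponding statement for the left adjoints: that the family $L_1^{(n)}$ assembles into a natural transformation $\cat^\cell_{/[-]}\Rightarrow\seg_{\ccat\times[-]}(\cS)$. This is visible from the explicit description of $L_1$ as the composite correspondence $p_!\, i_j^*\, i^\cell_{n,!}$ of \cref{constr:tensor}, since passing to the preimages $\alpha^{-1}([0,j])$ commutes with pullback of cellular maps along $\phi$. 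Taking the mate of this (invertible) naturality square with respect to the adjunctions $L_1\dashv R_1$ then yields a canonical comparison transformation $\phi^*\circ R_1^{(n)}\Rightarrow R_1^{(m)}\circ\phi^*$.

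The main obstacle is to verify that this comparison is an equivalence, i.e.\ the Beck--Chevalley condition relating the right adjoints $R_1$ to the structure maps $\phi^*$. The difficulty is that the $\corr$-side structure map is not a plain restriction: by \cref{rem:corr_corr} it is the composite $i_m^{\cell,*}\,\phi^*\,i^\cell_{n,!}$, and the cellular left Kan extension $i^\cell_{n,!}$ is exactly the delicate functor whose colimit formula was computed in \cref{prop:cell_def} and \cref{prop:corr}. I would sidestep an abstract Beck--Chevalley argument by arguing pointwise through the formula $R_1 f(\alpha)\cong\mor_{\Delta\times[n]}(L_1\alpha,f)$ for $R_1$: evaluating both sides of the desired equivalence on a cellular object $\beta\colon[k]\rightarrow[m]$ reduces the claim to a cofinality statement comparing the indexing category that computes $\phi^*(R_1 f)(\beta)$ with the one that computes $R_1(\phi^* f)(\beta)$. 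As in the proof of \cref{prop:corr}, it suffices to treat the generating face and degeneracy maps $\delta_j$ and $\sigma_i$ separately, and in each case the required cofinality should follow from the same kind of contractibility computation used there, factoring through the subcategories $\Delta^\op_{-\infty}$ and $\Delta^\op_\infty$ and exhibiting a terminal or initial object.

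Once the natural transformation $R_1\colon\cat\Rightarrow\corr$ is in hand, the equality $R_1^{(0)}=\id$ shows that it sends every Segal space to itself, while $R_1^{(1)}(f)=f_!$ identifies it on morphisms with the assignment of the statement; the case $f\mapsto f^*$ is identical with $R_2$ and $L_2$ in place of $R_1$ and $L_1$. I expect the degeneracy case of the cofinality verification to be the most laborious step, precisely as it was in \cref{prop:corr}.
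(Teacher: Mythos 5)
Your overall route is in fact the paper's: encode both sides as functors $\Delta^\op\rightarrow\cat$ with levels $\seg_{\ccat\times[n]}(\cS)$ and $\cat^\cell_{/[n]}$, take $R_1^{(n)}$ levelwise, reduce naturality to the generating face and degeneracy maps, and finish with cofinality computations of the kind used in \cref{prop:corr}. However, your second paragraph asserts something false: the left adjoints $L_1^{(n)}$ do \emph{not} assemble into an invertibly natural transformation, so the mate construction cannot start from ``this (invertible) naturality square.'' The failure is exactly at the active maps. By \cref{prop:corr_bicat}, $L_1^{(1)}$ sends a correspondence to the inclusion of its source into its total category; hence for $\delta_1\colon[1]\rightarrow[2]$ and a composable pair $M\colon\cC_0\nrightarrow\cC_1$, $N\colon\cC_1\nrightarrow\cC_2$, the composite $L_1^{(1)}\circ\delta_1^*$ yields $\cC_0\hookrightarrow\mathrm{Tot}(N\circ M)$, whose space of objects is $(\cC_0)_0\sqcup(\cC_2)_0$, while $\delta_1^*\circ L_1^{(2)}$ yields $\cC_0$ mapping into the total category over $[2]$, whose space of objects also contains $(\cC_1)_0$. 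Already for $\cC_0=\cC_1=\cC_2=M=N=*$ this is $[1]$ versus $[2]$, which are not equivalent Segal spaces. (Note the tension with your own third paragraph, where you correctly observe that the $\corr$-side structure map involves $i^\cell_{n,!}$ and is not a plain restriction --- that is precisely why the $L_1$-square fails to be invertible.) One can still take the mate of the canonical \emph{non-invertible} comparison $L_1^{(m)}\phi^*\Rightarrow\phi^*L_1^{(n)}$, but constructing that comparison already amounts to the composition statement you are trying to prove, so nothing is gained by the detour.

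Your pointwise fallback is the correct move and is what the paper does, but as written it omits the only substantive computation. After the reduction to generators, the degeneracy case is the trivial one here --- it amounts to $R_1$ sending $(\cC=\joinrel=\cC)$ to the identity correspondence --- contrary to your expectation that it would be the laborious step; all the content sits in the inner face map, where the claim becomes $g_!\circ f_!\cong(g\circ f)_!$ for a composable pair $\cC\xrightarrow{f}\cD\xrightarrow{g}\cE$. The paper proves this by showing that $g_!\circ f_!(\id_{[1]})$ is the colimit over $\Delta^\op$ of $[n]\mapsto(\cC_0\times_{\cD_0}\cD_1)\times_{\cD_0}\cD_1\times_{\cD_0}\cdots\times_{\cD_0}\cD_1\times_{\cD_0}(\cD_0\times_{\cE_0}\cE_1)$, that this diagram is the restriction along the cofinal functor $\Delta^\op\rightarrow\Delta^\op_{-\infty}$ of a diagram defined on $\Delta^\op_{-\infty}$, and that $\Delta^\op_{-\infty}$ has a final object $[0]$, collapsing the colimit to $\cC_0\times_{\cE_0}\cE_1\cong(g\circ f)_!(\id_{[1]})$. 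Your appeal to ``the same kind of contractibility computation'' as in \cref{prop:corr} points at the right tools, but without this bar-resolution collapse (or an equivalent argument) the proof is incomplete.
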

\begin{proof}
We will prove the theorem for $(-)_!$, leaving a similar argument for $(-)^*$ to the reader. We need to prove that for any morphism $\alpha:[n]\rightarrow[m]$ in $\Delta$ we have the following commutative diagram
\[
\begin{tikzcd}[row sep=huge, column sep=huge]
{[m]}\times \ccat\arrow[r, "R_{1,m}"]\arrow[d, "\alpha^*"]&\ccat^\cell_{/[m]}\arrow[d, "\alpha^*_\cell"]\\
{[n]}\times\ccat\arrow[r, "R_{1,n}"]&\ccat^\cell_{/[n]}
\end{tikzcd}.
\]
Observe that we can represent any $\alpha$ as a composition of face and degeneracy maps, so it would suffice to prove the claim for those. First assume that $\alpha$ is a degeneracy map $\sigma_i:[n]\rightarrow[n-1]$. Now observe that for any $\cF\in[m]\times\cat$ and for any $c:[l]\rightarrow[n]$ such that its image does not contain $i$ we have $R_{1,n}\circ \sigma_i^*\cF(c)\cong \sigma^*_{i,\cell}\circ R_{1,n-1}\cF(c)\cong R_{1,n-1}\cF(\sigma_i\circ{c})$. So we only have to prove the statement for the morphisms $c$ whose image contains $i$. Moreover, using that both $R_{1,n}\circ \sigma_i^*\cF$ and $\sigma^*_{i,\cell}\circ R_{1,n-1}\cF$ satisfy the Segal condition, we can further assume that the image of $c$ is contained in the interval $\{i-1,i\}$ or $\{i,i+1\}$. Since in this case $c$ uniquely factors through $[1]\hookrightarrow[n]$ we are finally reduced to proving the statement for $\sigma_):[1]\rightarrow[0]$. In other words, we have to prove that for any Segal space $\cC$ the morphism $R_1$ sends $(\cC=\joinrel=\cC)$ to the identity correspondence, which is immediate.\par
Now we consider the case of the face map $\delta_j:[n-1]\rightarrow[n]$. Just like in the previous paragraph, we are reduced to proving the claim for the unique active morphism $[1]\rightarrow[2]$. In other words, for a composable pair $(\cC\xrightarrow{f}\cD\xrightarrow{g}\cE)$ we need to demonstrate that $g_!\circ f_!\cong (g\circ f)_!$. It would suffice to demonstrate $g_!\circ f_!(\id_{[1]})\cong (g\circ f)_!(\id_{[1]})$. We have seen in the course of the proof of \cref{prop:corr} that $g_!\circ f_!(\id_{[1]})$ is isomorphic to the colimit of a functor $F:\Delta^\op\rightarrow\cS$ given by \[F([n])\bydef(\cC_0\times_{\cD_0}\cD_1)\overbrace{\times_{\cD_0}\cD_1\times_{\cD_0}...\times_{\cD_0}\cD_1\times_{\cD_0}}^\text{n times}(\cD_0\times_{\cE_0}\cE_1).\] 
Note that this functor is a restriction along $\Delta^\op\xrightarrow{[n]\rightarrow[n+1]}\Delta^\op_{-\infty}$ of a functor $\widetilde{F}:\Delta^\op_{-\infty}\rightarrow\cS$ sending $[n]$ to \[\widetilde{F}([n])\bydef(\cC_0)\overbrace{\times_{\cD_0}\cD_1\times_{\cD_0}...\times_{\cD_0}\cD_1\times_{\cD_0}}^\text{n times}(\cD_0\times_{\cE_0}\cE_1).\] 
Since $\Delta^\op_{-\infty}$ has a final object $[0]$, the colimit of this functor is isomorphic to \[\widetilde{F}([0])\cong\cC_0\times_{\cE_0}\cE_1\cong(g\circ f)_!(\id_{[1]}).\]
On the other hand we know from the proof of \cref{prop:corr} that $\Delta^\op\rightarrow\Delta^\op_{-\infty}$ is cofinal, so in the end we get
\[g_!\circ f_!(\id_{[1]})\cong\underset{[n]\in\Delta^\op}{\colim}F([n])\cong\underset{[n]\in\Delta^\op_{-\infty}}{\colim}\widetilde{F}([n])\cong\widetilde{F}([0])\cong (g\circ f)_!(\id_{[1]}).\]
\end{proof}
\begin{prop}\label{prop:fact_1}
Given a correspondence $M:\cC\nrightarrow\cD$, denote by $i_1:\cC\hookrightarrow M$ and $i_2:\cD\hookrightarrow M$ the natural inclusions of fibers over $0$ and $1$. Then we have
\[M\cong i_2^*\circ i_{1,!}.\]
\end{prop}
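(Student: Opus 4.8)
The plan is to realise $M$ as the value of a single $2$-simplex of $\corr$ whose two short edges are $i_{1,!}$ and $i_2^*$, so that $M$ becomes \emph{by definition} their composite. Recall that composition in the $\mathpzc{Cat}$-Segal space $\corr$ is extracted from the Segal equivalence $\corr_2\cong\corr_1\times_{\corr_0}\corr_1$ of \cref{cor:iso} together with the restriction $d_1^*\colon\corr_2\to\corr_1$ along the face map $d_1\colon[1]\to[2]$ with image $\{0,2\}$. Thus it suffices to produce an object $W\in\corr_2=\mathrm{Cat}^{\mathrm{Cell}}_{/[2]}$ whose restrictions along the two short faces $d_2,d_0$ recover $i_{1,!}$ and $i_2^*$ and whose restriction along $d_1$ recovers $M$.

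I would take $W$ to be the full subcategory of the product $M\times[2]$, equipped with the projection to $[2]$, spanned by the objects $(x,i)$ with $p(x)\le i\le p(x)+1$, where $p\colon M\to[1]$ is the structure map of the correspondence. Its fibre over $0$ is $\cC$, its fibre over $2$ is $\cD$, and its fibre over $1$ is all of $M$; the relevant mapping spaces are $\mor_M(i_1c,m)$ over the edge $\{0,1\}$, $\mor_M(m,i_2d)$ over $\{1,2\}$, and $\mor_M(i_1c,i_2d)$ over $\{0,2\}$. Comparing these with the explicit descriptions of $f_!$ and $f^*$ coming from \cref{not_cat} (already implicit in the computations of \cref{prop:fun_comp}) identifies the short faces $d_2^*W$ with $i_{1,!}$ and $d_0^*W$ with $i_2^*$, and identifies the edge $\{0,2\}$ of the underlying total category $i^\cell_{2,!}W$ with $M$.

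The single substantial point — and the step I expect to be the main obstacle — is to check that $W$ is genuinely cellular, i.e. that it defines an object of $\mathrm{Cat}^{\mathrm{Cell}}_{/[2]}$ rather than merely a category over $[2]$. Since $\{0,2\}$ is the unique non-cellular elementary edge of $[2]$, cellularity reduces, via \cref{cor:iso} and the colimit formula of \cref{prop:cell_def}, to the assertion that the morphisms over $\{0,2\}$ are reconstructed as composites through the fibre over $1$; concretely this is the density equivalence
\[\underset{[n]\in\Delta^\op}{\colim}\ \cC_0\times_{M_0}M_{n+2}\times_{M_0}\cD_0\ \xrightarrow{\ \sim\ }\ \cC_0\times_{M_0}M_1\times_{M_0}\cD_0\cong M,\]
where I have used the Segal condition of $M$ to rewrite the iterated fibre products $M_1\times_{M_0}\cdots\times_{M_0}M_1$ as $M_{n+2}$. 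Equivalently, this says that $d_1^*W$ is the composite $i_2^*\circ i_{1,!}$, so that cellularity of $W$ and the desired equivalence $i_2^*\circ i_{1,!}\cong M$ are one and the same statement.

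To prove this density statement I would run exactly the cofinality argument from the proof of \cref{prop:fun_comp}: fixing the initial vertex at $i_1c$ supplies an extra degeneracy, exhibiting the two-sided bar construction as the restriction along the cofinal inclusion $\Delta^\op\hookrightarrow\Delta^\op_{-\infty}$ of a diagram on $\Delta^\op_{-\infty}$; since $\Delta^\op_{-\infty}$ has a final object $[0]$ whose value is $\cC_0\times_{M_0}M_1\times_{M_0}\cD_0\cong M$, the colimit collapses as claimed. Granting this, $W$ lies in $\corr_2$, the Segal equivalence identifies it with the composable pair $(i_{1,!},i_2^*)$, and therefore $i_2^*\circ i_{1,!}=d_1^*W\cong M$.
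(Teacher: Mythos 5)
Your proof is correct, and at its core it is the same computation as the paper's: both arguments identify $i_2^*\circ i_{1,!}$ with the two-sided bar colimit $\underset{[n]\in\Delta^\op}{\colim}\,\cC_0\times_{M_0}M_{n+2}\times_{M_0}\cD_0$ (using the colimit description of composition from \cref{prop:corr} and \cref{prop:fun_comp}) and then collapse that colimit onto $M$. What differs is the packaging and the collapse mechanism. The paper dispenses with your filler $W\subset M\times[2]$ entirely and just computes the composite; your $2$-simplex is a pleasant conceptual gloss, but — as you yourself observe — the assertion that $W$ lies in the essential image of $i^\cell_{2,!}$ (so that its long edge is reconstructed from the cellular data) is literally the statement being proved, so it carries no independent logical weight. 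For the collapse, you use a single one-sided splitting: anchoring the initial vertex in $\cC_0$ exhibits the bar diagram as the restriction, along the cofinal functor $[n]\mapsto[n+1]\colon\Delta^\op\rightarrow\Delta^\op_{-\infty}$, of a diagram on $\Delta^\op_{-\infty}$ whose value at the final object $[0]$ is $\cC_0\times_{M_0}M_1\times_{M_0}\cD_0\cong M$; this is exactly the mechanism of the paper's proof of \cref{prop:fun_comp}, and it is valid here (the anchored-string diagram is indeed functorial on $\Delta^\op_{-\infty}$ because morphisms preserving the minimal element never discard the $\cC_0$-vertex). The paper's own proof of \cref{prop:fact_1} collapses differently: it rewrites $M^{n+2,\sur}$ via transitivity of left Kan extensions along $(n,m)\mapsto n+m+1$, re-indexing the colimit over $\Delta^\op\times\Delta^\op$ with terms $\cC_1\times_{\cC_0}(\cC_n\times_{\cC_0}M\times_{\cD_0}\cD_m)\times_{\cD_0}\cD_1$, and then collapses both factors using $\Delta^\op_{-\infty}\times\Delta^\op_{\infty}$. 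Your route is marginally more economical, since one splitting suffices; the paper's re-indexing has the side benefit of making explicit the decomposition of a string from $\cC$ to $\cD$ according to the position of its crossing morphism, which is the structural picture reused elsewhere in the paper.
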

\begin{proof}
First, observe that by definition 
\begin{equation*}
    \begin{split}
    i_2^*\circ i_{1,!}\cong& \underset{[n]\in\Delta^\op}{\colim}\cC_0\times_{M_0}M_1\times_{M_0}M_n\times_{M_0}M_1\times_{M_0}\cD_0\\
    \cong& \underset{[n]\in\Delta^\op}{\colim}M_1^\cC\times_{M_0}M_n\times_{M_0}M_1^\cD\\
    \cong& \underset{[n]\in\Delta^\op}{\colim}M^{n+2,\sur},
    \end{split}
\end{equation*}
where $M_1^\cC$ (resp. $M_1^\cD$) denotes the space of morphisms of $M$ with source in $\cC$ (resp. with target in $\cD$) and $M^{n,\sur}$ denotes the space of composable strings of $n$ morphisms with source in $\cC$ and target in $\cD$. Observe that $M^{n,\sur}$ is the left Kan extension along $\Delta^\op\times \Delta^\op\xrightarrow{(n,m)\mapsto(n+m+1)}\Delta^\op$ of the presheaf on $\Delta\times\Delta$ sending $(n,m)$ to $\cC_n\times_{\cC_0}M\times_{\cD_0}\cD_m$. By transitivity of Kan extensions, we can continue the above chain of isomorphisms as follows:
\begin{equation*}
    \begin{split}
        i_2^*\circ i_{1,!}\cong&\underset{[n]\in\Delta^\op}{\colim}M^{n+2,\sur}\\
        \cong&\underset{(n,m)\in\Delta^\op\times \Delta^\op}{\colim}\cC_1\times_{\cC_0}(\cC_n\times_{\cC_0}M\times_{\cD_0}\cD_m)\times_{\cD_0}\cD_1\\
        \cong&\underset{(n,m)\in\Delta_{-\infty}^\op\times \Delta_\infty^\op}{\colim}\cC_n\times_{\cC_0}M\times_{\cD_0}\cD_m\\
        \cong&M,
    \end{split}
\end{equation*}
where the first isomorphism follows from the transitivity of Kan extensions, the second since $\Delta^\op\rightarrow\Delta^\op_{\infty}$ is cofinal and the last since $\Delta^\op_{\infty}$ has a final object.
\end{proof}
\begin{prop}\label{prop:adj}
For any morphism $f:\cD\rightarrow\cC$ in $\mathrm{Cat}$ the morphism $f_!$ is left adjoint to $f^*$ in $\mathrm{Corr}$.
\end{prop}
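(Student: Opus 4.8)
The plan is to exhibit the adjunction directly, by constructing a unit $\eta\colon\id_\cD\Rightarrow f^*\circ f_!$ and a counit $\epsilon\colon f_!\circ f^*\Rightarrow\id_\cC$ in $\corr$ and then verifying the two triangle identities. Recall from \cref{not_cat} and the colimit computations in the proofs of \cref{prop:corr} and \cref{prop:fun_comp} that $f_!\colon\cD\nrightarrow\cC$ has underlying space $\cD_0\times_{\cC_0}\cC_1$, the space of pairs $(d,\gamma)$ with $\gamma\colon f(d)\to c$ in $\cC$, while $f^*\colon\cC\nrightarrow\cD$ has underlying space $\cC_1\times_{\cC_0}\cD_0$, the pairs $(\gamma,d)$ with $\gamma\colon c\to f(d)$; likewise $\id_\cD$ has underlying space $\cD_1$ and $\id_\cC$ has underlying space $\cC_1$.

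First I would compute the two relevant composites using the colimit formula for composition established in the proof of \cref{prop:fact_1}, namely that the composite of $P\colon\cA\nrightarrow\cB$ and $Q\colon\cB\nrightarrow\cC$ has underlying space $\colim_{[n]\in\Delta^\op} P\times_{\cB_0}\cB_n\times_{\cB_0}Q$. For $f^*\circ f_!\colon\cD\nrightarrow\cD$ this gives $\colim_{[n]}\cD_0\times_{\cC_0}\cC_{n+2}\times_{\cC_0}\cD_0$, and the same cofinality argument used in \cref{prop:fun_comp} (collapsing a string of composable arrows of $\cC$ to a single one via $\Delta^\op\to\Delta^\op_{-\infty}$) identifies it with $\cD_0\times_{\cC_0}\cC_1\times_{\cC_0}\cD_0$, the space of arrows $f(d)\to f(d')$ of $\cC$. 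Dually, $f_!\circ f^*\colon\cC\nrightarrow\cC$ is computed as $\colim_{[n]}\cC_1\times_{\cC_0}\cD_n\times_{\cC_0}\cC_1$, the two-sided bar construction whose points are zigzags $c\to f(d_0)\to\dots\to f(d_n)\to c'$.

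With these identifications in hand I would define $\eta$ as the map induced by the action of $f$ on mapping spaces, $\cD_1\to\cD_0\times_{\cC_0}\cC_1\times_{\cC_0}\cD_0$, $\delta\mapsto(s\delta,\,f(\delta),\,t\delta)$, and $\epsilon$ as the map induced by composition in $\cC$ after applying $f$, sending a zigzag $c\to f(d_0)\to\dots\to f(d_n)\to c'$ to its total composite $c\to c'$ in $\cC_1$. By \cref{prop:corr_mor} a $2$-cell in $\corr$ is determined by a map of underlying spaces over the relevant product of object-spaces compatible with the free-correspondence monad, which both maps evidently are, so $\eta$ and $\epsilon$ are legitimate morphisms of correspondences.

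The hard part will be verifying the two triangle identities as coherent $2$-cells in $\corr$ rather than merely on $\pi_0$ of the underlying spaces. Conceptually these identities are exactly the unit and counit relations of the co-Yoneda (density) isomorphism, the collapse $\colim_{[n]}\cC_{n+2}\simeq\cC_1$ being precisely what forces each triangle to be an identity; but to make this precise I would reuse the cofinality statements from the proof of \cref{prop:corr} to present each whiskered composite as a colimit over a category with a cofinal terminal object, so that evaluating the whiskerings of $\eta$ and $\epsilon$ at that object reduces the triangles to unitality of composition in $\cC$ together with functoriality of $f$. An alternative I would keep in reserve, should the bar-construction bookkeeping become unwieldy, is to prove instead the hom-space reformulation $\mathrm{Map}_{\corr(\cE,\cC)}(f_!\circ M,N)\simeq\mathrm{Map}_{\corr(\cE,\cD)}(M,f^*\circ N)$, naturally in $M$ and $N$, by the same coend manipulation combined with \cref{prop:fact_1}.
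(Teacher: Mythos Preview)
Your proposal is correct and follows essentially the same approach as the paper: construct $\eta$ and $\epsilon$ exactly as you describe (the paper writes $\eta$ as $\cD_1\xrightarrow{(s,f,t)}\cD_0\times_{\cC_0}\cC_1\times_{\cC_0}\cD_0$ and $\epsilon$ via the map induced by $f$ on the bar construction followed by the collapse $\colim_{[n]}\cC_{n+2}\cong\cC_1$), and then verify the two triangle identities by the same $\Delta^\op\to\Delta^\op_{-\infty}$ cofinality trick you outline. The paper simply carries out the triangle verifications explicitly rather than leaving them as a sketch, but the mechanism is identical to what you propose.
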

\begin{proof}
We begin by describing the unit and counit morphisms. We start with the counit $\epsilon:f_!f^*\rightarrow\id$:
\begin{equation*}
    \begin{split}
        f_!f^*\cong&\underset{[n]\in\Delta^\op}{\colim}(\cC_1\times_{\cC_0}\cD_0)\times_{\cD_0}\cD_n\times_{\cC_0}(\cD_1\times_{\cD_0}\cC_0)\\
        \cong &\underset{[n]\in\Delta^\op}{\colim}(\cC_1\times_{\cC_0}\cD_n)\times_{\cC_0}\cC_1\\
        \xrightarrow{f}&\underset{[n]\in\Delta^\op}{\colim}(\cC_1\times_{\cC_0}\cC_n)\times_{\cC_0}\cC_1\\
        \cong&\underset{[n]\in\Delta^\op}{\colim}\cC_{n+1}\times_{\cC_0}\cC_1\\
        \cong&\underset{[m]\in\Delta^\op_{-\infty}}{\colim}\cC_{m}\times_{\cC_0}\cC_1\\
        \cong&\cC_1\\
        \cong&\id_\cC,
    \end{split}
\end{equation*}
where the non-trivial isomorphisms follow by the same argument as in \cref{prop:fact_1}.
Now we observe the following isomorphism:
\begin{equation*}
    \begin{split}
        f^*f_!\cong&\underset{[n]\in\Delta^\op}{\colim}(\cD_0\times_{\cC_0}\cC_1)\times_{\cC_0}\cC_n\times_{\cC_0}(\cC_1\times_{\cC_0}\cD_0)\\
        \cong&\underset{[n]\in\Delta^\op}{\colim}\cD_0\times_{\cC_0}\cC_1\times_{\cC_0}\cD_0\times_{\cC_0}\cC_{(n+1)}\\
        \cong&\underset{[m]\in\Delta^\op_{-\infty}}{\colim}\cD_0\times_{\cC_0}\cC_1\times_{\cC_0}\cD_0\times_{\cC_0}\cC_{m}\\
        \cong&\cD_0\times_{\cC_0}\cC_1\times_{\cC_0}\cD_0.
    \end{split}
\end{equation*}
Using this isomorphism, we define $\eta:\id_\cD\rightarrow f^*f_!$ as $\cD_1\xrightarrow{(s,f,t)}\cD_0\times_{\cC_0}\cC_1\times_{\cC_0}\cD_0$. Now it remains to verify the triangle identities. We first verify that $f_!\xrightarrow{\eta}f_!f^*f_!\xrightarrow{\epsilon}f_!$ is isomorphic to the identity:
\begin{equation*}
    \begin{split}
        f_!\cong&\cD_0\times_{\cC_0}\cC_1\\
        \cong&\underset{[n]\in\Delta^\op}{\colim}\cD_1\times_{\cD_0}\cD_n\times_{\cD_0}(\cD_0\times_{\cC_0}\cC_1)\\
        \xrightarrow{f\times\id\times\id}&\underset{[n]\in\Delta^\op}{\colim}\cC_1\times_{\cC_0}\cD_n\times_{\cD_0}(\cD_0\times_{\cC_0}\cC_1)\\
        \cong&\underset{[n]\in\Delta^\op}{\colim}(\cC_1\times_{\cC_0}\cD_n)\times_{\cD_0}(\cD_0\times_{\cC_0}\cC_1)\\
        \xrightarrow{f\times\id}&\underset{[n]\in\Delta^\op}{\colim}\cC_{n+1}\times_{\cD_0}(\cD_0\times_{\cC_0}\cC_1)\\
        \cong &\cD_0\times_{\cC_0}\cC_1.
    \end{split}
\end{equation*}
Since the restriction of all the morphisms to $(\cD_0\times_{\cC_0}\cC_1)$ is isomorphic to identity, we conclude that the whole composition is also isomorphic to identity. The other triangle identity is proved in a similar manner:
\begin{equation*}
    \begin{split}
        f^*\cong&\cC_1\times_{\cC_0} \cD_0\\
        \cong&\underset{[n]\in\Delta^\op}{\colim}(\cC_1\times_{\cC_0} \cD_0)\times_{\cD_0}\cD_n\times_{\cD_0}\cD_1\\
        \xrightarrow{\id\times \id\times f}&\underset{[n]\in\Delta^\op}{\colim}(\cC_1\times_{\cC_0} \cD_0)\times_{\cD_0}\cD_n\times_{\cD_0}\cC_1\\
        \cong&\underset{[n]\in\Delta^\op}{\colim}(\cC_1\times_{\cC_0} \cD_0)\times_{\cD_0}(\cD_n\times_{\cD_0}\cC_1)\\
        \xrightarrow{\id\times f}&\underset{[n]\in\Delta^\op}{\colim}(\cC_1\times_{\cC_0} \cD_0)\times_{\cD_0}\cC_{n+1}\\
        \cong&\cC_1\times_{\cC_0} \cD_0,
    \end{split}
\end{equation*}
which is isomorphic to the identity for the same reason.
\end{proof}
\begin{construction}\label{constr:rfib}
Let $M:\cC\nrightarrow\cD$ be a correspondence. We will construct from it a simplicial space $\widetilde{M}$ as follows: first consider the functor $F:\Delta\xrightarrow{\Delta}\Delta\times\Delta\rightarrow\Delta_{/[1]}$ sending $[n]$ to the morphism $f:[2n+1]\rightarrow[1]$ that sends $[0,n]$ to $0$, the segment $[n,n+1]$ to $[0,1]$ and the rest to $1$. We define $\widetilde{M}\bydef F^*M$, where $M$ is viewed as a presheaf on $\Delta_{/[1]}$. Observe that there are natural transformations $\alpha:j_i\rightarrow F, i\in\{1,2\}$ where $j_i:\Delta\rightarrow\Delta_{/[1]}$ is a functor sending $[n]$ to a constant morphism $[n]\rightarrow[1]$ with the target $\{i\}$. It induces a natural functor $p:\widetilde{M}\rightarrow\cC\times \cD$. 
\end{construction}
\begin{lemma}\label{lem:mon_slice}
Assume we are given an $\infty$-category $\cC$ equipped with a monad $T$ and a fixed object $x\in\cC$. Then giving a structure of a $T$-algebra on $x$ is equivalent to lifting $T$ to a monad $\widetilde{T}$ on $\cC_{/x}$. The category of algebras for the monad $\widetilde{T}$ is then equivalent to the category of $T$-algebras over $x$.
\end{lemma}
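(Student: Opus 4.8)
The plan is to realize the lift $\widetilde{T}$ as the monad of a \emph{sliced} monadic adjunction, so that both assertions follow from Barr--Beck--Lurie monadicity. Write $F\dashv V$ for the monadic adjunction attached to $T$, with $V\colon\mathrm{Alg}_T(\cC)\to\cC$ and $T\cong VF$, and let $U\colon\cC_{/x}\to\cC$ be the projection. The $1$-categorical shadow of the statement is that any lift is forced to send $(y\xrightarrow{f}x)$ to $(Ty\xrightarrow{\theta\circ Tf}x)$, where $\theta\colon Tx\to x$ is the algebra structure; all of the work lies in upgrading this to an equivalence between the space of lifts and the space of algebra structures, and to an equivalence of the $\infty$-categories of algebras.

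First I would treat the second assertion together with the construction of $\widetilde{T}$. Given a $T$-algebra structure, i.e. an object $\hat x\in\mathrm{Alg}_T(\cC)$ with $V\hat x\cong x$, I would slice the adjunction at $\hat x$: the induced functor $V'\colon\mathrm{Alg}_T(\cC)_{/\hat x}\to\cC_{/x}$ admits a left adjoint $F'$ sending $(b\xrightarrow{g}x)$ to $(Fb\xrightarrow{\epsilon_{\hat x}\circ Fg}\hat x)$, as one checks on mapping spaces using the universal property of slices (the fiber of $\mathrm{Map}(b,Va)\to\mathrm{Map}(b,V\hat x)$ matches the fiber of $\mathrm{Map}(Fb,a)\to\mathrm{Map}(Fb,\hat x)$). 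Unwinding, the monad $\widetilde{T}:=V'F'$ sends $(y\xrightarrow{f}x)$ to $(Ty\xrightarrow{\theta\circ Tf}x)$ with $\theta=V\epsilon_{\hat x}$, so $U\widetilde{T}\cong TU$ and $\widetilde{T}$ is a lift. I would then invoke that monadicity is inherited by slices: $V'$ is conservative because $U V'\cong V\,\mathrm{pr}$ and both $V$ and the projection $\mathrm{Alg}_T(\cC)_{/\hat x}\to\mathrm{Alg}_T(\cC)$ are conservative, and $V'$ preserves colimits of $V'$-split simplicial objects because over-category projections create colimits, so such objects project to $V$-split ones whose colimits $V$ preserves. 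Barr--Beck--Lurie then identifies $\mathrm{Alg}_{\widetilde{T}}(\cC_{/x})\simeq\mathrm{Alg}_T(\cC)_{/\hat x}$, which is the second assertion.

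For the equivalence of structures, I would build the inverse by evaluating a lift at the terminal object $*=\id_x\in\cC_{/x}$. For any lift $\widetilde{T}$, writing $\widetilde{T}(*)=(Tx\xrightarrow{\theta}x)$ and using $U\widetilde{T}\cong TU$, the unique map $f\colon(y,f)\to *$ and its image $\widetilde{T}(f)$, which lies over $Tf$, force $\widetilde{T}(y,f)\cong(Ty\xrightarrow{\theta\circ Tf}x)$; thus the underlying functor of any lift is determined by $\theta$. Moreover the unit of $\widetilde{T}$ at $*$ lies over $\eta_x$ and, being a morphism in $\cC_{/x}$ out of the terminal object, encodes the triangle $\theta\circ\eta_x\cong\id_x$, while the multiplication at $*$ lies over $\mu_x$ with source $\widetilde{T}\widetilde{T}(*)\cong(T^2x\xrightarrow{\theta\circ T\theta}x)$ and hence encodes $\theta\circ\mu_x\cong\theta\circ T\theta$. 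These are exactly the unit and associativity constraints making $\theta$ a $T$-algebra structure on $x$, so evaluation-at-$*$ lands in algebra structures and is inverse to the slicing construction on underlying data.

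The main obstacle is upgrading this matching of low-dimensional data to an equivalence of the full spaces of structures: one must check that the coherence data of $\widetilde{T}$ as a monad on $\cC_{/x}$ compatible with $U\widetilde{T}\cong TU$ is carried isomorphically onto the coherence data of a $T$-algebra on $x$, not merely the morphisms $\theta$, $\theta\circ\eta_x$, $\theta\circ\mu_x$. I would handle this by exhibiting both the space of lifts and the space of algebra structures as sections of the same categorical object, assembling the slicing construction and evaluation-at-$*$ into mutually inverse functors between the $\infty$-category of lifts and the $\infty$-category of algebra structures, and leaning on the universal property of $\mathrm{Alg}_T(\cC)$ as the terminal monadic resolution of $T$ over $\cC$. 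The monadicity-of-slices argument of the previous paragraph is precisely what certifies that no information is lost in the passage $\hat x\mapsto\widetilde{T}$.
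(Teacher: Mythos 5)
Your treatment of the second assertion is sound and is genuinely different from the paper's: you slice the monadic adjunction at $\hat x$, check that $V'\colon\mathrm{Alg}_T(\cC)_{/\hat x}\to\cC_{/x}$ is conservative and preserves colimits of $V'$-split simplicial objects (both inherited from $V$ because slice projections are conservative and create colimits), and conclude by Barr--Beck--Lurie. The paper instead identifies $\widetilde{T}$-algebra structures on $f\colon y\to x$ with $T$-algebra structures on $y$ compatible with $f$ by direct inspection of the lifted powers $T^n$; your route is arguably cleaner. Likewise the direction ``algebra structure $\Rightarrow$ lift'' via $\widetilde{T}:=V'F'$ is an honest construction, with $U\widetilde{T}\cong TU$ canonical since $\mathrm{pr}\circ F'\cong F\circ U$ and $U\circ V'\cong V\circ\mathrm{pr}$.

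The gap is exactly where you flagged it, and your proposed repair does not close it. For ``lift $\Rightarrow$ algebra structure'' you evaluate at the terminal object $\id_x$ and extract $\theta$, the unit triangle $\theta\circ\eta_x\cong\id_x$, and the multiplication square $\theta\circ\mu_x\cong\theta\circ T\theta$; in the $\infty$-categorical setting this is only the bottom layer of an infinite tower of coherence data, while the lemma asserts an equivalence of the full spaces of structures (and you must also show this map is inverse to the slicing construction, which you again only check on underlying data). Saying you will ``exhibit both spaces as sections of the same categorical object'' and ``lean on the universal property of $\mathrm{Alg}_T(\cC)$'' names the desired conclusion rather than supplying an argument: the universal property of $\mathrm{Alg}_T(\cC)$ concerns adjunctions over $\cC$ inducing $T$, and a lift of $T$ to $\cC_{/x}$ is not a priori such a thing. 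This is precisely where the paper's proof does its real work. By \cref{cor:lax_mon} a monad is a functor of bicategories $\rB\Delta_a\rightarrow\cat$, so a lift of $T$ is a lift of this functor; the preliminary steps of the paper's proof (a lift of an endofunctor $F$ is the same datum as a map $Fx\to x$, a $2$-cell between lifted endofunctors lifts iff one triangle commutes, and lifts compose via $m_G\circ Gm_F$) then convert a lift into exactly the system of maps $m^n_x\colon T^nx\to x$ compatible with every $2$-cell of $\rB\Delta_a$ --- which is the $\Delta_a$-indexed description of a $T$-algebra structure. Indexing both sides over the same combinatorial object is what makes the coherence identification automatic; without some such device your two constructions cannot be compared as maps of spaces. (If you want to stay in your framework, a workable fix is to note that the terminal object of $\cC_{/x}$ carries an essentially unique $\widetilde{T}$-algebra structure and that a lift induces a functor $\mathrm{Alg}_{\widetilde{T}}(\cC_{/x})\to\mathrm{Alg}_T(\cC)$ over $U$, whose value on that algebra is a coherent $T$-algebra structure on $x$; but the mutual-inverse verification still has to be carried out.)
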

\begin{proof}
First assume we have an endofunctor $F:\cC\rightarrow\cC$, then it is easy to see that lifting $F$ to an endofunctor of $\cC_{/x}$ is equivalent to giving a morphism $m:Fx\rightarrow x$. Indeed, this is certainly necessary, and conversely, given a morphism like this we define $F(y\xrightarrow{f} x)\bydef Fy\xrightarrow{Ff} Fx\xrightarrow{m} x$. Similarly, assume we have a natural transformation $\alpha:F\rightarrow G$ where both $F$ and $G$ lift to $\cC_{/x}$. Then $\alpha$ lifts to $\cC_{/x}$ if and only if the following diagram commutes 
\[
\begin{tikzcd}
Fx\arrow[rr,"\alpha_x"]\arrow[dr,"m_F"]&{}&Gx\arrow[dl, "m_G" swap]\\
{}&x
\end{tikzcd}.
\]
Finally, observe that given two endofunctors $F$ and $G$ that both lift to $\cC_{/x}$, the lifting of the composition $G\circ F$ corresponds to $GFx\xrightarrow{Gm_F}Gx\xrightarrow{m_G}x$. Combining this with the fact that a monad on $\cC$ is equivalent by \cref{cor:lax_mon} to a functor of bicategories $\rB\Delta_a\rightarrow \cat$ sending $*\in\rB\Delta_a$ to $\cC$, we see that lifting a monad $T$ to $\cC_{/x}$ is equivalent to giving a morphism $m_x:Tx\rightarrow x$ such that for every $s:[n]\rightarrow[l]$ the following diagram commutes
\[
\begin{tikzcd}
T^n x\arrow[rr,"s_T"]\arrow[dr,"m_x^n"]&{}&T^l x\arrow[dl, "m_x^l" swap]\\
{}&x
\end{tikzcd}.
\]
In other words, this is equivalent to giving $x$ the structure of a $T$-algebra.\par
To prove the second claim, observe that the lifting of $T^n$ to $\cC_{/x}$ sends $f:y\rightarrow x$ to $T^n y\xrightarrow{T^n f}T^n x\xrightarrow{m^n}x$, so giving $f$ the structure of a $T$-algebra is equivalent to giving a morphism $m_y:Ty\rightarrow y$ endowing $y$ with the $T$-algebra structure such that it makes the following diagram commute
\[
\begin{tikzcd}[row sep=huge, column sep=huge]
T^n y\arrow[r, "T^n f"]\arrow[d, "m_y^n"]&T^n x\arrow[d, "m^n"]\\
y\arrow[r, "f"]&x
\end{tikzcd}.
\]
This is manifestly the same data necessary to give $y$ a structure of a $T$-algebra over $x$.
\end{proof}
\begin{prop}\label{prop:rfib}
$\widetilde{M}$ is a Segal space. Moreover, postcomposition with $p_!p^*$ induces an isomorphism between $\mor_{\mathrm{Corr}}(*, \widetilde{M})$ and $\mor_{\mathrm{Corr}}(\cC, \cD)_{/M}$.
\end{prop}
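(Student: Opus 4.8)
The plan is to prove the two assertions separately: first the Segal condition by a direct computation of $\widetilde M$, and then the asserted isomorphism by recognizing both mapping categories as a common presheaf category.

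For the first claim I would unwind $\widetilde M = F^*M$. Since $F$ is the diagonal $\Delta\to\Delta\times\Delta$ followed by the join $\Delta\times\Delta\to\Delta_{/[1]}$, the morphism $f_n=F([n])$ is the cellular map $[2n+1]\to[1]$ collapsing the first $n+1$ and the last $n+1$ vertices, with a single active edge in the middle. Applying the Segal condition for $M$, viewed as a $\ccat^\cell_{/[1]}$-Segal space, to the spine of $f_n$ yields a natural identification
\[\widetilde M_n = M(f_n)\cong\cC_n\times_{\cC_0}M\times_{\cD_0}\cD_n,\]
where the $n$ edges over $0$ contribute $\cC_n$, the active edge contributes $M=M(\id_{[1]})$, and the $n$ edges over $1$ contribute $\cD_n$. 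I would then read off the spine maps induced by $F$: the $i$-th edge map sends a chain $(c_\bullet,m,d_\bullet)$ to $(c_i\to c_{i+1},\,c_{i+1}\to d_i,\,d_i\to d_{i+1})$ with the middle term the composite heteromorphism, and the two faces $\widetilde M_1\rightrightarrows\widetilde M_0=M$ are $(\gamma,m,\delta)\mapsto m\gamma$ and $(\gamma,m,\delta)\mapsto\delta m$. Granting the compatibility of consecutive edges (which holds since both agree with the composite $c_{i+1}\to d_{i+1}$), the Segal map for $\widetilde M$ becomes the canonical comparison $\cC_n\times_{\cC_0}M\times_{\cD_0}\cD_n\to(\cC_1\times_{\cC_0}M\times_{\cD_0}\cD_1)\times_M\cdots\times_M(\cC_1\times_{\cC_0}M\times_{\cD_0}\cD_1)$, which is an equivalence because it is obtained from the Segal equivalences for $\cC$ and $\cD$ by a routine rearrangement of iterated pullbacks in $\cS$. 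This exhibits $\widetilde M$ as the two-sided bar construction $B(\cC,M,\cD)$, i.e. the (two-sided) category of elements of the correspondence $M$.

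For the second claim I would reduce both sides to presheaf categories. On one side, \cref{prop:corr_mor} presents $\mor_\corr(*,\widetilde M)$ as monadic over $\cS_{/\widetilde M_0}=\cS_{/M}$ with free monad $S(N)=N\times_{\widetilde M_0}\widetilde M_1$, so that $\mor_\corr(*,\widetilde M)$ is the category of modules over $\widetilde M$; via \cref{cor:dual} this is $\cP(\widetilde M^\op)$ (using that $\cP$ is insensitive to completion, as in \cref{cor:corr_comp}). On the other side, $M$ is an algebra for the free correspondence monad $T$ of \cref{prop:corr_mor} on $\cS_{/\cC_0\times\cD_0}$, so \cref{lem:mon_slice} identifies $\mor_\corr(\cC,\cD)_{/M}$ with the algebras for the induced monad $\widetilde T$ on $(\cS_{/\cC_0\times\cD_0})_{/M}\cong\cS_{/M}$, where $\widetilde T(N)=\cC_1\times_{\cC_0}N\times_{\cD_0}\cD_1$. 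The crux is then the chain
\[\mor_\corr(\cC,\cD)_{/M}\cong\cP(\cC\times\cD^\op)_{/M}\cong\cP\big((\cC\times\cD^\op)_{/M}\big)\cong\cP(\widetilde M^\op),\]
whose middle isomorphism is the slice-of-presheaves equivalence $\cP((\cC\times\cD^\op)_{/M})\cong\cP(\cC\times\cD^\op)_{/M}$ of \cite[Lemma 4.1]{ayala2017fibrations} (already used in \cref{prop:slice}), and whose last isomorphism uses the identification from Part 1 of $\widetilde M$ with (the opposite of) the category of elements $(\cC\times\cD^\op)_{/M}$. Finally I would check that the composite equivalence is the one induced by $p$: postcomposition with the correspondence $p_!$ sends a module $N$ to the correspondence $(c,d)\mapsto\colim_{m\in M(c,d)}N(m)$ together with its augmentation to $M\cong p_!(*)$, and \cref{prop:adj} ($p_!\dashv p^*$) supplies the inverse $p^*$; this is exactly the category-of-elements equivalence spelled out as $p_!p^*$.

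The main obstacle is this crux step, for two independent reasons. First, one must pin down the variance: the explicit formula $\widetilde M_n=\cC_n\times_{\cC_0}M\times_{\cD_0}\cD_n$ has to be matched on the nose with the slice $(\cC\times\cD^\op)_{/M}$, so that $p$ becomes the canonical projection out of a category of elements, and this forces careful tracking of the opposite on the $\cC$-factor through \cref{cor:dual}. Second, one should \emph{not} try to prove this by comparing the two monads directly: $\widetilde T$ and $S$ are not isomorphic as endofunctors (already for $\cD=*$ with $M$ representable, a single module supported at one element has $\widetilde T$-value and $S$-value of different sizes), so the equivalence of their categories of algebras is a genuinely Morita-type statement that must be extracted from the category-of-elements input rather than from a formal comparison of monads. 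The completion bookkeeping in the passage to $\cP$ is a comparatively minor point, controlled by \cref{cor:corr_comp}.
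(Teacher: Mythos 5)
Your proposal contains a genuine error that propagates through both halves: you have the simplicial structure maps of $\widetilde M$ wrong. The functor $F$ of \cref{constr:rfib} must be read with the edgewise-subdivision twist, i.e.\ on a morphism $\alpha:[k]\rightarrow[n]$ it acts by $\alpha^{\mathrm{op}}\star\alpha:[2k+1]\rightarrow[2n+1]$, so that the two faces of an edge $(\gamma,m,\delta)\in\cC_1\times_{\cC_0}M\times_{\cD_0}\cD_1$ are the \emph{middle} heteromorphism $m$ (restriction along $\{1,2\}\subset[3]$) and the \emph{outer composite} $\delta m\gamma$ (restriction along $\{0,3\}$); morphisms of $\widetilde M$ thus go $m\rightarrow\delta m\gamma$, exhibiting $\widetilde M$ as the category of elements of $M$ regarded as a functor on $\cC^{\mathrm{op}}\times\cD$ (for $M=\id_\cC$ this is the twisted arrow category). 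With the faces you wrote down, $(\gamma,m,\delta)\mapsto m\gamma$ and $(\gamma,m,\delta)\mapsto\delta m$, the statement you are proving in Part 1 is simply \emph{false}: take $\cC=\cD=[1]$ with arrow $u:0\rightarrow1$ and $M$ the identity correspondence; then $\bigl((\id_0,\id_0,u),(u,\id_1,\id_1)\bigr)$ is a compatible pair in $\widetilde M_1\times_{\widetilde M_0}\widetilde M_1$ (the matching faces both equal $u$), but it admits no preimage in $\widetilde M_2$, since a filler would require a morphism $1\rightarrow0$. So no ``routine rearrangement of iterated pullbacks'' can establish the Segal condition for your version; the rearrangement works precisely because, with the correct twist, the gluing in the Segal condition happens along the projection $\widetilde M_1\rightarrow M$ onto the middle factor, which does cancel in iterated pullbacks.

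The same variance slip invalidates the claim on which your whole Part 2 strategy rests, namely that a direct comparison of the two monads ``must fail.'' With the correct faces, the source map $\widetilde M_1\rightarrow\widetilde M_0$ \emph{is} the projection $M\times_{\cC_0\times\cD_0}(\cC_1\times\cD_1)\rightarrow M$, so the free $\widetilde M$-correspondence monad of \cref{prop:corr_mor} is $K\mapsto K\times_M\widetilde M_1\cong K\times_{\cC_0\times\cD_0}(\cC_1\times\cD_1)$, with structure map the outer composite $\delta\bar k\gamma$; this is, on the nose, the lift (in the sense of \cref{lem:mon_slice}) of the free correspondence monad on $\cS_{/\cC_0\times\cD_0}$ determined by the algebra structure of $M$. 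Your putative counterexample ($\cD=*$, $M$ representable) computes the fiber of the wrong map: the space of factorizations $\bar k\simeq m\gamma$, rather than the fiber of the middle projection, which is just a space of composable pairs. This direct identification of monads, followed by \cref{lem:mon_slice}, is exactly the paper's proof; it is shorter than your presheaf-theoretic route and avoids both the completion bookkeeping and the detour through $\mor_\corr(\cC,\cD)\cong\cP(\widehat\cD^{\mathrm{op}}\times\widehat\cC)$, which requires extra care since $\cC$, $\cD$ and $\widetilde M$ are not assumed complete. Your Part 2 outline (presheaves on the category of elements) could likely be repaired once the twist is fixed, since the identification of $\widetilde M$ with the category of elements is exactly what the twist provides; but as written, both that identification and your justification for rejecting the monad comparison are unsound.
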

\begin{proof}
First, observe that $F$ is \textit{not} a morphism of algebraic patterns. We will prove that 
\[\widetilde{M}([n])\cong\underset{e\in\mElo{Cat}{[n]}}{\lim}\widetilde{M}(e)\]
by induction on $n$. Observe that $\widetilde{M}([0])\cong M$ and $\widetilde{M}([1])\cong \cC_1\times_{\cC_0}M\times_{\cD_0}\cD_1\cong M\times_{(\cC_0\times \cD_0)}(\cC_1\times\cD_1)$. Assume that the claim is true for $(n-1)$, then we have 
\begin{equation*}
    \begin{split}
        \widetilde{M}([n])\cong& M\times_{(\cC_0\times\cD_0)}(\cC_n\times\cD_n)\\
        \cong&  M\times_{(\cC_0\times\cD_0)}(\cC_{n-1}\times\cD_{n-1})\times_M(M\times_{(\cC_0\times\cD_0)}(\cC_1\times\cD_1))\\
        \cong&\widetilde{M}([n-1])\times_{\widetilde{M}[0]}\widetilde{M}([1]),
    \end{split}
\end{equation*}
so $\widetilde{M}$ is indeed a Segal object.\par
To prove the last statement we will use \cref{prop:corr_mor}. Observe that $\fib(*,\widetilde{M})^\el\cong\cS_{/M}\cong (\cS_{/\cC_0\times \cD_0})_{/M}$. Moreover, the free $\widetilde{M}$-fibration monad is given by 
\[K\mapsto K\times_M M\times_{\cC_0\times \cD_0} (\cC_1\times \cD_1)\cong K\times_{\cC_0\times \cD_0} (\cC_1\times \cD_1).\] 
So in particular it is a lifting of the free $(\cC\times\cD^\op)$-fibration monad to $\fib(\cC,\cD)_{/M}^\el$, so the last claim of the proposition now follows from \cref{lem:mon_slice}.
\end{proof}
\begin{prop}
If we denote by $p_1:\widetilde{M}\rightarrow\cC$ and $p_2:\widetilde{M}\rightarrow\cD$ the natural projections, there is a natural isomorphism
\[M\cong p_{2,!}\circ p_1^*\]
\end{prop}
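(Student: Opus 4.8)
The strategy is to imitate the proof of \cref{prop:fun_comp}: I would compute the composite $p_{2,!}\circ p_1^*$ as a colimit over $\Delta^\op$ of a two-sided bar construction, identify the resulting simplicial space with the d\'ecalage of $\widetilde{M}_\bullet$, and then conclude by the very same cofinality statement that was used there.

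First I would record the two factors. Applying \cref{not_cat} and the computations of \cref{prop:adj} to the projections $p_1\colon\widetilde{M}\to\cC$ and $p_2\colon\widetilde{M}\to\cD$, and using $\widetilde{M}_0\cong M$ from \cref{prop:rfib}, the underlying spaces are
\[p_1^*\cong\cC_1\times_{\cC_0}\widetilde{M}_0,\qquad p_{2,!}\cong\widetilde{M}_0\times_{\cD_0}\cD_1,\]
where the map $\widetilde{M}_0\cong M\to\cC_0$ (resp. $\to\cD_0$) is the relevant component of the structure map of $M$. By the description of composition in $\corr$ obtained in the proof of \cref{prop:corr} (and used in \cref{prop:fact_1}) we have
\[p_{2,!}\circ p_1^*\cong\underset{[n]\in\Delta^\op}{\colim}\;p_1^*\times_{\widetilde{M}_0}\widetilde{M}_n\times_{\widetilde{M}_0}p_{2,!},\]
the two fibre products being taken along the initial- and terminal-vertex maps $\widetilde{M}_n\to\widetilde{M}_0$. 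Substituting $\widetilde{M}_n\cong\cC_n\times_{\cC_0}M\times_{\cD_0}\cD_n$ from \cref{prop:rfib} and applying the Segal condition, the left-hand fibre product prepends the $\cC_1$-factor of $p_1^*$ to the $\cC$-chain and the right-hand one appends the $\cD_1$-factor of $p_{2,!}$ to the $\cD$-chain, so that the $n$-th term simplifies to $\cC_{n+1}\times_{\cC_0}M\times_{\cD_0}\cD_{n+1}\cong\widetilde{M}_{n+1}$.

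It then remains to compute $\colim_{[n]\in\Delta^\op}\widetilde{M}_{n+1}$. I would argue that, under the identifications above, this simplicial space is exactly the restriction of $\widetilde{M}_\bullet$ to $\Delta^\op_{-\infty}$ precomposed with the initial-vertex shift $\psi\colon\Delta\to\Delta_{-\infty}$, $[n]\mapsto[n+1]$ (adding a new minimal vertex; note that $\psi$ indeed lands in the subcategory of morphisms preserving the minimal element): the inner faces of the bar compose adjacent morphisms of $\widetilde{M}$, while the two outer faces absorb the prepended and appended edges, reproducing precisely the faces of $\widetilde{M}$ dictated by the cofaces of $\psi$. Since $\psi^\op\colon\Delta^\op\to\Delta^\op_{-\infty}$ is cofinal — this is the cofinality statement established in the proof of \cref{prop:corr} and reused in \cref{prop:fun_comp} — and $[0]$ is a final object of $\Delta^\op_{-\infty}$, the colimit collapses:
\[p_{2,!}\circ p_1^*\cong\underset{[n]\in\Delta^\op}{\colim}\widetilde{M}_{n+1}\cong\underset{[k]\in\Delta^\op_{-\infty}}{\colim}\widetilde{M}_k\cong\widetilde{M}_0\cong M.\]
A final check that all of these identifications are compatible with the projections to $\cC_0$ and $\cD_0$ upgrades this to an equivalence in $\mor_{\corr}(\cC,\cD)$, not merely on underlying spaces.

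The step I expect to be the main obstacle is the identification of the bar-construction simplicial space with $\widetilde{M}\circ\psi^\op$: one must verify honestly that the two outer face maps of the two-sided bar, transported along $p_1^*\times_{\widetilde{M}_0}\widetilde{M}_n\times_{\widetilde{M}_0}p_{2,!}\cong\widetilde{M}_{n+1}$, agree with the face maps of $\widetilde{M}$ prescribed by the shift $\psi$ (equivalently, that the degeneracy of $\widetilde{M}$ inserting an identity at the free end furnishes the required extra degeneracy). Once this bookkeeping is in place, the contraction of the d\'ecalage onto its space of objects is formal and proceeds exactly as in \cref{prop:fun_comp}.
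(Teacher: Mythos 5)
Your strategy is the same as the paper's (its proof is precisely: write the bar colimit, substitute $\widetilde{M}_n\cong\cC_n\times_{\cC_0}M\times_{\cD_0}\cD_n$, and collapse the colimit ``as in \cref{prop:fact_1}''), and your computation of the terms $B_n\bydef p_1^*\times_{\widetilde{M}_0}\widetilde{M}_n\times_{\widetilde{M}_0}p_{2,!}\cong\cC_{n+1}\times_{\cC_0}M\times_{\cD_0}\cD_{n+1}\cong\widetilde{M}_{n+1}$ is correct. However, the identification of simplicial structures that you yourself flag as the crux --- $B_\bullet\cong\widetilde{M}\circ\psi^\op$ for the single initial-vertex shift $\psi\colon[n]\mapsto[n+1]$ --- is false, and carrying out the verification you defer would reveal this. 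Since $\widetilde{M}=F^*M$ with $F$ factoring through the diagonal (\cref{constr:rfib}), every face map of $\widetilde{M}$ deletes the \emph{same} vertex index in the $\cC$-chain and in the $\cD$-chain; in particular no face of $\widetilde{M}\circ\psi^\op$ ever deletes the initial $\cD$-vertex. The bar faces, by contrast, are offset by one between the two chains: writing a point of $B_n$ as $(e_0\rightarrow\cdots\rightarrow e_{n+1},\,m,\,f_0\rightarrow\cdots\rightarrow f_{n+1})$, the face $d_i$ (for every $0\le i\le n$, outer faces included) deletes $e_{i+1}$ from the $\cC$-chain but $f_i$ from the $\cD$-chain; for instance $d_0$ composes $e_0\rightarrow e_1\rightarrow e_2$ into the $p_1^*$-factor while simultaneously pushing the first $\cD$-edge $f_0\rightarrow f_1$ into the crossing. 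So $B_\bullet$ carries the \emph{initial}-vertex shift on the $\cC$-side but the \emph{final}-vertex shift on the $\cD$-side, and it is not isomorphic to the d\'ecalage $\widetilde{M}\circ\psi^\op$ via your identification; the cofinality step, which needs $B_\bullet$ to be a restriction along $\psi^\op$, therefore does not apply as stated.

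The repair is exactly the argument of \cref{prop:fact_1}, which is what the paper's proof invokes: identify $B_\bullet$ with the restriction of the \emph{bisimplicial} object $([k],[l])\mapsto\cC_k\times_{\cC_0}M\times_{\cD_0}\cD_l$ along $[n]\mapsto([n+1],[n+1])$, where the first coordinate is shifted into $\Delta^\op_{-\infty}$ and the second into $\Delta^\op_{\infty}$; then pass from the diagonal colimit to a colimit over $\Delta^\op\times\Delta^\op$ (by transitivity of Kan extensions, as in \cref{prop:fact_1}, or by siftedness of $\Delta^\op$), apply cofinality of the two shift functors in each variable separately, and evaluate at the final object $([0],[0])$ of $\Delta^\op_{-\infty}\times\Delta^\op_{\infty}$, which gives $\cC_0\times_{\cC_0}M\times_{\cD_0}\cD_0\cong M$. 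With this replacement your proof goes through and coincides with the paper's.
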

\begin{proof}
Observe that by definition we have
\begin{equation*}
    \begin{split}
        p_{2,!}\circ p_1^*\cong &\underset{[n]\in\Delta^\op}{\colim}\cC_1\times_{\cC_0} M\times_M M_n\times_m M\times_{\cD_0} \cD_1\\
        \cong &\underset{[n]\in\Delta^\op}{\colim} \cC_1\times_{\cC_0}\cC_n\times_{\cC_0}M\times_{\cD_0}\cD_n\times_{\cD_0}\cD_1\\
        \cong & \underset{[n]\in\Delta_{-\infty}^\op}{\colim}\cC_n\times_{\cC_0} M\times_{\cD_0}\cD_n\\
        \cong &M,
    \end{split}
\end{equation*}
where the isomorphisms follow as in \cref{prop:fact_1}.
\end{proof}
\begin{defn}
Denote by $c_1$ a Segal space with two objects and an invertible morphism between them. We call a Segal space $\cC$ \textit{complete} if any morphism $c_1\rightarrow\cC$ factors through $e:\cC_0\hookrightarrow\cC$. We denote the full subcategory of $\mathrm{Cat}$ on complete Segal spaces by $\cat^\comp$ and the full subcategory of $\corr$ on complete Segal spaces by $\corr^\comp$.
\end{defn}
\begin{prop}\label{prop:compl}
The natural inclusion $i^{\mathrm{comp}}:\mathrm{Cat^{comp}}\hookrightarrow\mathrm{Cat}$ admits a left adjoint $(\widehat{-}):\mathrm{Cat}\rightarrow\mathrm{Cat^{comp}}$. Moreover, for any Segal space $\cC$ we have $\cC_1\cong\cC_0\times_{\widehat{\cC_0}}\widehat{\cC_1}\times_{\widehat{\cC_0}}\cC_0$.
\end{prop}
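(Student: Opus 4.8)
The plan is to exhibit the inclusion $i^\comp:\cat^\comp\hookrightarrow\cat$ as a reflective localization and then to compute the unit. For the existence of the left adjoint I would note that $\cat=\seg_\ccat(\cS)$ is presentable, being an accessible localization of $\cP(\Delta)$, and that by the definition of completeness a Segal space $\cC$ is complete precisely when it is local with respect to the single map $\pi:c_1\rightarrow *$. Indeed $\mor_\cat(*,\cC)\cong\cC_0$ while $\mor_\cat(c_1,\cC)$ is the space of equivalences in $\cC$, and $\pi^*$ is the degeneracy $s_0$; demanding that it be an isomorphism is exactly the condition that every map $c_1\rightarrow\cC$ factor through $e:\cC_0\hookrightarrow\cC$. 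Since localizing a presentable category at a single morphism produces a reflective accessible localization (cf.\ \cite[Section 5.5.4]{lurie2009higher}), the full subcategory $\cat^\comp$ of $\pi$-local objects is reflective, and this furnishes the left adjoint $\widehat{(-)}$.

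For the formula I would first reformulate it. Writing $u\bydef\eta_0:\cC_0\rightarrow\widehat{\cC_0}$ for the unit $\eta:\cC\rightarrow\widehat\cC$ on objects (here $\widehat{\cC_0}$ and $\widehat{\cC_1}$ denote the spaces of objects and morphisms of $\widehat\cC$), the asserted isomorphism is precisely the statement that the square with vertices $\cC_1$, $\widehat{\cC_1}$, $\cC_0\times\cC_0$, $\widehat{\cC_0}\times\widehat{\cC_0}$ formed by the source–target maps is cartesian, i.e.\ that $\eta$ is fully faithful. To prove this I would introduce the base-changed Segal space $u^*\widehat\cC$ obtained by pulling back $\widehat\cC$ along $u$ on each vertex, so that
\[(u^*\widehat\cC)_n\cong\cC_0\times_{\widehat{\cC_0}}\widehat{\cC_n}\times_{\widehat{\cC_0}}\cC_0.\]
This is again a Segal space, since the Segal condition for $\widehat\cC$ is a limit condition preserved by the base change, and by construction the projection $q:u^*\widehat\cC\rightarrow\widehat\cC$ is fully faithful with $q_0\cong u$. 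The universal property of the base change then supplies a comparison $g:\cC\rightarrow u^*\widehat\cC$ with $q\circ g\cong\eta$ and $g_0\cong\id_{\cC_0}$, and the formula at level $1$ becomes the assertion that $g$ is an isomorphism.

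To see that $g$ is an isomorphism I would pass to completions. The completion unit is essentially surjective on objects, so $u$ is essentially surjective; combined with full faithfulness this makes $q$ a Dwyer--Kan equivalence, whence $\widehat q:\widehat{u^*\widehat\cC}\xrightarrow{\sim}\widehat\cC$ is an isomorphism. Since $q\circ g\cong\eta$ and $\widehat\eta\cong\id_{\widehat\cC}$ ($\widehat\cC$ being already complete), applying $\widehat{(-)}$ shows $\widehat g\cong\widehat q^{-1}$ is an isomorphism, so $g$ is a Dwyer--Kan equivalence; as $g_0\cong\id_{\cC_0}$ is an isomorphism on objects, full faithfulness makes it an isomorphism on $1$-simplices and the Segal condition propagates this to all levels, so $g$ is an isomorphism. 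The main obstacle is the categorical input that the completion map is essentially surjective and that Dwyer--Kan equivalences are detected by the completion functor; these I would take from the completion construction of \cite{ayala2018flagged}, which also identifies the reconstruction formula above as the flagged-category description of a Segal space in terms of its underlying complete Segal space together with the flag $u:\cC_0\rightarrow\widehat{\cC_0}$.
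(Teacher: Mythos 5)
Your proposal is correct in substance, but it takes a genuinely different route from the paper: the paper's entire proof of this proposition is a citation of \cite[Lemma 1.9]{ayala2018flagged} and \cite[Lemma 1.14]{ayala2018flagged}, whereas you reconstruct the argument. You obtain the left adjoint from presentability of $\cat$ together with localization at the single map $c_1\rightarrow *$ (via \cite[Section 5.5.4]{lurie2009higher}), and you correctly recognize that the displayed isomorphism is precisely full faithfulness of the unit $\eta:\cC\rightarrow\widehat{\cC}$, which you then deduce from a base-change construction plus two facts about completion: the unit is essentially surjective on objects, and completion inverts exactly the Dwyer--Kan equivalences. Those two facts are exactly the content of the Ayala--Francis results the paper cites, so your proof is an unpacking of the same source rather than an independent argument; what it buys is transparency (in particular the identification of the formula with full faithfulness of $\eta$, which the paper leaves implicit), while the paper's citation buys brevity.

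One slip should be corrected. Your displayed formula
\[(u^*\widehat{\cC})_n\cong\cC_0\times_{\widehat{\cC_0}}\widehat{\cC_n}\times_{\widehat{\cC_0}}\cC_0\]
pulls back only at the two endpoint vertices. With that definition $u^*\widehat{\cC}$ would fail the Segal condition for $n\geq 2$ (the inner vertices of a composable string are not lifted to $\cC_0$, so $(u^*\widehat{\cC})_2$ would differ from $(u^*\widehat{\cC})_1\times_{(u^*\widehat{\cC})_0}(u^*\widehat{\cC})_1$ by a factor of $\cC_0\times_{\widehat{\cC_0}}$ in the middle), and at $n=0$ it returns $\cC_0\times_{\widehat{\cC_0}}\cC_0$ rather than $\cC_0$. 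The construction you describe verbally, pulling back at every vertex, is the right one:
\[(u^*\widehat{\cC})_n\cong\widehat{\cC_n}\times_{(\widehat{\cC_0})^{n+1}}(\cC_0)^{n+1}.\]
Since only the $n=1$ term, where the two formulas agree, enters your final argument, and the Segal property you invoke does hold for the vertex-wise pullback, the slip is harmless once the display is fixed; the remainder of your argument (the comparison map $g$ with $g_0\cong\id$, detection of $g$ as an equivalence after completion, and propagation from level $1$ to all levels via the Segal condition) goes through.
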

\begin{proof}
This is a combination of \cite[Lemma 1.9]{ayala2018flagged} and \cite[Lemma 1.14]{ayala2018flagged}.
\end{proof}
\begin{prop}\label{prop:univ}
The natural morphism $p:\cC\rightarrow\widehat{\cC}$ induces an isomorphism $p_!$ in $\corr$.
\end{prop}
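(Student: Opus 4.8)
The plan is to prove that $p_!$ is an equivalence in the bicategory $\corr$ by showing that the adjunction $p_!\dashv p^*$ of \cref{prop:adj} is in fact an adjoint equivalence; since an adjunction whose unit and counit are both invertible $2$-morphisms is an adjoint equivalence, and $p^*$ is then a two-sided inverse to $p_!$, it suffices to check that the unit $\eta\colon\id_\cC\to p^*p_!$ and the counit $\epsilon\colon p_!p^*\to\id_{\widehat{\cC}}$ are isomorphisms. The unit is immediate, and all the content lies in the counit.

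For the unit, the computation in the proof of \cref{prop:adj} (specialized to $f=p$) gives $p^*p_!\cong\cC_0\times_{\widehat{\cC}_0}\widehat{\cC}_1\times_{\widehat{\cC}_0}\cC_0$, and under this identification $\eta$ is the canonical map $\cC_1\xrightarrow{(s,p,t)}\cC_0\times_{\widehat{\cC}_0}\widehat{\cC}_1\times_{\widehat{\cC}_0}\cC_0$. This is exactly the map that the second assertion of \cref{prop:compl} declares to be an isomorphism, so $\eta$ is invertible; this already exhibits $p_!$ as ``fully faithful'' in $\corr$.

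For the counit, the same proof yields $p_!p^*\cong\colim_{[n]\in\Delta^\op}\widehat{\cC}_1\times_{\widehat{\cC}_0}\cC_n\times_{\widehat{\cC}_0}\widehat{\cC}_1$, with $\epsilon$ the map induced by $p\colon\cC_n\to\widehat{\cC}_n$ into $\colim_{[n]}\widehat{\cC}_1\times_{\widehat{\cC}_0}\widehat{\cC}_n\times_{\widehat{\cC}_0}\widehat{\cC}_1\cong\widehat{\cC}_1\cong\id_{\widehat{\cC}}$. I would recognize both colimits as two-sided bar constructions: fixing the two free endpoints $d,d'\in\widehat{\cC}_0$, the target is the coend $\colim_{x\in\widehat{\cC}}\mor_{\widehat{\cC}}(d,x)\times\mor_{\widehat{\cC}}(x,d')$, which computes $\mor_{\widehat{\cC}}(d,d')$ by the usual co-Yoneda (composition) identity, while the source is the same coend but indexed over $\cC$ through $p$, namely $\colim_{c\in\cC}\mor_{\widehat{\cC}}(d,pc)\times\mor_{\widehat{\cC}}(pc,d')$. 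Thus $\epsilon$ is precisely the comparison map between these two coends, and it is an equivalence as soon as $p$ is cofinal in the relevant sense (after rewriting each coend as a weighted colimit over the associated category of elements, this is cofinality of the induced functor on comma categories).

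The cofinality is where the real work sits, and it is where completeness enters: the completion map $p$ is a Dwyer--Kan equivalence. It is fully faithful because \cref{prop:compl} gives $\mor_\cC(c,c')\cong\cC_0\times_{\widehat{\cC}_0}\widehat{\cC}_1\times_{\widehat{\cC}_0}\cC_0\cong\mor_{\widehat{\cC}}(pc,pc')$, and it is essentially surjective because $\cC_0\to\widehat{\cC}_0$ is, as part of the flagged completion of \cite{ayala2018flagged} invoked in \cref{prop:compl}. For such a $p$, choosing $c_0$ with $pc_0\simeq d$ identifies the comma category $d/p$ with $\cC_{c_0/}$, which has an initial object and is therefore contractible; this is exactly the cofinality criterion, so the source and target colimits agree and $\epsilon$ is invertible. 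I expect this step to be the main obstacle. If one prefers to stay entirely inside the bar-construction formalism already used in \cref{prop:fact_1} and \cref{prop:adj}, the same conclusion can be obtained by substituting $\cC_n\cong\cC_0\times_{\widehat{\cC}_0}\widehat{\cC}_n\times_{\widehat{\cC}_0}\cC_0$ from \cref{prop:compl} and collapsing the resulting colimit using that $\cS$ is an $\infty$-topos, though the essential input remains that $p$ does not change mapping spaces and is surjective on equivalence classes of objects.
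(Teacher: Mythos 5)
Your main argument is correct, and it takes a genuinely different route from the paper. The paper proves the statement entirely inside its simplicial toolkit: it quotes the computation of \cref{prop:adj} to identify $p^*p_!\cong\cC_0\times_{\widehat{\cC}_0}\widehat{\cC}_1\times_{\widehat{\cC}_0}\cC_0\cong\cC_1$ via \cref{prop:compl}, and then collapses the bar construction for $p_!p^*$ by a levelwise substitution followed by the cofinality of $\Delta^\op\rightarrow\Delta^\op_{-\infty}$. You instead verify that the unit and counit of the adjunction $p_!\dashv p^*$ are invertible, handling the counit by rewriting both sides fiberwise as coends and using that $p$ is fully faithful (\cref{prop:compl}) \emph{and} essentially surjective ($\cC_0\rightarrow\widehat{\cC}_0$ is an effective epimorphism, as the paper itself uses in \cref{prop:monadicity}), so that $d/p\simeq\cC_{c_0/}$ has an initial object and the coend comparison collapses. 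What your route buys is that it isolates the indispensable input: essential surjectivity. Full faithfulness alone cannot give the counit (for a fully faithful, non-essentially-surjective $i$ one still has $i^*i_!\cong\id$ while $i_!i^*\not\cong\id$), and indeed the paper's own middle step, which asserts $\widehat{\cC}_1\times_{\widehat{\cC}_0}\cC_n\cong\widehat{\cC}_0\times_{\cC_0}\cC_{n+1}$, does not even typecheck (there is no map $\widehat{\cC}_0\rightarrow\cC_0$) and cannot be repaired levelwise: in the bar construction for $p_!p^*$ both free ends lie in $\widehat{\cC}_0$, so \cref{prop:compl}, which requires lifting \emph{both} endpoints of a $\widehat{\cC}$-morphism to $\cC_0$, never applies, and the collapse genuinely needs the groupoid-quotient description of $\widehat{\cC}_0$. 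Your comma-category argument supplies exactly this; the price is importing standard coend technology (co-Yoneda, coends as colimits over categories of elements, fiberwise computation of colimits in an $\infty$-topos) that the paper never develops, whereas the paper's intended proof stays self-contained.

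One caveat: the alternative you sketch in your last sentence is not correct as stated. \cref{prop:compl} gives $\cC_1\cong\cC_0\times_{\widehat{\cC}_0}\widehat{\cC}_1\times_{\widehat{\cC}_0}\cC_0$ only in simplicial degree one; for $n\geq2$ the formula $\cC_n\cong\cC_0\times_{\widehat{\cC}_0}\widehat{\cC}_n\times_{\widehat{\cC}_0}\cC_0$ is false, because iterating the degree-one case through the Segal condition forces a lift of \emph{every} intermediate vertex to $\cC_0$, not just the endpoints. Concretely, if $\cC$ is the walking isomorphism (so that $\widehat{\cC}\simeq *$), then $\cC_2$ has eight points while $\cC_0\times_{\widehat{\cC}_0}\widehat{\cC}_2\times_{\widehat{\cC}_0}\cC_0\cong\cC_0\times\cC_0$ has four. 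So the proposed bar-construction shortcut fails for essentially the same reason the paper's own substitution does, and your comma-category argument should be regarded as the actual proof rather than an optional refinement of it.
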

\begin{proof}
We will prove that $p^*$ constitutes an inverse to $p_!$. We know from the proof of \cref{prop:adj} that 
\[p^*p_!\cong\cC_0\times_{\widehat{\cC_0}}\widehat{\cC_1}\times_{\widehat{\cC_0}}\cC_0\cong \cC_1\]
where the last isomorphism follows from \cref{prop:compl}. Similarly, we have 
\begin{equation*}
    \begin{split}
        p_!p^*\cong&\underset{[n]\in\Delta^\op}{\colim}(\widehat{\cC}_1\times_{\widehat{\cC}_0}\cC_n)\times_{\widehat{\cC}_0}\widehat{\cC}_1\\
        \cong&\underset{[n]\in\Delta^\op}{\colim}(\widehat{\cC}_0\times_{\cC_0}\cC_{n+1})\times_{\widehat{\cC}_0}\widehat{\cC}_1\\
        \cong&\underset{[m]\in\Delta_{-\infty}^\op}{\colim}(\widehat{\cC}_0\times_{\cC_0}\cC_{m})\times_{\widehat{\cC}_0}\widehat{\cC}_1\\
        \cong&\widehat{\cC}_0\times_{\widehat{\cC}_0}\widehat{\cC}_1\cong\widehat{\cC}_1
    \end{split}
\end{equation*}
where we once again used the isomorphism of \cref{prop:compl}.
\end{proof}
\begin{cor}\label{cor:comp_corr}
The functor $(\widehat{-}):\cat\rightarrow\cat^\comp$ extends to a functor $(\widehat{-}):\corr\rightarrow\corr^\comp$.
\end{cor}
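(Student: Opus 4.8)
The plan is to exhibit the inclusion $j\colon\corr^\comp\hookrightarrow\corr$ as an equivalence of twofold Segal spaces and to take $(\widehat{-})\colon\corr\rightarrow\corr^\comp$ to be the quasi-inverse pinned down on objects by $\cC\mapsto\widehat\cC$. The decisive input is \cref{prop:univ}: for every Segal space $\cC$ the unit $p_\cC\colon\cC\rightarrow\widehat\cC$ induces an invertible correspondence $(p_\cC)_!\colon\cC\nrightarrow\widehat\cC$, with inverse $p_\cC^*$ (that these are mutually inverse was precisely what that proof established, consistently with \cref{prop:adj}).

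First I would observe that $j$ is fully faithful, which is automatic: since $\corr^\comp$ is by definition the full subcategory of $\corr$ on complete Segal spaces, the induced map on hom-Segal-spaces $\mor_{\corr^\comp}(\cC,\cD)\rightarrow\mor_\corr(\cC,\cD)$ is an equivalence for all complete $\cC,\cD$. Next, $j$ is essentially surjective: every object $\cC$ of $\corr$ is equivalent, via the invertible $1$-morphism $(p_\cC)_!$, to the complete Segal space $\widehat\cC$. Invoking the equivalence criterion for twofold Segal spaces (fully faithful together with essential surjectivity on objects implies an equivalence), $j$ is an equivalence.

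To pin down the quasi-inverse and verify that it extends $(\widehat{-})\colon\cat\rightarrow\cat^\comp$, I would use the explicit formula $\widehat M\bydef(p_\cD)_!\circ M\circ p_\cC^*$ for a correspondence $M\colon\cC\nrightarrow\cD$; this lands in $\corr^\comp$ because $\widehat\cC$ and $\widehat\cD$ are complete, and the invertible correspondences $(p_\cC)_!$ are meant to assemble into a natural equivalence $\id_\corr\simeq j\circ(\widehat{-})$. On the image of $\cat$ under $(-)_!$ (\cref{prop:fun_comp}) this recovers completion: for $f\colon\cC\rightarrow\cD$ the naturality relation $\widehat f\circ p_\cC\cong p_\cD\circ f$ gives, after applying $(-)_!$ and using functoriality, $(\widehat f)_!\circ(p_\cC)_!\cong(p_\cD)_!\circ f_!$, whence $\widehat{f_!}\cong(\widehat f)_!$. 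This is exactly the assertion that the relevant square commutes, i.e.\ that $(\widehat{-})\colon\corr\rightarrow\corr^\comp$ extends the completion on $\cat$.

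The main obstacle is coherence. The levelwise invertibility of the family $(p_\cC)_!$ must be upgraded to a genuine natural equivalence of twofold Segal spaces, and the resulting quasi-inverse must be rigidified so that it takes $\cC$ to $\widehat\cC$ on the nose while remaining a morphism of twofold Segal spaces, that is, respecting composition of correspondences and $2$-morphisms, not merely the assignment on objects and $1$-morphisms. Establishing the equivalence criterion in the twofold Segal setting, and checking that $\widehat M=(p_\cD)_!\circ M\circ p_\cC^*$ is compatible with the Segal structure of $\corr$, is where the real work lies; everything else reduces to \cref{prop:univ}, \cref{prop:fun_comp}, and naturality of $p$.
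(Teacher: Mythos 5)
Your proposal embeds the paper's actual construction as a side remark, but the scaffolding you hang it on has a genuine flaw. The criterion you invoke --- fully faithful plus essentially surjective implies equivalence --- is valid for $(\infty,2)$-categories, i.e.\ for \emph{complete} twofold Segal spaces, but not for flagged bicategories such as $\corr$. An equivalence of twofold Segal spaces is detected levelwise (twofold Segal spaces form a full subcategory of bisimplicial spaces), so in particular it must induce an equivalence on the spaces of objects; for $j\colon\corr^\comp\hookrightarrow\corr$ this would require $(\cat^\comp)^\sim\rightarrow\cat^\sim$ to be an equivalence of spaces, which fails: a non-complete Segal space such as $c_1$ is not equivalent \emph{as a Segal space} to any complete one, even though $(p_{c_1})_!$ is invertible in $\corr$. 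Notice that the very phenomenon you feed into your criterion --- the invertible correspondence $(p_\cC)_!$ joining two objects that are distinct points of the flagging $\cat^\sim$ --- is precisely a witness that $\corr$ is \emph{not} complete, hence precisely the situation in which the criterion breaks down. So $j$ admits no inverse in the category of twofold Segal spaces, and the quasi-inverse you propose to ``pin down'' does not exist in the sense required; at best $j$ becomes an equivalence after completing the flagged bicategories, which does not by itself produce a map of twofold Segal spaces $\corr\rightarrow\corr^\comp$ with prescribed values on objects.

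The repair is to discard the inversion of $j$ entirely and promote your parenthetical formula to the definition, which is exactly what the paper does: set $\widehat{M}\bydef p_{\cD,!}\circ M\circ p_\cC^*$ for $M\colon\cC\nrightarrow\cD$, and obtain functoriality directly from \cref{prop:univ}: the isomorphisms $p^*p_!\cong\id$ and $p_!p^*\cong\id$ give $\widehat{N}\circ\widehat{M}\cong p_!\circ N\circ(p^*p_!)\circ M\circ p^*\cong\widehat{N\circ M}$ and $\widehat{\id_\cC}\cong p_!p^*\cong\id_{\widehat{\cC}}$, with no equivalence criterion and no choice of quasi-inverse anywhere. The coherence you correctly flag as ``the real work'' is then the check that conjugation by the fixed family $(p_\cC)_!$ is compatible with the simplicial structure of $\corr$ from \cref{prop:corr}; your proposal defers this to the unproven (and false) equivalence claim rather than carrying it out, so as written the argument does not close. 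The one piece worth keeping unchanged is your compatibility check with $(-)_!$ on $\cat$ via \cref{prop:fun_comp}, which indeed shows $\widehat{f_!}\cong(\widehat{f})_!$ and justifies the word ``extends'' in the statement.
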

\begin{proof}
Given a correspondence $M\cC\nrightarrow\cD$ we define 
\[\widehat{M}\bydef p_{\cD,!}\circ M\circ p^*_\cC :\widehat{\cC}\nrightarrow\widehat{\cD}.\]
Since by \cref{prop:univ} we have $p^*p_!\cong\id$ and $p_!p^*\cong \id$, we see that this indeed defines a functor.
\end{proof}
\begin{cor}\label{cor:corr_comp}
Given two Segal spaces $\cC$ and $\cD$ we have 
\[\mor_{\corr}(\cC,\cD)\cong\cP(\widehat{\cD}^\op\times \widehat{\cC})\]
\end{cor}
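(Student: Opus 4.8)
The plan is to first reduce to complete Segal spaces using the invertibility results already proved, then to pass to the one‑sided situation via the duality \cref{cor:dual}, and finally to recognise the resulting free--correspondence monad as the free left fibration (straightening) monad, for which completeness is exactly the hypothesis needed. To begin the reduction, recall that by \cref{prop:univ} the morphisms $p_{\cC,!}\colon\cC\to\widehat\cC$ and $p_{\cD,!}\colon\cD\to\widehat\cD$ are isomorphisms in $\corr$, with inverses $p_\cC^*$ and $p_\cD^*$ (the adjoints of \cref{prop:adj}). Since pre- and post-composition with an isomorphism in a flagged bicategory induces an equivalence of hom-categories, we obtain
\[\mor_\corr(\cC,\cD)\xrightarrow{\ \sim\ }\mor_\corr(\widehat\cC,\widehat\cD),\]
which is nothing but the restriction to hom-categories of the functor $\widehat{(-)}$ of \cref{cor:comp_corr}. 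As $\widehat\cD^\op\times\widehat\cC$ depends only on the completions and $\widehat{\widehat\cC}\cong\widehat\cC$, it suffices to prove $\mor_\corr(\cC,\cD)\cong\cP(\cD^\op\times\cC)$ when $\cC,\cD$ are already complete, noting that $\cD^\op\times\cC$ is then complete as well, since op and finite products preserve completeness.

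Next I would pass to the one-sided situation. By \cref{cor:dual} there is an isomorphism $\mor_\corr(\cC,\cD)\cong\mor_\corr(*,\cC^\op\times\cD)$, so writing $\cE\bydef\cC^\op\times\cD$ — a complete Segal space with $\cE_0\cong\cC_0\times\cD_0$ and $\cE_1\cong\cC_1\times\cD_1$ — it is enough to identify $\mor_\corr(*,\cE)$. By \cref{prop:corr_mor} this category is monadic over $\cP(\fib(*,\cE)^{\el,\op})\cong\cS_{/\cE_0}$, and the computation carried out in the proof of \cref{cor:dual} exhibits the free correspondence monad as $M\mapsto M\times_{\cE_0}\cE_1$, the two legs of $\cE_1\to\cE_0\times\cE_0$ playing the roles of source and target.

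Finally I would recognise this monad. The endofunctor $M\mapsto M\times_{\cE_0}\cE_1$ on $\cS_{/\cE_0}$, together with its bar construction, is exactly the free left fibration monad: an algebra yields a simplicial space $M_\bullet$ with $M_n\cong M\times_{\cE_0}\cE_n$ assembling into a left fibration over $\cE_\bullet$, and by straightening over the complete Segal space $\cE$ such left fibrations are classified by functors $\cE\to\cS$. Hence $\mor_\corr(*,\cE)\cong\mor_\cat(\cE,\cS)\cong\cP(\cE^\op)$, and unwinding $\cE^\op\cong\cC\times\cD^\op$ (so that $\cP(\cE^\op)\cong\cP(\cD^\op\times\cC)$ after reordering factors) gives the claim for complete $\cC,\cD$; combined with the first paragraph this yields $\mor_\corr(\cC,\cD)\cong\cP(\widehat\cD^\op\times\widehat\cC)$ in general.

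I expect the main obstacle to be precisely this last identification and the way completeness enters it. The monad $M\mapsto M\times_{\cE_0}\cE_1$ remembers only $\cE_0$ and $\cE_1$, so the straightening of the associated left fibration is a functor on the $\infty$-category presented by $\cE$, namely its completion $\widehat\cE$; for a general Segal space one therefore only recovers $\mor_\cat(\widehat\cE,\cS)$. It is exactly the completeness of $\cE$ — guaranteed by the reduction of the first paragraph — that collapses $\widehat\cE$ to $\cE$ and makes the monad compute $\cP(\cE^\op)$ on the nose. The remaining bookkeeping, namely tracking which leg of $\cE_1$ serves as source and target, is equivalent to tracking the variance introduced by the op in $\cE=\cC^\op\times\cD$, and is what makes the two monad formulas agree exactly.
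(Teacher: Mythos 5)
Your proof is correct, and its opening move --- using \cref{prop:univ} (equivalently \cref{cor:comp_corr}) to reduce to complete Segal spaces --- is exactly the paper's. Where you genuinely diverge is in the complete case itself: the paper settles it in one stroke by citing \cite[Lemma 4.1]{ayala2017fibrations}, whereas you first pass to one-sided correspondences via \cref{cor:dual}, then use the monadicity of \cref{prop:corr_mor} to present $\mor_\corr(*,\cE)$, for $\cE\bydef\cC^\op\times\cD$, as algebras for the monad $M\mapsto M\times_{\cE_0}\cE_1$ on $\cS_{/\cE_0}$, identify those algebras with left fibrations over $\cE$ (specialised to $\cC\cong *$, the description in \cref{prop:corr_mor} reads $F([n],[m])\cong M\times_{\cE_0}\cE_m$, which is literally the simplicial-space definition of a left fibration), and only then appeal to straightening. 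Both routes end by invoking an external classification result, but yours reuses more of the paper's internal machinery, shrinks the outside input to the standard straightening equivalence for left fibrations over an $\infty$-category, and makes visible exactly where completeness enters; the paper's version is simply shorter. Two minor points. First, your closing claim that the monad ``remembers only $\cE_0$ and $\cE_1$'' is loose, since its multiplication uses the composition law of $\cE$; but the intended point is right --- the monad is insensitive to completeness, so its algebras compute $\mor_\cat(\widehat{\cE},\cS)$ for an arbitrary Segal space $\cE$, which in fact means your first-paragraph reduction could be skipped altogether (it merely makes the straightening citation cleaner). Second, to make the middle step fully rigorous you should note that the comparison with left fibrations is an equivalence of categories, not just a construction: this follows either directly from the description of correspondences in \cref{prop:corr_mor}, or by observing that restriction $\mor_\cat(\cE,\cS)\rightarrow\cS_{/\cE_0}$ along $\cE_0\hookrightarrow\cE$ is itself monadic with free functor given by the same formula $M\mapsto M\times_{\cE_0}\cE_1$; your bar-construction phrasing produces the functor but does not yet certify the equivalence.
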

\begin{proof}
We know from \cref{prop:univ} that $\mor_{\corr}(\cC,\cD)\cong\mor_{\corr}(\widehat{\cC},\widehat{\cD})$, so it suffices to prove it for complete Segal spaces, in which case it follows from \cite[Lemma 4.1]{ayala2017fibrations}.
\end{proof}
\begin{prop}\label{prop:monadicity}
Assume that $F:\cC\rightarrow\cD$ is a morphism of Segal spaces such that the induced morphism $F_0:\cC_0\rightarrow\cD_0$ is an effective epimorphism. Then $F_!:\corr(*,\cC)\leftrightarrows\corr(*,\cD):F^*$ is a monadic adjunction.
\end{prop}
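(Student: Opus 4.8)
The plan is to apply the $\infty$-categorical Barr--Beck--Lurie monadicity theorem to the right adjoint $F^*$. First I would recall the setup from \cref{prop:corr_mor} and \cref{cor:dual}: the category $\corr(*,\cC)$ is monadic over $\fib(*,\cC)^\el\cong\cS_{/\cC_0}$ via a forgetful functor $U_\cC$ whose associated free monad is $T_\cC(M)\cong M\times_{\cC_0}\cC_1$, and likewise $U_\cD:\corr(*,\cD)\to\cS_{/\cD_0}$ carries the monad $T_\cD(N)\cong N\times_{\cD_0}\cD_1$. The adjunction $F_!\dashv F^*$ is exactly \cref{prop:adj}, so the left adjoint demanded by the monadicity theorem is already in hand; it then remains to check that $F^*$ is conservative and preserves geometric realizations of $F^*$-split simplicial objects.

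The key structural input is a Beck--Chevalley square. I would compute the underlying space of $F^*\circ N$ for $N:*\nrightarrow\cD$ directly from the bar formula for composition: since the underlying space of the correspondence $F^*:\cD\nrightarrow\cC$ is $\cD_1\times_{\cD_0}\cC_0$, we get $F^*\circ N\cong\colim_{[n]\in\Delta^\op}N\times_{\cD_0}\cD_n\times_{\cD_0}(\cD_1\times_{\cD_0}\cC_0)$. Exactly as in the décalage collapse of \cref{prop:fact_1} and \cref{prop:adj} (use the Segal isomorphism $\cD_n\times_{\cD_0}\cD_1\cong\cD_{n+1}$, cofinality of $\Delta^\op\to\Delta^\op_{-\infty}$, and the terminal object of $\Delta^\op_{-\infty}$), this colimit collapses to the pullback $N\times_{\cD_0}\cC_0$; conceptually this just records that $F^*$ is restriction of modules along $F$, computed fibrewise. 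Hence there is a natural isomorphism $U_\cC\circ F^*\cong (F_0)^*\circ U_\cD$, where $(F_0)^*:\cS_{/\cD_0}\to\cS_{/\cC_0}$ is base change along $F_0$.

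Granting this square, the two remaining conditions follow formally. For conservativity: the functors $U_\cC$ and $U_\cD$ are conservative (being monadic), so if $F^*\alpha$ is an equivalence then $(F_0)^*U_\cD\alpha\cong U_\cC F^*\alpha$ is an equivalence, and it suffices that $(F_0)^*$ be conservative to conclude that $U_\cD\alpha$, hence $\alpha$, is an equivalence. This is precisely where the hypothesis enters: in the $\infty$-topos $\cS$ the base-change functor $(F_0)^*$ is conservative if and only if $F_0$ is an effective epimorphism. For the colimit condition: the monads $T_\cC$ and $T_\cD$ preserve sifted colimits, since fibre products over a fixed space preserve colimits by universality in $\cS$, so $U_\cC$ and $U_\cD$ create sifted colimits; as $(F_0)^*$ preserves all colimits (it has a right adjoint $(F_0)_*$), the square forces $F^*$ to preserve sifted colimits, in particular geometric realizations, which exist because both categories are presentable. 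The monadicity theorem then yields the claim.

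The main obstacle is really the identification of the Beck--Chevalley square $U_\cC\circ F^*\cong (F_0)^*\circ U_\cD$: once the underlying space of $F^*\circ N$ is correctly identified with the pullback $N\times_{\cD_0}\cC_0$, everything else is a formal consequence and the role of the effective-epimorphism hypothesis becomes transparent through the conservativity of base change. As an alternative to the explicit bar computation one could pass to completions via \cref{cor:corr_comp}, identifying $F^*$ with the restriction of presheaves $\cP(\widehat{\cD}^\op)\to\cP(\widehat{\cC}^\op)$ along $\widehat{F}$ — which automatically preserves colimits and is conservative exactly when $\widehat{F}$ is essentially surjective, equivalently when $F_0$ is surjective on $\pi_0$; I would note this as a sanity check but carry out the monadic argument above as the actual proof.
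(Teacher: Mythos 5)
Your proof is correct, and it takes a genuinely different route from the paper's. The paper argues by reduction: since \cref{prop:univ} makes the completion maps invertible in $\corr$, the adjunction $F_!\dashv F^*$ is equivalent to the one induced by $\widehat{F}:\widehat{\cC}\rightarrow\widehat{\cD}$, so monadicity can be quoted from \cite[Proposition 8.1]{chu2019homotopy} once $\widehat{F}_0$ is known to be an effective epimorphism; that last point is the only real work, and it is handled by the two-out-of-three property \cite[Corollary 6.2.3.12]{lurie2009higher} applied to the square comparing $F_0$ with $\widehat{F}_0$. You instead verify the Barr--Beck--Lurie hypotheses directly: the adjunction is \cref{prop:adj}, your Beck--Chevalley square $U_\cC\circ F^*\cong (F_0)^*\circ U_\cD$ follows from exactly the bar-collapse computation of \cref{prop:fact_1} and \cref{prop:adj}, and it reduces conservativity of $F^*$ to conservativity of base change along an effective epimorphism in $\cS$, and preservation of geometric realizations to universality of colimits. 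What your route buys is self-containedness and transparency --- the hypothesis on $F_0$ enters exactly once, as conservativity of $(F_0)^*$ --- and it avoids the completion reduction altogether; what the paper's route buys is brevity, plus the observation that the effective-epimorphism condition only needs to be checked before completion. Two caveats: your argument still leans on \cref{prop:corr_mor} for conservativity of $U_\cD$ and for creation of sifted colimits by $U_\cC$ and $U_\cD$, and that proposition is itself proved by the same appeal to \cite[Proposition 8.1]{chu2019homotopy}, so the external input is repackaged rather than eliminated; and your closing ``sanity check'' via \cref{cor:corr_comp} (restriction of presheaves along $\widehat{F}$, monadic precisely when $\widehat{F}$ is essentially surjective) is in substance the paper's own reduction to completions, so it is closer to the actual proof than your main argument is.
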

\begin{proof}
It follows from \cref{prop:univ} that it suffices to prove the statement fro $\widehat{F}:\widehat{\cC}\rightarrow\widehat{\cD}$. In this case it would follow from \cite[Proposition 8.1.]{chu2019homotopy} once we prove that $\widehat{F}$ induces an effective epimorphism on the space of objects. Observe that we have the following diagram
\[
\begin{tikzcd}[row sep=huge, column sep=huge]
\cC_0\arrow[r]\arrow[d,"F"]&\widehat{\cC}_0\arrow[d,"\widehat{F}"]\\
\cD_0\arrow[r]&\widehat{\cD}_0
\end{tikzcd}
\]
In this diagram all morphisms except possibly the right vertical map are effective epimorphisms, so it now follows from the two-out-of-three property \cite[Corollary 6.2.3.12.]{lurie2009higher} that $\widehat{F}$ is also an effective epimorphism.
\end{proof}
\begin{construction}\label{constr:n_string}
For a given $n\geq0$ denote by $\ccat^\inj_{/[n]}$ the full subcategory of $\ccat_{/[n]}$ on injective morphisms $g:[m]\rightarrow[n]$ and by $\ccat^{\act, \inj}_{/[n]}$ the subcategory of $\ccat^\inj_{/[n]}$ only containing active morphisms. Assume that we have a Segal space $(F:\cD\rightarrow[n])\in\cat_{/[n]}$, denote $\cD_i\bydef F^{-1}(i)$. Also for $i\leq j\leq k\leq n$ denote by $\cD_{i,j}$ the pullback $[1]\times_{[n]}\cD$, where $[1]\rightarrow[n]$ is the morphism sending $0$ to $j$ and $1$ to $k$.  We can view this as a correspondence from $\cD_j$ to $\cD_k$.  Denote by $\ccat^{\infty,\inj}_{/[n]}$ the subcategory of $\ccat^\inj_{/[n]}$ containing the same objects and only those morphisms that preserve the maximal element, observe that $\ccat^{\act,\inj}_{/[n]}$ is a subcategory of $\ccat^{\infty,\inj}_{/[n]}$.\par
Now fix an object $\cF\bydef(\cF_0,\cF_1,...,\cF_n)\in\mor_\corr(\coprod_{i=0}^n\cD_i,*)$. Denote by $\corr_n^\cF$ the category whose objects are given by injective morphisms $g:[m]\rightarrow[n]$ and a morphism from $f:[l]\rightarrow[n]$ to $g$ is given by a morphism $h$ as in the diagram below
\[
\begin{tikzcd}
{[m]}\arrow[dr, "g"]\arrow[rr, "h"]&{}&{[l]}\arrow[dl, "f" swap]\\
{}&{[n]}
\end{tikzcd}
\]
together with the data of morphisms \[\alpha_{h,i}:\cD_{f(h(i+1)-1),g(i+1)}\circ\cD_{f(h(i+1)-2),f(h(i+1)-1)}\circ...\circ\cD_{g(i),f(h(i)+1)}\rightarrow\cD_{g(i),g(i+1)}\]
for $0\leq i\leq m$ in $\mor_\corr(\cD_{g(i)},\cD_{g(i+1)})$ and a morphism \[\alpha_h:\cF_{f(l)}\circ \cD_{f(l)-1,f(l)}\circ...\circ\cD_{g(m),g(m)+1}\rightarrow\cF_{g(m)}\]
in $\mor_{\corr}(\cD_{g(m)},*)$ with the obvious operation of composition.\par
There is a natural forgetful functor $p_\cF:\corr^\cF_n\rightarrow\ccat^\inj_{/[n]}$. Observe that its pullback $p_\cF^{-1}(\ccat^{\infty,\inj}_{/[n]})\rightarrow\ccat^{\infty,\inj}_{/[n]}$ along the natural inclusion $\ccat^{\infty,\inj}_{/[n]}\hookrightarrow\ccat^{\inj}_{/[n]}$ admits a section 
\[s_{\cF,0}:\ccat^{\infty,\inj}_{/[n]}\rightarrow p^{-1}(\ccat^{\infty,\inj}_{/[n]})\]
that is identity on objects and sends every morphism $h$ as above to the morphism in $\corr_n^\cF$ for which $\alpha_h:\cF_{g(m)}\rightarrow\cF_{g(m)}$ is the identity and every $\alpha_{h,i}$ is induced by the composition maps. Now we define the category $M_n$ to have as objects pairs $(\cF,s)$ where $s$ is a section of $p_\cF$ that restricts to $s_{\cF,0}$ on the subcategory $\ccat^{\infty,\inj}_{/[n]}$. To define morphisms in $M_n$, observe that for any morphism $F:\cF\rightarrow\cE$ in $\mor_\corr(\coprod_{i=0}^n\cD_i,*)$ we can define the category $\corr^F_n$ exactly as above except we require the morphisms $\alpha_h$ to be given by the commutative diagrams of the form
\[
\begin{tikzcd}[row sep=huge, column sep=huge]
\cF_{f(l)}\circ \cD_{f(l)-1,f(l)}\circ...\circ\cD_{g(m),g(m)+1}\arrow[r,"\alpha'_h"]\arrow[d, "F_{f(l)}"]&\cF_{g(m)}\arrow[d, "F_{g(m)}"]\\
\cE_{f(l)}\circ \cD_{f(l)-1,f(l)}\circ...\circ\cD_{g(m),g(m)+1}\arrow[r,"\alpha''_h"]&\cE_{g(m)}
\end{tikzcd}.
\]
Observe that there are forgetful functors $p_1:\corr^F_n\rightarrow\corr^\cF_n$ and $p_2:\corr^F_n\rightarrow\corr^\cE_n$. We define the space of morphisms from $(\cF,s_\cF)$ to $(\cE,s_\cE)$ is given by the space of sections of $p_F:\corr^F_n\rightarrow\ccat^\inj_{/[n]}$ that extend $s_{F,0}$ and such that $p_1\circ s_F\cong s_\cF$ and $p_2\circ s_F\cong s_\cE$.
\end{construction}
\begin{construction}\label{constr:monad_alg}
 Assume that we are given a Segal space $\cC$ with a monad $T$ on it and an object $x\in\cC$. Then we can define a Segal space $\cC^T_x$ whose objects are given by numbers $n\geq0$ and such that a morphism from $l$ to $m$ is given by the data of a morphism $h:[m]\rightarrow[l]$ together  with natural transformations $\alpha_{h,i}:T^{h(i+1)-h(i)}\rightarrow T$ and a morphism $\alpha_h:T^{l-h(m)}x\rightarrow x$. Denote by $p_x:\cC^T_x\rightarrow\Delta^\op$, observe that there is a natural section 
 \[s_{x,0}:(\Delta^{\infty})^\op\rightarrow p_x^{-1}((\Delta^{\infty})^\op)\]
 sending every morphism $h$ to the morphism in $\cC^T_x$ for which $\alpha_x\cong\id$ and all $\alpha_{h,i}$ are given by the structural morphisms of $T$.
\end{construction}
\begin{lemma}\label{lem:monad_alg}
Giving an object $x\in\cC$, giving it a structure of a $T$-algebra is equivalent to extending the section $s_{x,0}$ to a section $s_x:\Delta^\op\rightarrow\cC^T_x$ in the notation of \cref{constr:monad_alg}. A monad $T$ on $\cC$ induces a monad $T_\arr$ on $\arr(\cC)$ sending every $f:x\rightarrow y$ to $Tf:Tx\rightarrow Ty$ with the structural morphisms given by the naturality squares for the respective morphisms of $T$. If both $x$ and $y$ are $T$-algebras, then giving $f$ a structure of a morphism of $T$-algebras is equivalent to giving a section $s_f:\Delta^\op\rightarrow\arr(\cC)_f^{T_\arr}$ that restricts to $s_x$ and $s_y$ on $\cC^T_x$ and $\cC^T_y$ respectively.
\end{lemma}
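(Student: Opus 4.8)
The plan is to treat the three assertions in sequence, deducing the first from the explicit description of monad-algebras already obtained in \cref{lem:mon_slice} and then bootstrapping the remaining two from it. For the first assertion I would start from the identification, recalled in the proof of \cref{lem:mon_slice} via \cref{cor:lax_mon}, of the monad $T$ with a functor of bicategories $\rB\Delta_a\rightarrow\cat$ sending the unique object to $\cC$; under this identification a $T$-algebra structure on $x$ is precisely a morphism $m_x:Tx\rightarrow x$ together with, for every active $s:[n]\rightarrow[l]$, the commuting triangle relating $m_x^n$, $m_x^l$ and $s_T$. The claim is that this package of data is the same thing as a section of $p_x:\cC^T_x\rightarrow\Delta^\op$ extending $s_{x,0}$.

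To make this precise I would unwind \cref{constr:monad_alg}. A section must send each $[n]$ to the object $n$ and each $h:[m]\rightarrow[l]$ to a morphism $l\rightarrow m$ in $\cC^T_x$, whose data consists of the internal transformations $\alpha_{h,i}:T^{h(i+1)-h(i)}\rightarrow T$ and the action datum $\alpha_h:T^{l-h(m)}x\rightarrow x$. The requirement that the section restrict to $s_{x,0}$ on $(\Delta^\infty)^\op$ forces the $\alpha_{h,i}$ to be the structural maps of $T$ and forces $\alpha_h$ to be the identity whenever $h$ preserves the maximal element, so the only genuinely new datum is the family of action maps $\alpha_h$ indexed by those $h$ with $h(m)<l$. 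Functoriality of the section says exactly that these maps are compatible with composition, which is precisely the system of commuting triangles above; the generating datum $\alpha:Tx\rightarrow x$ coming from the constant map $[0]\rightarrow[1]$ at $0$ is the action, and the higher $\alpha_h$ are its iterates. This yields the desired natural equivalence between sections extending $s_{x,0}$ and $T$-algebra structures on $x$.

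For the second assertion I would invoke pure $2$-categorical formalism: the arrow construction $\arr(-)=\mor_\cat([1],-)$ is a cotensor in $\cat$, hence a $2$-functor, and every $2$-functor carries monads to monads. Applying it to $T$ produces a monad $T_\arr$ on $\arr(\cC)$ with underlying endofunctor $f\mapsto Tf$ and with multiplication and unit obtained by whiskering $\mu$ and $\eta$; evaluated on $f:x\rightarrow y$ these are exactly the naturality squares of $\mu$ and $\eta$ at $f$, as claimed. For the third assertion I would then apply the first assertion to $T_\arr$ and the object $f\in\arr(\cC)$, so that a $T_\arr$-algebra structure on $f$ is the same as a section $s_f$ of $\arr(\cC)^{T_\arr}_f\rightarrow\Delta^\op$ extending its canonical section. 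The endpoint evaluations $\ev_0,\ev_1:\arr(\cC)\rightarrow\cC$ satisfy $\ev_i\circ T_\arr\cong T\circ\ev_i$ compatibly with $\mu$ and $\eta$, hence are morphisms of monads and induce forgetful functors $\arr(\cC)^{T_\arr}_f\rightarrow\cC^T_x$ and $\arr(\cC)^{T_\arr}_f\rightarrow\cC^T_y$ over $\Delta^\op$ along which $s_f$ restricts to $s_x$ and $s_y$. Unwinding, a $T_\arr$-algebra structure on $f$ amounts to $T$-algebra structures $m_x,m_y$ together with the coherent commutativity of the square relating $Tf$, $m_x$, $m_y$ and $f$ — that is, a morphism of $T$-algebras — with the endpoint restrictions recording $m_x$ and $m_y$, paralleling the second half of \cref{lem:mon_slice}.

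I expect the main obstacle to be the bookkeeping in the first assertion: one must verify carefully that functoriality of a section over all of $\Delta^\op$, and not merely the existence of the single action map $\alpha:Tx\rightarrow x$, reproduces the full coherent hierarchy of associativity and unit constraints, and that the constraint of restricting to $s_{x,0}$ on $(\Delta^\infty)^\op$ exactly isolates the action data from the monad's own structure maps. Once this translation is in place, the remaining two parts are essentially formal, requiring only the $2$-functoriality of $\arr(-)$ and the observation that the endpoint evaluations are maps of monads.
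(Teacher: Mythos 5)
Your decomposition is genuinely different from the paper's: you bootstrap everything from \cref{lem:mon_slice}, whereas the paper's proof never uses that lemma. Your second and third steps are reasonable in outline --- the paper likewise obtains $T_\arr$ from the postcomposition action of $\edm(\cC)$ on $\mor_\cat([1],\cC)$ (identified with a diagonal action), and its treatment of algebra morphisms is equivalent to your endpoint-restriction picture.

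The genuine gap is in your first step, and it is not ``bookkeeping''. You take as input the characterization recalled from \cref{lem:mon_slice}: an algebra structure is a morphism $m_x:Tx\rightarrow x$ ``such that for every $s:[n]\rightarrow[l]$ a triangle commutes''. In the $\infty$-categorical setting this phrase does not determine a space of structures: the commutativity of each triangle is a choice of homotopy, and these choices must be compatible across all composites in $\Delta_a$, coherently and to all orders. The space of such coherent data is precisely what the space of sections of $p_x:\cC^T_x\rightarrow\Delta^\op$ is designed to encode, so ``verifying that functoriality of the section reproduces the full coherence hierarchy'' has no independent target against which to check anything; \cref{lem:mon_slice} cannot supply one, since its own proof elides exactly the same coherence --- indeed, the present lemma is what makes the informal claim in \cref{lem:mon_slice} precise, so your dependency runs backwards. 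The paper resolves this by anchoring the algebra side fibrationally: it invokes \cite[Propositions 4.2.2.9 and 4.2.2.12]{luriehigher} to encode the right $\edm(\cC)$-module $\cC\cong\mor_\cat(*,\cC)$ as a coCartesian fibration $M\rightarrow\Delta^\op\times[1]$, so that ``a monad together with an algebra'' \emph{means} ``a section of $M$''; it then observes that such a section is determined by its restriction to $\Delta^\op\times\{0\}$, restricts attention to the subcategory of $M$ spanned by $T$ and $x$, and compares the resulting section space with that of $\cC^T_x\rightarrow\Delta^\op$ --- a comparison of two functor spaces, which is a meaningful assertion. Your argument needs this (or an equivalent) precise model of the left-hand side before the data-matching in your second paragraph can count as a proof; the same issue recurs in your third step, where ``morphism of $T$-algebras'' must also be given a coherent meaning (in the paper: a natural transformation between sections of $M$, encoded via the relative arrow category $\arr_{\Delta^\op\times[1]}(M)$, whose fibers are the arrow categories of the fibers of $M$) before it can be matched with endpoint-compatible $T_\arr$-algebra structures.
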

\begin{proof}
By definition, a monad is an algebra in the monoidal category $\edm(\cC)$. For every Segal space $\cD$ the category $\mor_\cat(\cD,\cC)$ is a right module over $\edm(\cC)$, where the action is given by postcomposition with an endomorphism. In particular, this applies to $\mor_\cat(*,\cC)\cong\cC$. Using \cite[Proposition 4.2.2.9.]{luriehigher} and \cite[4.2.2.12.]{luriehigher} and untangling the definitions, we see that this structure can be encoded as a coCartesian fibration $M\rightarrow\Delta^\op\times[1]$ such that $M([n],0)\cong\edm(\cC)^{\times n}\times \cC$, $M([n],1)\cong \edm(\cC)^{\times n}$, the morphisms $([n],0)\rightarrow([n],1)$ are sent to the natural projection $\edm(\cC)^{\times n}\times \cC\rightarrow\edm(\cC)^{\times n}$, the restrictions to $\Delta\times\{1\}$ and $\Delta^{\act,\op}\times\{0\}$ are given by the structure of the monoidal category on $\edm(\cC)$, inert morphisms $i:[m]\rightarrowtail[n]$ in $\Delta^{\op}\times\{0\}$ that preserve the maximal element are given by projections $\edm^{\times m}\times\cC\rightarrow\edm^{\times n}\times \cC$. Giving a monad $T$ with an algebra $x$ is then equivalent to giving the section $s$ of the natural projection $p:M\rightarrow\Delta^\op\times[1]$. Now observe that this section is in fact uniquely determined by its restriction to $\Delta^\op\times\{0\}$. Moreover, if we are given a monad $T$ and an underlying object $x$ of the prospective $T$-algebra, we can in fact restrict to considering sections of the natural projection from the full subcategory of $M$ on $T$ and $x$. It now follows by easy inspection that this space of sections is in fact the same as the one described in the statement of the lemma.\par
To prove the second claim we first observe that by definition the morphisms between $T$-algebras $x$ and $y$ are given by the natural transformations between sections $s_x$ and $s_y$ that restrict to identity on $\edm(\cC)^{\times n}$. For a pair of functors $F_0,F_1:\cD\rightarrow\cE$ over a give category $\cC$, the space of natural transformations between them is the space of morphisms $\cD\rightarrow\arr_\cC(\cE)$ that restrict to $F_1$ and $F_1$, where $\arr_\cC(\cE)$ is given by the following pullback square
\[
\begin{tikzcd}[row sep=huge, column sep=huge]
{\arr_\cC(\cE)}\arrow[d]\arrow[r]&{\arr(\cE)}\arrow[d, "(s{,}t)"]\\
\cC\arrow[r, "\Delta"]&\cC\times\cC
\end{tikzcd}.
\]
It is easy to see that if $\cE\rightarrow\cC$ is a coCartesian fibration, then so is $\arr_\cC(\cE)$ and its fiber over $c\in\cC$ is given by $\arr(\cE_c)$. Applying it to $p:M\rightarrow \Delta^\op\times[1]$ we see that $\arr(\cC)$ also obtains the natural action of $\edm(\cC)$. It is easy to see that this natural action is in fact given by \[\edm(\cC)\times\arr(\cC)\xrightarrow{\Delta}\edm(\arr(\cC))\times\arr(\cC)\rightarrow\arr(\cC)\]
where $\Delta$ is a morphism sending an endomorphism $G$ of $\cC$ to an endomorphism of $\arr(\cC)$ that sends $c\xrightarrow[f]c'$ to $Gc\xrightarrow{Gf}Gc'$. Putting everything together, we see that extending a given morphism $f:x\rightarrow y$ to a morphism of $T$-algebras is equivalent to giving a section $s_f:\Delta^\op\rightarrow \arr(\cC)^{T_\arr}_f$ satisfying the properties described in the claim.
\end{proof}
\begin{prop}\label{prop:nat}
Given a Segal space $(F:\cD\rightarrow[n])$ as in \cref{constr:n_string}, there is a natural isomorphism
\[\mor_\corr(\cD,*)\cong M_n\]
in the notation of \cref{constr:n_string}.
\end{prop}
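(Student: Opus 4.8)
The plan is to identify both sides with categories of algebras over the common object space $\cD_0\cong\coprod_{i=0}^n(\cD_i)_0$ and then read off the difference in structure. First I would invoke \cref{prop:corr_mor}, together with the computation of the free-correspondence monad carried out in the proof of \cref{cor:dual}, to describe $\mor_\corr(\cD,*)$ as the category of algebras in $\cS_{/\cD_0}$ for the monad $T_\cD\colon K\mapsto\cD_1\times_{\cD_0}K$ (taking the target to be $*$ kills the second factor of $\cC_1\times_{\cC_0}K\times_{\cD_0}\cD_1$). Concretely an object of $\mor_\corr(\cD,*)$ is a left $\cD$-module in spaces whose underlying object lies in $\cS_{/\cD_0}$. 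Since the underlying space of a tuple $\cF=(\cF_0,\dots,\cF_n)\in\mor_\corr(\coprod_i\cD_i,*)$ also lives in $\cS_{/\cD_0}$, the two sides already agree on underlying data, and the content of the proposition is a comparison of the extra structure.

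Next I would use the grading $F\colon\cD\to[n]$ to decompose that structure. Because $\cD$ is a Segal space over $[n]$, its morphism space splits as $\cD_1\cong\coprod_{i\le j}\cD_{i,j}$ according to the images of source and target, where $\cD_{i,j}$ is the correspondence $\cD_i\nrightarrow\cD_j$ of \cref{constr:n_string}, and the higher levels $\cD_m$ split similarly along injective strings $[k]\hookrightarrow[n]$. Feeding this into $T_\cD$, an algebra map $T_\cD\cF\to\cF$ decomposes into components indexed by pairs $i\le j$, the $(i,j)$-component assembling, via the monad multiplication, into a morphism of correspondences $\cF_j\circ\cD_{i,j}\to\cF_i$ of exactly the kind appearing as the $\alpha_{h,i}$ and $\alpha_h$ in \cref{constr:n_string}. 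The diagonal components $i=j$ are precisely the $\coprod_i\cD_i$-module structure carried by the tuple $\cF$ itself (note $\cD_{i,i}\cong\cD_i$), while the components $i<j$ carry the genuinely new data.

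The heart of the argument is to show that packaging the associativity and unit coherences of a $T_\cD$-algebra is the same as giving a section $s$ of $p_\cF\colon\corr^\cF_n\to\ccat^\inj_{/[n]}$ extending $s_{\cF,0}$ on $\ccat^{\infty,\inj}_{/[n]}$. Here I would follow the method of \cref{lem:monad_alg} essentially verbatim, replacing the single monad $T$ and its bar resolution indexed by $\Delta^\op$ by the string of correspondences $\cD_0\xrightarrow{\cD_{0,1}}\cdots\xrightarrow{\cD_{n-1,n}}\cD_n$ indexed by $\ccat^\inj_{/[n]}$: an injective $g\colon[m]\to[n]$ names a composite of edge-correspondences, a section $s$ chooses coherent composition and action maps for all of them, and the requirement that $s$ restrict to $s_{\cF,0}$ on the maximal-element-preserving subcategory $\ccat^{\infty,\inj}_{/[n]}$ rigidifies the structural composites to their canonical values, exactly as $s_{x,0}$ pins down the monad-multiplication composites on $(\Delta^{\infty})^\op$ in \cref{constr:monad_alg}. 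The object-level bijection then upgrades to an equivalence of categories by the same arrow-category argument as in \cref{lem:monad_alg}: a morphism $F\colon\cF\to\cE$ of $T_\cD$-algebras is a section of the induced $T_\arr$-fibration, which is precisely a section $s_F$ of $p_F\colon\corr^F_n\to\ccat^\inj_{/[n]}$ restricting to $s_\cF$ and $s_\cE$, matching the definition of morphisms in $M_n$.

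The main obstacle I expect is this last dictionary: verifying that coherent $T_\cD$-algebra data over the whole string corresponds bijectively, and naturally in $\cF$, to sections over $\ccat^\inj_{/[n]}$ with the prescribed restriction. Concretely this needs the rigidification input that the canonical composites along maximal-element-preserving maps are forced, so that the genuine freedom lives only over the remaining part of $\ccat^\inj_{/[n]}$; this is the combinatorial analog, now spread over the poset of sub-intervals of $[n]$ rather than over $\Delta^\op$, of the observation in \cref{constr:monad_alg} that $s_{x,0}$ determines the restriction to $(\Delta^{\infty})^\op$. Once this matching is established, naturality in $\cF$ and the identification of hom-spaces are formal.
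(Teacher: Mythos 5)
Your strategy is workable but takes a genuinely different route at its base: you make $\mor_\corr(\cD,*)$ monadic over $\cS_{/\cD_0}$ via \cref{prop:corr_mor}, whereas the paper applies \cref{prop:monadicity} to the objects-isomorphism $j:\coprod_k\cD_k\hookrightarrow\cD$ and works monadically over $\mor_\corr(\coprod_k\cD_k,*)\cong\prod_k\mor_\corr(\cD_k,*)$. The paper's choice keeps the vertical $\cD_k$-module structures in the base category, so the monad $j^*j_!$ adds only the crossing data; its powers are computed (using the calculations of \cref{prop:adj}) to be coproducts of honest composites of correspondences $\cF_{f(m+1)}\circ\cD_{f(m),f(m+1)}\circ\cdots\circ\cD_{i,f(1)}$ indexed by maps $f:[m+1]\to[n]$, so \cref{lem:monad_alg} identifies its algebras with the section category $\widetilde{M}_n$ essentially by inspection, leaving only the reduction to $\ccat^{\inj}_{/[n]}$ via surjective/injective factorization. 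Your flattened version buys self-containedness (only the free-correspondence monad of \cref{prop:corr_mor} and \cref{cor:dual} is needed), but it pushes two real steps into the ``dictionary'' you defer. First, the off-diagonal components of a $T_\cD$-algebra are space-level maps $(\cD_1)_{i,j}\times_{(\cD_j)_0}\cF_j\to\cF_i$, and turning them into the correspondence-level maps $\cF_j\circ\cD_{i,j}\to\cF_i$ demanded by \cref{constr:n_string} requires descending along the bar construction computing $\circ$ and checking $\cD_i$-linearity; your word ``assembling'' is doing exactly this work. Second, the $T_\cD$-coherences are indexed by all order-preserving strings in $[n]$, with repetitions encoding the vertical actions, so the passage to injective strings is a separate surjective/injective factorization argument and is not the same thing as the rigidification over $\ccat^{\infty,\inj}_{/[n]}$ into which you fold it. Both steps do go through (maps out of a bar colimit form a limit of maps out of its terms, and surjective maps act by identities), so this is a sound alternative whose cost is precisely the bookkeeping that the paper's relative monadicity renders automatic.
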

\begin{proof}
Consider the natural inclusion $j:\coprod_{k=0}^n\cD_k\hookrightarrow \cD$. It is an isomorphism on the space of objects, so by \cref{prop:monadicity} the category $\mor_{\corr}(\cD,*)$ is monadic over $\mor_\corr(\coprod_{j=0}^n\cD_j,*)$. It is easy to see that $\mor_\corr(\coprod_{k=0}^n\cD_k,*)\cong\prod_{k=0}^n\mor_\corr(\cD_k,*)$. Denote by $\ccat^\act_{/[n]}$ (resp. $\ccat^{\infty}_{/[n]}$) the subcategory of $\ccat_{/[n]}$ containing only active (resp. maximal element preserving) morphisms over $[n]$ and by $\widetilde{M}_n$ the category defined exactly like $M_n$ but with $\ccat^{\act,\inj}_{/[n]}$ and $\ccat^{\infty,\inj}_{/[n]}$ replaced with $\ccat^\act_{/[n]}$ and $\ccat^{\infty}_{/[n]}$. Using the calculations of \cref{prop:adj} we see that
\[j^*j_!\cong\coprod_{k=0}^n\cD_{k,0}\times_{\cD_1}\coprod_{k=0}^n\cD_{k,0}\cong\coprod_{f:[1]\rightarrow[n]}\cD_{f(0),f(1)}.\]
Iterating this calculation we see that the $i$th component of $(j^*j_!)^m\cF$ is given by
\[\coprod_{f:[m+1]\rightarrow[n],f(0)=i}\cF_{f(m+1)}\circ\cD_{f(m),f(m+1)}\circ \cD_{f(m-1),f(m)}\circ...\circ \cD_{i, f(1)}\]
and the unit and multiplication morphisms are given by composition and identity morphisms in $\ccat^\act_{/[n]}$.\par
Observe that the left fibration over $\Delta^{\op}$ corresponding to a functor sending $[m]$ to $\mor_\cat([m],[n])$ is given by the natural projection $p:\ccat_{/[n]}\rightarrow\ccat$. It follows from this and \cref{lem:monad_alg} that the category of $j^*j_!$-algebras is isomorphic to $\widetilde{M}_n$. Now it remains to prove that the sections $s_\cF:\ccat_{/[n]}\rightarrow\corr^\cF_n$ in $\widetilde{M}_n$ are uniquely defined by their restriction to $\ccat^{\inj}_{/[n]}$, however this is obvious from the existence of surjective/injective factorization and the fact that all surjective morphisms in $\ccat^\act_{/[n]}$ are sent to identities.
\end{proof}
\begin{remark}\label{rem:nat}
Applying \cref{prop:nat} to the special case $n=1$ we see that given a correspondence $M:\cC\nrightarrow\cD$, the category $\mor_\corr(M,*)$ (where here we denote by $M$ the total category of the correspondence) is isomorphic to the category of triples $(\cF_\cC,\cF_\cD, \alpha:\cF_\cD\circ M\rightarrow\cF_\cC)$ where $\cF_\cC\in\mor_\corr(\cC,*)$ and $\cF_\cD\in\mor_\corr(\cD,*)$.
\end{remark}
\begin{cor}\label{cor:nat}
There is a natural equivalence between that space of natural transformations $\alpha:M\rightarrow N$ where $M$ and $N$ are correspondences from $\cC$ to $\cD$ and the
space of correspondences $\overline{\alpha}\in\corr([1]\times\cC,\cD)$ restricting to $M$ and $N$ on $0$ and $1$ respectively.
\end{cor}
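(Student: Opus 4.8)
The plan is to deduce the statement from the presheaf description of correspondences furnished by \cref{cor:corr_comp}, turning the assertion into a formal manipulation of cartesian closure. Since the isomorphism $\mor_\corr(\cC,\cD)\cong\cP(\widehat{\cD}^\op\times\widehat{\cC})$ of \cref{cor:corr_comp} is natural in both variables, it suffices to compute $\corr([1]\times\cC,\cD)$ together with the two restriction functors to $\corr(\cC,\cD)$ induced by the endpoint inclusions $i_k:\cC\cong\{k\}\times\cC\hookrightarrow[1]\times\cC$, and then take the fibre over a pair $(M,N)$.

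The one piece of genuine input is that completion commutes with the relevant product, i.e. $\widehat{[1]\times\cC}\cong[1]\times\widehat{\cC}$. To see this, first note that $[1]\times\widehat{\cC}$ is complete: by \cref{prop:compl} the inclusion $\cat^\comp\hookrightarrow\cat$ is a right adjoint, hence preserves products, and $[1]$ and $\widehat{\cC}$ are complete. Second, the unit map $[1]\times\cC\to[1]\times\widehat{\cC}$ is fully faithful and essentially surjective, because mapping spaces in a product are the products of the mapping spaces and $\cC\to\widehat{\cC}$ has both properties; since the target is already complete, this map exhibits the completion. Granting this, \cref{cor:corr_comp} gives $\corr([1]\times\cC,\cD)\cong\cP(\widehat{\cD}^\op\times[1]\times\widehat{\cC})$.

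Now I would pull the interval out of the presheaf category using the cartesian closure of $\cat$ and the self-duality $[1]^\op\cong[1]$:
\[\corr([1]\times\cC,\cD)\cong\cP(\widehat{\cD}^\op\times[1]\times\widehat{\cC})\cong\mor_\cat([1]^\op,\cP(\widehat{\cD}^\op\times\widehat{\cC}))\cong\mor_\cat([1],\corr(\cC,\cD))=\arr(\corr(\cC,\cD)).\]
Under this chain the two endpoint restrictions are carried to the source and target evaluations $\ev_0,\ev_1:\arr(\corr(\cC,\cD))\to\corr(\cC,\cD)$. Taking the fibre over $(M,N)$ on both sides then identifies the space of correspondences $\overline{\alpha}$ restricting to $M$ and $N$ with the space of morphisms $M\to N$ in the category $\corr(\cC,\cD)$, that is, with the space of natural transformations $\alpha:M\to N$. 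The naturality of the resulting equivalence is inherited from that of \cref{cor:corr_comp}.

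The step requiring the most care, and the main obstacle, is precisely the verification that the endpoint restrictions correspond to $\ev_0$ and $\ev_1$: concretely, that restricting $\overline{\alpha}$ along $i_k$ (i.e. precomposing with $(i_k)_!$ in $\corr$, using \cref{prop:fun_comp}) is carried under \cref{cor:corr_comp} to restriction of presheaves along $\widehat{i_k}:\widehat{\cC}\hookrightarrow[1]\times\widehat{\cC}$, hence after currying to evaluation at $k\in[1]$. I expect this to follow from the naturality of the completion isomorphism in the source variable together with the standard fact that precomposition with a graph correspondence $g_!$ acts on profunctors by restriction along $g$; once this compatibility with the projections is in hand, the comparison of fibres is automatic.
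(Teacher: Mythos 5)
Your route is genuinely different from the paper's: the paper observes that $[1]\times\cC$ is the total category of the identity correspondence $\id_{\cC,!}$ and then combines \cref{cor:dual} with the monadic description of $\mor_\corr(-,*)$ from \cref{prop:nat} (in the form of \cref{rem:nat}), staying entirely inside the $\corr$-formalism and producing along the way the explicit description that is reused later (e.g.\ in the proof of \cref{prop:corr_bicat}). Your reduction to presheaf categories via completion could plausibly be made to work, but as written it has a genuine gap, and it is exactly the step you flag: the compatibility of the equivalence of \cref{cor:corr_comp} with precomposition by $(i_k)_!$. This cannot be obtained by appealing to ``naturality'', because the paper establishes no functoriality whatsoever for \cref{cor:corr_comp}: that equivalence is produced as a bare bijection of categories by combining \cref{prop:univ} with a black-box citation of \cite[Lemma 4.1]{ayala2017fibrations}. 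The half of the compatibility living before the black box is indeed formal --- from $p_{[1]\times\cC}\circ i_k\cong \widehat{i_k}\circ p_\cC$, \cref{prop:fun_comp} and the isomorphisms $p^*p_!\cong\id\cong p_!p^*$ of \cref{prop:univ} one gets $(i_k)_!\circ p_\cC^*\cong p_{[1]\times\cC}^*\circ(\widehat{i_k})_!$ --- but the other half, that under the cited equivalence precomposition with the graph correspondence $(\widehat{i_k})_!$ becomes restriction of presheaves along $\widehat{i_k}$, is precisely the ``profunctors restrict along functors'' statement in this formalism. Proving it requires unwinding the correspondence/presheaf dictionary at the level of the underlying Segal data, which is work of the same kind and magnitude as the paper's own argument via \cref{rem:nat}; your formal reduction does not bypass the hard part so much as relocate it into an uncited compatibility.

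Two further points. First, $\widehat{[1]\times\cC}\cong[1]\times\widehat{\cC}$ is argued by asserting that a fully faithful, essentially surjective map into a complete Segal space exhibits the completion; this is true (Rezk's characterization of completion via Dwyer--Kan equivalences) but does not follow from \cref{prop:compl}, so it must be cited externally or replaced by the universal-property argument $\mor([1]\times\cC,\cK)\cong\mor(\cC,\cK^{[1]})\cong\mor(\widehat{\cC},\cK^{[1]})$, which in turn needs completeness of $\cK^{[1]}$ for complete $\cK$ --- again standard, but external to the paper. Second, you never track the endpoint swap introduced by $[1]^\op\cong[1]$: with the convention that a correspondence $\cC\nrightarrow\cD$ is contravariant in $\cC$, the curried functor $[1]^\op\to\cP(\widehat{\cD}^\op\times\widehat{\cC})$ attached to $\overline{\alpha}$ has its arrow running from the restriction at $1$ to the restriction at $0$, and composing with $[1]^\op\cong[1]$ exchanges what you call $\ev_0$ and $\ev_1$. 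Since the statement pairs $(M,N)$ with the specific endpoints $(0,1)$, this bookkeeping is part of the content of the corollary and has to be carried out explicitly, not absorbed into ``the comparison of fibres is automatic''.
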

\begin{proof}
Observe that $[1]\times \cC$ is equivalently the correspondence given by $\id_{\cC,!}$. The result now follows from \cref{rem:nat} and \cref{cor:dual}. For future use we will also explicate the construction. Assume that we are given a correspondence $\cC\times[1]\nrightarrow\cD$ restricting to $M$ and $N$ on the ends of the interval. Observe that the objects of $\fib([1]\times\cC,\cD)$ lying over $([m],[n])\in\Delta^\op\times\Delta^\op$ are given by sequences \[\widehat{c}\bydef(c_1\rightarrow...\rightarrow c_{m'}\rightarrow c\rightarrow c'\rightarrow c'_1\rightarrow...\rightarrow c'_{m''},d_1\rightarrow...\rightarrow d_n)\] for all $m'$ and $m''$ such that $m'+m''+1=m$. In particular, the morphisms $i_{1,2}:\fib(\cC,\cD)\rightarrow\fib(\cC\times[1],\cD)$ corresponding to the natural inclusions $i_{1,2}:\cC\hookrightarrow\cC\times [1]$ are given by sending an object \[(c_1\rightarrow...\rightarrow c_m,d_1\rightarrow...\rightarrow d_n)\in\fib(\cC,\cD)\]
to the same string of morphisms viewed as an element of $\fib(\cC\times[1],\cD_n)$ for which $m'=0$ (resp. $m''=0$). To get the value of the corresponding natural transformation $\alpha$ on $(c_1\rightarrow...\rightarrow c_m,d_1\rightarrow...\rightarrow d_n)\in\fib(\cC,\cD)$, consider the objects \[\overline{c}\bydef(c_1\rightarrow...\rightarrow c_m,d_1\rightarrow...\rightarrow d_n)\]
with $m'=m-1$ and $m''=0$ and 
\[\widetilde{c}\bydef(c_1\rightarrow...\rightarrow c_m=\joinrel=c_m,d_1\rightarrow...\rightarrow d_n)\] 
with $m'=m-1$ and $m''=1$. Observe that there is a natural morphism $\widetilde{c}\rightarrow\overline{c}$ ling over $([m],[n])\xrightarrow{(i_m,\id)}([m+1],[n])$ where $i_m:[m]\rightarrowtail[m+1]$ sends $[m]$ isomorphically into the initial subinterval of $[m+1]$. This induces the morphism \[\alpha:\cC_m\times_{\cC_0}M\times_{\cC_0}\cD_n\rightarrow\cC_m\times_{\cC_0}N\times_{\cC_0}\cD_n\]
which we declare the value of the required natural transformation.\par
Conversely, assume that we are given a natural transformation $\alpha:M\rightarrow N$. We define the corresponding functor $\fib(\cC\times[1],\cD)\rightarrow\cS$ by first sending every object of the form of $\widehat{c}$ above with $m''>0$ to $M$ and objects as with $m''=0$ to $N$. Then we need to define the action of morphisms. First, observe that we only need to define the action of morphisms whose image in $\Delta^\op\times\Delta^\op$ does not belong to $\Delta^\op_\infty\times\Delta^\op$. So assume we are given an object $\widehat{c}$ as above and an inert morphism $([m_1],[n])\overset{(i,\id)}{\rightarrowtail}([m],[n])$ such that $i(m_1)\bydef m_2<m'$, then we define the corresponding morphism as a composition
\[\cC_m\times_{\cC_0}M\times_{\cC_0}\cD_n\overset{(j_{m'},\id)}{\rightarrowtail}\cC_{m'+1}\times_{\cC_0}M\times_{\cC_0}\cD_n\xrightarrow{\alpha}\cC_{m'+1}\times_{\cC_0}N\times_{\cC_0}\cD_n\overset{(j_{m_1},\id)}{\rightarrowtail}\cC_{m_1}\times_{\cC_0}N\times_{\cC_0}\cD_n\]
where $j_{m'}:[m'+1]\rightarrowtail[m]$ is the inclusion of $[m'+1]$ as the initial segment of $[m]$ with the corresponding morphism \[\cC_m\times_{\cC_0}M\times_{\cC_0}\cD_n\overset{(j_{m'},\id)}{\rightarrowtail}\cC_{m'+1}\times_{\cC_0}M\times_{\cC_0}\cD_n\]
given by the structure of the correspondence on $M$ and $j_{m_1};[m_1]\rightarrowtail[m'+1]$ is such that $j_{m_1}\circ j_{m'}\cong i$ with the corresponding morphism \[\cC_{m'+1}\times_{\cC_0}N\times_{\cC_0}\cD_n\overset{(j_{m_1},\id)}{\rightarrowtail}\cC_{m_1}\times_{\cC_0}N\times_{\cC_0}\cD_n\]
given by the structure of the correspondence on $N$. 
\end{proof}
\begin{lemma}\label{lem:nat_1}
Given an $\infty$-category $\cC$ together with two morphisms $\cF:\cC\rightarrow\cS$ and $\cE:\cC\rightarrow\cS$, denote by $p_1:\widetilde{\cE}\rightarrow\cC$ (resp. $p_2:\widetilde{\cF}\rightarrow\cS$) the left fibration over $\cC$ corresponding to $\cE$ (resp. to $\cF$) and consider a functor $\overline{\cF}:\widetilde{\cE}\rightarrow\cS$ sending $e\in\widetilde{\cE}$ to $\cF(p(e))$. Then the space of natural transformation $\nat(\cE,\cF)$ is isomorphic to the limit of $\overline{\cF}$.
\end{lemma}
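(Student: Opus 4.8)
The plan is to reduce the computation of $\nat(\cE,\cF)$ to a short chain of adjunctions organized around the straightening equivalence $\mor_\cat(\cC,\cS)\cong\mathrm{LFib}(\cC)$ between copresheaves and left fibrations. The starting observation is that the functor $\cE$ classifying the left fibration $p_1:\widetilde{\cE}\rightarrow\cC$ can be written as a left Kan extension: I would show that
\[\cE\cong p_{1,!}(\underline{*}),\]
where $\underline{*}:\widetilde{\cE}\rightarrow\cS$ denotes the constant functor at the terminal space. Under straightening this is transparent, since $p_1^*$ corresponds to pullback of left fibrations along $p_1$, its left adjoint $p_{1,!}$ corresponds to post-composition with $p_1$, and $\underline{*}$ corresponds to the identity fibration $\id_{\widetilde{\cE}}$; hence $p_{1,!}(\underline{*})$ is classified by the composite $\widetilde{\cE}\xrightarrow{p_1}\cC$, which is $\cE$ by definition.

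Granting this, I would run two adjunctions in sequence. First, the adjunction $p_{1,!}\dashv p_1^*$ gives
\[\nat(\cE,\cF)\cong\nat_{\widetilde{\cE}}(\underline{*},p_1^*\cF),\]
and since $p_1^*\cF=\cF\circ p_1=\overline{\cF}$ by definition, the right-hand side is the space of maps $\underline{*}\rightarrow\overline{\cF}$ in $\mor_\cat(\widetilde{\cE},\cS)$. Second, the constant-diagram functor $\mathrm{const}:\cS\rightarrow\mor_\cat(\widetilde{\cE},\cS)$ is left adjoint to the limit functor $\underset{\widetilde{\cE}}{\lim}$, and $\underline{*}=\mathrm{const}(*)$, so
\[\nat_{\widetilde{\cE}}(\underline{*},\overline{\cF})\cong\mor_\cS(*,\underset{\widetilde{\cE}}{\lim}\:\overline{\cF})\cong\underset{\widetilde{\cE}}{\lim}\:\overline{\cF},\]
the last identification using that $*$ is terminal in $\cS$, so mapping out of it is the identity. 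Chaining the two displays yields the claimed isomorphism $\nat(\cE,\cF)\cong\underset{\widetilde{\cE}}{\lim}\:\overline{\cF}$.

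The only genuinely non-formal input is the identification $\cE\cong p_{1,!}(\underline{*})$, so that is where I would be most careful; everything after it is a formal manipulation of adjoint functors. As an alternative one could verify this pointwise through the comma-category colimit formula $(p_{1,!}\underline{*})(c)\cong|p_1/c|$, together with the cofinality of the fiber inclusion $\cE(c)\cong\widetilde{\cE}_c\hookrightarrow p_1/c$ valid for a left fibration; but the straightening argument avoids this cofinality check, so I would present that as the main line and relegate the pointwise computation to a remark.
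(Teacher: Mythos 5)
Your proposal is correct, but it takes a different route from the paper. The paper's proof has three ingredients: it identifies the left fibration classifying $\overline{\cF}=\cF\circ p_1$ as the base change $\widetilde{\cF}\times_\cC\widetilde{\cE}\rightarrow\widetilde{\cE}$, invokes \cite[Corollary 3.3.3.4]{lurie2009higher} to identify $\underset{\widetilde{\cE}}{\lim}\:\overline{\cF}$ with the space of coCartesian sections of that fibration, and then observes that such sections are exactly maps $\widetilde{\cE}\rightarrow\widetilde{\cF}$ over $\cC$ preserving coCartesian edges, i.e.\ natural transformations under straightening. You instead localize all the content in the single identification $\cE\cong p_{1,!}(\underline{*})$ and then chain the two adjunctions $p_{1,!}\dashv p_1^*$ and $\mathrm{const}\dashv\lim$; no appeal to section spaces is needed. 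Both arguments ultimately rest on straightening/unstraightening, but yours is the more formal one, and correspondingly the only place requiring genuine care is the one you flagged: the claim that $p_{1,!}$ is implemented by post-composition with $p_1$ is special to the situation where $p_1$ is itself a left fibration (so that composites of left fibrations are left fibrations); for a general functor one must further localize the composite, so it is worth stating explicitly that this is where the hypothesis on $p_1$ enters. What the paper's route buys is consistency of technique: the section-space description of limits is reused immediately afterwards (in \cref{lem:nat_2} and \cref{prop:colim_cocone}), whereas your Kan-extension argument, while cleaner in isolation, would have to be supplemented by that description anyway for the subsequent results. As a minor point in your favour, your write-up also sidesteps the typo in the paper's proof, where the pullback $\widetilde{\cF}\times_\cC\widetilde{\cE}$ is written as a product $\widetilde{\cF}\times\widetilde{\cE}$.
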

\begin{proof}
First, observe that the left fibration corresponding to $\overline{\cF}$ is $\overline{p}:\widetilde{\cF}\times\widetilde{\cE}\rightarrow\widetilde{\cE}$. By \cite[Corollary 3.3.3.4.]{lurie2009higher} the limit of $\overline{\cF}$ is given by the space of coCartesian sections of $\overline{p}$, which is easily seen to be the space of morphisms $\widetilde{\cE}\rightarrow\widetilde{\cF}$ taking coCartesian edges to coCartesian edges, which is the same as $\nat(\cE,\cF)$.
\end{proof}
\begin{lemma}\label{lem:nat_2}
Given two functors $F,G:\cC\rightarrow\cD$ between Segal spaces, consider the correspondence $\id_\cC$ viewed as a left fibration over $\fib(\cC,\cC)$. Define a functor $\widetilde{G}:\id_\cC\rightarrow\cS$ that sends \[(\overline{c_1}\rightarrow\overline{c_2}\rightarrow...\rightarrow\overline{c_k}\rightarrow c\rightarrow c'\rightarrow\widehat{c_1}\rightarrow...\rightarrow\widehat{c_t})\]
to $\mor_\cD(F(c),G(c'))$. Then the space of natural transformations $\nat(F,G)$ is isomorphic to the limit of this functor.
\end{lemma}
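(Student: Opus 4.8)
The plan is to bootstrap from \cref{lem:nat_1}, which already settles the case of $\cS$-valued functors. First I would note that the functor $\widetilde G$ depends only on the middle edge of each string: its value on $(\overline c_1\rightarrow\dots\rightarrow\overline c_k\rightarrow c\rightarrow c'\rightarrow\widehat c_1\rightarrow\dots\rightarrow\widehat c_t)$ is $\mor_\cD(Fc,Gc')$, which is insensitive to the two strings hanging off the ends. Concretely, writing $p:\id_\cC\rightarrow\fib(\cC,\cC)$ for the structure map, $\widetilde G$ is the pullback $p^*\cF$ of the functor $\cF:\fib(\cC,\cC)\rightarrow\cS$ sending a pair of strings to $\mor_\cD(F(\text{last vertex}),G(\text{first vertex}))$. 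Since $\id_\cC$ is by construction the total space of the left fibration associated to the correspondence $\id_\cC$, \cref{lem:nat_1} applies with ambient category $\fib(\cC,\cC)$ and gives $\lim\widetilde G\cong\nat(\id_\cC,\cF)$, the space of natural transformations of the two functors $\id_\cC,\cF:\fib(\cC,\cC)\rightarrow\cS$.

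Next I would reorganize this limit over the twisted arrow category. The assignment ``middle edge'' upgrades to a functor $q:\id_\cC\rightarrow\twar\cC$ (it is the projection of the pullback $\id_\cC\cong\mathrm{ev}^*\mor_\cC$ along the endpoint evaluation $\mathrm{ev}:\fib(\cC,\cC)\rightarrow\cC^\op\times\cC$ off the left fibration $\twar\cC\rightarrow\cC^\op\times\cC$), and by the previous paragraph $\widetilde G\cong q^*\overline K$ for $\overline K:\twar\cC\rightarrow\cS$, $(c\rightarrow c')\mapsto\mor_\cD(Fc,Gc')$. I would then show $q$ is initial: it admits the section taking an edge $c\rightarrow c'$ to the minimal string (with $k=t=0$), and its comma categories are weakly contractible because the extending strings range over the weakly contractible slices $\cC_{/c}$ and $\cC_{c'/}$. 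Cofinality then yields $\lim\widetilde G\cong\lim_{\twar\cC}\overline K$.

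The hard part will be exactly this last cofinality statement --- equivalently, the assertion that $\mathrm{ev}$ is a localization, which rests on the cofinality of the last-vertex maps out of the string categories; everything else is bookkeeping. Granting it, the proof concludes formally: $\twar\cC$ is the total space of the mapping functor $\mor_\cC$, so $\lim_{\twar\cC}\overline K$ is a second instance of \cref{lem:nat_1}, identifying it with the end $\int_{c\in\cC}\mor_\cD(Fc,Gc)$; and this end is the space of natural transformations $\nat(F,G)$. Chaining the isomorphisms gives $\lim\widetilde G\cong\nat(F,G)$.
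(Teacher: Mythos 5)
Your opening move is fine: $\widetilde G$ does factor as $p^*\cF$ through the base, and since $\fib(\cC,\cC)$ is an honest $\infty$-category, \cref{lem:nat_1} does give $\lim\widetilde G\cong\nat(\id_\cC,\cF)$. The proof breaks at the other end of the chain. The step you treat as formal --- ``this end is the space of natural transformations $\nat(F,G)$'' --- is precisely the content of the lemma, not a fact you may invoke. In this paper $\nat(F,G)$ means the space of maps $\cC\times[1]\rightarrow\cD$ of (possibly non-complete) Segal spaces restricting to $F$ and $G$; no end formula for such natural transformations is proved or cited anywhere, and \cref{lem:nat_2} is itself the paper's substitute for one, with the string category $\id_\cC$ serving as a fattened model of the twisted arrow category. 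Modulo your own cofinality step, the assertion $\lim_{\twar\cC}\overline K\cong\nat(F,G)$ is \emph{equivalent} to the statement being proved, so invoking it is circular; and you cannot outsource it to the quasicategorical literature, because $\cC$ and $\cD$ are not assumed complete, $\nat$ here is computed in Segal spaces (maps into a non-complete $\cD$ are not maps into $\widehat{\cD}$), and bridging that discrepancy is exactly the missing work. The symptom of the gap is that nothing in your chain of isomorphisms ever touches the actual definition of $\nat(F,G)$ as maps out of $\cC\times[1]$.

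The paper's proof supplies exactly this bridge: it starts from the definition, applies \cref{lem:nat_1} over $\Delta^\op$ to write $\mor_\cat(\cC\times[1],\cD)$ as a limit of a functor $F_\cD$ on the total space of $\cC\times[1]$, identifies $\nat(F,G)$ with the limit of the subfunctor $\widetilde{F_\cD}$ constraining the two ends of each string to map via $F$ and $G$, and then recognizes the indexing data as $\id_\cC$ over $\fib(\cC,\cC)$ via $\cC\times[1]\cong p_!\id_\cC$ --- never leaving the framework of fibrations over $\Delta^\op$. Your intermediate reduction is also under-supplied: $\twar\cC$, the map $\mathrm{ev}:\fib(\cC,\cC)\rightarrow\cC^\op\times\cC$, and the functor $(c,c')\mapsto\mor_\cD(Fc,Gc')$ are never constructed in the paper for a non-complete Segal space, and the comma categories of $q$ are not products of slices: an object of $q_{/(c\rightarrow c')}$ is a string together with a twisted-arrow factorization $c\rightarrow\overline{c_k}\rightarrow\widehat{c_1}\rightarrow c'$, and since morphisms in $\id_\cC$ only restrict outer strings and compose existing edges into the middle (they never create the factorizing arrows $a$ and $b$), your proposed minimal-string section is neither initial nor terminal in these commas; contracting them needs a cofinality argument of the kind carried out in \cref{prop:corr} and \cref{prop:fact_1}. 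So even granting the end formula the route is longer than the paper's, and without it the proof is circular.
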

\begin{proof}
A natural transformation between $F$ and $G$ is a functor $[1]\times\cC\rightarrow\cD$ that restricts to $F$ on $\cC\times\{0\}$ and to $G$ on $\cC\times\{1\}$. Using \cref{lem:nat_1}, we can view the space of morphisms $[1]\times\cC\rightarrow\cD$ as the limit over $(\cC\times[1])$ (viewed as a left fibration over $\Delta^\op$) of a functor  $F_\cD:(\cC\times[1])\rightarrow\cS$ sending 
\[\widetilde{c}\bydef(c_1\xrightarrow{f_1} c_2\xrightarrow{f_2}...\xrightarrow{f_{n-1}} c_n)\in(\cC\times[1])_n\]
to $\cD_n$. The space of natural transformations is then given by the limit of $\widetilde{F_\cD}\subset F_\cD$ which sends $\widetilde{c}$ as above to the space of sequences $(d_1\xrightarrow{g_1}d_2\xrightarrow{g_2}...\xrightarrow{g_{n-1}}d_n)$ for which $d_i\cong F(c_i)$ (resp. $G(c_i)$) and $g_j\cong F(f_j)$ (resp. $G(f_j)$) if $c_i$ and $f_j$ are in the image of $\cC\times\{0\}$ (resp. $\cC\times\{1\}$). Observe that since $\cC\times[1]$ is the total category of the identity correspondence, we have $(\cC\times[1])\cong p_!\id_\cC$ where $p:(\Delta^\op\times\Delta^\op)\rightarrow\Delta^\op$ sends $([n],[m])$ to $[n+m+1]$ and $\id_\cC(n,m)\cong \cC_n\times_{\cC_0}\cC_1\times_{\cC_0}\cC_m$. In other words, there is an isomorphism
\[\coprod_{k+t=n-1}\cC_k\times_{\cC_0}\cC_1\times_{\cC_0}\cC_t\cong (\cC\times[1])_n.\]
Finally, observe that the functor
\[\coprod_{k+t=n-1}\cC_k\times_{\cC_0}\cC_1\times_{\cC_0}\cC_t\cong (\cC\times[1])_n\xrightarrow{\widetilde{F_\cD}}\cS\]
sends $(\overline{c_1}\rightarrow\overline{c_2}\rightarrow...\rightarrow\overline{c_k}\rightarrow c\rightarrow c'\rightarrow\widehat{c_1}\rightarrow...\rightarrow\widehat{c_t})$ to $\mor_\cD(F(c),G(c'))$, so the resulting left fibration over $\fib(\cC,\cC)$ is indeed the one described in the claim.
\end{proof}
\begin{prop}\label{prop:corr_bicat}
Given two morphisms $f$ and $g$ from $\cC$ to $\cD$ there is a natural isomorphism
\[\nat(f,g)\cong\mor_{\mor_{\mathrm{Corr}}(\cC,\cD)}(f_!,g_!)\cong\mor_{\mor_{\mathrm{Corr}}(\cC,\cD)}(g^*,f^*)\]
\end{prop}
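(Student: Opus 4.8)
The plan is to establish the two isomorphisms separately. The second one, relating $\mor_{\mor_\corr(\cC,\cD)}(f_!,g_!)$ to $\mor_{\mor_\corr(\cD,\cC)}(g^*,f^*)$, is a formal consequence of the adjunctions of \cref{prop:adj}, while the first one, relating $\nat(f,g)$ to $\mor_{\mor_\corr(\cC,\cD)}(f_!,g_!)$, is the substantive part and will be obtained by combining \cref{cor:nat} with \cref{lem:nat_2}, both of which were set up precisely for this purpose.

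For the first isomorphism I would begin by invoking \cref{cor:nat} with $M=f_!$ and $N=g_!$. Since morphisms in the category $\mor_\corr(\cC,\cD)$ are exactly natural transformations of correspondences (by \cref{prop:corr_mor}), this identifies $\mor_{\mor_\corr(\cC,\cD)}(f_!,g_!)$ with the space of correspondences $\overline{\alpha}\colon[1]\times\cC\nrightarrow\cD$ whose restrictions to $\{0\}\times\cC$ and $\{1\}\times\cC$ are $f_!$ and $g_!$. The point of this move is that, as observed in \cref{cor:nat}, $[1]\times\cC$ is the total space of the identity correspondence $\id_\cC$, so $\fib([1]\times\cC,\cD)$ is organized over $\fib(\cC,\cC)$ exactly as in the hypotheses of \cref{lem:nat_2}: its objects over $([m],[n])$ are strings $(c_1\to\dots\to c_{m'}\to c\to c'\to c'_1\to\dots\to c'_{m''},\,d_1\to\dots\to d_n)$ carrying a single crossing morphism $c\to c'$ from level $0$ to level $1$.

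I would then analyze the effect of the two restriction conditions. Writing $\overline{\alpha}$ as a presheaf on $\fib([1]\times\cC,\cD)$ via \cref{prop:corr_mor}, the requirement that it restrict to $f_!$ on level $0$ and to $g_!$ on level $1$ fixes all of its values on strings lying entirely over $\{0\}$ or entirely over $\{1\}$, so the only genuinely new datum is the left action of a crossing morphism $(0,c)\to(1,c')$ on the level-$1$ part, i.e.\ a coherent family of maps $\mor_\cC(c,c')\times\mor_\cD(g(c'),d)\to\mor_\cD(f(c),d)$. Evaluating this family at $c=c'$, $u=\id$ and $d=g(c)$ extracts a component $f(c)\to g(c)$, and the co-Yoneda reduction shows the whole family is equivalent to such a coherent assignment $c\mapsto\bigl(f(c)\to g(c)\bigr)$. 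This is exactly the datum recorded by the functor $\widetilde{g}\colon\id_\cC\to\cS$ of \cref{lem:nat_2}, which sends a crossing string to $\mor_\cD(f(c),g(c'))$; matching the Segal/compatibility conditions on $\overline{\alpha}$ with the condition of being a coherent section then identifies this space with $\underset{\id_\cC}{\lim}\,\widetilde{g}$, which by \cref{lem:nat_2} is $\nat(f,g)$.

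For the second isomorphism I would argue purely formally from \cref{prop:adj}, which gives adjunctions $f_!\dashv f^*$ and $g_!\dashv g^*$ in $\corr$, with units $\eta_f,\eta_g$ and counits $\epsilon_f,\epsilon_g$. The mate construction sends a $2$-cell $\alpha\colon f_!\Rightarrow g_!$ to the composite $g^*\xrightarrow{\eta_f g^*}f^*f_!g^*\xrightarrow{f^*\alpha g^*}f^*g_!g^*\xrightarrow{f^*\epsilon_g}f^*$, and the triangle identities show this is an equivalence with inverse the mate in the other direction, whence $\mor_{\mor_\corr(\cC,\cD)}(f_!,g_!)\cong\mor_{\mor_\corr(\cD,\cC)}(g^*,f^*)$. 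I expect the main obstacle to be the middle step of the first isomorphism: matching the explicit fibration-level description of a correspondence $[1]\times\cC\nrightarrow\cD$ produced by \cref{cor:nat} with the explicit functor $\widetilde{g}$ over $\id_\cC$ produced by \cref{lem:nat_2}, and verifying that the coherence conditions on the former correspond exactly to being a point of $\underset{\id_\cC}{\lim}\,\widetilde{g}$, i.e.\ to naturality of the associated transformation $f\Rightarrow g$; the adjunction step, by contrast, is routine.
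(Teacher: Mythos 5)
Your proposal is correct in outline and genuinely differs from the paper in how it organizes the main isomorphism, so a comparison is in order. For the second equivalence you do exactly what the paper does: it, too, dispatches the $(-)^*$ case by \cref{prop:adj} and formal properties of adjunctions (and like you, one must read the statement as $\mor_{\mor_\corr(\cD,\cC)}(g^*,f^*)$, since $f^*,g^*$ run from $\cD$ to $\cC$). For the first equivalence the paper does not open with \cref{cor:nat}: it starts from the adjunction $L_1\dashv R_1$ of \cref{constr:tensor}, rewriting $\mor_{\mor_\corr(\cC,\cD)}(f_!,g_!)\cong\mor_{\ccat\times[1]}(L_1R_1f,g)$, computes $L_1R_1f$ explicitly as the inclusion $\cC\rightarrowtail f_!$ into the total category of the correspondence, expresses the mapping space as $\lim_{L_1R_1f}F_g$ for an explicit functor $F_g$, and then builds two maps: $G$ to $\nat(f,g)$ by restricting the limit along a functor $F:\id_\cC\rightarrow L_1R_1f$ satisfying $\widetilde{g}\cong F_g\circ F$, and $H$ in the opposite direction using \cref{cor:nat} ($\alpha\mapsto\alpha_!$), finally checking both composites, with $H\circ G\cong\id$ resting on the Yoneda-type identity $\beta(h)\cong\beta(\id_{f(c)})\circ h$. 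You instead use \cref{cor:nat} as the opening equivalence, reducing everything to computing the space of correspondences $[1]\times\cC\nrightarrow\cD$ with prescribed ends, and then identify the only free datum (the crossing action $\mor_\cC(c,c')\times\mor_\cD(g(c'),d)\rightarrow\mor_\cD(f(c),d)$) with a point of $\lim_{\id_\cC}\widetilde{g}\cong\nat(f,g)$ via Yoneda and \cref{lem:nat_2}. Your route is arguably cleaner conceptually and avoids \cref{constr:tensor} entirely; what it gives up is that the step you correctly flag as the main obstacle is exactly where the paper's explicit machinery ($F_g$ and the two composite checks) earns its keep: your co-Yoneda reduction must be carried out coherently over the whole fibration $\fib([1]\times\cC,\cD)$ — compatibly with the left $[1]\times\cC$-action, the right $\cD$-action, and the Segal decompositions — not merely objectwise, and that verification is essentially the same amount of work as the paper's computations, just repackaged. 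With that step executed, your argument is a complete and valid alternative proof.
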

\begin{proof}
We will prove the proposition for $(-)_!$, the case of $(-)^*$ would then follow from \cref{prop:adj} and the elementary properties of adjunctions. First, observe that by definition $f_!\cong R_1f$ for the functor $R_1$ of \cref{constr:tensor}. It follows that
\[\mor_{\ccat^\cell_{/[1]}}(f_!,g_!)\cong \mor_{\ccat^\cell_{/[1]}}(R_1 f,R_1 g)\cong \mor_{\ccat\times [1]}(L_1 R_1 f,g).\]
Observe that $L_1 R_1 f$ is given by the natural inclusion $i:\cC\rightarrowtail f_!$ of $\cC$ into the total category of the correspondence $f_!$. More explicitly, the value of $L_1 R_1 f$ on $([n],0)$ is given by $\cC_n$ and on $([m],1)$ it is given by \[p_!(f_!)([n])\cong \coprod_{k+t=n-1} \cC_k\times_{\cC_0}\cD_1\times_{\cD_0}\cD_t.\] Denote by $F_g:L_1 R_1 f\rightarrow\cS$ the functor whose limit gives the space $\nat_\corr(f_!,g_!)\bydef \mor_{\mor_{\mathrm{Corr}}(\cC,\cD)}(f_!,g_!)$. By arguing as in the proof of \cref{lem:nat_2}, we see that 
\[F_g(c_1\rightarrow...\rightarrow c_n,0)\cong \{f(c_n)\}\]
and 
\[F_g(d_1\rightarrow...\rightarrow d_t, d_t\rightarrow f(c_1), c_1\rightarrow...\rightarrow c_k)\cong \mor_\cD(d_t, g(c_1)).\] 
Observe that there is a natural morphism $F:\id_\cC\rightarrow L_1 R_1 f$ sending \[(c_1\rightarrow...\rightarrow c_t, c_t\rightarrow c'_1, c'_1\rightarrow...\rightarrow c'_k)\]
to 
\[(f(c_1)\rightarrow...\rightarrow f(c_t), f(c_t)\rightarrow f(c'_1), c'_1\rightarrow...\rightarrow c'_k,1).\] 
It follows by inspection that $\widetilde{g}\cong F_g\circ F$, where $\widetilde{g}:\id_\cC\rightarrow\cS$ is the functor of \cref{lem:nat_2}, and so we have a natural morphism 
\[G:\nat_\corr(f_!,g_!)\cong \underset{L_1 R_1 f}{\lim}F_g\rightarrow\underset{\id_\cC}{\lim}\;\widetilde{g}\cong \nat(f,g).\]
Conversely, given a natural transformation $\alpha:f\rightarrow g$ we can view it as a morphism $\alpha:\cC\times[1]\rightarrow\cD$ and define $H:\nat(f,g)\rightarrow\nat_\corr(f_!,g_!)$ by sending $\alpha$ to $\alpha_!:\cC\times[1]\nrightarrow\cD$ which corresponds to an element of $\nat_\corr(f_!,g_!)$ by \cref{cor:nat}.\par
It is easy to see by untangling the definitions that $G\circ H\cong \id$. Given an element $\beta:f_!\rightarrow g_!$, which we view as a cartesian section of $F_g$, we see that $H\circ G (\beta)$ sends $(d\xrightarrow{h}f(c))$ to 
\[d\xrightarrow{h}f(c)\xrightarrow{\beta(\id_{f(c)})}g(c)\in F_g(h)\cong\mor_{\cD}(d, g(c)).\]
Observe that we have a sequence of morphisms 
\[F_g(f(c)=\joinrel= f(c))\cong F_g(d\xrightarrow{h} f(c),f(c)=\joinrel=f(c))\rightarrow F_g(d\xrightarrow{h} f(c))\]
which in particular shows that for every $\beta\in\underset{L_1 R_1 f}{\lim}F_g$ we have $\beta(h)\cong \beta(\id_{f(c)})\circ h$ proving that indeed $H\circ G\cong \id$.
\end{proof}
\begin{remark}\label{rem:subcat}
\Cref{prop:corr_bicat} combined with \cref{prop:fun_comp} shows that we can view $\cat$ as a faithful subcategory of $\corr$.
\end{remark}
\section{Lax functors}\label{sect:four}
Our goal in this section is to describe the concept of a lax functor between a Segal space $\cC$ and a twofold Segal space $\cB$ and prove their basic properties. In defining the lax functors we follow the approach of \cite{gaitsgory2017study}, however for convenience we first describe in \cref{constr:cart} for a category $\cC$ with a subcategory $\cD$ such that $\cC_0\cong\cD_0$ the pattern $\ccart^\cD(\cC)$ whose Segal spaces correspond to categories over $\cC$ that admit coCartesian lifts of all morphisms in $\cD$. We then use this category to prove in \cref{prop:lax} and \cref{cor:unilax} that the categories of lax and unital lax functors from a given $\cC$ are corepresentable by a twofold Segal spaces $L^\lax\cC$ and $L^{\lax,\un}\cC$ respectively. \par
Finally, in \cref{def:coc} we introduce the concepts of lax cocones and lax colimits that will play a major role in the next section. We believe that our notion of lax colimits generalizes that of \cite{gepner2015lax}, however we do not prove that this is the case. 
\begin{prop}
There is a natural equivalence between the category of two-fold Segal spaces and the category of Cartesian fibrations over $\Delta$ satisfying the Segal condition such that $F([0])$ is a groupoid.
\end{prop}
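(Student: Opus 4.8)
The plan is to realise the asserted equivalence as a restriction of the (un)straightening correspondence, after reformulating a twofold Segal space as a simplicial object in $\cat$. First I would recall that a twofold Segal space is precisely a functor $X_\bullet\colon\Delta^\op\to\cat$ — an object of $\seg_\ccat(\cat)$ — whose value $X_{[0]}$ is a groupoid; here the Segal condition is the requirement that for every $m$ the map $X_{[m]}\xrightarrow{\sim}X_{[1]}\times_{X_{[0]}}\cdots\times_{X_{[0]}}X_{[1]}$, induced by restriction along the edge inclusions $[1]\hookrightarrow[m]$ and the vertex inclusions $[0]\hookrightarrow[m]$ (the inert morphisms of the pattern $\ccat$), is an isomorphism. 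This exhibits the category of twofold Segal spaces as the full subcategory of $\mor_\cat(\Delta^\op,\cat)$ cut out by the Segal condition together with the requirement that $X_{[0]}$ be a groupoid.

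On the fibration side I would use the $\cat$-valued unstraightening, an equivalence between $\mor_\cat(\Delta^\op,\cat)$ and Cartesian fibrations $p\colon\cE\to\Delta$ whose fibers are Segal spaces, under which the fiber $F([m])\cong\cE_{[m]}$ recovers $X_{[m]}$ and a morphism $\alpha\colon[m']\to[m]$ of $\Delta$ acts through the Cartesian pullback $\alpha^*\colon\cE_{[m]}\to\cE_{[m']}$, matching $X(\alpha)$. Since pullback along the edge and vertex inclusions computes exactly the corresponding restriction maps, the Segal isomorphisms for $X$ translate into the fibrewise decomposition $\cE_{[m]}\cong\cE_{[1]}\times_{\cE_{[0]}}\cdots\times_{\cE_{[0]}}\cE_{[1]}$, i.e. into the Segal condition for $p$; and the condition that $X_{[0]}$ is a groupoid becomes the condition that $F([0])\cong\cE_{[0]}$ is a groupoid. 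As unstraightening is natural and both conditions are full-subcategory conditions, it restricts to the claimed equivalence.

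The one genuinely delicate point is the form of unstraightening used: the fibers $X_{[m]}$ are Segal spaces that need not be complete, and $\cat^\comp$ is only a reflective localisation of $\cat$ (the completion of \cref{prop:compl}), so $\cat$ does not embed into $\cat^\comp$ and one cannot simply invoke the standard straightening for $\cat^\comp$-valued functors, whose fibers are complete Segal spaces. I would therefore work with Cartesian fibrations internal to Segal spaces — a map of simplicial spaces $\cE\to\Delta$ satisfying the relative lifting property, with Segal-space fibers — and establish the internal straightening equivalence at that level of generality; alternatively one can reduce to the complete case by completing each fiber via \cref{prop:compl} while retaining the flags $X_{[m],0}$, though the internal formulation keeps the flag data manifest and is the route I expect to be cleanest. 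Once this $\cat$-valued straightening is in hand, the matching of the Segal and groupoid conditions carried out above is routine.
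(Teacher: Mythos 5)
Your proposal is correct and takes essentially the same route as the paper: both sides are reduced to bisimplicial spaces satisfying the Segal condition in each variable together with the groupoid condition at $[0]$, with the straightening/unstraightening dictionary between functors $\Delta^\op\rightarrow\cat$ and Cartesian fibrations over $\Delta$ doing the real work. The paper packages exactly that step as the identification of such fibrations with Segal $\ccat\times\ccat$-spaces via \cref{constr:tensor} and cites \cite{haugseng2018equivalence} for the passage from double to twofold Segal spaces, so the delicate point you isolate (that the fibers are non-complete Segal spaces, forcing an internal notion of Cartesian fibration rather than classical $\cat^\comp$-valued straightening) is the same content the paper asserts, treated no less tersely there than in your sketch.
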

\begin{proof}
First, observe that the category of Cartesian fibrations over $\Delta$ satisfying the Segal condition can be equivalently described as the category of Segal $\ccat\times\ccat$-spaces in the notation of \cref{constr:tensor}. This is precisely the category of double Segal spaces. The fact that double Segal categories satisfying the conditions of the lemma describe twofold Segal spaces follows from \cite{haugseng2018equivalence}.
\end{proof}
\begin{construction}\label{constr:tricat}
We will denote by $2-\cat$ the Segal space of bicategories and functors between them. We will upgrade it to a flagged tricategory. In order to do this it is enough to endow each $\mor_{2-\cat}(\cA,\cB)$ with the structure of a flagged bicategory in a manner compatible with composition. To do this, first denote for $m\geq 0$ by $\theta_{0,m}$ the singleton category $[0]$ and by $\theta_{1,m}$ the bicategory with two objects $0$ and $1$ such that $\mor_{\theta_{1,m}}(0,0)=\mor_{\theta_{1,m}}(1,1)=\{\id\}$, $\mor_{\theta_{1,m}}(1,0)=\varnothing$ and $\mor_{\theta_{1,m}}(0,1)=[m]$, then define for $n>1$
\[\theta_{n,m}\bydef\overbrace{\theta_{1,m}\coprod_{[0]}\theta_{1,m}\coprod_{[0]}...\coprod_{[0]}\theta_{1,m}}^\text{$n$ times},\]
where in each term of the iterated pushout $[0]$ embeds as the final object of $\theta_{1,m}$ on the left and the initial object of $\theta_{1,m}$ on the right. More explicitly, $\theta_{n,m}$ is a bicategory with $n$ objects such that $\mor_{\theta_{n,m}}(i,j)$ is empty unless $j\geq i$ and in that case has objects given by sequences $(m_1,m_2,...,m_{j-i})$ such that $0\leq m_k\leq m$ with a unique morphism from $(m_1,m_2,...,m_{j-i})$ to $(m'_1,m'_2,...,m'_{j-i})$ if all $m'_k\geq m_k$. Also observe that we have an isomorphism
\[\theta_{n,m}\cong\overbrace{\theta_{n,1}\coprod_{[n]}\theta_{n,1}\coprod_{[n]}...\coprod_{[n]}\theta_{n,1}}^\text{$m$ times},\]
where the inclusions $[n]\hookrightarrow\theta_{n,1}$ are identity on objects and send morphisms in $[n]$ to sequences of the form $(1,1,...,1)$ on the right and $(0,0,...,0)$ on the left. Observe that there is a bicosimplicial object \[\theta_{\bullet,\bullet}:\Delta\times\Delta\rightarrow 2-\cat\]
sending $(i,j)$ to $\theta_{i,j}$. Indeed, using the iterated pushouts above it suffices to only define the action of morphisms $f:e\rightarrow([n],[m])$ where $e\in\{([0],[0]),([1],[0]),([1],[1])\}$. The action on $\theta_{0,0}$ is obvious, to every $([1],[0])\xrightarrow{(ij,k)}([n],[m])$ we assign a morphism $[1]\rightarrow\theta_{n,m}$ sending the unique nontrivial morphism of $[1]$ to $(k,k,...,k)$ viewed as a morphism from $i$ to $j$ in $\theta_{n,m}$ and finally we send every $([1],[1])\xrightarrow{(ij,st)}([n],[m])$ to a morphism $\theta_{1,1}\rightarrow\theta_{n,m}$ sending the unique non-trivial $2$-morphism of $\theta_{1,1}$ to the $2$-morphism $(s,s,...,s)\rightarrow(t,t,...,t)$ in $\mor_{\theta_{n,m}}(i,j)$.\par
Now given twofold Segal spaces $\cA$ and $\cB$ we define a bisimplicial space 
\[\mor_{2-\cat}(\cA,\cB)_{\bullet,\bullet}:([n],[m])\mapsto\mor_{2-\cat}(\cA\times \theta_{n,m},\cB),\]
it follows from the pushout isomorphisms above that it is indeed a twofold Segal space. Given $\alpha:\theta_{1,1}\times\cA\rightarrow\cB$ such that $\alpha_0\cong F$ and $\alpha_1\cong F'$ and $\beta:\theta_{1,1}\times\cB\rightarrow\cC$ such that $\beta_0\cong G$ and $\beta_1\cong G'$ we define their composition to be 
\[\theta_{1,1}\times\cA\xrightarrow{m}\theta_{2,1}\times\cA\xrightarrow{(\widehat{\alpha},\widehat{\beta})}\cC,\]
where $m$ is the morphism $\theta_{1,1}\xrightarrow{(02,01)}\theta_{2,1}$, $\widehat{\alpha}$ is the composition
\[\theta_{1,1}\times\cA\xrightarrow{\alpha}\cB\xrightarrow{G}\cC\]
and $\widehat{\beta}$ is 
\[\theta_{1,1}\times\cA\xrightarrow{F'}\theta_{1,1}\times\cB\xrightarrow{\beta}\cC.\]
\end{construction}
\begin{notation}\label{not_arr}
We will use \cref{constr:tricat} as the default tricategory structure on $2-\cat$, however in some cases we will need a slightly different notion. To describe it, we will first define a bicategory $\theta_{t,n}\otimes \theta_{l,m}$ for objects $\theta_{l,m}$ and $\theta_{t,n}$ of \cref{constr:tricat}. The objects of $\theta_{t,n}\otimes \theta_{l,m}$ are given by pairs $(i,j)$ where $0\leq i\leq t$ and $0\leq j\leq l$ and 
\begin{gather*}
    \mor_{\theta_{t,n}\otimes \theta_{l,m}}((i,j),(i',j'))\bydef
    \begin{cases}
        \varnothing &\text{      if $i'<i$ or $j'<j$}\\
        \arr_{\theta_{l,m}}^{i'-i,\lax}(j,j')&\text{       otherwise}
    \end{cases}
\end{gather*}
 where $\arr_{\theta_{l,m}}^{i'-i,\lax}(j,j')$ is a category with objects given by pairs $(f,s)$, where $f:i\rightarrow i'$ is a morphism in $\theta_{t,n}$ and $s$ is a sequence
 \[j\xrightarrow{s_1} j_1\xrightarrow{s_2}...\xrightarrow{s_{i'-i}} j_{i'-i}\xrightarrow{s_{i'-i+1}} j'\]
 in $\theta_{l,m}$. Observe that by definition every morphism from $j$ to $j'$ in $\theta_{l,m}$ is of the from $s_{j'-j}\circ...\circ s_1$, where each $s_k$ is a morphism from $j+k-1$ to $j+k$. We can thus identify an object of $\arr_{\theta_{l,m}}^{i'-i,\lax}(j,j')$ with the data of a morphism $f:i\rightarrow i'$ in $\theta_{t,n}$, a morphism $s:j\rightarrow j'$ in $\theta_{l,m}$ and $(i'-i)$ marked points $j\leq j_1\leq j_2\leq...\leq j_{i'-i}\leq j'$ in $[j'-j]$. Using this description, we define the space of morphisms from $(f,s,j\leq j_1\leq...\leq j')$ to $(\widetilde{f},\widetilde{s},l\leq\widetilde{j_1}\leq...\leq j')$ to be empty unless $j_k\leq\widetilde{j_k}$ for all $k$ in which case it is given by \[\mor_{\mor_{\theta_{t,n}}(i,i')}(f,f')\times\mor_{\mor_{\theta_{l,m}}(j,j')}(s,s').\]
 The composition of 
 \[(f:i\rightarrow i', s:j\rightarrow j',j\leq j_1\leq...\leq j_{i'-i}\leq j)\]
 and 
 \[(f':i'\rightarrow i'', s':j'\rightarrow j'', j'\leq j'_1\leq...\leq j'_{i''-i'}\leq j'')\]
 is given by 
 \[f'\circ f: i\rightarrow i'',s'\circ s:j\rightarrow j'', j\leq j_1\leq...\leq j_{i'-i}\leq j'_1\leq...\leq j'_{i''-i'}\leq j''.\]
 It is obvious that every morphism $F:\theta_{l_1,m_1}\rightarrow\theta_{l_2,m_2}$ induces morphisms $\arr_{\theta_{l_1,m_1}}^{i'-i}(j,j')\rightarrow\arr_{\theta_{l_2,m_2}}^{i'-i}(f(j),f(j'))$, so we can consider $(\theta_{l_1,m_1}\mapsto\theta_{t,n}\otimes\theta_{l_1,m_1})$ as a functor from $\Delta\times\Delta$ to $\cat_2$.
\end{notation}
\begin{lemma}\label{lem:lax_1}
There is a natural isomorphism
\[\theta_{t,n}\otimes\theta_{l,m}\cong\theta_{l,m}\otimes\theta_{t,n}.\]
\end{lemma}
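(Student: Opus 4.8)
The plan is to give an explicit isomorphism after first repackaging the morphisms of $\theta_{t,n}\otimes\theta_{l,m}$ into a form that is visibly symmetric in the two factors. Recall that a $1$-morphism $(i,j)\to(i',j')$ is the data of $f\in\mor_{\theta_{t,n}}(i,i')$, of $s\in\mor_{\theta_{l,m}}(j,j')$, and of $i'-i$ marked points $j\le j_1\le\dots\le j_{i'-i}\le j'$. I would reinterpret this as a single monotone staircase path from $(i,j)$ to $(i',j')$ in the rectangular grid $[i,i']\times[j,j']$ — that is, a shuffle of $i'-i$ horizontal unit steps with $j'-j$ vertical unit steps — decorated by a label in $\{0,\dots,n\}$ on each horizontal step (recording the components of $f$) and a label in $\{0,\dots,m\}$ on each vertical step (recording the components of $s$). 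The marked points record exactly the interleaving: $j_k$ is the height at which the $k$-th horizontal step occurs. In this dictionary the composition law of $\theta_{t,n}\otimes\theta_{l,m}$ becomes concatenation of decorated staircases, and the identities are the empty staircases.

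With this reformulation the candidate isomorphism $\Theta\colon\theta_{t,n}\otimes\theta_{l,m}\to\theta_{l,m}\otimes\theta_{t,n}$ is reflection across the diagonal: on objects $(i,j)\mapsto(j,i)$, and on a decorated staircase one exchanges horizontal and vertical steps together with their labels, thereby swapping the roles of the two factors. Concatenation and empty staircases are preserved by this reflection, so $\Theta$ is compatible with composition and units, and reflection is involutive, so $\Theta$ is invertible on $1$-cells. Naturality in both variables — so that $\Theta$ underlies an isomorphism of the bifunctors $\Delta\times\Delta\to\cat_2$ of \cref{not_arr} — would follow from the observation that every morphism $\theta_{l_1,m_1}\to\theta_{l_2,m_2}$ acts on staircases purely by relabelling, which commutes with the reflection.

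The only non-formal point is the behaviour on $2$-morphisms, where the hom-posets must be matched. The order on $\arr^{i'-i,\lax}_{\theta_{l,m}}(j,j')$ is the product of the pointwise orders on the labels $f$ and $s$ with the order on the marked points, and under the staircase dictionary the latter is the dominance order on lattice paths. The label orders are matched on the nose by $\Theta$, so the claim reduces to showing that the transpose induces an order isomorphism on the marked-point data. Here I would invoke the classical self-duality of monotone lattice paths — equivalently, that conjugation of partitions preserves containment — which identifies the poset of length-$(i'-i)$ nondecreasing sequences in $\{0,\dots,j'-j\}$ with the poset of length-$(j'-j)$ nondecreasing sequences in $\{0,\dots,i'-i\}$.

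I expect the main obstacle to be exactly this $2$-cell compatibility, because the two desiderata pull in opposite directions and must be reconciled by a careful choice of orientation convention: the naive ``reflect the staircase'' map respects concatenation on the nose but reverses the dominance order, whereas the conjugate-partition map preserves the dominance order. Pinning down the conventions on $\theta_{n,m}$ so that a single reflection simultaneously preserves the $2$-cell order and matches the concatenation of staircases — and verifying the resulting bijection is compatible with the blockwise splitting of marked points induced by composition — is the delicate bookkeeping on which almost all of the work will fall, and which must be carried out explicitly to upgrade the manifest symmetry on underlying decorated shuffles to a genuine isomorphism of bicategories.
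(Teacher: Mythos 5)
Your reduction of the problem is faithful to \cref{not_arr}, and the tension you isolate in your last paragraph is not bookkeeping to be fixed by a clever orientation convention — it is a genuine obstruction, and your plan cannot be completed, because with the conventions of \cref{not_arr} the statement itself fails. Take $t=2$, $l=1$, $n=m=0$, so $A\bydef\theta_{2,0}\otimes\theta_{1,0}$ and $B\bydef\theta_{1,0}\otimes\theta_{2,0}$. In both bicategories the hom-category from the minimal object to the maximal one is a three-element chain, with $2$-cells running from lower markings to higher ones. In $A$ the maximum of this chain is the $1$-cell with marking $(1,1)$, i.e.\ the composite $(0,0)\to(0,1)\to(1,1)\to(2,1)$, and it factors through the object $(1,1)$, which receives \emph{two} $1$-cells from $(0,0)$. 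In $B$ the maximum is the $1$-cell with marking $2$, the composite $(0,0)\to(0,1)\to(0,2)\to(1,2)$, and every intermediate object it factors through receives exactly \emph{one} $1$-cell from $(0,0)$. Both properties are isomorphism invariants (an isomorphism preserves minimal/maximal objects, induces an isomorphism of the hom-chains, hence matches their maxima, and preserves factorizations and hom-set cardinalities), so $A\not\cong B$. What your two reflections actually produce are isomorphisms onto duals: the diagonal flip combined with the label complementations $k\mapsto n-k$, $k\mapsto m-k$ gives $\theta_{t,n}\otimes\theta_{l,m}\cong(\theta_{l,m}\otimes\theta_{t,n})^{\mathrm{co}}$, while the antidiagonal flip gives $\theta_{t,n}\otimes\theta_{l,m}\cong(\theta_{l,m}\otimes\theta_{t,n})^\op$; this is exactly why neither can be adjusted to land in $\theta_{l,m}\otimes\theta_{t,n}$ itself.

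You should also know that the paper's own proof is precisely your first candidate map and nothing more: it declares the data ``symmetric except possibly for the markings'' and exhibits the transpose bijection $b_j=\max\{q : a_q<j\}$, which is your diagonal reflection — compatible with concatenation but order-reversing on markings — and it never checks the $2$-cell condition $j_k\leq\widetilde{j_k}$. So the printed argument contains, unacknowledged, the very gap you identified; your diagnosis is sharper than the paper's proof. The damage is contained, however: the lemma is used only to transport the decompositions of \cref{lem:lax_2} to the first tensor variable, and for that purpose the twisted isomorphism $\theta_{t,n}\otimes\theta_{l,m}\cong(\theta_{l,m}\otimes\theta_{t,n})^{\mathrm{co}}$ suffices, since $(-)^{\mathrm{co}}$ is an automorphism of the category of bicategories and therefore carries the pushout decompositions of \cref{lem:lax_2} to the corresponding decompositions in the $(t,n)$-variable. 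The constructive part of your proposal (the staircase dictionary and the compatibility of reflection with concatenation) is sound; the correct way to finish is to prove the $\mathrm{co}$- or $\op$-twisted statement with it, not to keep looking for an untwisted isomorphism.
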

\begin{proof}
The description of objects and morphisms of $\theta_{t,n}\otimes\theta_{l,m}$ is symmetric in $\theta_{l,m}$ and $\theta_{t,n}$ except possibly for the markings, so to prove the claim it suffices to establish for any $k,p\geq0$ a bijection between $p$-markings of $[k]$ and $k$-markings of $[p]$. Assume we are given $0\leq a_1\leq...\leq a_p\leq k$, then for any $j\in[k]$ define $b_j\in[p]$ to be the maximal $q$ for which $a_q<j$. We can apply the same construction to a $k$ marking of $[p]$, the fact that they are inverse to each other amounts to the statement that $a_r$ can be characterized as the maximal element $i$ of $[k]$ such that $a_{r-1}\leq i\leq a_r$ which is obvious.
\end{proof}
\begin{lemma}\label{lem:lax_2}
There are natural isomorphisms
\begin{align}\label{eq:colim_one}
    \theta_{t,n}\otimes\theta_{l,m}\cong\overbrace{(\theta_{t,n}\otimes\theta_{1,m})\coprod_{\theta_{t,n}\otimes\{1\}}(\theta_{t,n}\otimes\theta_{1,m})\coprod_{\theta_{t,n}\otimes\{2\}}...\coprod_{\theta_{t,n}\otimes\{n-1\}}(\theta_{t,n}\otimes\theta_{1,m})}^\text{$l$ times}
\end{align}
 and 
 \begin{align}\label{eq:colim_two}
     \theta_{t,n}\otimes\theta_{1,m}\cong\overbrace{(\theta_{t,n}\otimes\theta_{1,1})\coprod_{\theta_{t,n}\otimes[1]}(\theta_{t,n}\otimes\theta_{1,1})\coprod_{\theta_{t,n}\otimes[1]}...\coprod_{\theta_{t,n}\otimes[1]}(\theta_{t,n}\otimes\theta_{1,1})}^\text{$m$ times}.
 \end{align}
 \[\]
\end{lemma}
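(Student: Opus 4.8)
The plan is to show that the functor $\theta_{t,n}\otimes(-)\colon\Delta\times\Delta\to\cat_2$ carries the two defining colimit presentations recorded in \cref{constr:tricat} to the claimed iterated pushouts. Concretely, \eqref{eq:colim_one} asserts that $\theta_{t,n}\otimes(-)$ preserves the gluing of $\theta_{l,m}$ along its objects, $\theta_{l,m}\cong\theta_{1,m}\coprod_{\{1\}}\dots\coprod_{\{l-1\}}\theta_{1,m}$, while \eqref{eq:colim_two} asserts that $\theta_{t,n}\otimes(-)$ preserves the gluing of $\theta_{1,m}$ along its $2$-cells, $\theta_{1,m}\cong\theta_{1,1}\coprod_{[1]}\dots\coprod_{[1]}\theta_{1,1}$. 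In both cases I would verify the isomorphism by comparing objects and hom-categories of the two sides, the essential content being that the morphisms of $\theta_{t,n}\otimes\theta_{l,m}$ are \emph{local} along the second factor. Note that colimit-preservation cannot simply be invoked formally, since $\otimes$ is an \emph{ad hoc} combinatorial construction, so the preservation of these two specific pushouts must be checked by hand.

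For \eqref{eq:colim_one} the object set of either side is $\{0,\dots,t\}\times\{0,\dots,l\}$, obtained by gluing the slices $\theta_{t,n}\otimes\{k\}$ exactly as on the right. For the hom-categories, recall from \cref{not_arr} that $\mor_{\theta_{t,n}\otimes\theta_{l,m}}((i,j),(i',j'))\cong\arr^{i'-i,\lax}_{\theta_{l,m}}(j,j')$ depends only on the subinterval $[j,j']\subseteq[l]$, since a morphism $j\to j'$ in $\theta_{l,m}$ together with its $(i'-i)$ marked points lives entirely between $j$ and $j'$. The composition law of \cref{not_arr} then factors every such morphism uniquely through the intermediate slices, the $\theta_{t,n}$-coordinate of the intermediate object over $j+r$ being read off from the number of marked points lying in $[j,j+r]$. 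This exhibits $\theta_{t,n}\otimes\theta_{l,m}$ as the bicategory generated by the pieces $\theta_{t,n}\otimes\theta_{1,m}^{(k)}$ (the full subcategories on $\{k-1,k\}$) amalgamated along the slices, which is precisely the right-hand pushout.

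For \eqref{eq:colim_two} the relevant hom-category is $\arr^{i'-i,\lax}_{\theta_{1,m}}(0,1)$, whose objects are triples $(f,s,\text{markings})$ with $s\in\mor_{\theta_{1,m}}(0,1)\cong[m]$ and markings valued in $\{0,1\}$. Here the $\theta_{t,n}$-data (the map $f$ and the markings) is independent of $m$, while the $s$-component ranges over $[m]\cong[1]\coprod_{[0]}\dots\coprod_{[0]}[1]$; tensoring therefore only rebuilds the $s$-direction, and I would check directly that gluing $m$ copies of $\theta_{t,n}\otimes\theta_{1,1}$ along $\theta_{t,n}\otimes[1]=\theta_{t,n}\otimes\theta_{1,0}$ reconstitutes $\theta_{t,n}\otimes\theta_{1,m}$. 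I expect the main obstacle in both parts to be the bookkeeping of the markings: one must confirm that the nondecreasing markings of \cref{not_arr} split compatibly under the amalgamations and that no morphisms beyond the generated composites appear, so that the universal property of the iterated pushout is genuinely satisfied rather than merely a bijection on objects. The symmetry of \cref{lem:lax_1} may be used to streamline the combinatorics on the $\theta_{t,n}$-side of the markings.
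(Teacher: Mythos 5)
Your proposal is correct and takes essentially the same approach as the paper: a direct combinatorial comparison of the objects and of the marked-point morphism data of \cref{not_arr} on both sides of each pushout, with the compatibility of the markings across the gluings (your final caveat) being exactly the point the paper also relies on when it identifies a composite through the slices with a single morphism of $\theta_{t,n}\otimes\theta_{l,m}$. The paper's only streamlining is to invoke the symmetry of \cref{lem:lax_1} so that, for \eqref{eq:colim_one}, each glued copy of $\theta_{t,n}\otimes\theta_{1,m}$ contributes exactly one marked point on the $\theta_{t,n}$-side, which makes the concatenation of markings under composition (and the matching of $2$-morphisms as horizontal composites) immediate, whereas you keep the markings on the $\theta_{l,m}$-side and count them over subintervals.
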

\begin{proof}
First, observe that the sets of objects on both sides of \cref{eq:colim_one} and \cref{eq:colim_two} are isomorphic. An object of $\theta_{t,n}\otimes\theta_{1,m}$ can be identified with a pair $(j,i)$ with $0\leq j\leq t$ and $i\in\{0,1\}$. A morphism from $(j,0)$ to $(j,1)$ is given by the data $(f, j\xrightarrow{s_1}j_1\xrightarrow{s_2}j')$, where $f$ is a morphism from 0 to 1 in $\theta_{1,m}$. It follows that a morphism from $(j,i)$ to $(j',i')$ in 
\[ \overbrace{(\theta_{t,n}\otimes\theta_{1,m})\coprod_{\theta_{t,n}\otimes\{1\}}(\theta_{t,n}\otimes\theta_{1,m})\coprod_{\theta_{t,n}\otimes\{2\}}...\coprod_{\theta_{t,n}\otimes\{n-1\}}(\theta_{t,n}\otimes\theta_{1,m})}^\text{$l$ times}\]
can be identified with the following data:
\[(f:i\rightarrow i',j\xrightarrow{s_1}j_1\xrightarrow{s_2}j_2, j_1\xrightarrow{s_2}j_2\xrightarrow{s_3}j_3,..., j_{i'-i-1}\xrightarrow{s_{i'-i}}j_{i'-i}\xrightarrow{s_{i'-i+1}}j'),\]
which is obviously the same as the data needed to define a morphism in $\theta_{t,n}\otimes\theta_{l,m}$. Similarly, since every 2-morphism in $\mor_{\theta_{t,l}}(j,j')$ is of the form $\alpha_{j'-j}*\alpha_{j'-j-1}*...*\alpha_1$, where $\alpha_k$ is a 2-morphism in $\mor_{\theta_{t,n}}(j+k-1,j+k)$, we see that the spaces of 2-morphisms are also isomorphic. Finally, the isomorphism of \cref{eq:colim_two} follows easily from the description of $\theta_{t,n}\otimes\theta_{1,m}$ given above.
\end{proof}
\begin{defn}
For a twofold Segal space $\cB$ and a pair of non-negative integers $(t,n)$ we can consider a functor $\Delta^\op\rightarrow\Delta^\op\rightarrow\cS$ given by 
\[(l,m)\mapsto \mor_{2-\cat}(\theta_{t,n}\otimes\theta_{l,m},\cB).\]
It follows from \cref{lem:lax_2} that this functor satisfies the Segal condition and so defines a twofold Segal space that we will denote $\arr^{\theta_{t,n},\lax}(\cB)$ and simply $\arr^\lax(\cB)$ for $t=1$ and $n=0$. Moreover, we can consider
\[(t,n)\mapsto\arr^{\theta_{t,n},\lax}(\cB)\]
as a functor $\Delta^\op\times\Delta^\op\rightarrow\cS$ and it once again follows from \cref{lem:lax_1} and \cref{lem:lax_2} that it satisfies the Segal condition. In particular, for any other twofold Segal space $\cE$ we can consider a functor 
\[(n,m)\mapsto\mor_{2-\cat}(\cE,\arr^{\theta_{n,m},\lax}(\cB))\]
and it will be a twofold Segal space as well. We denote it by $\mor_{2-\cat}^\lax(\cE,\cB)$.
\end{defn}
\begin{remark}
The category $[n]\otimes[m]$ is in fact the Gray tensor product of $[n]$ and $[m]$ viewed as $2$-categories. The $\infty$-categorical version of this construction can be found in \cite{gaitsgory2017study}, \cite{haugseng2020fibrational} and \cite{gagna2020gray}.
\end{remark}
\begin{remark}
More explicitly, an object of $\arr^\lax(\cB)$ is an arrow $f:a\rightarrow b$ in $\cB$, a morphism from $f$ to $t$ is given by a diagram
\[
 \begin{tikzcd}[row sep=huge, column sep=huge]
 a\arrow[r, "f"]\arrow[d, "g_1"]&b\arrow[dl, Rightarrow, "\alpha_1"]\arrow[d, "h_1"]\\
 c\arrow[r, "t"]&d
 \end{tikzcd},
 \]
 a 2-morphism between a pair of morphisms $\alpha_{1,2}:f\rightarrow t$ is given by a diagram
 \[
 \begin{tikzcd}[row sep=huge, column sep=huge]
 a\arrow[r, "f"]\arrow[d, "g_2"{name=O2}, bend left=30]\arrow[d, "g_1"{name=O1, swap}, bend right=30]&b\arrow[dl, Rightarrow, "\alpha_{1,2}"]\arrow[d, "h_2"{name=C2}, bend left=30]\arrow[d, "h_1"{name=C1, swap}, bend right=30]\\
 c\arrow[r, "t"]&d\arrow[Rightarrow, from=O1,to=O2, shorten <=1ex, "\gamma_1"]\arrow[Rightarrow, from=C1,to=C2, shorten <=1ex, "\gamma_2"]
 \end{tikzcd}
 \]
 in $\cB$ such that furthermore the following square of 2-morphisms commutes
 \[
 \begin{tikzcd}[row sep=huge, column sep=huge]
 h_1\circ f\arrow[r, "\gamma_2*f"]\arrow[d, "\alpha_1"]&h_2\circ f\arrow[d, "\alpha_2"]\\
 t\circ g_1\arrow[r, "t*\gamma_1"]&t\circ g_2
 \end{tikzcd}.
 \]
\end{remark}
\begin{defn}
Given a Segal space $\cC$ and a twofold Segal space $\cB$, we define a \textit{lax functor} from $\cC$ to $\cB$ to be a morphism from $\cC$ to $\cB$ considered as categories over $\Delta^\op$ that take coCartesian morphisms over inert morphisms in $\Delta^\op$ to coCartesian morphisms. We call a lax functor \textit{unital} if it additionally takes coCartesian morphisms lying over surjective active morphisms to coCartesian morphisms. We denote by $\mor^\lax_\cat(\cC,\cB)$ (resp. by $\mor_\cat^{\lax,\un}(\cC,\cB)$) the space of lax functors (resp. unital lax functors) from $\cC$ to $\cB$ and denote a lax functor by $F:\cC\rightsquigarrow\cB$. We endow this space with the structure of a twofold Segal space by declaring a morphism from $F$ to $G$ to be a morphism $f:\cC\rightsquigarrow \arr^\lax(\cB)$  such that its composition with the natural projection $p_1:\arr^\lax(\cB)\rightarrow \cB$ (resp. with $p_2:\arr^\lax(\cB)\rightarrow\cB$) is isomorphic to $F$ (resp. to $G$) and a 2-morphism $\alpha:f\rightarrow g$ to be a morphism $\cC\rightsquigarrow\arr^{\theta_{1,1},\lax}(\cB)$ whose natural projections to $\mor_\cat^\lax(\cC,\arr^\lax(\cB))$ are isomorphic to $f$ and $g$ respectively.
\end{defn}
\begin{construction}\label{constr:cart}
Given categories $\cE$ and $\cC$ together with a subcategory $i:\cD\hookrightarrow\cC$ such that $i_0:\cD_0\overset{\sim}{\hookrightarrow}\cC_0$ is an isomorphism, denote by $\mor^\cD(\cE,\cC)$ the category whose objects are functors $F:\cE\rightarrow\cC$ and whose morphisms are natural transformations $\alpha:F\rightarrow G$ such that all $\alpha_e:F(e)\rightarrow G(e)$ belong to $\cD$, also denote by $\arr_{\cC,n}(\cD)$ the full subcategory of $\mor_\cat([n],\cC)$ on those morphisms $[n]\rightarrow\cC$ that factor through $\cD$. Denote by $\ccart^\cD(\cC)$ the Segal fibration over $\ccat$ sending $[n]$ to $\mor^\cD([n],\cC)$. We will also denote $\seg_{\ccart^\cD(\cC)}(\cS)$ by $\cart^\cD(\cC)$.
\end{construction}
\begin{prop}\label{prop:cart_def}
$\ccart^\cD(\cC)$ is an extendable Segal pattern. Moreover, each intermediate subcategory inclusion $\cD\overset{j}{\hookrightarrow}\cA\hookrightarrow\cC$ induces an extendable Segal morphism $j:\ccart^\cD(\cC)\rightarrow\ccart^\cA(\cC)$ and the adjunction 
\[L_\seg j_!:\cart^\cD(\cC)\leftrightarrows\cart^\cA(\cC):j^*\]
is monadic.
\end{prop}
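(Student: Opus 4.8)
The plan is to recognize $\ccart^\cD(\cC)$ as the Segal fibration over the extendable pattern $\ccat$ associated to the functor $\Phi\colon\ccat\to\cat$ sending $[n]$ to $\mor^\cD([n],\cC)$, and then to deduce all three assertions from the general machinery of \cite{chu2019homotopy}, exactly as in the proof of \cref{prop:slice}. The extendability of the pattern will come from \cite[Corollary 9.17]{chu2019homotopy}, the extendability of $j$ from \cite[Proposition 9.5]{chu2019homotopy}, and the monadicity from \cite[Proposition 8.1]{chu2019homotopy}; the only genuine work is to verify the hypotheses of these results, the delicate point being the Barr--Beck verification for $j^*$.

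First I would check that $\Phi$ is a Segal object in $\cat$, i.e.\ that the Segal maps
\[
\mor^\cD([n],\cC)\longrightarrow \mor^\cD([1],\cC)\times_{\mor^\cD([0],\cC)}\cdots\times_{\mor^\cD([0],\cC)}\mor^\cD([1],\cC)
\]
are equivalences. On objects this is the Segal condition for $\cC$ (a functor $[n]\to\cC$ is a string of composable arrows), and on morphisms it records that a $\cD$-valued natural transformation between two such strings is the same datum as a compatible family of $\cD$-morphisms on the vertices; since $\cD_0\cong\cC_0$ and $\cC$ is a Segal space this is immediate. The elementary objects of $\ccart^\cD(\cC)$ are those lying over $[0],[1]\in\ccat$, namely the objects $([0],c)$ for $c\in\cC$ and the arrows $([1],d)$ for $d\colon[1]\to\cC$, and the inert/active factorization is pulled back from $\ccat$. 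With $\Phi$ a Segal object over the extendable pattern $\ccat$, the associated Segal fibration is an extendable pattern by \cite[Corollary 9.17]{chu2019homotopy}.

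Next, the inclusion $\cD\hookrightarrow\cA$ induces, levelwise, a functor $\mor^\cD([n],\cC)\to\mor^\cA([n],\cC)$ that is the identity on objects and includes the $\cD$-natural transformations among the $\cA$-natural transformations; these assemble into a morphism of Segal objects over $\ccat$, and hence into a morphism of patterns $j$. Since $j$ lies over the identity of $\ccat$ and is the identity on elementary objects, the induced map $(\ccart^\cD(\cC))^\el_{o/}\to(\ccart^\cA(\cC))^\el_{j(o)/}$ is an equivalence for every $o$, which is precisely the condition for $j$ to be a Segal morphism; its extendability then follows from \cite[Proposition 9.5]{chu2019homotopy} as in the last paragraph of \cref{prop:slice}.

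Finally, because $j$ is an extendable Segal morphism the restriction $j^*$ preserves Segal objects and admits the left adjoint $L_\seg j_!$, and for monadicity I would invoke the Barr--Beck--Lurie theorem through \cite[Proposition 8.1]{chu2019homotopy}. The functor $j$ is a bijection on elementary objects, so restriction along $(\ccart^\cD(\cC))^\el\to(\ccart^\cA(\cC))^\el$ is conservative, and since evaluation on elementary objects is conservative on Segal objects, $j^*$ is conservative; moreover $j^*$ is computed on underlying presheaves by a restriction, hence preserves all colimits there, and as Segal objects in an extendable pattern are closed under the sifted colimits formed in presheaves, $j^*$ preserves sifted colimits of Segal objects. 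The hard part is exactly this last bookkeeping: one must check that, although the fibers $\mor^\cD([n],\cC)$ are genuine (non-groupoidal) categories, passing from $\cA$- to $\cD$-structure changes neither the underlying object over $\cC$ nor the relevant colimits, so that the hypotheses of \cite[Proposition 8.1]{chu2019homotopy} are met and the adjunction $L_\seg j_!\dashv j^*$ is monadic.
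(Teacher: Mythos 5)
Your proposal is correct and follows essentially the same route as the paper: both verify that $\ccart^\cD(\cC)\rightarrow\ccat$ is a Segal fibration via the isomorphism $\mor^\cD([n],\cC)\cong\lim_{e\in\ccat^\el_{[n]/}}\mor^\cD(e,\cC)$, check the Segal-morphism condition for $j$ by comparing elementary overcategories, and then cite the same three results of \cite{chu2019homotopy} (Corollary 9.17 for extendability of the pattern, Proposition 9.5 for extendability of $j$, and Proposition 8.1 for monadicity). Your added Barr--Beck bookkeeping (conservativity of $j^*$ from bijectivity on elementaries, preservation of sifted colimits) simply unpacks what the cited Proposition 8.1 delivers, so it is harmless extra detail rather than a divergence.
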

\begin{proof}
Observe that for every object $f\in\ccart^\cD(\cC)$ we have \[(\ccart^\cD(\cC))^\el_{f/}\cong(\ccart^\cA(\cC))^\el_{j(f)/}\cong(\ccat_{/\cC})^\el_{f/},\] 
so the fact that $j$ is a Segal morphism follows. The next two statements would follow from \cite[Proposition 9.5]{chu2019homotopy} and \cite[Corollary 9.17]{chu2019homotopy} respectively if we prove that $\ccart^\cD(\cC)\rightarrow\ccat$ is a Segal fibration. However, this reduces to an obvious isomorphism 
\[\mor^\cD([n],\cC)\cong\underset{e\in\ccat^\el_{[n]/}}{\lim} \mor^\cD(e,\cC).\]
The last statement now follows from \cite[Proposition 8.1]{chu2019homotopy}.
\end{proof}
\begin{remark}
Observe that there are two natural subcategories of $\cC$: the entire category $\cC$ and $\widetilde{i}:\cC^\sim\hookrightarrow\cC$. In the first case we will denote $\ccart^\cC(\cC)$ simply by $\ccart(\cC)$. Observe that $\ccart^{\cC^\sim}(\cC)$ is isomorphic to $\ccat_{/\cC}$. Moreover, for every subcategory $\cD$ the inclusion $\widetilde{i}$ factors through $\cD$, and so by \cref{prop:cart_def} we have an induced morphism $L_\seg \widetilde{i}_{\cD,!}:\cat_{/\cC}\rightarrow\cart^\cD(\cC)$ and moreover $\cart^\cD(\cC)$ is isomorphic to the category of algebras for the monad $\widetilde{i}^*_\cD L_\seg \widetilde{i}_{\cD,!}$.
\end{remark}
\begin{prop}\label{prop:cart_mon}
The monad $\widetilde{i}^*_\cD L_\seg \widetilde{i}_{\cD,!}$ on $\cat_{/\cC}$ is isomorphic to 
\[L_\cart^\cD:(p:\cE\rightarrow\cC)\mapsto\cE\times_{\cC}\arr_\cC(\cD)\bydef\cC^\cD_{p/}.\]
\end{prop}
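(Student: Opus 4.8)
The plan is to identify $L_\cart^\cD$ with the monad $\widetilde{i}^*_\cD L_\seg\widetilde{i}_{\cD,!}$ by exhibiting the explicit object $\cE\times_\cC\arr_\cC(\cD)$ as the underlying category over $\cC$ of the \emph{free} Segal $\ccart^\cD(\cC)$-space on $\cE$, i.e.\ by checking the universal property of the adjunction $L_\seg\widetilde{i}_{\cD,!}\dashv\widetilde{i}^*_\cD$ of \cref{prop:cart_def} directly on this formula; uniqueness of left adjoints then forces the two monads to agree. Throughout I use the identifications recorded before the statement: $\cart^\cD(\cC)$ is the category of categories over $\cC$ admitting $\cD$-coCartesian lifts with morphisms the functors over $\cC$ preserving them, and $\cart^{\cC^\sim}(\cC)\cong\cat_{/\cC}$.

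First I would promote $L_\cart^\cD(\cE)=\cE\times_{\cC}\arr_\cC(\cD)$ to an object of $\cart^\cD(\cC)$. Here the fibre product is formed along the source evaluation $\ev_0\colon\arr_\cC(\cD)\to\cC$ and along $p$, while the structure map to $\cC$ is the target evaluation $\ev_1$; thus the objects of $\cC^\cD_{p/}$ over $c\in\cC$ are pairs $(e,\delta\colon p(e)\to c)$ with $\delta\in\cD$. I would then define the $\cD$-coCartesian lift of a $\cD$-morphism $\gamma\colon c\to c'$ at $(e,\delta)$ to be the morphism $(e,\delta)\to(e,\gamma\circ\delta)$ covering $\gamma$, which is legitimate since $\cD$ is a subcategory, so $\gamma\circ\delta\in\cD$. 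A short check shows these lifts are coCartesian and closed under composition, so that $\cC^\cD_{p/}\to\cC$ lies in $\cart^\cD(\cC)$ and produces a functor $\widetilde L\colon\cat_{/\cC}\to\cart^\cD(\cC)$ lifting $L_\cart^\cD$, i.e.\ $\widetilde{i}^*_\cD\widetilde L=L^\cD_\cart$.

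Next I would verify that $\widetilde L$ is left adjoint to $\widetilde{i}^*_\cD$. The unit is the identity section $\iota\colon\cE\to\cC^\cD_{p/}$, $e\mapsto(e,\id_{p(e)})$, which is a map over $\cC$. Given $\cF\in\cart^\cD(\cC)$, I must show that restriction along $\iota$ induces an equivalence
\[
\mor_{\cart^\cD(\cC)}(\cC^\cD_{p/},\cF)\;\xrightarrow{\ \sim\ }\;\mor_{\cat_{/\cC}}(\cE,\widetilde{i}^*_\cD\cF).
\]
The inverse sends $h\colon\cE\to\widetilde{i}^*_\cD\cF$ to the functor $(e,\delta)\mapsto\delta_!\,h(e)$, where $\delta_!$ is the chosen $\cD$-coCartesian pushforward in $\cF$; the point is that every object $(e,\delta)$ of $\cC^\cD_{p/}$ is canonically the coCartesian pushforward $\delta_!\,\iota(e)$ of an object in the image of $\iota$, and any morphism in $\cart^\cD(\cC)$ preserves coCartesian lifts, so it is determined by its restriction along $\iota$. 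By uniqueness of left adjoints $\widetilde L\cong L_\seg\widetilde{i}_{\cD,!}$, whence $\widetilde{i}^*_\cD L_\seg\widetilde{i}_{\cD,!}\cong\widetilde{i}^*_\cD\widetilde L=L^\cD_\cart$, the monad structures agreeing since both arise from the same adjunction.

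The main obstacle is the free-generation statement underlying the previous paragraph: making precise, in the Segal-space model, that $\cC^\cD_{p/}$ is \emph{freely} generated over $\cE$ by $\cD$-coCartesian lifts, equivalently that restriction along $\iota$ is an equivalence. Concretely this reduces to the claim that the space of sections of $\cC^\cD_{p/}\to\cC$ extending a given map out of $\cE$ is contractible, which I would establish by the kind of computation behind $L_\seg\widetilde{i}_{\cD,!}$: writing this functor via the active slices of $\ccart^\cD(\cC)$—which, by the proof of \cref{prop:cart_def}, share their elementary objects and inert morphisms with $\ccat_{/\cC}$, so that only the active $\cD$-direction is freely adjoined—and showing the resulting colimit collapses onto the fibre product because the relevant ``identity-section'' indexing category, analogous to the $\Delta^\op_{-\infty}$ that recurs in \cref{sect:three}, has a terminal object. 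Verifying this collapse and its naturality in $\cE$ is the technical heart; everything else is formal from \cref{prop:cart_def} and the bookkeeping of \cref{constr:cart}.
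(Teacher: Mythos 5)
Your overall strategy --- exhibit $\cE\times_\cC\arr_\cC(\cD)$ as an explicit object of $\cart^\cD(\cC)$ with chosen coCartesian lifts, verify the universal property of the adjunction of \cref{prop:cart_def} against this candidate, and conclude by uniqueness of left adjoints --- is the standard free-fibration argument and is legitimate in principle. But as written it is circular in the context of this paper. You begin by ``using the identification'' of $\cart^\cD(\cC)$ with the category of categories over $\cC$ admitting $\cD$-coCartesian lifts, with morphisms the lift-preserving functors over $\cC$. That identification is \emph{not} available before the present proposition: in \cref{constr:cart} the category $\cart^\cD(\cC)$ is defined purely as $\seg_{\ccart^\cD(\cC)}(\cS)$, the Segal spaces for an algebraic pattern, and the concrete description you invoke is established only later --- via \cref{not:cartmor}, \cref{prop:cart_uniq}, \cref{cor:cartlift} and the remark following it --- and those results use \cref{prop:cart_mon} itself (one must first know that the monad $\widetilde{i}^*_\cD L_\seg\widetilde{i}_{\cD,!}$ is $L^\cD_\cart$ before one can describe its algebras as fibrations with lifts). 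Consequently the steps of your argument that manipulate objects of $\cart^\cD(\cC)$ as categories with chosen lifts --- defining the lift $(e,\delta)\to(e,\gamma\circ\delta)$, the inverse $h\mapsto((e,\delta)\mapsto\delta_!\,h(e))$, and the claim that a morphism in $\cart^\cD(\cC)$ is determined by its restriction along $\iota$ --- are not yet meaningful for Segal $\ccart^\cD(\cC)$-spaces at this point of the paper.

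Moreover, even granting a model in which those manipulations make sense, the step you yourself flag as ``the technical heart'' --- that restriction along $\iota$ is an equivalence, i.e.\ that $\cC^\cD_{p/}$ is \emph{freely} generated --- is precisely where the content lies, and your sketch of it, once unwound at the level of presheaves on the pattern, essentially reproduces the paper's own proof. The paper stays inside the pattern formalism: since $\ccart^\cD(\cC)\to\ccat$ is a coCartesian fibration, every morphism of $\ccart^\cD(\cC)$ factors uniquely as a morphism of $\ccat_{/\cC}$ followed by a fibrewise $\cD$-natural transformation; this factorization reduces the comma category computing the pointwise left Kan extension $\widetilde{i}_{\cD,!}\cE(f)$ to the fibre slice over $f$, the resulting colimit is visibly $\cC^\cD_{p/}(f)$, and since this presheaf already satisfies the Segal condition one gets $L_\seg\widetilde{i}_{\cD,!}\cong\widetilde{i}_{\cD,!}$. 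To make your route non-circular you would either have to establish the coCartesian-fibration description of $\cart^\cD(\cC)$ independently of \cref{prop:cart_mon}, or carry out the free-generation argument directly for Segal $\ccart^\cD(\cC)$-spaces --- which amounts to the comma-category computation the paper performs.
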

\begin{proof}
Observe that every morphism $f\xrightarrow{\alpha}g$ in $\ccart^\cD(\cC)$ decomposes uniquely as $f\xrightarrow{m}h\xrightarrow{\alpha_\cD}g$ where $m$ is a morphism in $\ccat_{/\cC}$ and $\alpha_\cD$ is a morphism in $\mor^\cD([m],\cC)$ for some $m$ - this follows since $\ccart^\cD(\cC)$ is a coCartesian fibration over $\ccat$. The statement now follows easily since by definition we have 
\[\widetilde{i}^*_\cD\widetilde{i}_{\cD,!}\cE(f)\cong\underset{(\widetilde{i}f'\rightarrow f)\in\widetilde{i}/f}{\colim}\cE(f')\cong \underset{(f'\rightarrow f)\in\arr_{\cC,n}(\cD)}{\colim}\cE(f')\cong\cC^\cD_{p/}(f)\]
for any $(f:[n]\rightarrow\cC)\in\ccat_{/\cC}$, where the second isomorphism follows from the factorization described above. Since this is obviously a Segal $\ccart^\cD(\cC)$-space, we see that $L_\seg \widetilde{i}_{\cD,!}\cong \widetilde{i}_{\cD,!}$.
\end{proof}
\begin{notation}\label{not:cartmor}
We will often identify $\cF\in\cart^\cD(\cC)$ with the underlying overcategory $p:\cF\rightarrow\cC$. Observe that if we have $a\in\cF_x$ and a morphism $f:x\rightarrow y$ belonging to the subcategory $\cD$, then we have a morphism $f_*:x\rightarrow y$ in $\ccart^\cD(\cC)$ and so a corresponding functor $f_*:\cF_x\rightarrow\cF_y$. Moreover, we can view $f$ as a morphism from $x$ to $f:[1]\rightarrow\cC$ in $\ccart^\cD(\cC)$, so we also have a functor $\ob(\cF_x)\rightarrow\cF_f$. We will call the arrows in the essential image of this functor \textit{cocartesian} and denote them by $f_*$; an arrow of this form has $a\in\cF_x$ as the source and $f_*(a)\in\cF_y$ as the target.
\end{notation}
\begin{notation}
We denote by $\Delta^\li_a$ the full subbicategory of $\cat$ on $\varnothing$ and $[n]$ for $n\geq0$. Observe that the underlying category of $\Delta^\li_a$ is $\Delta_a$, in particular there is a natural inclusion $i:\Delta_a\hookrightarrow\Delta^\li_a$. Observe that it also inherits the monoidal structure from $\Delta_a$ given by 
\[([n],[m])\mapsto[m+n+1]\]
with the unit object given by $\varnothing$. In particular, we can consider the tricategory $\rB \Delta^\li_a$. Now let $\cT$ be an arbitrary tricategory. Recall from \cref{cor:lax_mon} that giving a monad in $\cT$ is equivalent to giving a functor $T:\rB\Delta_a\rightarrow\cT$. We will call the monad $T$ \textit{lax-idempotent} if factors through $i:\rB\Delta_a\hookrightarrow\rB\Delta^\li_a$.
\end{notation}
\begin{prop}
A monad $T$ admits a lax idempotent structure if and only if the unit morphism $eT:T\rightarrow TT$ is right adjoint to multiplication $m:TT\rightarrow T$, in which case this structure is unique.
\end{prop}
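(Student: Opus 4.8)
The plan is to exploit the fact that the bicategory $\Delta^\li_a$ already contains the universal instance of the required adjunction, namely the adjoint string formed by the two coface maps $d_0,d_1\colon[0]\to[1]$ and the codegeneracy $s\colon[1]\to[0]$. First I would record, by a direct inspection of pointwise orderings of monotone maps inside $\cat$, the $2$-cells $d_0 s\Rightarrow\id_{[1]}$ and $\id_{[1]}\Rightarrow d_1 s$ together with the equalities $s d_0\cong\id_{[0]}\cong s d_1$, and verify the triangle identities, so that in $\Delta^\li_a$ one has the adjunctions
\[
d_0\dashv s\dashv d_1.
\]
Under the monoidal (join) structure $([n],[m])\mapsto[n+m+1]$ and the dictionary of \cref{cor:lax_mon}, the $1$-morphism $[0]$ is the universal monad, $s$ is its multiplication, $d_0\cong Te$ and $d_1\cong eT$; thus the right-hand adjunction reads $m\dashv eT$ and the left-hand one $Te\dashv m$.

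For the forward direction, suppose $T$ admits a lax-idempotent structure, i.e.\ factors as $\rB\Delta_a\xrightarrow{i}\rB\Delta^\li_a\xrightarrow{\hat T}\cT$. Since a functor of tricategories preserves internal adjunctions in each hom-bicategory (carrying units and counits to units and counits, and the triangle identities to the triangle identities), $\hat T$ transports the string above to an adjoint string $Te\dashv m\dashv eT$ in the hom-bicategory of $\cT$ containing $T$; in particular $eT$ is right adjoint to $m$, as claimed.

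For the converse and for uniqueness I would argue that $\rB\Delta^\li_a$ is the free tricategory on a monad whose multiplication is right adjoint to its unit $eT$. Concretely, the content is a presentation of $\Delta^\li_a$ as a monoidal bicategory: its underlying $1$-category is $\Delta_a$, the walking monoid, and every $2$-cell of $\Delta^\li_a$ — being a pointwise inequality of monotone maps — is obtained from the basic unit/counit $2$-cells of the adjoint string by whiskering and pasting, subject precisely to the triangle (adjunction) identities and the monoidal coherences. Granting this presentation, an adjunction $m\dashv eT$ in $\cT$ determines, and is determined by, an extension $\hat T\colon\rB\Delta^\li_a\to\cT$ of $T$: one sends the generating $2$-cell $\id_{[1]}\Rightarrow d_1 s$ to the unit of the chosen adjunction and propagates by functoriality, the adjunction identities guaranteeing that the assignment respects all relations. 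The essential uniqueness of adjoints then gives uniqueness of the lax-idempotent structure: any two right adjoints of $m$ together with their units are canonically isomorphic, so the space of extensions $\hat T$ extending $T$ is either empty or contractible.

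The main obstacle is this converse, and specifically the generators-and-relations presentation of $\Delta^\li_a$: one must check that every inequality $f\le g$ of monotone maps factors canonically through composites of the basic cells of the adjoint string, and that the only relations among these are the adjunction identities and the monoidal coherences. This is the $2$-categorical refinement of the classical fact that $\Delta_a$ is the walking monoid, and it is where the combinatorics of $\Delta$ must be controlled. A useful reduction is the standard lemma that for a monad the two conditions $Te\dashv m$ and $m\dashv eT$ are equivalent, each adjunction being the mate of the other; this ensures that the single hypothesis $m\dashv eT$ already produces the full string $Te\dashv m\dashv eT$ needed to match every generating $2$-cell of $\Delta^\li_a$.
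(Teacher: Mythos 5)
Your forward direction is fine and is essentially the paper's argument: both proofs locate the universal adjunction inside $\Delta^\li_a$ (the paper via the observation that a monotone map preserving the maximal element is a right adjoint, you via the explicit adjoint string $d_0\dashv s\dashv d_1$) and transport it along the factorization $\rB\Delta_a\to\rB\Delta^\li_a\to\cT$, using that a functor of tricategories preserves adjunctions in hom-bicategories.

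The converse, however, has a genuine gap, and you name it yourself: your whole argument rests on the claim that $\rB\Delta^\li_a$ is the free tricategory on a monad with $m\dashv eT$, which you reduce to a generators-and-relations presentation of $\Delta^\li_a$ and then assume ("granting this presentation"). That presentation is not a citable background fact; it is the mathematical content of the proposition, so as written the converse and the uniqueness statement are not proved. This is precisely the step the paper carries out. Writing $\theta:i_0\rightarrow i_1$ for the unique $2$-cell between the two inert maps $[0]\rightarrow[1]$, the paper checks two things: first, that an arbitrary $2$-cell $\alpha:f\rightarrow g$ between maps $[m]\rightarrow[n]$ decomposes as a monoidal product of powers of $\theta$, namely $\theta^{g(0)-f(0)}\otimes\theta^{g(1)-f(1)}\otimes\cdots\otimes\theta^{g(m)-f(m)}$, so that $\theta$ generates all $2$-cells under $\otimes$ and composition; second, that $\theta$ is itself forced by the adjunction, being the whiskered unit $i_0\cong\id\circ i_0\xrightarrow{\eta}i_1\circ p\circ i_0\cong i_1$ of $p\dashv i_1$, i.e.\ of $m\dashv eT$. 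Together these give that any factorization through $\rB\Delta^\li_a$ is determined by the adjunction data, which yields both existence and uniqueness. Note also that the "relations" half of your presentation claim is less delicate than you suggest: the hom-categories of $\Delta^\li_a$ are posets, so there is never more than one $2$-cell between parallel maps in the source, and the only thing to verify is that parallel composites of the generating data agree in the target, which follows from uniqueness of adjoints and their units. Finally, your closing appeal to the "standard lemma" that $m\dashv eT$ and $Te\dashv m$ are equivalent as mates of one another is both unnecessary (the paper's argument uses only $m\dashv eT$) and not a formal mate computation: in the $2$-categorical setting this equivalence is Kock's theorem and requires the monad axioms, so in the present tricategorical context it would itself demand a proof.
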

\begin{proof}
First, observe that a morphism $f:[m]\rightarrow[n]$ in $\Delta^\li_a$ is a right adjoint if it preserves the maximal element and a left adjoint if it preserves the minimal element. In particular, if there is a lax idempotent structure on a monad $T$ then we certainly have the adjunction described in the claim.\par
Conversely, denote by $\theta:i_0\rightarrow i_1$ the natural transformation between the two inert morphisms $i_{0,1}:[0]\rightarrow[1]$. We claim that a lax-idempotent structure is uniquely defined by the image of $\theta$. Indeed, it is easy to see that any natural transformation $\alpha:f\rightarrow g$ where $f$ and $g$ are functors from $[m]$ to $[n]$ has the form $\theta^{g(0)-f(0)}\otimes\theta^{g(1)-f(1)}\otimes...\otimes\theta^{g(m)-f(m)}$. Now observe that $\theta$ itself is isomorphic to the composite
\[i_0\cong\id\circ i_0\xrightarrow{\eta}i_1\circ p\circ i_0\cong i_1,\]
where $p:[1]\rightarrow[0]$ denotes the unique morphism and $\eta$ is the unit of the adjunction $p\dashv i_1$. It follows that the structure of the lax-idempotent monad is uniquely determined by the data of the adjunction $m\dashv eT$ and this data is unique if it exists.
\end{proof}
\begin{prop}\label{prop:lax_uniq}
Given a lax-idempotent monad on a flagged bicategory $\cB$ and an object $b\in\cB$, there is a structure of a $T$-algebra on $b$ if and only if the unit morphism $e:b\rightarrow Tb$ admits a left adjoint $t$ such that $t\circ b\cong\id$, and this structure is unique if it exists.
\end{prop}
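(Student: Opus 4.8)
The plan is to exploit the classifying description of \cref{cor:lax_mon}: a lax-idempotent structure on $T$ is a factorization through a trifunctor $\rB\Delta^\li_a\to 2-\cat$, and by the preceding proposition the whole structure is determined by the single generating $2$-cell $\theta$, which at $b$ compares the two units $Te\bydef T\eta_b$ and $\eta_{Tb}$ as maps $Tb\to T^2b$; explicitly $\theta_b\colon Te\Rightarrow\eta_{Tb}$. The decisive feature I will use is that $\rB\Delta^\li_a$ is locally posetal in top degree — its $3$-cells, being natural transformations of monotone maps, are unique — so every constraint that arises will be forced as a mate of a pasting of $\theta$. I also record that the monad-level adjunction $m\dashv eT$ of the preceding proposition specializes at $b$ to $\mu_b\dashv\eta_{Tb}$ with invertible counit $\mu_b\circ\eta_{Tb}\cong\id_{Tb}$. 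Throughout I regard a $T$-algebra structure on $b$ as a morphism $t\colon Tb\to b$ together with invertible constraints $\lambda\colon t\circ e\Rightarrow\id_b$ and $\rho\colon t\circ\mu_b\Rightarrow t\circ Tt$ subject to the usual coherences.

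For the forward direction I start from an algebra $(t,\lambda,\rho)$ and promote $t$ to a left adjoint of $e$. The constraint $\lambda\colon t\circ e\cong\id_b$ is invertible and serves as the counit of $t\dashv e$, so it remains to build a unit $\iota\colon\id_{Tb}\Rightarrow e\circ t$ and verify the triangle identities. I construct $\iota$ from $\theta_b$: applying $T$ to $\lambda$ gives $Tt\circ Te\cong\id_{Tb}$, naturality of $\eta$ gives $Tt\circ\eta_{Tb}\cong\eta_b\circ t=e\circ t$, and whiskering $\theta_b$ by $Tt$ supplies the middle arrow, so that $\iota\colon\id_{Tb}\cong Tt\circ Te\xRightarrow{Tt\ast\theta_b}Tt\circ\eta_{Tb}\cong e\circ t$. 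The two triangle identities then become equalities of $2$-cells between composites lying in the image of $T$, hence hold by the uniqueness of $3$-cells in $\rB\Delta^\li_a$.

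Conversely, suppose $t\dashv e$ with invertible counit $\lambda\colon t\circ e\cong\id_b$. I declare $t$ the action, take $\lambda$ as the unit constraint, and produce the associativity constraint $\rho\colon t\circ\mu_b\Rightarrow t\circ Tt$ as the mate, under the two adjunctions $\mu_b\dashv\eta_{Tb}$ and $t\dashv e$, of the canonical comparison built from $\theta$; the hypothesis $t\circ e\cong\id_b$ is precisely what makes this mate invertible. The genuine work is checking the pseudoalgebra coherences, and this is where I expect the main obstacle to lie, since the associativity and unit diagrams interleave the formal structure cells of $T$ with the external adjunction data $t\dashv e$. I resolve this by the uniqueness of mates: once each coherence diagram is transported across the defining adjunctions it becomes a diagram of $T$-cells between parallel composites, which commutes because the top hom-categories of $\rB\Delta^\li_a$ are posets. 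Thus no residual coherence needs to be imposed.

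Finally, uniqueness is formal. The forward direction shows that any algebra structure exhibits its action as a left adjoint of $e$ with invertible counit; since left adjoints are unique up to canonical isomorphism, the action is determined, and the constraints $\lambda,\rho$, being forced as mates of $\theta$, are determined as well. Hence a $T$-algebra structure on $b$ exists if and only if $e$ admits a left adjoint $t$ with $t\circ e\cong\id_b$, and it is then unique.
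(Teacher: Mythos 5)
Your two constructions do coincide with the paper's: the unit $\iota\colon\id_{Tb}\Rightarrow e\circ t$ you build by whiskering $\theta_b$ is exactly the paper's composite $\id_{Tb}\cong Tt\circ Te\xrightarrow{\eta_m}Tt\circ eT\circ m\circ Te\cong Tt\circ eT\cong e\circ t$, and your "uniqueness of mates" is the paper's device of checking commutativity after passing to right adjoints. The gap lies in what you take a $T$-algebra to be. You define it as the finite datum $(t,\lambda,\rho)$ "subject to the usual coherences", i.e.\ the $2$-categorical pseudoalgebra presentation. In this paper $\cB$ is a flagged bicategory (a twofold Segal space), and an algebra structure is the homotopy-coherent datum of \cref{lem:monad_alg}: an extension of the functor $F\colon\Delta_a\rightarrow\cB$, $[n]\mapsto T^nb$, along $\Delta_a\hookrightarrow\Delta^\op_{-\infty}$. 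The equivalence of that definition with a finite presentation is not available for free in the $\infty$-categorical setting; producing the whole tower of coherences is precisely the content of the proof. The paper does this by defining $\widetilde F$ on the inert generators $s_{\infty,n}$ as iterates of $t$ and using the Segal decomposition $\Act_\ccat([n])\cong\lim_{([n]\rightarrowtail e)}\Act(e)$ to reduce every factorization square to the two basic squares at $n=0$ (the unit square, which is the hypothesis $t\circ e\cong\id$, and the multiplication square, checked via right adjoints). Your adjoint-implies-algebra direction produces only $(t,\lambda,\rho)$ and two axioms, so it never constructs an algebra in the sense of the statement being proved.

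Relatedly, your blanket coherence justification fails exactly where it is needed. You assert that the triangle identities for $(\iota,\lambda)$ "become equalities of $2$-cells between composites lying in the image of $T$, hence hold by the uniqueness of $3$-cells in $\rB\Delta^\li_a$". But those composites contain $t$, $\lambda$ and $\iota$, none of which lies in the image of the classifying functor $\rB\Delta^\li_a\rightarrow 2-\cat$ (nor of its evaluation at $b$), so the posetal hom-categories of $\Delta^\li_a$ say nothing about them; in $\cB$ parallel $2$-cells are points of a space and need not be homotopic. Rigidity of $\Delta^\li_a$ can only be invoked once every occurrence of $t$ has been eliminated from the diagram. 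That is how the paper argues for the multiplication square: each $1$-cell of $t\circ m\cong t\circ Tt$ admits a right adjoint ($e$ by hypothesis, $eT$ by lax-idempotency, $Te$ since functors preserve adjunctions), the square of right adjoints consists only of units and commutes by functoriality of $F$ on $\Delta_a$, and uniqueness of left adjoints transports this back. Note also that for the triangle identities this transport is not even available to you, since the adjunction $t\dashv e$ is what you are trying to establish; verifying them requires genuinely using the algebra coherences (in the paper, the functoriality of the extension $\widetilde F$), not the rigidity of $\rB\Delta^\li_a$.
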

\begin{proof}
Consider a functor $F:\Delta_a\cong(\Delta^\act)^\op\rightarrow\cB$ that sends $[n]$ to $T^n b$ and every morphism to the corresponding structural morphism of the monad $T$. It follows from \cref{lem:monad_alg} that giving $b$ a structure of a $T$-algebra is equivalent to extending this to a functor $\widetilde{F}\Delta^\op_{-\infty}\rightarrow\cB$. First assume that $e:b\rightarrow Tb$ has a left adjoint $t:Tb\rightarrow b$. We claim that $t$ gives $b$ the structure of a $T$-algebra. Observe that every inert morphism in $\Delta_{-\infty}$ is a composition of $s_{\infty,n};[n]\hookrightarrow[n+1]$ that maps $[n]$ isomorphically onto the initial segment of $[n+1]$. We declare $\widetilde{F}(s_{\infty,n})$ to be $T^{n+1}b\xrightarrow{t}T^n b$. We now need to check that for every factorization square
\[
\begin{tikzcd}[row sep=huge, column sep=huge]
{[n]}\arrow[d, tail, "s_{\infty,n}"]\arrow[r, two heads, "\widetilde{a}"]&{[l]}\arrow[d, tail, "\widetilde{s}"]\\
{[n+1]}\arrow[r, two heads, "a"]&{[m]}
\end{tikzcd}
\]
in $\Delta_{-\infty}$ we have $\widetilde{F}(s_{\infty,n})\circ F(a)\cong F(\widetilde{a})\circ \widetilde{F}(\widetilde{s})$. Now observe that if $a$ is such that its restriction to $[n,n+1]$ is equal to identity, then the commutativity is trivial. Using the equivalence 
\[\Act_\ccat([n])\cong\underset{([n]\rightarrowtail e)\in\ccat^\el_{[n]/}}{\lim}\Act(e)\]
we are thus reduced to the case $n=0$. We are then reduced to proving the commutativity of the diagrams
\[
\begin{tikzcd}[row sep=huge, column sep=huge]
b\arrow[r, "e"]\arrow[d, equal]&Tb\arrow[d, "t"]\\
b\arrow[r, equal]&b
\end{tikzcd},
\begin{tikzcd}[row sep=huge, column sep=huge]
T^m b\arrow[r, "m"]\arrow[d, "t^m"]&Tb\arrow[d, "t"]\\
b\arrow[r, equal]&b
\end{tikzcd}.
\]
The first of these diagrams commute by assumption, to prove he commutativity of the second diagram, observe that every morphism in it has a right adjoint (by the definition of $t$ and since $T$ is lax-idempotent), so it suffices to prove the commutativity of the corresponding diagram of right adjoints. It is given by
\[
\begin{tikzcd}[row sep=huge, column sep=huge]
b\arrow[r, equal]\arrow[d, "e"]&b\arrow[d, "e^m"]\\
Tb\arrow[r, "e^{m-1}"]&T^m b
\end{tikzcd}
\]
and it obviously commutes.\par
Now assume that $b$ has a $T$-algebra structure $t:Tb\rightarrow b$. The isomorphism $t\circ e$ then follows from the definition. Moreover, it is elementary to see that the composition 
\[\id_{Tb}\cong Tt \circ Te\xrightarrow{\eta_m} Tt\circ eT\circ m\circ Te\cong Tt\circ eT\cong e\circ t,\]
where $\eta_m:\id_{T^2b}\rightarrow eT\circ m$ is the unit of the adjunction $m\dashv eT$ provides a unit for the adjunction $t\dashv e$. It follows that the structure of a $T$-algebra is necessarily given by the left adjoint to $e:b\rightarrow Tb$, and this morphism is unique if it exists, which concludes the proof of the claim.
\end{proof}
\begin{remark}
Lax-idempotent monads (also frequently called \textit{KZ-monads}) were introduced for ordinary categories in \cite{kock1972monads} and \cite{zoberlein1976doctrines}, a significantly more detailed account of them in classical terms can be found in \cite{bunge2006singular}.
\end{remark}
\begin{prop}\label{prop:cart_uniq}
The structure of a $\cD$-coCartesian fibration on a given $(p:\cE\rightarrow\cC)\in\ccat_{\cC}$ exists if and only if the natural inclusion $e:\cE\hookrightarrow \cE\times_{\cC}\arr_\cC(\cD)$ admits a left adjoint.
\end{prop}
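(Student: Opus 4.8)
The plan is to recognize a $\cD$-coCartesian fibration structure on $(p\colon\cE\to\cC)$ as an algebra structure for the monad $T\bydef\widetilde i_\cD^*L_\seg\widetilde i_{\cD,!}$ on $\cat_{/\cC}$ and then to apply \cref{prop:lax_uniq}. By the Remark following \cref{prop:cart_def} the category $\cart^\cD(\cC)$ is the category of $T$-algebras, and by \cref{prop:cart_mon} the monad is given explicitly by $T(\cE)\cong\cE\times_\cC\arr_\cC(\cD)$. Thus a structure of a $\cD$-coCartesian fibration on $\cE$ is exactly a $T$-algebra structure on the underlying object $\cE\in\cat_{/\cC}$, and the unit $\eta_\cE\colon\cE\to T\cE$ is precisely the natural inclusion $e\colon\cE\hookrightarrow\cE\times_\cC\arr_\cC(\cD)$ sending $a\in\cE_c$ to $(a,\id_c)$. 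Everything will then follow once we know that $T$ is lax-idempotent and that the criterion of \cref{prop:lax_uniq} simplifies to the bare existence of a left adjoint of $e$.

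First I would establish that $T$ is a lax-idempotent monad on the flagged bicategory $\cat_{/\cC}$. By the characterization of lax-idempotent monads proved above (a monad is lax-idempotent precisely when its multiplication $m\colon TT\to T$ admits $eT\colon T\to TT$ as a right adjoint), it suffices to exhibit the adjunction $m\dashv eT$ in $\cat_{/\cC}$. Here $TT(\cE)\cong\cE\times_\cC\arr_\cC(\cD)\times_\cC\arr_\cC(\cD)$ has objects $(a,f\colon c\to c',g\colon c'\to c'')$ with $f,g\in\cD$; the multiplication $m$ sends this to $(a,g\circ f)$ and $eT$ sends $(a,f\colon c\to c')$ to $(a,f,\id_{c'})$ by appending an identity arrow at the target. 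I expect this to be the main obstacle, and I would handle it by verifying the universal property of the counit directly: since $m\circ eT\cong\id_{T\cE}$, it is enough to check that for every $Z=(a,f,g)$ and every morphism $\phi\colon m(Z)\to Y$ in $T\cE$ there is a unique lift $\psi\colon Z\to eT(Y)$ in $TT\cE$ with $m(\psi)\cong\phi$; unwinding the description of morphisms of composable pairs of $\cD$-arrows, the middle component of $\psi$ is forced to be the appropriate composite with $g$, and one checks that the two resulting squares commute, yielding the required unique lift. This is precisely the statement that \emph{composing} $\cD$-arrows is left adjoint to \emph{appending an identity}, all $2$-cells being natural transformations over $\cC$.

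Finally I would invoke \cref{prop:lax_uniq} for the lax-idempotent monad $T$ and the object $\cE$: a $T$-algebra structure on $\cE$ exists if and only if the unit $e\colon\cE\to T\cE$ admits a left adjoint $t$ with $t\circ e\cong\id$. It remains only to remove the retraction condition, which I would do by checking that $e$ is fully faithful. A morphism $e(a)\to e(a')$ in $\cE\times_\cC\arr_\cC(\cD)$ consists of a morphism $a\to a'$ in $\cE$ over some $u\colon c\to c'$ together with a morphism $\id_c\to\id_{c'}$ of identity arrows; the latter is commutative square data that forces its two vertical legs to coincide with $u$, so the whole morphism is determined by the $\cE$-morphism alone and $\mor_{T\cE}(e(a),e(a'))\cong\mor_\cE(a,a')$. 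Hence $e$ is fully faithful, so any left adjoint of $e$ automatically has invertible counit $t\circ e\cong\id$, and the condition of \cref{prop:lax_uniq} reduces to the mere existence of a left adjoint of $e$. Combining this with the identification of $T$-algebra structures and $\cD$-coCartesian fibration structures completes the proof.
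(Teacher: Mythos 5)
Your proposal is correct and follows essentially the same route as the paper: identify a $\cD$-coCartesian fibration structure with a $T$-algebra structure for the monad $T(\cE)\cong\cE\times_\cC\arr_\cC(\cD)$ of \cref{prop:cart_mon}, show $T$ is lax-idempotent, apply \cref{prop:lax_uniq}, and then drop the retraction condition $t\circ e\cong\id$ by observing that $e$ is fully faithful (the paper phrases this last step via the Yoneda lemma, which is the same argument). The only divergence is in the lax-idempotence check, where the paper simply notes that $[n]\mapsto\cE\times_\cC\arr^n_\cC(\cD)$ extends to a functor out of $\Delta^\li_a$ because $\arr_{\cC,n}(\cD)$ is a full subcategory of $\mor_\cat([n],\cC)$, whereas you verify the equivalent criterion $m\dashv eT$ by a direct (and correct) computation with composable pairs of $\cD$-arrows.
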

\begin{proof}
We will consider $\cat_{/\cC}$ as a flagged bicategory. The category of $\cD$-coCartesian fibrations over $\cC$ is then isomorphic to the category of algebras for the monad that sends $p:\cF\rightarrow\cC$ to $\cC^\cD_{p/}$. Observe that $e:\cE\hookrightarrow \cE\times_{\cC}\arr_\cC(\cD)$ is the unit morphism for the free $\cD$-coCartesian fibration monad. The morphism $e$ is easily seen to be fully faithful: indeed, it sends an object $x\in\cE$ to the pair $(x,\id_{p(x)})$ and it follows directly from the definitions that
\[\mor_{\cC^\cD_{p/}}((x,\id_{p(x)}),(y,\id_{p(y)}))\cong\mor_\cE(x,y).\]
Now observe that this monad is lax-idempotent: indeed, by definition $T^n_{\ccart^\cD(\cC)}(\cE\xrightarrow{p}\cC)\cong\cE\times_\cC \arr^n_\cC(\cD)$; since $\arr_\cC(\cD)$ is defined as a full subcategory of $\mor_\cat([n],\cC)$, we see that this functor indeed extends to a functor $\Delta^\li_a\rightarrow \edm_{2-\cat}(\cat_{/\cC})$.\par
By \cref{prop:lax_uniq} the only thing we need to show is that for  any left adjoint $m:\cC^\cD_{p/}\rightarrow p$ we have $m\circ e\cong\id_p$. Indeed, for any pair of objects $(x,y)$ of $\cF$ we have
\[\mor_{\cF}(m\circ e x,y)\cong\mor_{\cC^\cD_{p/}}(ex,ey))\cong\mor_\cF(x,y),\]
where the first isomorphism follows from the definition and the second follows since $e$ is fully faithful, and so the claim follows by Yoneda lemma.
\end{proof}
\begin{remark}\label{rem:cart_univ}
Given a $\cD$-coCartesian fibration $p:\cF\rightarrow\cC$, the unit morphism $e:\cF\rightarrow\cC^\cD_{/p}$ is given by sending $x$ to $(x,\id_x)\in\cC^\cD_{/p}$. The left adjoint $m:\cC^\cD_{/p}\rightarrow \cF$ sends $(x,f:p(x)\rightarrow y)$ to $f_*x\in\cF_y$. Observe that 
\[\mor_{\cC^\cD_{/p}}((x,f),e(z))\cong \mor_\cC(y,p(z))\times_{\mor_\cC(p(x),p(z))}\mor_\cF(x,z).\]
The universal property of left adjoint then gives 
\[\mor_\cF(f_*x,z)\cong \mor_\cC(y,p(z))\times_{\mor_\cC(p(x),p(z))}\mor_\cF(x,z).\]
\end{remark}
\begin{cor}\label{cor:cartlift}
Given a $\cD$-coCartesian fibration $p:\cF\rightarrow\cC$, a morphism $\widetilde{f}:x\rightarrow y$ in $\cF$ lying over $f:p(x)\rightarrow p(y)$ in $\cD$ is coCartesian in the sense of \cref{not:cartmor} if and only if for any object $z\in\cF$ the following commutative square
\[
\begin{tikzcd}[column sep=huge, row sep=huge]
    \mor_{\cF}(y,z)\arrow[r]\arrow[d]&\mor_{\cF}(x,z)\arrow[d]\\
    \mor_{\cC}(p(y),p(z))\arrow[r]& \mor_{\cC}(p(x),p(z))
\end{tikzcd}
\]
is a pullback square. Moreover, $p$ is a $\cD$-coCartesian fibration if and only if every $\cD$-morphism of $\cC$ admits a coCartesian lift.
\end{cor}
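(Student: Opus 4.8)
The plan is to deduce both assertions from the universal property of the free $\cD$-coCartesian fibration recorded in \cref{rem:cart_univ} together with the adjoint characterization of \cref{prop:cart_uniq}. The key observation, valid for an arbitrary $(p\colon\cF\to\cC)\in\cat_{/\cC}$, is that since $\cC^\cD_{p/}=\cF\times_\cC\arr_\cC(\cD)$ is a fiber product (\cref{prop:cart_mon}) and $e(z)=(z,\id_{p(z)})$, computing mapping spaces into the unit object yields a natural equivalence
\[
\mor_{\cC^\cD_{p/}}\bigl((x,f),e(z)\bigr)\cong\mor_\cC\bigl(c,p(z)\bigr)\times_{\mor_\cC(p(x),p(z))}\mor_\cF(x,z)
\]
for every $x\in\cF$, every $\cD$-morphism $f\colon p(x)\to c$, and every $z\in\cF$, where the two structure maps are precomposition $(-)\circ f$ and $p$. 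I would establish this by unwinding a morphism in $\arr_\cC(\cD)$ with target an identity arrow: such a morphism is precisely the datum of a leg $c\to p(z)$, whose source component is precomposition with $f$. Thus the right-hand side is exactly the pullback of the square appearing in the statement (taking $c=p(y)$).

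For the first assertion I would then argue as follows. Since $p$ is a $\cD$-coCartesian fibration, the coCartesian arrow $x\to f_*x=m(x,f)$ exists, and by the adjunction $m\dashv e$ of \cref{rem:cart_univ} it corepresents the displayed functor; concretely, precomposition with this coCartesian arrow realizes $\mor_\cF(f_*x,z)$ as the pullback above. If $\widetilde f$ is coCartesian in the sense of \cref{not:cartmor}, then it is equivalent under $x$ to this arrow, so $y\cong f_*x$ and the comparison map induced by $(\widetilde f,p)$ is the corepresenting equivalence onto the pullback; hence the square is cartesian. Conversely, if the square is cartesian for all $z$, then $\mor_\cF(y,-)$ and $\mor_\cF(f_*x,-)$ corepresent the same functor compatibly with the maps out of $x$, so the Yoneda lemma identifies $(y,\widetilde f)$ with $(f_*x,\ \text{coCartesian arrow})$, whence $\widetilde f$ is coCartesian.

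For the second assertion I would invoke \cref{prop:cart_uniq}: the morphism $p$ admits the structure of a $\cD$-coCartesian fibration precisely when $e\colon\cF\hookrightarrow\cC^\cD_{p/}$ admits a left adjoint. By the pointwise criterion for the existence of adjoints \cite{lurie2009higher}, such a left adjoint exists if and only if for every object $(x,f)$ the functor $z\mapsto\mor_{\cC^\cD_{p/}}((x,f),e(z))$ is corepresentable; by the identity of the first paragraph this functor is $z\mapsto\mor_\cC(c,p(z))\times_{\mor_\cC(p(x),p(z))}\mor_\cF(x,z)$, and its corepresentability by an object $w\in\cF$ together with its universal element is, by the criterion just proved, exactly the existence of a coCartesian lift $x\to w$ of $f$. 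As $(x,f)$ ranges over all $\cD$-morphisms of $\cC$ and all lifts of their source, this is precisely the stated condition. I expect the main technical point to be the converse of the first assertion, namely verifying that the pullback condition pins down not merely the object $f_*x$ but the coCartesian arrow itself; this requires tracking the universal element (the image of $\id$) through the equivalences, after which the remaining passage to the adjoint criterion is formal.
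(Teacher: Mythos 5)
Your proof is correct and follows essentially the same route as the paper's: the mapping-space formula for $\mor_{\cC^\cD_{p/}}((x,f),e(z))$ coming from the fiber-product description (i.e.\ \cref{rem:cart_univ}) yields the first assertion, and the pointwise construction of a left adjoint to $e$ from coCartesian lifts combined with \cref{prop:cart_uniq} yields the second. You merely spell out the corepresentability/Yoneda bookkeeping that the paper compresses into the phrase ``this is a restatement of \cref{rem:cart_univ}''.
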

\begin{proof}
The claimed universal property is simply a restatement of \cref{rem:cart_univ}. Assuming every morphism $f$ of $\cC$ admits a coCartesian lift, we can define a left adjoint to $e:p\rightarrow\cC^\cD_{p/}$ by sending $(x, f:px\rightarrow y)$ to the target of the coCartesian lift of $f$.
\end{proof}
\begin{remark}
\Cref{cor:cartlift} and \cite[Proposition 2.4.4.3.]{lurie2009higher} allows us to identify the category of $\cD$-coCartesian fibrations in the sense of \cref{prop:cart_def} and the category with objects $p:\cF\rightarrow\cC$ for which all morphisms in $\cD$ admit $p$=coCartesian lifts and with morphisms given by those morphisms over $\cC$ that preserve those lifts. In the case $\cD\cong\cC$ we can alternatively conclude by observing the isomorphism between $L^\cC_{\cart}$ and the free fibration monad of \cite{gepner2015lax}.
\end{remark}
\begin{prop}\label{prop:lax}
Given a Segal space $\cC$, there exists a twofold Segal space $L^\lax\cC$ such that for every twofold Segal space $\cB$
\[\mor^\lax_{2-\cat}(L^\lax\cC,\cB)\cong \mor_\cat^\lax(\cC,\cB).\]
\end{prop}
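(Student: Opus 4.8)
The plan is to realize both lax functors and their corepresenting object through the formalism of $\cD$-coCartesian fibrations developed in \cref{prop:cart_def}--\cref{cor:cartlift}, applied to the pattern $\Delta^\op$ together with its wide subcategory $\Delta^\op_\inrt$ of inert morphisms. First I would set $\cD:=\Delta^\op_\inrt\hookrightarrow\Delta^\op$; since inert morphisms do not alter objects we have $(\Delta^\op)_0\cong(\Delta^\op_\inrt)_0$, so \cref{constr:cart} furnishes the patterns $\ccart^{\Delta^\op_\inrt}(\Delta^\op)$ and $\ccart(\Delta^\op)=\ccart^{\Delta^\op}(\Delta^\op)$, and by \cref{prop:cart_def} the inclusion $j:\Delta^\op_\inrt\hookrightarrow\Delta^\op$ induces a monadic adjunction $L_\seg j_!:\cart^{\Delta^\op_\inrt}(\Delta^\op)\leftrightarrows\cart(\Delta^\op):j^*$. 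Under the equivalence recalled at the start of this section, twofold Segal spaces are the full subcategory of $\cart(\Delta^\op)$ on coCartesian fibrations whose fiber over $[0]$ is a groupoid, while a Segal space $\cC$, viewed as a left fibration over $\Delta^\op$, is an object of $\cart^{\Delta^\op_\inrt}(\Delta^\op)$ (all of its edges, a fortiori the inert ones, being coCartesian). I then define $L^\lax\cC:=L_\seg j_!\cC$. As the only inert morphism out of $[0]$ in $\Delta^\op$ is the identity, the explicit formula of \cref{prop:cart_mon} shows the $[0]$-fiber of $L^\lax\cC$ is unchanged, namely $\cC_0$, hence a groupoid; so $L^\lax\cC$ is a twofold Segal space.

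Next I would establish the corepresenting property on underlying spaces. By definition of a lax functor, $\mor^\lax_\cat(\cC,\cB)$ is the space of maps $\cC\to\cB$ over $\Delta^\op$ carrying inert-coCartesian edges to coCartesian edges, i.e. precisely the mapping space $\mor_{\cart^{\Delta^\op_\inrt}(\Delta^\op)}(\cC,j^*\cB)$, where \cref{cor:cartlift} matches the fibrational coCartesian edges of $\cB$ with those in the definition of lax functor. On the other hand $\mor_{2-\cat}(L^\lax\cC,\cB)$, the space of strict morphisms of twofold Segal spaces, is the space of coCartesian-edge-preserving maps $L^\lax\cC\to\cB$, i.e. the mapping space in $\cart(\Delta^\op)$. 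The adjunction $L_\seg j_!\dashv j^*$ then gives
\[
\mor_{2-\cat}(L^\lax\cC,\cB)\cong\mor_{\cart^{\Delta^\op_\inrt}(\Delta^\op)}(\cC,j^*\cB)\cong\mor^\lax_\cat(\cC,\cB),
\]
naturally in $\cB$.

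To upgrade this to an isomorphism of twofold Segal spaces I would evaluate in each bidegree $(n,m)$. By definition $\mor^\lax_{2-\cat}(L^\lax\cC,\cB)_{n,m}=\mor_{2-\cat}(L^\lax\cC,\arr^{\theta_{n,m},\lax}(\cB))$, and since $\arr^{\theta_{n,m},\lax}(\cB)$ is again a twofold Segal space the previous step applies verbatim, yielding $\mor_{2-\cat}(L^\lax\cC,\arr^{\theta_{n,m},\lax}(\cB))\cong\mor^\lax_\cat(\cC,\arr^{\theta_{n,m},\lax}(\cB))$. It remains to match the right-hand side with the bidegree-$(n,m)$ value of the twofold Segal space $\mor^\lax_\cat(\cC,\cB)$, which is exactly how that structure was defined (its $(1,0)$- and $(1,1)$-cells being lax functors into $\arr^\lax(\cB)$ and $\arr^{\theta_{1,1},\lax}(\cB)$, and the higher cells into $\arr^{\theta_{n,m},\lax}(\cB)$), the coherence across $(n,m)$ being governed by \cref{lem:lax_1} and \cref{lem:lax_2}. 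Naturality of the adjunction in $\cB$, hence in $\arr^{\theta_{n,m},\lax}(\cB)$, makes all identifications compatible with the simplicial structure maps, giving the claimed isomorphism.

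The hard part will be the matching underlying the last two paragraphs: one must check that the abstract coCartesian edges controlling the free--forgetful adjunction $\ccart^{\Delta^\op_\inrt}(\Delta^\op)\to\ccart(\Delta^\op)$ are precisely the inert-coCartesian edges whose preservation defines a lax functor (this is where \cref{cor:cartlift} does essential work), and that forming the $\theta_{n,m}$-arrow objects $\arr^{\theta_{n,m},\lax}(\cB)$ on the target corresponds on the source to the lax natural transformations built into the definition of $\mor^\lax_\cat$. This second compatibility is the only point demanding more than formal manipulation of the adjunction, and it rests on the Gray-tensor decompositions of \cref{lem:lax_1} and \cref{lem:lax_2}.
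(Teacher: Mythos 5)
Your strategy coincides with the paper's own proof in all its formal parts: the paper too identifies strict morphisms of twofold Segal spaces with morphisms in $\cart(\Delta^\op)$ and lax functors with morphisms in $\cart^\inrt(\Delta^\op)$, invokes the monadic adjunction $L_\seg i^\inrt_!:\cart^\inrt(\Delta^\op)\leftrightarrows\cart(\Delta^\op):U$ of \cref{prop:cart_def}, and reduces the enhancement to twofold Segal spaces to the underlying-space case by applying the same argument to $\arr^{\theta_{n,m},\lax}(\cB)$. The difference is that the paper does not take the abstract value of the left adjoint as the \emph{definition} of $L^\lax\cC$: it first exhibits the explicit model $\cC\times_{\Delta^\op}\arr^\act(\Delta^\op)$ and proves by a cofinality argument --- reducing the Kan-extension colimit over $(\arr^n(\Delta^\op))^\inrt_{/f}$ to the groupoid $(\Act^n(\Delta^\op))_{/f}$ of active morphisms --- that $i^\inrt_!\cC$ agrees with this model and is in particular already Segal. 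That computation is exactly what your proposal lacks, and it is needed at the one place where you must verify something non-formal: that $L_\seg j_!\cC$ is a twofold Segal space.

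Concretely, your check that the fiber of $L_\seg j_!\cC$ over $[0]$ is the groupoid $\cC_0$ fails on two counts. First, \cref{prop:cart_mon} computes the monad of the adjunction whose source is $\cat_{/\cC}\cong\cart^{\cC^\sim}(\cC)$, i.e.\ the free coCartesian fibration on a category carrying no pre-existing lifts; it says nothing about the relative adjunction $\cart^\inrt(\Delta^\op)\leftrightarrows\cart(\Delta^\op)$ you are using. If you applied its formula literally, the fiber over $[0]$ would be assembled from pairs of an inert morphism $[m]\rightarrow[0]$ in $\Delta^\op$ and a point of $\cC_m$; such inert morphisms are the vertex inclusions $[0]\hookrightarrow[m]$ in $\Delta$, of which there are many, so this fiber is far larger than $\cC_0$. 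Second, the class of morphisms that matters is not the inert one: since $\cC$ already has coCartesian lifts of inert morphisms, the relative free construction only freely adjoins lifts of \emph{active} morphisms, so the statement doing the work --- and the one the paper uses --- is that there are no nontrivial active morphisms of $\Delta^\op$ with target $[0]$. Turning this heuristic into a proof that the $[0]$-fiber of the abstractly defined $L_\seg j_!\cC$ really is $\cC_0$ requires identifying the left adjoint concretely, which is precisely the cofinality computation above; without it, the substantive core of the proposition remains unproved rather than reduced to formal manipulation of the adjunction.
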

\begin{proof}
First, observe that it is enough to present an isomorphism on the level of underlying spaces, since the isomorphism on morphisms and 2-morphisms would then follow from the same argument applied to $\arr^\lax(\cB)$ and $\arr^{\theta_{1,1},\lax}(\cB)$. We begin by defining 
\[L^\lax\cC\bydef \cC\times_{\Delta^\op}\arr^\act(\Delta^\op),\]
where $\arr^\act(\Delta^\op)$ is a coCartesian fibration over $\Delta^\op\cong\ccat$ corresponding to a functor that sends $s\in\ccat$ to $\ccat^\act_{/s}$.\par
To prove that this object satisfies the required universal property, first observe that a morphism of twofold Segal spaces can be identified with a morphism of the corresponding coCartesian fibrations over $\Delta^\op$, while a lax functor can be identified with a morphism in $\cart^\inrt(\Delta^\op)$. Denote by $i^\inrt:(\Delta^\inrt)^\op\hookrightarrow\Delta^\op$ the natural inclusion, then by \cref{prop:cart_def} the natural forgetful functor $U:\cart(\Delta^\op)\rightarrow\cart^\inrt(\Delta^\op)$ admits a left adjoint $L_\seg i^\inrt_!$. We will prove that $i^\inrt_!\cC\cong L^\lax\cC$, in particular it would follow that it satisfies the $\ccart(\Delta^\op)$ Segal condition, so in particular $L_\seg i^\inrt_!\cC\cong i^\inrt_! \cC$. For any morphism $f:[n]\rightarrow\cC$ we have by definition
\[i^\inrt_!\cC(f)\cong \underset{(g\rightarrow f)\in(\arr^n(\Delta^\op))^\inrt_{/f}}{\colim}\cC(g)\cong\underset{(g\rightarrow f)\in(\arr^n(\Delta^\op))^\inrt_{/f}}{\colim}\cC(g(0)),\]
where $g:[n]\rightarrow\Delta^\op$ is a functor, the isomorphism $\cC(g)\cong \cC(g(0))$ follows since $\cC$ is a right fibration over $\Delta^\op$ and $(\arr^n(\Delta^\op))^\inrt_{/f}$ is a category whose objects are morphisms commutative diagrams 
\[
\begin{tikzcd}[row sep=huge,column sep=huge]
g(0)\arrow[r, "g_1"]\arrow[d, "s_0"]&g(1)\arrow[r, "..." description]\arrow[d, "s_{1}"]&g(n-1)\arrow[d, "s_{n-1}"]\arrow[r, "g_n"]&g(n)\arrow[d, "s_n"]\\
f(0)\arrow[r, "f_1"]&f(1)\arrow[r, "..." description]&f(n-1)\arrow[r,"f_n"]&f(n)
\end{tikzcd}
\]
in $\Delta^\op$ and morphisms $i:g\rightarrow h$ are given by diagrams 
\[
\begin{tikzcd}[row sep=huge,column sep=huge]
g(0)\arrow[r, "g_1"]\arrow[d, tail, "i_0"]&g(1)\arrow[r,  "..." description]\arrow[d, tail, "i_{1}"]&g(n-1)\arrow[d, tail, "i_{n-1}"]\arrow[r, "g_n"]&g(n)\arrow[d, tail, "i_n"]\\
h(0)\arrow[r, "g_1"]\arrow[d, "t_0"]&h(1)\arrow[r, "..." description]\arrow[d, "t_1"]&h(n-1)\arrow[d, "t_{n-1}"]\arrow[r, "g_n"]&h(n)\arrow[d, "t_n"]\\
f(0)\arrow[r, "f_1"]&f(1)\arrow[r, "..." description]&f(n-1)\arrow[r,"f_n"]&f(n)
\end{tikzcd}
\]
where for all $j$ we have $t_j\circ i_j\cong s_j$ and all $i_j$ are inert.\par
Denote by $(\Act^n(\Delta^\op))_{/f}$ the groupoid whose objects are objects $s:g\rightarrow f$ of $(\arr^n(\Delta^\op))^\inrt_{/f}$ as above for which all $s_j$ are active morphisms. Observe that $(\Act^n(\Delta^\op))_{/f}$ is a full subcategory of $(\arr^n(\Delta^\op))^\inrt_{/f}$ and also that we have an isomorphism
\[L^\lax\cC(f)\cong \underset{(g\overset{a}{\twoheadrightarrow}f)\in(\Act^n(\Delta^\op))_{/f}}{\colim}\cC(g(0)).\] 
We will now prove that the inclusion $(\Act^n(\Delta^\op))_{/f}\hookrightarrow(\arr^n(\Delta^\op))^\inrt_{/f}$ is cofinal, it would imply that $i^\inrt_!\cC\cong L^\lax\cC$ using the isomorphism above. In fact, for every object $(g\xrightarrow{s}f)\in(\arr^n(\Delta^\op))^\inrt_{/f}$ we will produce a unique up to isomorphism morphism to an object of $(\Act^n(\Delta^\op))_{/f}$. To do this first denote by $f(k-1)\overset{f^i_k}{\rightarrowtail}c_k\overset{f^a_k}{\twoheadrightarrow}f(k)$ the active/inert factorization of $f_k$ and introduce similar notation for $g_k$ and $s_k$, then observe that we have the following diagram
\[
\begin{tikzcd}[row sep=huge,column sep=huge]
g(0)\arrow[r, tail, "g^i_1"]\arrow[d, tail, "s^i_0"]&a_1\arrow[d, tail, "u_1"]\arrow[r, two heads, "g^a_1"]&g(1)\arrow[r, tail, "g^i_2"]\arrow[d, tail, "s^i_1"]&a_2\arrow[d, tail, "u_2"]\arrow[r, two heads, "..." description]&g(n-1)\arrow[r, tail, "g^i_n"]\arrow[d, tail, "s^i_{n-1}"]&a_n\arrow[d, tail, "u_n"]\arrow[r, two heads, "g^a_n"]&g(n)\arrow[d, tail, "s^i_n"]\\
h(0)\arrow[d, two heads, "s^a_1"]\arrow[r, tail, "h^i_1"]&b_1\arrow[d, two heads, "v_1"]\arrow[r, two heads, "h^a_1"]&h(1)\arrow[d, two heads, "s^a_2"]\arrow[r, tail, "h^i_2"]&b_2\arrow[d, two heads, "v_2"]\arrow[r, two heads, "..." description]&h(n-1)\arrow[d, two heads, "s^a_{n-1}"]\arrow[r, tail, "h^i_{n}"]&b_n\arrow[d, two heads, "v_n"]\arrow[r, two heads, "h^a_n"]&h(n)\arrow[d, "s^a_n"]\\
f(0)\arrow[r, tail, "f^i_1"]&c_1\arrow[r, two heads, "f^a_1"]&f(1)\arrow[r, tail, "f^i_2"]&c_2\arrow[r, two heads, "..." description]&f(n-1)\arrow[r, tail, "f^i_n"]&c_n\arrow[r, two heads, "f^a_n"]&f(n)
\end{tikzcd}
\]
in which every square is given by active/inert factorization. The fact that $b_k$ that appears in the active/inert factorization of both $s^i_k\circ g^a_k$ and $f^i_k\circ s^a_k$ is the same object follows since $(f^a_k\circ v_k)\circ(h^i_k\circ s^i_{k-1})$ and $(s_k^a\circ h^a_k)\circ(u_i\circ g^i_k)$ represent the active/inert factorization of the same morphism. The object $(h\overset{s_1}{\twoheadrightarrow}f)$ obviously belongs to $(\Act^n(\Delta^\op))_{/f}$ and the morphism $(g\overset{s_i}{\rightarrowtail}h)$ is unique by the uniqueness of the active/inert factorization.\par
It follows that $i^\inrt_! \cC\cong L^\lax\cC$. It remains to prove that this is indeed a twofold Segal space. For this we need to show that both $\Act_\ccat([n])$ and $\Act^2_\ccat([n])$ are Segal spaces, which easily follows since $\ccat$ is extendable; this proves that $L^\lax\cC$ is a double Segal space. Moreover, since there are no non-trivial active morphisms with target $[0]$, we also see that $L^\lax\cC([0])\cong \cC([0])$ is a groupoid, which concludes the proof.
\end{proof}
\begin{lemma}\label{lem:int_sur}
Morphisms of the form $i\circ s$ where $i$ is an inert morphism and $s$ is a surjective active morphism form a subcategory of $\Delta$.
\end{lemma}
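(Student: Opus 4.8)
The plan is to exhibit the collection of morphisms of the form $i\circ s$ as a wide subcategory of $\Delta$, so I must check that it contains all identities and is closed under composition. Recall that an inert morphism $i\colon[k]\rightarrowtail[n]$ in $\Delta$ is an interval inclusion, i.e.\ it identifies $[k]$ with a consecutive subinterval $\{a,a+1,\dots,a+k\}$ of $[n]$, while an active morphism preserves the endpoints $0$ and the top element. The identity is both inert and surjective active, so $\id=\id\circ\id$ lies in the class; the content of the lemma is therefore closure under composition. Given two such morphisms $f=i_1\circ s_1$ and $g=i_2\circ s_2$, the composite is $f\circ g=i_1\circ(s_1\circ i_2)\circ s_2$, so everything reduces to rewriting the middle juxtaposition $s_1\circ i_2$ of a surjective active morphism following an inert one back into the form (inert)$\circ$(surjective active).

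First I would record the elementary but crucial observation that a monotone surjection $s\colon[n]\twoheadrightarrow[m]$ has all increments in $\{0,1\}$: if some step satisfied $s(j+1)>s(j)+1$, then by monotonicity the value $s(j)+1$ would never be attained, contradicting surjectivity. In particular a surjective monotone map automatically preserves endpoints, so ``surjective active'' coincides with ``surjective'' here. Now let $i\colon[k]\rightarrowtail[n]$ be the inclusion of $\{a,\dots,a+k\}$ and $s\colon[n]\twoheadrightarrow[m]$ surjective. The composite $s\circ i\colon[k]\to[m]$, $j\mapsto s(a+j)$, again has increments in $\{0,1\}$, hence its image is the full interval $\{s(a),s(a)+1,\dots,s(a+k)\}$ and it surjects onto it. I can therefore set $s'\colon[k]\twoheadrightarrow[\,s(a+k)-s(a)\,]$, $j\mapsto s(a+j)-s(a)$, which is a surjective active morphism, together with $i'\colon[\,s(a+k)-s(a)\,]\rightarrowtail[m]$, $l\mapsto s(a)+l$, which is the inert inclusion of that interval; by construction $s\circ i=i'\circ s'$.

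Substituting this into the composite gives $f\circ g=(i_1\circ i')\circ(s'\circ s_2)$. Since inert morphisms (interval inclusions) are manifestly closed under composition and surjective active morphisms (monotone surjections) are likewise closed under composition, $i_1\circ i'$ is inert and $s'\circ s_2$ is surjective active, so $f\circ g$ again has the required form, establishing the lemma. I expect the only genuine obstacle to be the commutation step $s\circ i=i'\circ s'$, which rests entirely on the increment observation: once one knows that the image of a subinterval under a monotone surjection is again a subinterval onto which the restriction surjects, the factorization is forced. I would finally remark that the morphisms of this subcategory are precisely the cellular morphisms of \cref{def:cell}, which gives a conceptual reinterpretation of the statement.
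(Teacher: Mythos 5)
Your proof is correct and follows essentially the same route as the paper: both arguments reduce closure under composition to the single commutation step of rewriting (surjective active)$\,\circ\,$(inert) as (inert)$\,\circ\,$(surjective active), which the paper phrases via the active/inert factorization square and you construct explicitly using the observation that monotone surjections have increments in $\{0,1\}$. Your version merely fills in the details the paper declares obvious, and your closing remark identifying these morphisms with the cellular ones of \cref{def:cell} is a correct and pertinent addition.
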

\begin{proof}
It would suffice to prove that if 
\[
\begin{tikzcd}[row sep=huge, column sep=huge]
x\arrow[d, tail, "i"]\arrow[r, two heads, "s"]&y\arrow[d, tail, "j"]\\
z\arrow[r, two heads, "t"]&w
\end{tikzcd}
\]
is a factorization square in $\Delta$ in which $i$ is inert and $t$ is surjective, then $s$ is also surjective. However, this is obvious: $i$ is an inclusion of a subinterval, $s$ is the restriction of $t$ to that subinterval and $j$ is the inclusion of the image of $x$ under $t$, so $s$ is obviously surjective.
\end{proof}
\begin{notation}
We will denote by $\Delta^{\sur,\inrt}$ the subcategory described in \cref{lem:int_sur}.
\end{notation}
\begin{cor}\label{cor:unilax}
Given a Segal space $\cC$ there exists a twofold Segal space $L^{\lax,\un}\cC$ such that for every two-fold Segal space $\cB$
\[\mor^\lax_{2-\cat}(L^{\lax,\un}\cC,\cB)\cong \mor_\cat^{\lax,\un}(\cC,\cB).\]
\end{cor}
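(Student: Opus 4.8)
The plan is to repeat the proof of \cref{prop:lax} almost verbatim, replacing the inert subcategory $(\Delta^\inrt)^\op$ by the larger subcategory $(\Delta^{\sur,\inrt})^\op$ produced in \cref{lem:int_sur}. The first step is to recast unitality fibrationally: a lax functor $\cC\rightsquigarrow\cB$, regarded as a map of the associated coCartesian fibrations over $\Delta^\op$, is unital precisely when it preserves coCartesian lifts of every morphism in $(\Delta^{\sur,\inrt})^\op$. Indeed, by definition such a functor preserves coCartesian lifts over inert and over surjective active morphisms; since coCartesian edges are closed under composition and, by \cref{lem:int_sur}, every morphism of $\Delta^{\sur,\inrt}$ is a composite $i\circ s$ of an inert and a surjective active morphism, preservation over these generators is equivalent to preservation over all of $\Delta^{\sur,\inrt}$. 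Consequently, unital lax functors from $\cC$ to $\cB$ are exactly the morphisms $\cC\rightarrow j^*\cB$ in $\cart^{\sur,\inrt}(\Delta^\op)$, where $j\colon(\Delta^{\sur,\inrt})^\op\hookrightarrow\Delta^\op$ is the inclusion and $\cC$ is regarded as an object of $\cart^{\sur,\inrt}(\Delta^\op)$ through its right fibration structure, exactly as in \cref{prop:lax}.

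Next I would apply \cref{prop:cart_def} to the chain $(\Delta^\inrt)^\op\hookrightarrow(\Delta^{\sur,\inrt})^\op\hookrightarrow\Delta^\op$ of subcategories, all bijective on objects: the forgetful functor $j^*\colon\cart(\Delta^\op)\rightarrow\cart^{\sur,\inrt}(\Delta^\op)$ admits a left adjoint $L_\seg j_!$, and I define $L^{\lax,\un}\cC\bydef L_\seg j_!\cC$. The universal property is then formal: for any twofold Segal space $\cB$ the adjunction gives
\[\mor_{2-\cat}(L^{\lax,\un}\cC,\cB)\cong\mor_{\cart^{\sur,\inrt}(\Delta^\op)}(\cC,j^*\cB)\cong\mor^{\lax,\un}_\cat(\cC,\cB),\]
where the left-hand side is identified with the underlying space of $\mor^\lax_{2-\cat}(L^{\lax,\un}\cC,\cB)$ via $\arr^{\theta_{0,0},\lax}(\cB)\cong\cB$. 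Replacing $\cB$ throughout by $\arr^{\theta_{n,m},\lax}(\cB)$ upgrades this to the asserted isomorphism of twofold Segal spaces, just as at the start of the proof of \cref{prop:lax}.

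It then remains to check that $L^{\lax,\un}\cC$ genuinely is a twofold Segal space. Since $L_\seg j_!$ lands in $\cart(\Delta^\op)$, whose objects satisfy the double Segal condition by construction, only the requirement that the $[0]$-level be a groupoid—the condition distinguishing twofold Segal spaces among double Segal spaces—must be verified. As in \cref{prop:lax}, I would first compute $L_\seg j_!\cC\cong j_!\cC$ explicitly by a cofinality argument, now replacing the active/inert factorization by the factorization that splits off the injective active part from a part lying in $\Delta^{\sur,\inrt}$; this shows the relevant subcategory of the indexing comma category is cofinal, so no nontrivial Segalification occurs, and it yields the concrete model $L^{\lax,\un}\cC\cong\cC\times_{\Delta^\op}\arr^{\act,\inj}(\Delta^\op)$, where $\arr^{\act,\inj}(\Delta^\op)$ classifies $s\mapsto\ccat^{\act,\inj}_{/s}$, parallel to the description $L^\lax\cC\cong\cC\times_{\Delta^\op}\arr^\act(\Delta^\op)$.

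The main obstacle is exactly the $[0]$-groupoid condition, and this is where unitality, as opposed to the bare lax case, does the work. For $L^\lax\cC$ the $[0]$-level equals $\cC([0])$ because no nontrivial active morphism targets $[0]$; here, however, there is for every $m$ a unique surjective active morphism $[m]\twoheadrightarrow[0]$, so the comma category over $[0]$ is no longer trivial. The point to establish is that the coCartesian lifts of these surjective active morphisms—the structural units—are equivalences in the free $(\Delta^{\sur,\inrt})^\op$-coCartesian fibration, so that the fiber over $[0]$ collapses onto the groupoid $\cC_0$. I expect this to follow from the lax-idempotence of the free coCartesian fibration monad established in \cref{prop:cart_uniq} together with \cref{prop:lax_uniq}, which forces the units to be adjoint and hence, on the groupoidal $[0]$-fiber, invertible; making this collapse precise is the only step requiring genuine care.
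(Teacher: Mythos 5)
Your first half coincides with the paper's proof: unital lax functors out of $\cC$ are exactly morphisms in $\cart^{\sur,\inrt}(\Delta^\op)$, and \cref{prop:cart_def} supplies the left adjoint $L_\seg i^{\sur,\inrt}_!$ to the forgetful functor $\cart(\Delta^\op)\rightarrow\cart^{\sur,\inrt}(\Delta^\op)$. The gap is in the final step, where you set $L^{\lax,\un}\cC\bydef L_\seg i^{\sur,\inrt}_!\cC$ and claim that, because this object lies in $\cart(\Delta^\op)$, the double Segal condition holds ``by construction'' so that only the $[0]$-groupoid condition remains. That claim is false: $\cart(\Delta^\op)=\seg_{\ccart(\Delta^\op)}(\cS)$ is the category of \emph{all} coCartesian fibrations over $\Delta^\op$ (equivalently functors $\Delta^\op\rightarrow\cat$); the Segal condition of the pattern $\ccart(\Delta^\op)$ encodes the property of \emph{being} a coCartesian fibration, not the Segal condition in the $\Delta^\op$-direction. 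This is precisely why the proof of \cref{prop:lax} does not stop after computing $i^\inrt_!\cC\cong L^\lax\cC$, but must separately verify that $\Act_\ccat([n])$ and $\Act^2_\ccat([n])$ are Segal spaces (using extendability of $\ccat$) before concluding that $L^\lax\cC$ is a double Segal space. In the unital case you would need the analogous statement for the comma categories built from $\Delta^{\sur,\inrt}$, and your proposal never establishes it; the cofinality argument and the model $\cC\times_{\Delta^\op}\arr^{\act,\inj}(\Delta^\op)$ are plausible (compare \cref{cor:uni}) but unproven. Your treatment of the $[0]$-condition is moreover circular: arguing that the units become invertible ``on the groupoidal $[0]$-fiber'' presupposes that this fiber is a groupoid, which is exactly what is to be shown, and it is unclear how \cref{prop:cart_uniq} and \cref{prop:lax_uniq}, which concern left adjoints to unit maps in $\cat_{/\cC}$, would produce such invertibility.

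The paper sidesteps both difficulties with one extra observation: $L_\seg i^{\sur,\inrt}_!\cC$ is \emph{a priori} not a twofold Segal space, but it is at least a presheaf on $\Delta^\op\times\Delta^\op$, so one can compose with the reflection $L_{\mathrm{2-Seg}}$ onto the reflective subcategory of twofold Segal spaces and define $L^{\lax,\un}\cC\bydef L_{\mathrm{2-Seg}}(L_\seg i^{\sur,\inrt}_!\cC)$. Since the test object $\cB$ is itself a twofold Segal space, maps out of $L_\seg i^{\sur,\inrt}_!\cC$ into $\cB$ agree with maps out of its reflection, so the universal property follows formally from the composition of the two left adjoints, with no cofinality computation, no verification of the double Segal condition, and no collapse argument over $[0]$. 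If you insert this single localization step (and, as in \cref{prop:lax}, reduce the full statement to underlying spaces by replacing $\cB$ with $\arr^{\theta_{n,m},\lax}(\cB)$), your argument becomes correct; as written, it has a genuine gap.
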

\begin{proof}
First, observe that just like in \cref{prop:lax} it suffices to prove the statement at the level of underlying spaces. A unital lax functor is equivalently a morphism in $\cart^{\sur,\inrt}(\Delta^\op)$ and we know by \cref{prop:cart_def} that the natural forgetful functor $U:\cart(\Delta^\op)\rightarrow\cart^{sur,\inrt}(\Delta^\op)$ admits a left adjoint $L_\seg i^{\sur,\inrt}_!$. It is \textit{a priori} not a twofold Segal space, but it is at least a presheaf on $\Delta^\op\times\Delta^\op$, so it factors through a universal twofold Segal space $L_{\mathrm{2-Seg}}(L_\seg i^{\sur,\inrt}_!\cC)$ and we define $L^{\lax,\un}\cC\bydef L_{\mathrm{2-Seg}}(L_\seg i^{\sur,\inrt}_!\cC)$.
\end{proof}
\begin{lemma}\label{lem:pullback}
Given an injective morphism $i:[l]\hookrightarrow[n]$ and a morphism $s:[m]\rightarrow[n]$ in $\Delta$ such that $\ima(s)\cap\ima(i)\neq\varnothing$ we have that the $x$ in the following pullback square
\[
\begin{tikzcd}[row sep=huge, column sep=huge]
x\arrow[d, "s'"]\arrow[r, hook, "i'"]&{[m]}\arrow[d, "s"]\\
{[l]}\arrow[r, hook, "i"]&{[n]}
\end{tikzcd}
\]
in $\cat$ is isomorphic to an object of $\Delta$.
\end{lemma}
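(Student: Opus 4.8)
The plan is to compute the pullback completely explicitly, using that every object appearing is the nerve of a finite poset. Since $\cat$ sits inside simplicial spaces as a full subcategory closed under limits, the pullback $x$ is computed levelwise, and because $[l]$, $[m]$ and $[n]$ are all $0$-truncated (nerves of linearly ordered posets) the result is again $0$-truncated; concretely $x$ is the poset whose elements are pairs $(a,b)\in[l]\times[m]$ with $i(a)=s(b)$, ordered by $(a,b)\leq(a',b')$ precisely when $a\leq a'$ and $b\leq b'$. Thus the whole statement reduces to a claim about finite posets: this fibre product is a nonempty finite linear order, hence isomorphic to some $[k]$.

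First I would record that $i$, being an injective monotone map between linear orders, is \emph{order-reflecting}: $i(a)\leq i(a')$ forces $a\leq a'$. Viewed as a functor this says that $i$ is fully faithful, and since functors that are fully faithful and injective on objects are stable under base change, the projection $i'\colon x\to[m]$ is again fully faithful and injective on objects. I would verify this by hand: injectivity on objects holds because $i(a)=s(b)=i(a')$ forces $a=a'$, and fullness holds because any inequality $b\leq b'$ yields $s(b)\leq s(b')$, that is $i(a)\leq i(a')$, hence $a\leq a'$ by order-reflection, so that $(a,b)\leq(a',b')$. Consequently $i'$ identifies $x$ with a full subposet of the linear order $[m]$, and every full subposet of a finite linear order is again a finite linear order.

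It then remains only to see that $x$ is nonempty, which is exactly where the hypothesis $\ima(s)\cap\ima(i)\neq\varnothing$ enters: a common value $c=i(a)=s(b)$ is precisely a point $(a,b)$ of $x$. A nonempty finite linear order is isomorphic to $[k]$ for $k=\#x-1$, so $x\cong[k]$ is an object of $\Delta$, as claimed.

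I do not expect a serious obstacle here; the only two points requiring care are that the pullback taken in $\cat$ genuinely coincides with the $1$-categorical (poset) fibre product, which follows from $0$-truncatedness being preserved under limits, and that the image hypothesis is not cosmetic but is exactly what excludes the empty poset, which is not an object of $\Delta$.
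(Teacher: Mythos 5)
Your proof is correct, and it reaches the conclusion by a genuinely different mechanism than the paper's. The paper works along the projection $s'\colon x\rightarrow[l]$: it notes that the fiber of $x$ over each $j\in[l]$ is a (possibly empty) linearly ordered set, and that for $j<j'$ with nonempty fibers every element of $x_j$ admits a morphism into $x_{j'}$, so the finitely many fibers concatenate into a finite nonempty linear order. You work along the other projection $i'\colon x\rightarrow[m]$: since an injective monotone map of finite linear orders is order-reflecting, hence fully faithful and injective on objects, and since this property is stable under base change (which you verify by hand rather than quoting), $x$ is identified with the full subposet $s^{-1}(\ima(i))\subseteq[m]$, and a nonempty full subposet of a finite linear order is again one; both proofs consume the hypothesis $\ima(s)\cap\ima(i)\neq\varnothing$ in the same way, to rule out the empty poset. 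Your route buys a cleaner packaging — a reusable stability statement plus the explicit identification $x\cong s^{-1}(\ima(i))$ — and you are also more careful than the paper on a point it leaves implicit, namely that the pullback in $\cat$ agrees with the $1$-categorical poset fibre product, which you justify via $0$-truncatedness and closure of Segal spaces under limits in simplicial spaces. What the paper's fiberwise description buys is the picture that is actually exploited where the lemma gets applied: for instance in \cref{prop:corr} one needs not only that $\delta_j^*[k]$ lies in $\Delta$ but also that the base-changed map $\delta_j^*c$ is cellular, and that is read off from how the fibers over the base sit next to one another, i.e.\ from the paper's decomposition rather than from yours. As a proof of the lemma as stated, either argument is complete.
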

\begin{proof}
Indeed, the set of objects of $x$ is obviously finite and nonempty, its fiber over every $\{j\}\in[l]$ is a (possibly empty) linearly ordered set. Moreover, if $\{j<j'\}\in[l]$ and the corresponding fibers are nonempty, then there is a single morphism from any element of $x_j$ to the smallest element of $x_{j'}$. It follows that $x$ is indeed a finite linearly ordered set.
\end{proof}
\begin{construction}\label{constr:lax_mon}
Define the bicategory $\bimod_n$ for $n\geq0$ as follows: first define its objects to be natural numbers $0\leq i\leq n$. define $\mathrm{End}_{\bimod_n}(i)\bydef \Delta_a$. Define $\mor_{\bimod_n}(i,j)\bydef \varnothing$ if $i>j$. To define $\mor_{\bimod_n}(i,j)$ in the other cases first consider the category $(\Delta^{\act,\inj}_{/[j-i]})^\op$ whose objects are active injective morphisms $[n]\twoheadrightarrow [j-i]$. Consider a functor $F_{i,j}:(\Delta^{\act,\inj}_{/[j-i]})^\op\rightarrow\cat$ that sends $[n]\twoheadrightarrow [j-i]$ to $\Delta^n_a$ and sends an injective morphism $i:[m]\hookrightarrow[n]$ to the natural projection $i^*:\Delta^n_a\rightarrow\Delta^m_a$. Finally, define $\mor_{\bimod_n}(i,j)$ to be the total category corresponding to $F_{i,j}$ if $j>i$. Given two composable morphisms $(a:[n]\twoheadrightarrow[j-i],c\in\Delta^n_a)$ and $(b:[m]\twoheadrightarrow[k-j],d\in\Delta^m_a)$, define their composition to be a pair $(a+b:[m+n]\twoheadrightarrow[k-i],c+d\in\Delta^{(m+n)}_a)$ where $(a+b)$ is an injective active morphism obtained by gluing $a$ and $b$ and $(c+d)$ has the same projections to the first $(j-i-1)$ components as $c$ and the same projection to the last $(k-j-1)$ components as $d$ and whose projection to the $(j-i)$-th component is given by the sum of the corresponding components of $c$ and $d$.
\end{construction}
\begin{lemma}\label{lem:bimod}
There is a natural equivalence 
\[L^\lax [n]\cong \bimod_n\]
\end{lemma}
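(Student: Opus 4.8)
The plan is to identify both sides directly as twofold Segal spaces by unwinding the explicit description of $L^\lax[n]$ obtained in the proof of \cref{prop:lax}, rather than by invoking its universal property. Recall from that proof that $L^\lax\cC\cong i^\inrt_!\cC$ is computed as a left Kan extension, so that on an $n$-simplex $f$ of $\cC$ its value is the colimit
\[
L^\lax\cC(f)\cong\underset{(g\twoheadrightarrow f)\in(\Act^{k}(\Delta^\op))_{/f}}{\colim}\cC(g(0))
\]
over the category of active resolutions of $f$. For $\cC=[n]$ each space $[n](g(0))$ is either empty or contractible, so these colimits are governed entirely by the combinatorics of $\Act_\ccat$. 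I would first record that the two objects have the same space of objects: since $L^\lax\cC([0])\cong\cC([0])$, the objects of $L^\lax[n]$ are exactly $\{0,1,\dots,n\}$, matching $\bimod_n$.

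Next I would identify the category of $1$-morphisms from $i$ to $j$. Applying the colimit formula above to the $1$-simplex $i\le j$ of $[n]$, the active resolutions organize into the category $(\Delta^{\act,\inj}_{/[j-i]})^\op$ of endpoint-preserving injective subdivisions of the interval $[i,j]$, and the fibrewise $\Act$-data attached to each subdivision reproduces exactly the functor $F_{i,j}$ of \cref{constr:lax_mon}. Thus $\mor_{L^\lax[n]}(i,j)$ is the total category of $F_{i,j}$, and in particular $\mathrm{End}_{L^\lax[n]}(i)\cong\Delta_a$; the latter is precisely the $n=0$, one-object case, namely the identification $L^\lax[0]\cong\rB\Delta_a=\bimod_0$ furnished by \cref{cor:lax_mon}. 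This is the step I expect to require the most care, since it amounts to matching the active-slice indexing of $L^\lax$ with the explicit $(\Delta^{\act,\inj}_{/[j-i]})^\op$-indexed presentation of the bimodule categories.

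Finally I would match the composition, organizing the argument as an induction on $n$ via the Segal decomposition $[n]\cong[1]\cup_{[0]}[1]\cup_{[0]}\cdots\cup_{[0]}[1]$, which exhibits $[n]$ as an iterated pushout of copies of $[1]$ along $[0]$ in $\cat$. Since $L^\lax\cong i^\inrt_!$ is a left adjoint between the relevant categories of fibrations over $\Delta^\op$, it should carry this pushout to the corresponding iterated pushout of twofold Segal spaces, while on the other side the defining iterated-pushout description of $\bimod_n$ glues copies of $\bimod_1$ along the shared endomorphism pattern $\Delta_a=\bimod_0$. It then suffices to treat the base cases: $n=0$ is the monad case above, and $n=1$ is the direct computation that $\mor_{L^\lax[1]}(0,1)\cong\Delta_a$ with its evident $(\Delta_a,\Delta_a)$-bimodule structure, matching $\bimod_1$. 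Identifying the two iterated pushouts then yields the equivalence $L^\lax[n]\cong\bimod_n$. The main obstacle is this last compatibility of composition: one must verify that the relative-tensor (bar-type) composition of bimodule categories produced by the colimit formula for $L^\lax$ agrees with the gluing built into $\bimod_n$, equivalently that $L^\lax$ genuinely preserves the Segal pushout inside the twofold-Segal target and not merely at the level of underlying presheaves.
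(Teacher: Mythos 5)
Your identification of the objects and your description of the morphism categories follow the same outline as the paper, but two of your key steps do not hold up as stated. First, there is a circularity: you obtain $\mathrm{End}_{L^\lax[n]}(i)\cong\Delta_a$ (equivalently $L^\lax[0]\cong\rB\Delta_a=\bimod_0$) by citing \cref{cor:lax_mon}, but the second statement of that corollary — that monads in $\cB$ are the same as morphisms $\rB\Delta_a\rightarrow\cB$ — is itself deduced in the paper \emph{from} \cref{lem:bimod}; the first statement of \cref{cor:lax_mon} only identifies lax functors from a point with monads, not with maps out of $\rB\Delta_a$. The missing content is precisely the combinatorial isomorphism $\Delta^{\act,\op}\cong\Delta_a$, which the paper proves by hand: one checks on the standard generators ($[n]\mapsto[n-1]$, inner faces $\delta^n_i\mapsto\sigma^{n-2}_{i-1}$, degeneracies $\sigma^n_i\mapsto\delta^n_i$) that the simplicial identities are preserved. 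Without this input your identification of the fibrewise data with $F_{i,j}$ of \cref{constr:lax_mon} has no basis; the paper's actual argument first identifies $\mor_{L^\lax[n]}(i,j)\cong(\Delta^\act_{/[j-i]})^\op$ (all active morphisms, not only injective ones), then uses the surjective/injective factorization and \cref{lem:pullback} to exhibit this category as a coCartesian fibration over $(\Delta^{\act,\inj}_{/[j-i]})^\op$ with fibres $\Delta_a^m$, which is exactly the total category defining $\mor_{\bimod_n}(i,j)$.

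Second, your treatment of composition by induction over the spine pushout $[1]\sqcup_{[0]}\cdots\sqcup_{[0]}[1]$ is not an argument but a restatement of the problem, as you yourself concede. The category $\bimod_n$ is defined in \cref{constr:lax_mon} by explicit formulas, not as an iterated pushout of copies of $\bimod_1$ along $\bimod_0$; proving that it \emph{is} such a pushout in twofold Segal spaces is equivalent to computing the relative tensor products $\mor_{\bimod_n}(i,i+1)\circ\cdots$, which is essentially the content of the lemma. Moreover, even the source-side claim needs justification: the pushout must be taken in $\ccart^\inrt(\Delta^\op)$-Segal objects (where $L^\lax\cong L_\seg i^\inrt_!$ is a left adjoint), and colimits there are computed by Segalifying presheaf-level pushouts, so the assertion that the spine pushout recovers $[n]$ in that category is not automatic. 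The paper sidesteps all of this: once each morphism category is identified with the corresponding Grothendieck construction, compatibility of composition is checked directly on the fibration structure (the functoriality $q_*:F_t\rightarrow F_{t'}$ matches the gluing of \cref{constr:lax_mon}), and no pushout-preservation statement is ever needed.
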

\begin{proof}
By applying \cref{prop:lax} we see that the objects of $L^\lax [n]$ are in bijection with the objects of $[n]$. Given a pair $i\leq j$, denote by $f_{i,j}:[1]\rightarrow[j-i]$ the unique active morphism in $\Delta$, then it is easy to see that the category $\mor_{L^\lax[n]}(i,j)$ is isomorphic to $(\Delta^\act_{/[n]})^\op_{f_{i,j}/}\cong(\Delta^\act_{/[j-i]})^\op$.\par
We first prove the claim in the special case $i=j$. The claim is then reduced to
\[\Delta^{\act,\op}\cong\Delta_a\]
To prove it recall the classical fact that $\Delta$ is generated by morphisms $\delta^n_i:[n-1]\rightarrow[n]$  and $\sigma^n_i:[n+1]\rightarrow[n]$ for $0\leq i\leq n$ that satisfy
\begin{gather*}
    \delta^{n+1}_i\circ \delta^n_j\cong \delta^{n+1}_{j+1}\circ \delta^{n}_i\;\text{    if }i\leq j\\
    \sigma^{n}_j\circ \sigma^{n+1}_i\cong \sigma^n_{i}\circ \sigma^{n+1}_{j+1}\; \text{ if }i\leq j\\
    \sigma^n_j\circ \delta^{n+1}_i\cong 
    \begin{cases}
        \delta^n_i\circ \sigma^{n-1}_{j-1}&\text{    if }i< j\\
        \id_{[n]} \;\text{    if }i=j&\text{ or }i=j+1\\
        \delta^n_{i-1}\circ \sigma^{n-1}_{j}&\text{    if }j+1< i\\
    \end{cases}.
\end{gather*}
Out of these morphisms only $\delta^n_0$ and $\delta^n_n$ are inert and the rest are active. We define a morphism $F:\Delta^{\act,\op}\rightarrow\Delta_a$ by sending $[n]$ to $[n-1]$, sending $\delta^n_{o<i<n}:[n-1]\rightarrow[n]$ to $\sigma^{n-2}_{i-1}:[n-1]\rightarrow[n-2]$ and sending $\sigma^n_i:[n+1]\rightarrow[n]$ to $\delta^n_i:[n-1]\rightarrow[n]$. It is elementary to check that this map preserves the necessary relations, so since it induces an isomorphism on the set of generators we see that $F$ is indeed an isomorphism.\par
Now assume $j>i$. In this case first observe that any active morphism $a:[n]\twoheadrightarrow[j-i]$ in $\Delta$ decomposes as $a\cong t\circ s$ where $t$ is injective and $s$ is surjective. In particular, there is a functor $F:(\Delta^\act_{/[j-i]})^\op\rightarrow(\Delta^{\act,\inj}_{/[j-i]})^\op$ sending each active morphism to its the injective component. Given an active injective morphism $t:[m]\hookrightarrow[j-i]$, it is easy to see that the fiber of $F$ over $t$ is isomorphic to $(\Delta^{\act,\op})^m\cong\Delta_a^m$. Moreover, given a morphism $q:t\rightarrow t'$ observe that we can form the following diagram in $\Delta$
\[
\begin{tikzcd}[row sep=huge]
{[n']}\arrow[d, "\overline{s}"]\arrow[rr,"\overline{q}"]&{}&{[n]}\arrow[d, "s"]\\
{[m']}\arrow[rr,"q"]\arrow[dr,"t'"]&{}&{[m]}\arrow[dl, "t" swap]\\
{}&{[n]}
\end{tikzcd}
\]
where the top square is a pullback (which exists by \cref{lem:pullback}). Using the universal property of the pullback we see that $F:(\Delta^\act_{/[j-i]})^\op\rightarrow(\Delta^{\act,\inj}_{/[j-i]})^\op$ is a coCartesian fibration. Moreover, it is easy to see that under the isomorphism $F_t\cong \Delta^m_a$ and $F_{t'}\cong \Delta^{m'}_a$ the functor $q_*:F_t\rightarrow F_{t'}$ is exactly the same as described in \cref{constr:lax_mon}.
\end{proof}
\begin{cor}\label{cor:uni}
The bicategory $L^{\lax,\un}[n]$ has objects given by natural numbers $0\leq i\leq n$ and the morphisms categories are given by $\mor_{L^{\lax,\un}}(i,j)\cong(\Delta^{\act,\inj}_{/[j-i]})^\op$ with composition given by gluing active morphisms.
\end{cor}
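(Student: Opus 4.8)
The plan is to read off the unital case from the computation $L^\lax[n]\cong\bimod_n$ of \cref{lem:bimod}, tracking what the passage from lax to unital lax functors does to the morphism categories; throughout I use \cref{cor:unilax} in place of \cref{prop:lax}. First, for the objects: since $L^{\lax,\un}[n]$ is built from $L_\seg i^{\sur,\inrt}_!\,[n]$ and there are no nontrivial surjective active (indeed no nontrivial active) morphisms with target $[0]$, the value on $[0]$ is unchanged, so the objects are again the integers $0\le i\le n$, exactly as for $\bimod_n$. It therefore remains only to identify $\mor_{L^{\lax,\un}[n]}(i,j)$ together with its composition.

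The key observation is that the inclusion of unital lax functors into lax functors is corepresented by a canonical morphism of flagged bicategories $q\colon L^\lax[n]\to L^{\lax,\un}[n]$, arising from the factorization $(\Delta^\inrt)^\op\hookrightarrow(\Delta^{\sur,\inrt})^\op\hookrightarrow\Delta^\op$ of \cref{lem:int_sur} and the resulting comparison between $i^\inrt_!$ and $i^{\sur,\inrt}_!$. I claim that on the morphism category from $i$ to $j$ this $q$ is exactly the coCartesian fibration $F\colon(\Delta^\act_{/[j-i]})^\op\to(\Delta^{\act,\inj}_{/[j-i]})^\op$ appearing in the proof of \cref{lem:bimod}, sending an active morphism $a\cong t\circ s$ (with $t$ injective active and $s$ surjective active) to its injective part $t$. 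Granting this, the unital condition forces precisely the $2$-cells lying over surjective active morphisms to become invertible, i.e. it inverts the vertical morphisms of $F$. Now the fibers of $F$ are the categories $\Delta^m_a$, and each is weakly contractible because $\Delta_a$ has the initial object $\varnothing=[-1]$; moreover the coCartesian transport of $F$ is by the projections $i^*\colon\Delta^n_a\to\Delta^m_a$ of \cref{constr:lax_mon}, which preserve initial objects. Consequently $F$ admits the initial-object section as a fully faithful left adjoint, and therefore exhibits $(\Delta^{\act,\inj}_{/[j-i]})^\op$ as the localization of $(\Delta^\act_{/[j-i]})^\op$ at the morphisms it inverts, namely the vertical ones. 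As these are exactly the surjective-active $2$-cells, we obtain $\mor_{L^{\lax,\un}[n]}(i,j)\cong(\Delta^{\act,\inj}_{/[j-i]})^\op$.

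The composition law is then inherited. The composition of $\bimod_n$ described in \cref{constr:lax_mon} is given by gluing active morphisms and summing the $\Delta_a$-components; passing to the localization kills the summed (surjective) components and leaves the concatenation of the injective active parts, which is precisely the gluing of active morphisms asserted in the statement.

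The main obstacle is the claim that unitalization inverts exactly the vertical morphisms of $F$; everything after it is elementary once the contractibility of $\Delta^m_a$ is noted. To establish it rigorously I would rerun the colimit-and-cofinality computation of \cref{prop:lax} with $(\arr^n(\Delta^\op))^{\sur,\inrt}_{/f}$ in place of $(\arr^n(\Delta^\op))^{\inrt}_{/f}$. The active-morphism cofinal subcategory used there is then replaced by the subcategory of active morphisms whose surjective part has been absorbed into the now-admissible surjective active direction; the same active/inert factorization argument shows this subcategory is cofinal, and for $\cC=[n]$ and $f=f_{i,j}$ it is precisely $(\Delta^{\act,\inj}_{/[j-i]})^\op$. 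This simultaneously shows that the intermediate presheaf $L_\seg i^{\sur,\inrt}_!\,[n]$ already carries the correct morphism categories, so that the $2$-Segalification $L_{\mathrm{2-Seg}}$ of \cref{cor:unilax} leaves them unchanged and only enforces the twofold Segal conditions.
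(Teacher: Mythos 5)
Your proposal is correct and follows essentially the same route as the paper's own proof: both start from the identification $L^\lax[n]\cong\bimod_n$ of \cref{lem:bimod} and characterize unitalization as trivializing the $\Delta_a$-directions of $\bimod_n$, i.e.\ the $2$-cells lying over surjective active morphisms, leaving exactly the injective active parts $(\Delta^{\act,\inj}_{/[j-i]})^\op$ with composition by gluing. Where the paper compresses the conclusion into ``it now follows easily,'' your localization argument --- the coCartesian fibration $F\colon(\Delta^\act_{/[j-i]})^\op\to(\Delta^{\act,\inj}_{/[j-i]})^\op$ has weakly contractible fibers $\Delta_a^m$ with initial objects preserved by the projection transport functors, hence admits a fully faithful left-adjoint section and exhibits its target as the localization at the vertical $2$-cells --- is a correct and welcome way of filling in precisely that step.
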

\begin{proof}
Observe that by definition a unital lax functor $L^{\lax,\un}[n]\rightarrow\cB$ is a lax functor $F:\bimod_n\rightarrow\cB$ that sends all surjective active morphisms $s:[n]\twoheadrightarrow[j-i]$ to identities. By examining the construction of $\bimod_n$ and the proof of \cref{lem:bimod}, we see that those are functors $F$ sending all endomorphisms in $\edm_{\bimod_n}(i)\cong\Delta_a$ for all $i$ to identities. It now follows easily that the space of those functors is indeed corepresented by the category described in the claim.
\end{proof}
\begin{cor}\label{cor:lax_mon}
Giving a lax functor from a point to a twofold Segal space $\cB$ is equivalent to giving a monad in $\cB$. In particular, giving a monad in $\cB$ is equivalent to giving a morphism \[\rB\Delta_a\rightarrow\cB.\]
\end{cor}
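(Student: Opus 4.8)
The plan is to reduce everything to the computation of $L^\lax$ on the terminal Segal space and then invoke the corepresentability of lax functors proved in \cref{prop:lax}. Observe first that the point $*$ is exactly $[0]$ viewed as a Segal space, so by \cref{lem:bimod} applied with $n=0$ we have $L^\lax*\cong L^\lax[0]\cong\bimod_0$. Inspecting \cref{constr:lax_mon} at $n=0$, the bicategory $\bimod_0$ has a single object $0$ with endomorphism category $\edm_{\bimod_0}(0)\cong\Delta_a$ and composition given by the ordinal sum; that is, it is precisely the one-object bicategory $\rB\Delta_a$. Thus the first thing I would record is the identification $L^\lax*\cong\rB\Delta_a$.

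Next I would feed this into \cref{prop:lax}. Taking $\cC=*$ gives $\mor_\cat^\lax(*,\cB)\cong\mor_{2-\cat}^\lax(\rB\Delta_a,\cB)$. Since $\theta_{0,0}\otimes\theta_{l,m}\cong\theta_{l,m}$ we have $\arr^{\theta_{0,0},\lax}(\cB)\cong\cB$, so the underlying space of $\mor_{2-\cat}^\lax(\rB\Delta_a,\cB)$ is just the ordinary mapping space $\mor_{2-\cat}(\rB\Delta_a,\cB)$, the laxness entering only in the $1$- and $2$-morphisms. As "giving" a lax functor or a morphism refers to a point of these spaces, this already yields the second asserted equivalence: lax functors $*\rightsquigarrow\cB$ are the same as morphisms $\rB\Delta_a\to\cB$.

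It then remains to identify either side with a monad in $\cB$, which I would do on the $\rB\Delta_a$ side. A morphism of bicategories out of the one-object bicategory $\rB\Delta_a$ is the data of an object $b\in\cB$, the image of the unique object, together with a monoidal functor $\Delta_a\to\edm_\cB(b)=\mor_\cB(b,b)$. Because $(\Delta_a,+)$ with unit $\varnothing$ is the walking monoid, such monoidal functors are precisely monoids, i.e. algebras, in $\edm_\cB(b)$; and by definition (as in the proof of \cref{lem:monad_alg}) an algebra in $\edm_\cB(b)$ is exactly a monad on $b$. Combining this with the previous paragraph gives both statements: lax functors from the point are monads, and monads are morphisms $\rB\Delta_a\to\cB$.

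The main obstacle is this last identification, namely the clean statement that $(\Delta_a,+)$ is the walking monoid so that monoidal functors out of it compute monoids; in the $\infty$-categorical setting this is the content of the bar-construction description of associative algebras of \cite{luriehigher}, and one must also check the routine bookkeeping that a morphism out of a delooping $\rB M$ is an object together with a monoidal functor on endomorphism categories. Alternatively, one can prove the first sentence directly by unwinding the definition of a lax functor $*\rightsquigarrow\cB$ as a section $s$ of $\cB\to\Delta^\op$ preserving coCartesian lifts of inert maps: since $\cB_{[0]}$ is a groupoid the inert maps force all vertices to a single object $b$, the Segal condition presents the value on $[n]$ as an $n$-fold composite of $t\bydef s([1])$, and the active maps together with the simplicial identities supply the unit and multiplication of a monad and their associativity and unitality. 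This is precisely the verification already packaged into \cref{lem:bimod}, which is why I prefer to route the argument through $L^\lax*\cong\rB\Delta_a$ rather than reprove it by hand.
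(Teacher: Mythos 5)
Your proposal is correct, but it inverts the paper's logic, and the two halves of the argument carry different weight in each version. The paper proves the first equivalence directly and almost tautologically: a monad in $\cB$ with underlying object $b$ is \emph{by definition} an algebra in the monoidal category $\edm_\cB(b)$, and, presenting $\edm_\cB(b)$ as a coCartesian fibration over $\Delta^\op$, such an algebra is exactly a section sending coCartesian lifts of inert morphisms to coCartesian morphisms --- which is word-for-word the definition of a lax functor $*\rightsquigarrow\cB$ landing at $b$; the second equivalence then follows from \cref{lem:bimod} and the description of $\bimod_0$, i.e.\ exactly the route of your first two paragraphs (your observation that $\theta_{0,0}\otimes\theta_{l,m}\cong\theta_{l,m}$ kills the laxness at the level of objects is a point the paper leaves implicit, and is worth making). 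You instead prove the second equivalence first, and then identify morphisms $\rB\Delta_a\rightarrow\cB$ with monads via the walking-monoid property of $(\Delta_a,+)$. That last step is where your route costs more than the paper's: the statement that strong monoidal functors out of $\Delta_a$ compute algebra objects in a monoidal $\infty$-category is true, but it is not really the bar construction; it is the (planar) monoidal-envelope theorem, and identifying the envelope with $\Delta_a$ amounts to the equivalence $\Delta^{\act,\op}\cong\Delta_a$, which is itself the combinatorial core of \cref{lem:bimod} --- so your citation would need to be sharpened. Also note that your fallback ``direct unwinding'' is not quite ``packaged into \cref{lem:bimod}'': it is packaged into the fibrational definition of algebra objects, and it \emph{is} the paper's proof. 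In summary: same two pillars (\cref{lem:bimod} and \cref{prop:lax}), but you place the definitional weight on the universal property of $\Delta_a$ where the paper places it on the definition of algebra objects; your version is more conceptual and portable to other models, the paper's is self-contained and needs no input beyond its own definitions.
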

\begin{proof}
Indeed, for a given object $b\in\cB$ giving a monad with the underlying object $b$ is by definition equivalent to giving an algebra in the monoidal category $\edm_\cB(b)$. If we view $\edm_\cB(b)$ as a coCartesian fibration over $\Delta^\op$, then this in turn is equivalent to giving a morphism $*\rightarrow\edm_\cB(b)$ over $\Delta^\op$ that sends coCartesian morphisms lying over inert morphisms to coCartesian morphisms. This in turn is by definition the same thing as a lax functor $*\rightsquigarrow\cB$ sending $*$ to $b$.\par
The second statement now follows from \cref{lem:bimod} and the description of $\bimod_0$.
\end{proof}
\begin{construction}
Observe that since $\ccart^\cD(\cC)$ is a coCartesian fibration over $\Delta^\op$ we have that every morphism in $\ccart^\cD(\cC)$ canonically decomposes as $a\circ s$ where $a:([n]\xrightarrow{f}\cC)\rightarrow ([m]\xrightarrow{g'}\cC)$ is a morphism of categories over $\cC$ and $s$ is a morphism in $\mor^\cD([m],\cC)$. Denote by  \[p:\ccart^\cD(\cC)\rightarrow\cC\]
the morphism that sends $(c_0\xrightarrow{f_1}c_1\xrightarrow{f_2}...\xrightarrow{f_n}c_n)$ to $c_0$, sends every morphism $s$ given by the diagram
\[
\begin{tikzcd}[row sep=huge,column sep=huge]
c_0\arrow[r, "f_0"]\arrow[d,  "s_0"]&c_1\arrow[r, "..." description]\arrow[d,  "s_{1}"]&c_n\arrow[d,  "s_n"]\\
\overline{c_0}\arrow[r, "g_0"]&\overline{c_1}\arrow[r, "..." description]&\overline{c_n}
\end{tikzcd}
\]
in which all $s_i$ belong to $\cD$ to $s_0:c_0\rightarrow\overline{c_0}$, sends every active morphism in $\ccat_{/\cC}$ to the identity and sends every inert morphism of the form
\[(c_k\xrightarrow{f_{k+1}}c_{k+1}\xrightarrow{f_{k+2}}...\xrightarrow{f_l}c_l)\subset(c_0\xrightarrow{f_1}c_1\xrightarrow{f_2}...\xrightarrow{f_n}c_n)\]
to \[(f_k\circ f_{k-1}\circ...\circ f_1):c_0\rightarrow c_k.\]
\end{construction}
\begin{lemma}\label{lem:fib_cart}
The morphism 
\[p^*:\mor_\cat(\cC,\cS)\rightarrow\mor_\cat(\ccart^\cD(\cC),\cS)\]
restricts to a morphism 
\[p^*:\mor_\cat(\cC,\cS)\rightarrow\cart^\cD(\cC).\]
\end{lemma}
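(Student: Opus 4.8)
The plan is to verify directly that for every functor $\cF\colon\cC\rightarrow\cS$ the composite $p^*\cF\cong\cF\circ p$ satisfies the Segal condition for the pattern $\ccart^\cD(\cC)$; since $\cart^\cD(\cC)=\seg_{\ccart^\cD(\cC)}(\cS)$ is a \emph{full} subcategory of $\mor_\cat(\ccart^\cD(\cC),\cS)$, this object-level check is all that is required, functoriality of the restricted $p^*$ then being automatic. Fix an object $x=(c_0\xrightarrow{f_1}c_1\xrightarrow{f_2}\dots\xrightarrow{f_n}c_n)$ of $\ccart^\cD(\cC)$. By the proof of \cref{prop:cart_def} one has $(\ccart^\cD(\cC))^\el_{x/}\cong(\ccat_{/\cC})^\el_{x/}\cong\ccat^\el_{[n]/}$, so the elementary objects under $x$ are the vertices $v_i=(c_i)$ for $0\leq i\leq n$ and the edges $e_i=(c_i\xrightarrow{f_{i+1}}c_{i+1})$ for $0\leq i\leq n-1$, assembled into the usual spine diagram in which each $e_i$ restricts inertly onto $v_i$ and $v_{i+1}$.

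Next I would compute $p^*\cF$ on this spine using the explicit description of $p$ from the construction preceding the statement. Since $p$ sends every string to its initial vertex, we get $(p^*\cF)(v_i)\cong\cF(c_i)$ and $(p^*\cF)(e_i)\cong\cF(c_i)$. The crucial point is the behaviour of $p$ on the two inert restrictions out of $e_i$: the source-restriction $e_i\rightarrowtail v_i$ selects the initial vertex of the one-simplex $e_i$ and is therefore sent by $p$ to the empty composite $\id_{c_i}$, whereas the target-restriction $e_i\rightarrowtail v_{i+1}$ selects its terminal vertex and is sent to $f_{i+1}\colon c_i\rightarrow c_{i+1}$. Consequently, in the induced $\cS$-valued spine diagram every source leg $(p^*\cF)(e_i)\rightarrow(p^*\cF)(v_i)$ is the identity of $\cF(c_i)$, while each target leg is $\cF(f_{i+1})$.

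It then remains to observe that the Segal limit collapses. Writing the limit over $(\ccart^\cD(\cC))^\el_{x/}$ as the iterated fibre product
\[
(p^*\cF)(e_0)\times_{(p^*\cF)(v_1)}(p^*\cF)(e_1)\times_{(p^*\cF)(v_2)}\dots\times_{(p^*\cF)(v_{n-1})}(p^*\cF)(e_{n-1}),
\]
each gluing is performed along a source leg that is an identity, so the pullbacks telescope from the right and the limit is identified with $(p^*\cF)(e_0)\cong\cF(c_0)\cong(p^*\cF)(x)$. Moreover the canonical Segal comparison map $(p^*\cF)(x)\rightarrow\lim$ has its $e_0$-component equal to $\cF$ of the empty composite $\id_{c_0}$, so it agrees with this telescoping identification; hence it is an equivalence and $p^*\cF\in\cart^\cD(\cC)$. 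The only genuine bookkeeping lies in matching the two legs of the spine with the two classes of inert restriction and in confirming that the telescoping identification is the Segal comparison rather than some unrelated equivalence — once the source legs are seen to be identities this is routine.

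As a structural sanity check one may instead argue through \cref{cor:cartlift}: the computation above shows that $p^*\cF$ corresponds to the unstraightening $\widetilde{\cF}\rightarrow\cC$ of $\cF$ (with fibre $\cF(c)$ over $c$ and edge-value the source fibre), and a left fibration admits coCartesian lifts of \emph{all} morphisms, in particular of every $\cD$-morphism, so it is tautologically a $\cD$-coCartesian fibration. I expect the direct Segal verification to be the most economical, with the identity-leg observation being the entire content of the argument.
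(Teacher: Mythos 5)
Your proof is correct and is essentially the paper's own argument: both rest on the observations that $p^*\cF$ takes the value $\cF(c_0)$ on a string $(c_0\rightarrow c_1\rightarrow\dots\rightarrow c_n)$ and that the source-restrictions are sent by $p$ to identity morphisms, so the Segal limit telescopes down to $\cF(c_0)$. The only cosmetic difference is that the paper organizes this collapse as an induction on $n$ (peeling off the first edge and rewriting $\cF(c_0)\cong\cF(c_0)\times_{\cF(c_1)}\cF(c_1)$), whereas you collapse the full spine limit in a single pass.
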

\begin{proof}
Denote by $\cF$ an arbitrary presheaf on $\cC^\op$, then we need to prove that $p^*\cF$ satisfies the Segal condition. The objects of $\ccart^\cD(\cC)$ are given by morphisms $f:[n]\rightarrow\cC$, so we will prove the claim by induction on $n$, the case $n=0$ being obvious. Now let $f:[n]\rightarrow\cC$ be given by $(c_0\xrightarrow{f_1}c_1\xrightarrow{f_2}...\xrightarrow{f_n}c_n)$ and assume that we have proved the claims for all chains of length $\leq n-1$, then we have the following isomorphisms
\begin{align*}
    p^*\cF(f)\cong&\cF(c_0)\\
    \cong&\cF(c_0)\times_{\cF(c_1)}\cF(c_1)\\
    \cong& p^*\cF(c_0\xrightarrow{f_1}c_1)\times_{p^*\cF(c_1)}p^*\cF(c_1\xrightarrow{f_2}...\xrightarrow{f_n}c_n)\\
    \cong&p^*\cF(c_0\xrightarrow{f_1}c_1)\times_{p^*\cF(c_1)}p^*\cF(c_1\xrightarrow{f_2}c_2)\times_{p^*\cF(c_2)}...\times_{p^*\cF(c_{n-1})}\cF(c_{n-1}\xrightarrow{f_n}c_n),
\end{align*}
where the last isomorphism uses the inductive hypothesis. This concludes the proof of the claim.
\end{proof}
\begin{prop}\label{prop:colim_lax}
For any Segal space $\cC$ and a twofold Segal space $\cB$ there is a natural isomorphism
\[\mor^\lax_\cat(\cC,\cB)\cong\underset{([n]\rightarrow\cC)\in\ccat_{/\cC}}{\lim}\mor^\lax_\cat([n],\cB).\]
Similarly, there is a natural isomorphism 
\[\mor^{\lax,\un}_\cat(\cC,\cB)\cong\underset{([n]\rightarrow\cC)\in\ccat_{/\cC}}{\lim}\mor^{\lax,\un}_\cat([n],\cB).\]
\end{prop}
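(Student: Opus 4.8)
The plan is to deduce the formula from the corepresentability of lax functors in \cref{prop:lax}, turning the statement into a single claim about how the corepresenting twofold Segal space $L^\lax\cC$ interacts with the canonical presentation of $\cC$ as a colimit of its simplices. Throughout I write $\ccat_{/\cC}$ for the slice pattern of \cref{prop:slice}, whose underlying category has as objects the simplices $([n]\to\cC)$, i.e. the points of $\cC_n$.

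First I would record two reductions. By \cref{prop:lax} there is a natural isomorphism $\mor^\lax_\cat(\cC,\cB)\cong\mor^\lax_{2-\cat}(L^\lax\cC,\cB)$, and likewise for each $[n]$. Moreover $\mor^\lax_{2-\cat}(-,\cB)$ sends colimits of twofold Segal spaces to limits: its value on $\cE$ is the twofold Segal space $(n,m)\mapsto\mor_{2-\cat}(\cE,\arr^{\theta_{n,m},\lax}(\cB))$, and for each fixed target $\arr^{\theta_{n,m},\lax}(\cB)$ the ordinary mapping space $\mor_{2-\cat}(-,\arr^{\theta_{n,m},\lax}(\cB))$ carries colimits to limits; since limits of twofold Segal spaces are computed levelwise, the claim follows. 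Hence it suffices to establish the single identity
\[
L^\lax\cC\cong\underset{([n]\to\cC)\in\ccat_{/\cC}}{\colim}L^\lax[n]
\]
in the category of twofold Segal spaces.

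The heart of the matter is this identity, and here I would use the explicit formula $L^\lax\cC\cong\cC\times_{\Delta^\op}\arr^\act(\Delta^\op)$ from \cref{prop:lax}. Viewed as a coCartesian fibration over $\Delta^\op$, $L^\lax\cC$ has fiber $\cC_s\times\ccat^\act_{/s}$ over $[s]$. Now $\cC\cong\colim_{\ccat_{/\cC}}[n]$ holds in $\cat$, and crucially this is the tautological co-Yoneda colimit: at the level of underlying simplicial spaces it is computed pointwise, $\cC_s\cong\colim_{([n]\to\cC)}\mor_\Delta([s],[n])$, because the presheaf-level colimit already satisfies the Segal condition and so is untouched by Segalification. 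Since $\cat$ is cartesian closed, $(-)\times\ccat^\act_{/s}$ preserves colimits, so for each $[s]$ one gets
\[
(L^\lax\cC)_s\cong\Big(\underset{([n]\to\cC)}{\colim}\mor_\Delta([s],[n])\Big)\times\ccat^\act_{/s}\cong\underset{([n]\to\cC)}{\colim}\big(\mor_\Delta([s],[n])\times\ccat^\act_{/s}\big)\cong\underset{([n]\to\cC)}{\colim}(L^\lax[n])_s.
\]
As colimits of coCartesian fibrations over $\Delta^\op$ are computed fiberwise, this yields $L^\lax\cC\cong\colim_{\ccat_{/\cC}}L^\lax[n]$ in $\mathrm{Fun}(\Delta^\op,\cat)$; applying the colimit-preserving twofold Segalification and using that $L^\lax\cC$ and each $L^\lax[n]$ are already twofold Segal spaces by \cref{prop:lax} upgrades this to the required identity in $2-\cat$. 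I expect this pointwise computation to be the main obstacle: the canonical simplex colimit is preserved even though the inclusion $\cat\hookrightarrow\cP(\Delta)$ is \emph{not} colimit-preserving, and the argument works precisely because this particular colimit needs no Segalification while $L^\lax$ is fiberwise a product.

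For the unital statement I would argue that unitality is a simplexwise condition and reduce to the case just proved. A lax functor $F\colon\cC\rightsquigarrow\cB$ is unital exactly when it carries coCartesian edges over surjective active maps to coCartesian edges; every such edge is the image, under a strict simplex $[m]\to\cC$ corresponding to a point of $\cC_m$, of the tautological edge of $L^\lax[m]$, so $F$ is unital if and only if every restriction $F|_{[n]}$ is. Thus $\mor^{\lax,\un}_\cat([n],\cB)\hookrightarrow\mor^\lax_\cat([n],\cB)$ is a subspace inclusion, and under the isomorphism $\mor^\lax_\cat(\cC,\cB)\cong\lim_{\ccat_{/\cC}}\mor^\lax_\cat([n],\cB)$ already established (whose projections are exactly the restrictions $F\mapsto F|_{[n]}$) the unital locus on the left corresponds to the sublimit of the unital loci on the right. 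Since limits preserve monomorphisms and compute intersections of subspaces, this gives $\mor^{\lax,\un}_\cat(\cC,\cB)\cong\lim_{\ccat_{/\cC}}\mor^{\lax,\un}_\cat([n],\cB)$; alternatively one may rerun the corepresentability argument verbatim with \cref{cor:unilax} in place of \cref{prop:lax}, the only extra input being the colimit-preservation of the left adjoints $L_\seg i^{\sur,\inrt}_!$ and the twofold Segalification defining $L^{\lax,\un}$.
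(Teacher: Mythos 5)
Your proof is correct, but it takes a genuinely different route from the paper's. The paper stays entirely on the source side: using \cref{lem:fib_cart} it identifies unital lax functors out of $\cC$ with morphisms out of $p^*\cC$ in $\cart^{\sur,\inrt}(\Delta^\op)$, writes $p^*\cC$ as the canonical weighted colimit $\colim_{\Delta^\op}\cC_n\otimes p^*h_{[n]}$ of representables there, and then maps out, rewriting the resulting weighted limit as a limit over the Grothendieck construction $\ccat_{/\cC}$; neither $L^\lax$ nor colimits in $2-\cat$ ever appear (and it is the unital case that the paper writes out, declaring the lax case similar). You instead transport both sides through the corepresentability of \cref{prop:lax} and reduce to the single identity $L^\lax\cC\cong\colim L^\lax[n]$ in $2-\cat$, which you verify fiberwise from the pullback formula $L^\lax\cC\cong\cC\times_{\Delta^\op}\arr^\act(\Delta^\op)$ together with co-Yoneda, cartesian-closedness of $\cat$, fiberwise computation of colimits of coCartesian fibrations, and reflectivity of $2-\cat$ in $\mathrm{Fun}(\Delta^\op,\cat)$. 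The two arguments are essentially adjoint to one another: since the proof of \cref{prop:lax} exhibits $L^\lax$ as the left adjoint $L_\seg i^\inrt_!$, your colimit identity also follows formally from the paper's source-side decomposition, so your fiberwise computation is a concrete substitute for that abstract step, and it isolates a statement of independent interest (that $L^\lax$ preserves the simplex decomposition of $\cC$). Your unital argument diverges further: rather than rerunning the machinery for $L^{\lax,\un}$ (whose model via \cref{cor:unilax} involves an extra Segalification and is less explicit), you observe that unitality is a property detected on restrictions to simplices and carve out the corresponding sublimit; this is sound because unitality, being a condition, cuts out a union of path components of the space of lax functors, and a lax functor is unital exactly when all of its restrictions to simplices are. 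Two bookkeeping points: your colimits should be indexed by the opposite of $\ccat_{/\cC}$ (the category of simplices of $\cC$), so that mapping out produces a limit over $\ccat_{/\cC}$ itself as in the statement; and, like the paper, you implicitly use that the equivalence of \cref{prop:lax} is natural in $\cC$, so that the limit projections really are the restriction functors.
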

\begin{proof}
The proofs of both claims are very similar and we will only prove the second one. First, observe that it would suffice to prove the statement for the underlying space on both sides, since the isomorphism on morphisms would then follow by applying the same argument to $\arr^\lax(\cB)$.\par
To prove the claim, begin by observing that $\cC$, when  viewed as an object of $\cart^{\sur,\inrt}(\Delta^\op)$, corresponds to the presheaf $p^*\cC$ on $\ccart^{\sur,\inrt}(\Delta^\op)^\op$ in the notation of \cref{lem:fib_cart}. The claim now follows from the following sequence of isomorphisms
\begin{align*}
    \mor^\lax_\cat(\cC,\cB)\cong&\mor_{\cart^{\sur,\inrt}(\Delta^\op)}(p^*\cC,\cB)\\
    \cong&\mor_{\cart^{\sur,\inrt}(\Delta^\op)}(p^*(\underset{\Delta^\op}{\colim}\;\cC_n\otimes h_{[n]}),\cB)\\
    \cong&\mor_{\cart^{\sur,\inrt}(\Delta^\op)}(\underset{\Delta^\op}{\colim}\;\cC_n\otimes p^*h_{[n]},\cB)\\
    \cong&\underset{\Delta^\op}{\lim}\;\mor_{\cart^{\sur,\inrt}(\Delta^\op)}(\cC_n\otimes h_{[n]},\cB)\\
    \cong&\underset{(f:[n]\rightarrow\cC)\in\ccat_{/\cC}}{\lim}\mor_{\cart^{\sur,\inrt}(\Delta^\op)}(h_{[n]},\cB)\\
    \cong &\underset{(f:[n]\rightarrow\cC)\in\ccat_{/\cC}}{\lim}\mor_\cat^\lax([n],\cB).
\end{align*}
\end{proof}
\begin{remark}
Combining \cref{cor:uni} with \cref{prop:colim_lax} we obtain an explicit description of $L^{\lax,\un}\cC$ for an arbitrary Segal space $\cC$.
\end{remark}
\begin{notation}\label{not:unitalization}
Observe that there is a natural inclusion $L^{\lax,\un}[n]\hookrightarrow L^\lax[n]$ which is identity on objects and sends $(f:[n]\rightarrow[j-i])\in\mor_{L^{\lax,\un}}(i,j)$ to $(f,(\varnothing,\varnothing,...,\varnothing)\in\Delta_a^n)\in\mor_{L^{\lax}}(i,j)$. Moreover, those morphisms are functorial with respect to morphisms in $\Delta$, so in particular using \cref{prop:colim_lax} we obtain for any twofold Segal space $\cB$ a functor
\[(-)^\un:\mor^\lax_\cat(\cC,\cB)\cong\underset{([n]\rightarrow\cC)\in\ccat_{/\cC}}{\lim}\mor^\lax_\cat([n],\cB)\rightarrow\underset{([n]\rightarrow\cC)\in\ccat_{/\cC}}{\lim}\mor^{\lax,\un}_\cat([n],\cB)\cong \mor^{\lax,\un}_\cat(\cC,\cB)\]
which we will call the \textit{unitalization} functor.
\end{notation}
\begin{defn}\label{def:coc}
For a twofold Segal space $\cB$, a lax functor $F:\cC\rightsquigarrow\cB$ and an object $b\in\cB$ we will define \textit{lax cocone} of $F$ over $b$ to be a lax functor $G_F:\cC^\triangleright\rightsquigarrow\cB$ such that $G_F|_{\cC}\cong F$ and the composition  $*\rightarrow \cC^\triangleright \overset{G_F}{\rightsquigarrow}\cB$ is the constant functor with the value $b$ (where $*$ denotes the vertex of the cone). We will endow the space of lax cocones over $b$ with the structure of a Segal space by declaring a morphism $\alpha:G_F\rightarrow H_F$ to be a natural transformation between $G_F$ and $H_F$ in $\mor_\cat^\lax(\cC^\triangleright,\cB)$ such that the morphisms $\alpha_c$ for $c\in\cC$ and $\alpha_*$ are all identities. We will denote this Segal space by $\coc_\cB^\lax(F,b)$. We will call $\widetilde{b}\in\cB$ a \textit{lax colimit} of $F$ and write 
\[b\cong\underset{c\in\cC}{\colim^\lax}F(c)\]
if $\coc^\lax(F,b)\cong\mor_\cB(\widetilde{b},b)$. We define lax cones and lax limits to be the lax cocones and lax colimits in $\cB^\op$.
\end{defn}
\begin{prop}\label{prop:colim_cocone}
There is a natural equivalence
\[\coc_\cB^\lax(F,b)\cong \underset{f:[n]\rightarrow\cC}{\lim}\coc_\cB^\lax(F\circ f,b).\]
\end{prop}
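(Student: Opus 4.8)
The plan is to realize $\coc^\lax_\cB(F,b)$ as a fibre of $\mor^\lax_\cat(\cC^\triangleright,\cB)$, apply \cref{prop:colim_lax} to the cone $\cC^\triangleright$, and then reduce the resulting limit over the simplices of $\cC^\triangleright$ to a limit over the simplices of $\cC$ via the cone functor. First I would record that, by \cref{def:coc}, the Segal space $\coc^\lax_\cB(F,b)$ is the fibre over $(F,b)$ of the restriction map
\[\mor^\lax_\cat(\cC^\triangleright,\cB)\longrightarrow\mor^\lax_\cat(\cC,\cB)\times\mor^\lax_\cat(*,\cB)\]
induced by $\cC\hookrightarrow\cC^\triangleright$ and by the inclusion of the cone vertex $*\hookrightarrow\cC^\triangleright$. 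As in \cref{prop:lax} and \cref{prop:colim_lax} it suffices to argue at the level of underlying spaces, the statements about mapping spaces following by replacing $\cB$ with $\arr^\lax(\cB)$. Applying \cref{prop:colim_lax} to $\cC^\triangleright$ and using that fibres commute with limits, I obtain
\[\coc^\lax_\cB(F,b)\cong\underset{([m]\to\cC^\triangleright)\in\ccat_{/\cC^\triangleright}}{\lim}D([m]\to\cC^\triangleright),\]
where $D(\tau)$ denotes the fibre of $\mor^\lax_\cat([m],\cB)$ over the constraints imposed by $F$ on the part of $\tau$ landing in $\cC$ and by the constant value $b$ on the part landing on $*$.

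The combinatorial heart is the reindexing of this limit. Since $*$ is terminal in $\cC^\triangleright$, an $m$-simplex $\tau$ is the datum of a simplex $\sigma\in\cC_i$ together with $m-i$ trailing copies of the cone vertex; thus $\ccat_{/\cC^\triangleright}$ contains the fully faithful image of the cone functor $C:\ccat_{/\cC}\to\ccat_{/\cC^\triangleright}$, $([n],\sigma)\mapsto([n]^\triangleright,\sigma^\triangleright)=([n{+}1],\sigma^\triangleright)$, whose essential image $\cK$ consists of the simplices meeting $*$ in exactly one (necessarily final) vertex. By construction $D\circ C$ is exactly the functor $([n],\sigma)\mapsto\coc^\lax_\cB(F\circ\sigma,b)$, so it remains to identify $\lim_{\ccat_{/\cC^\triangleright}}D$ with $\lim_{\ccat_{/\cC}}D\circ C$. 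I would prove this by showing that $D$ is the right Kan extension of its restriction $D|_\cK$ along $\cK\hookrightarrow\ccat_{/\cC^\triangleright}$ (all formed in the opposite categories over which the limits are taken), for then $\lim_{\ccat_{/\cC^\triangleright}}D\cong\lim_\cK D|_\cK\cong\lim_{\ccat_{/\cC}}\coc^\lax_\cB(F\circ\sigma,b)$ purely formally.

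The verification of this Kan extension property is the main obstacle, and I expect it to be carried out simplex by simplex by computing the relevant comma categories. For $\tau$ lying entirely in $\cC$ or entirely on $*$ the comma category is empty — in the first case because a cone simplex cannot map to a simplex avoiding $*$, in the second because it cannot map to a constant-$*$ simplex — so the Kan extension takes the value of the empty limit $*$, which agrees with $D(\tau)$ since there the value is pinned down by $F$, respectively by the constant value $b$. For a mixed simplex $\tau$ assembled from $\sigma\in\cC_i$ and trailing copies of $*$, the comma category has a terminal object, the maximal sub-cone $\sigma^\triangleright\hookrightarrow\tau$ selecting the full $\cC$-part together with the last vertex, so the Kan extension evaluates to $D(\sigma^\triangleright)=\coc^\lax_\cB(F\circ\sigma,b)$; and the comparison map $D(\tau)\to D(\sigma^\triangleright)$ is an equivalence because the additional copies of the cone vertex are forced to equal $b$ with identity transition data and so carry no information. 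The delicate points here are exactly that the naive cofinality of $C$ fails on the degenerate pure-$\cC$ and pure-$*$ simplices (where the relevant comma categories reproduce a copy of $\ccat_{/\cC}$ rather than a contractible one), it being the contractibility of $D$ on these simplices that rescues the argument, and that one must check the coherent triviality of the extra cone vertices in the mixed case.
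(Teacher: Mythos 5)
Your reduction of the statement to an identification $\coc^\lax_\cB(F,b)\cong\lim_{\ccat_{/\cC^\triangleright}}D$ is sound and is essentially where the paper's own proof starts; the gap is in the reindexing step, where the right Kan extension claim is false. Fix a mixed simplex $\tau=([n],\rho)$ with $\cC$-part $\sigma=\rho|_{[0,i]}$ and $k=n-i\geq 2$ vertices over the cone point. A morphism $\tau\to\kappa$ in $\ccat_{/\cC^\triangleright}$ with $\kappa\in\cK$ is a simplex map $[m]\to[n]$ over $\cC^\triangleright$ whose source is a cone simplex, and any morphism of the comma category $\tau/\cK$ must match the (unique, final) cone vertices of source and target, hence preserves the chosen vertex in $[i+1,n]$. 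Consequently $\tau/\cK$ splits as a disjoint union, indexed by the $k$ cone vertices of $\tau$, of contractible pieces: it has no terminal or initial object, and the right Kan extension evaluates to the $k$-fold product $\coc^\lax_\cB(F\circ\sigma,b)^{\times k}$, not to $D(\tau)$. And $D(\tau)$ is genuinely bigger, because the constraints defining it bear only on the two faces $[0,i]$ and $[i+1,n]$, not on the lax-structure $2$-morphisms mixing the two parts. Concretely, take $\cC=*$, so that $F$ is a monad $T$ on some $a\in\cB$ by \cref{cor:lax_mon}, and take $\tau$ the degenerate $2$-simplex $(0,1,1)$ of $\cC^\triangleright=[1]$. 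Then $D(\tau)$ is the space of lax functors $G:[2]\rightsquigarrow\cB$ with $G|_{\{0\}}=T$ and $G|_{\{1,2\}}$ constant at $b$, i.e.\ the space of pairs of right $T$-modules $M_{01},M_{02}$ in $\mor_\cB(a,b)$ \emph{together with a $T$-module map} $M_{01}\cong\id_b\circ M_{01}\to M_{02}$ (the lax compositor of $G$ at the composable pair), whereas the Kan extension value is just the space of pairs $(M_{01},M_{02})$; the comparison map forgets the module map and is not an equivalence. (A smaller slip: the essential image of the cone functor $C$ does not contain the $0$-simplex at the cone vertex, so your description of $\cK$ is inconsistent with your claim that the comma category over a pure-$*$ simplex is empty; this is harmless, since $D$ is contractible there either way.)

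The proposition survives because the limit over $\ccat_{/\cC^\triangleright}$ also enforces compatibility along morphisms pointing the other way, which your Kan extension discards: the collapse $q:[n]\to[i+1]$ over $\cC^\triangleright$, crushing $[i+1,n]$ to a point, is a morphism $C\sigma\to\tau$ in $\ccat_{/\cC^\triangleright}$, and any compatible family must take at $\tau$ the value $q^*$ of its value at $C\sigma$; it is exactly these degeneracy compatibilities that kill the extra $2$-morphisms in the example above. This also indicates the repair, which needs both handednesses of comma category: (i) $D$ \emph{is} right Kan extended from the full subcategory $\cM\subset\ccat_{/\cC^\triangleright}$ of mixed simplices, since the comma categories $\tau/\cM$ at pure simplices are empty while $D$ is contractible there (this half of your case analysis is fine); (ii) the inclusion $\cK\hookrightarrow\cM$ is coinitial, because for mixed $\tau$ the category of pairs $(\kappa\in\cK,\,\kappa\to\tau)$ has $(C\sigma,q)$ as a terminal object --- here one uses that any simplex map $[n]\to[m]$ over $\cC^\triangleright$ into a cone simplex is automatically constant on $[i+1,n]$. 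Together (i) and (ii) give $\lim_{\ccat_{/\cC^\triangleright}}D\cong\lim_{\cM}D\cong\lim_{\cK}D|_\cK$, which is your desired conclusion. The paper's proof sidesteps the reindexing entirely: it realizes both sides as spaces of coCartesian sections of the left fibration classifying $(f:[n]\to\cC^\triangleright)\mapsto\mor^\lax_\cat([n],\cB)$, cut out by constraints at the pure simplices, and compares the section spaces directly, so the degeneracy compatibilities are used implicitly rather than through a cofinality statement.
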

\begin{proof}
First, observe that just like in the proof of \cref{prop:colim_lax} it is enough to prove the statement for the underlying space of objects. Now recall the equivalence
\[\mor_\cat^\lax(\cC^\triangleright,\cB)\cong\underset{f:[n]\rightarrow\cC^\triangleright}{\lim}\mor_\cat^\lax([n],\cB)\]
from the same proposition. By \cite[Corollary 3.3.3.4.]{lurie2009higher} this limit is isomorphic to the space of coCartesian sections $s$ of the left fibration $p:\cF\rightarrow\ccat_{/\cC^\triangleright}$ sending $(f:[n]\rightarrow\cC^\triangleright)$ to $\mor_\cat^\lax([n],\cB)$. Observe that the space $\coc_\cB^\lax(F,b)$ is isomorphic to the space of those sections that send $f$ to $\{F\circ f\}$ if $f([n])\subset\cC$ and for which $s(f)$ factors through $\coc^\lax_\cB(F\circ f,b)\rightarrow\mor_\cat^\lax([n],\cB)$ if $f(n)=*$. The required isomorphism easily follows.
\end{proof}
\begin{remark}\label{rem:monad_mod}
It follows from the definitions that for a lax functor $*\overset{T}{\rightsquigarrow}\cB$ (which is equivalently a monad by \cref{cor:lax_mon}) and an object $b$ the category $\coc_\cB^\lax(T,b)$ is equivalent to the category of $T$-algebras in $\mor_\cB(T(*),b)$. Indeed, denote by $\ccat^\triangleright_{/[1]}$ the subcategory of $\ccat_{/[1]}$ on those $g:[n+1]\rightarrow[1]$ for which $g^{-1}(\{1\})\cong\{n+1\}$. Then by definition giving an object of $\coc_\cB^\lax(T,b)$ is equivalent to giving a morphism $\ccat^\triangleright_{/[1]}$ that send $g$ as above to $T^n f$ for some $f\in\mor_\cB(T(*),b)$. It is easy to see that $\ccat^\triangleright_{/[1]}\cong (\Delta_{-\infty})^\op$. Using this equivalence and the structure of a monad on $T$, it is easy to see that functors like this can be equivalently defined as sections $s:\Delta^\op\rightarrow\mor_{\cB}(T(*),b)^T_f$ satisfying the conditions described in the proof of \cref{prop:nat} and those correspond to the $T$-algebra structures on $f$.
\end{remark}
\begin{construction}
For an integer $n\geq0$ denote by $[1]\widetilde{\otimes}[n]$ the category whose objects are pairs $(i,t)$ where $0\leq i\leq n$ and $t\in\{0,1\}$ and such that $\mor_{[1]\widetilde{\otimes}[n]}((i,0),(i',0))\cong\mor_{[n]}(i,i')$, $\mor_{[1]\widetilde{\otimes}[n]}((j,1),(j',1))\cong\mor_{[n]}(j',j)$, the category $\mor_{[1]\widetilde{\otimes}[n]}((i,1),(j,0))$ is empty and the category $\mor_{[1]\widetilde{\otimes}[n]}((i,0),(j,1))$ has object given by cospans $i\xrightarrow{f}k\xleftarrow{g}j$ and morphisms given by diagrams
\[
\begin{tikzcd}
{}&k\arrow[dd, "h"]\\
i\arrow[ur, "f"]\arrow[dr, "f'"]&{}&j\arrow[ul, "g" swap]\arrow[dl, "g'" swap]\\
{}&k'
\end{tikzcd}.
\]
The construction is obviously functorial in $[n]$, and so for any twofold Segal space $\cB$ we can define a functor $\Delta^\op\rightarrow\cS$ sending $[n]$ to $\mor_\cat([1]\widetilde{\otimes}[n],\cB)$ which is easily seen to be a twofold Segal space. We will call the resulting Segal space the \textit{lax twisted arrow category} and denote it by $\twar^\lax(\cB)$.
\end{construction}
\begin{lemma}\label{lem:twar}
For a twofold Segal space $\cB$ denote by $\arr^{\lax,\radj}(\cB)$ (resp. $\arr^{\lax,\ladj}(\cB)$) the subcategory of $\arr^\lax(\cB)$ with the same objects but with morphisms given by $[1]\otimes[n]\rightarrow\cB$ that send all morphisms in $\mor_{[1]\otimes[n]}((i,0),(j,0))$ and in $\mor_{[1]\otimes[n]}((i',1),(j',1))$  to right (resp. left) adjoints and similarly denote by $\twar^{\lax,\ladj}(\cB)$ (resp. $\twar^{\lax,\radj}(\cB)$) the subcategory of $\twar^\lax(\cB)$ with the same objects but with morphisms given by $[1]\widetilde{\otimes}[n]$ that send the morphism in  $\mor_{[1]\widetilde{\otimes}[n]}((i,0),(j,0))$ to right (resp. left) adjoints and morphisms in $\mor_{[1]\widetilde{\otimes}[n]}((i',1),(j',1))$ to left (resp. right) adjoints. Then there are isomorphisms
\[\arr^{\lax,\radj}(\cB)\cong\twar^{\lax,\ladj}(\cB)\]
and 
\[\arr^{\lax,\ladj}(\cB)\cong\twar^{\lax,\radj}(\cB).\]
\end{lemma}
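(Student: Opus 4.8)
The plan is to exhibit both isomorphisms as instances of a single construction: reverse the ``top'' strand of the indexing shape by passing to adjoints, and transport the lax $2$-cells along the resulting \emph{mate} correspondence. I will treat only the first isomorphism $\arr^{\lax,\radj}(\cB)\cong\twar^{\lax,\ladj}(\cB)$ in detail; the second follows by the symmetric argument, interchanging left and right adjoints (equivalently, by applying the first to the twofold Segal space obtained from $\cB$ by reversing its $2$-morphisms).

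First I would reduce, as elsewhere in the paper, to producing an isomorphism on the level of $n$-simplices that is natural in $[n]\in\Delta^\op$, the compatibility with the remaining simplicial structure following by the same argument applied to $\cB$ and its variants $\arr^{\theta_{t,n},\lax}(\cB)$. Using the convention (justified by \cref{lem:lax_1}) that the objects of $[1]\otimes[n]$ and of $[1]\widetilde{\otimes}[n]$ are the pairs $(i,\epsilon)$ with $0\le i\le n$ and $\epsilon\in\{0,1\}$, an $n$-simplex of $\arr^{\lax,\radj}(\cB)$ is a functor $\phi\colon[1]\otimes[n]\to\cB$ whose restrictions to the two strands $\{(\,\cdot\,,0)\}\cong[n]$ and $\{(\,\cdot\,,1)\}\cong[n]$ send every edge to a right adjoint, while an $n$-simplex of $\twar^{\lax,\ladj}(\cB)$ is a functor $\psi\colon[1]\widetilde{\otimes}[n]\to\cB$ whose bottom strand ($\epsilon=0$, oriented forward) consists of right adjoints and whose top strand ($\epsilon=1$, oriented backward) consists of left adjoints. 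On objects ($n=0$) both shapes are just $[1]$, so the comparison is the identity on the underlying space $\mor_{2-\cat}([1],\cB)$ of arrows of $\cB$, consistent with the two subcategories having the same objects.

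The heart of the argument is the elementary comparison. By the pushout presentations of \cref{lem:lax_2} together with the Segal condition defining $\twar^\lax(\cB)$, a functor out of $[1]\otimes[n]$ (resp. $[1]\widetilde{\otimes}[n]$) is determined, compatibly with gluing, by its values on the elementary cells $[1]\otimes[1]$ (resp. on the elementary twisted squares $[1]\widetilde{\otimes}[1]$) glued along the two strands. It therefore suffices to construct an equivalence between the space of lax squares whose two parallel edges are right adjoints and the space of elementary twisted squares with bottom edge a right adjoint and top edge a left adjoint. Here I would keep all objects and the bottom strand fixed, replace each top-strand arrow $h$ by a left adjoint $h^{L}$ — reversing the orientation of that strand and turning the forward right-adjoint edges of $[1]\otimes[n]$ into the backward left-adjoint edges of $[1]\widetilde{\otimes}[n]$ — and replace each structural lax $2$-cell $\alpha\colon h\circ f\Rightarrow t\circ g$ by its mate along $h^{L}\dashv h$. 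The forward-forward crossing data of the Gray shape, where $(i,0)$ reaches $(j,1)$ only for $i\le j$, is thereby converted into the cospan crossing data of the twisted shape, where $(i,0)$ reaches $(j,1)$ through an apex $k\ge\max(i,j)$, since passing to the mate is exactly what permits one to come back along the reversed top strand. Invertibility follows from uniqueness of adjoints — in this framework the content of \cref{prop:lax_uniq} and the characterization of adjunctions via $\rB\Delta^\li_a$ — together with the fact that the mate of the mate is the original $2$-cell.

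The main obstacle is coherence: I must ensure that ``pass to a left adjoint and replace each $2$-cell by its mate'' is a well-defined map of spaces, compatible with composition along both strands and natural in $[n]$, rather than a mere levelwise bijection of cells. This is where the paper's encoding of adjunctions is essential, since an adjunction together with all of its mates is packaged as a single $\rB\Delta^\li_a$-indexed datum (\cref{cor:lax_mon} and the discussion of lax-idempotent structures), so the reversal of the top strand is automatically functorial and simplicially coherent and the elementary equivalence above assembles into an isomorphism of Segal spaces. Finally, running the identical construction with the words ``right'' and ``left'' interchanged — equivalently, applying the isomorphism just established to the $2$-cell reversal of $\cB$ — yields $\arr^{\lax,\ladj}(\cB)\cong\twar^{\lax,\radj}(\cB)$, completing the proof.
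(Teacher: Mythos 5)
Your core idea --- reverse the top strand by passing to adjoints and transport the lax $2$-cells along the mate correspondence --- is exactly the idea of the paper's proof, but your implementation has a genuine gap at the point you yourself identify as ``the main obstacle.'' You reduce to elementary cells and then define the comparison value-by-value in $\cB$ (replace each top-strand arrow $h$ by a left adjoint $h^L$ and each structural $2$-cell by its mate), and you claim the resulting assignment is functorial and simplicially coherent because ``an adjunction together with all of its mates is packaged as a single $\rB\Delta^\li_a$-indexed datum,'' citing \cref{cor:lax_mon} and \cref{prop:lax_uniq}. Neither result says this: \cref{cor:lax_mon} identifies monads in $\cB$ with lax functors $*\rightsquigarrow\cB$ (equivalently morphisms $\rB\Delta_a\rightarrow\cB$), and \cref{prop:lax_uniq} concerns uniqueness of algebra structures over lax-idempotent monads. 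Nothing in the paper packages an adjunction together with its mates as a diagram indexed by a fixed shape, so your assertion that the elementary equivalences ``assemble into an isomorphism of Segal spaces'' --- i.e.\ that taking mates is compatible with pasting along both strands and with the face maps that compose adjacent cells --- is exactly the content of the lemma, asserted rather than proved.

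The paper resolves precisely this coherence problem by working universally at the level of the indexing shapes rather than on values in $\cB$: every morphism of $\arr^{\lax,\radj}(\cB)_n$ factors through $[n]^\radj\otimes[1]$ and every morphism of $\twar^{\lax,\ladj}(\cB)_n$ factors through $[n]^\ladj\widetilde{\otimes}[1]$ (the shapes with the relevant adjoints freely adjoined), and one then writes down an explicit isomorphism $\widetilde{\cI}:[n]^\ladj\widetilde{\otimes}[1]\xrightarrow{\sim}[n]^\radj\otimes[1]$ whose effect on $2$-cells is the mate formula, checking that it and its proposed inverse are mutually inverse via the triangle identities. Because the comparison of Segal spaces is then given by precomposition with an isomorphism of shapes, functoriality and simplicial coherence come for free; this is the step your proposal is missing, and the citations you give cannot substitute for it. A secondary error: your parenthetical claim that the second isomorphism follows by applying the first to the $2$-morphism reversal of $\cB$ is false, since reversing $2$-cells turns lax squares into oplax ones (and exchanges which side of an adjunction is which), so the two statements do not transform into one another this way; your primary suggestion --- rerunning the argument with ``left'' and ``right'' interchanged --- is what the paper does and is fine.
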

\begin{proof}
We will only prove the first isomorphism, since the proof of the second is completely analogous. For every $n\geq0$ denote by $[n]^\radj$ (resp. $[n]^\ladj$) the category obtained by adding all right (resp. left) adjoints to $[n]$. Observe that there is an isomorphism $\cI:[n]^{\ladj,\op}\xrightarrow{\sim}[n]^\radj$ that sends every arrow in $[n]^\ladj$ to its right adjoint. By definition, all morphisms $[n]\otimes[1]\rightarrow\cB$ in $\arr^{\lax,\radj}(\cB)_n$ factor through $[n]^\radj\otimes[1]$, where $[n]^\radj\otimes[1]$ is the category with objects $(i,t)$ where $i\in[n]^\radj$ and $t\in\{0,1\}$ such that \[\mor_{[n]^\radj\otimes[1]}((i,t),(j,t))\cong\mor_{[n]^\radj}(i,j)\]
for $t=0$ and $t=1$, the category $\mor_{[n]^\radj\otimes[1]}((i,1),(j,0))$ is empty and $\mor_{[n]^\radj\otimes[1]}((i,0),(j,1))$ has objects given by strings $i\xrightarrow{f}k\xrightarrow{g}j$ in $[n]^\radj$ and morphisms by diagrams
\begin{equation}\label{eq:two}
    \begin{tikzcd}[row sep=huge, column sep=huge]
    i\arrow[r, "f"]\arrow[dr, "f'" swap]&k\arrow[d, "h"]\arrow[r, "g"]\arrow[ld, Rightarrow, shorten >=40 pt, "\alpha" swap, near start]\arrow[dr, Rightarrow, shorten >=40 pt, "\beta", near start]&j\\
    {}&k'\arrow[ur, "g'" swap]&{}
    \end{tikzcd}
\end{equation}
in $[n]^\radj$.\par
Similarly, every morphism $[n]\widetilde{\otimes}[1]\rightarrow\cB$ in $\twar^{\lax,\ladj}(\cB)_n$ factor through $[n]^\ladj\widetilde{\otimes}[1]$, where $[n]^\ladj\widetilde{\otimes}[1]$ is the category with objects $(i,t)$ where $i\in[n]^\ladj$ and $t\in\{0,1\}$ such that $\mor_{[n]^\ladj\widetilde{\otimes}[1]}((i,0),(j,0))\cong\mor_{[n]^\ladj}(i,j)$ and $\mor_{[n]^\ladj\widetilde{\otimes}[1]}((i,1),(j,1))\cong\mor_{[n]^\ladj}(j,i)$, the category $\mor_{[n]^\ladj\widetilde{\otimes}[1]}((i,1),(j,0))$ is empty and $\mor_{[n]^\ladj\widetilde{\otimes}[1]}((i,0),(j,1))$ has objects given by cospans $i\xrightarrow{s}l\xleftarrow{t}j$ in $[n]^\ladj$ and morphisms by diagrams
\begin{equation}\label{eq:three}
    \begin{tikzcd}[row sep=huge, column sep=huge]
    i\arrow[r, "s"]\arrow[dr, "s'" swap]&l\arrow[d, "v"]\arrow[ld, Rightarrow, shorten >=40 pt, "\gamma" swap, near start]\arrow[dr, Leftarrow, shorten >=40 pt, "\delta", near start]&j\arrow[l, "t" swap]\arrow[ld, "t'"]\\
    {}&l'&{}
    \end{tikzcd}.
\end{equation}
To finish the proof of the lemma, we will provide an isomorphism $\widetilde{\cI}:[n]^\ladj\widetilde{\otimes}[1]\xrightarrow{\sim}[n]^\radj\otimes[1]$. The restriction of $\widetilde{\cI}$ to the fiber over $0$ is given by the identity morphism, restriction to the fiber over $1$ is given by $\cI$. Finally, a morphism $i\xrightarrow{s}l\xleftarrow{t}j$ is sent to $i\xrightarrow{s}l\xrightarrow{t^R}j$ where $t^R$ denotes the left adjoint. The 2-morphism represented by the diagram \ref{eq:three} is then sent to 
\[(v\circ s\xrightarrow{\gamma}s', t^R\xrightarrow{\eta}t'^R\circ t'\circ t^R\xrightarrow{\delta}t'^R\circ v\circ t\circ t^R\xrightarrow{\epsilon}t'^R\circ v).\] The inverse $\widetilde{\cI}^{-1}$ restricts to $\id$ on the fiber over $0$, to $\cI^{-1}$ on the fiber over $1$ and sends diagram \ref{eq:two} to 
\[(h\circ f\xrightarrow{\alpha}f',g'^L\xrightarrow{\eta}g'^L\circ g\circ g^L\xrightarrow{\beta}g'^L\circ g'\circ h\circ g^L\xrightarrow{\epsilon}h\circ g^L).\]
The fact that those morphisms are inverse to each other follows from elementary calculations with triangle identities. 
\end{proof}
\begin{cor}\label{cor:lax_adj}
Denote by $\arr^{\lax,!}(\corr)$ the full flagged subcategory of (the underlying Segal space of) $\arr^\lax(\corr)$ on the objects of the form $f_!$. Then for all $k\geq0$ we have an isomorphism 
\[\arr^{\lax,!}(\corr)^\sim_k\cong\mor_\cat([1],\corr_k)^\sim.\]
\end{cor}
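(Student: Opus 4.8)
The plan is to realize both sides as the same space of ``lax squares in $\corr$ whose arrow legs are representable'', and to bridge the evident mismatch in the direction of the $2$-cells by combining the symmetry of the Gray tensor (\cref{lem:lax_1}) with the mate calculus of \cref{lem:twar}.

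First I would unwind the left-hand side. By the definition of $\arr^\lax$ and of the Gray tensor, the space of $k$-simplices of the underlying Segal space of $\arr^\lax(\corr)$ is computed by $\mor_{2-\cat}([1]\otimes[k],\corr)$, i.e.\ the space of lax natural transformations between two functors $F,G\colon[k]\to\corr$: two chains of correspondences $\cC_0\to\cdots\to\cC_k$ and $\cD_0\to\cdots\to\cD_k$, components $\eta_i\colon\cC_i\to\cD_i$, and lax-naturality $2$-cells $\eta_{i+1}\circ F_{i+1}\Rightarrow G_{i+1}\circ\eta_i$. Passing to the full flagged subcategory $\arr^{\lax,!}(\corr)$ imposes that each component $\eta_i$ be of the form $(f_i)_!$, and $(-)^\sim_k$ then records these data up to invertible modification.

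Next I would unwind the right-hand side using $\corr_k\cong\mathrm{Cat}^{\mathrm{Cell}}_{/[k]}$ and its vertex--edge description: a morphism $X\to Y$ in $\corr_k$ consists of vertex functors $\phi_i\colon\cC_i\to\cD_i$ together with edge-maps $X_{i,i+1}\to Y_{i,i+1}$ lying over $(\phi_i,\phi_{i+1})$, so that $\mor_\cat([1],\corr_k)^\sim$ is the space of such data with invertible $2$-cells between them. The pointwise dictionary is then clear on objects: the faithful embedding $f\mapsto f_!$ of \cref{prop:fun_comp} and \cref{rem:subcat} matches a vertex functor $\phi_i$ with a representable component $(\phi_i)_!$. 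For the edges the key input is that each representable correspondence is a left adjoint, $(\phi_i)_!\dashv\phi_i^*$ (\cref{prop:adj}): using the profunctor description of \cref{cor:corr_comp} one has $(\phi_i,\phi_{i+1})^*Y_{i,i+1}\cong\phi_{i+1}^*\,Y_{i,i+1}\,(\phi_i)_!$, so by \cref{prop:corr_bicat} an edge-map $X_{i,i+1}\to Y_{i,i+1}$ over $(\phi_i,\phi_{i+1})$ is the same datum as a $2$-cell $(\phi_{i+1})_!\circ X_{i,i+1}\Rightarrow Y_{i,i+1}\circ(\phi_i)_!$ --- a lax-naturality $2$-cell of a transformation with representable components, albeit with the opposite orientation to the one produced above.

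Finally I would assemble this into an honest equivalence and, crucially, resolve the orientation clash. Rather than fighting the coherences directly, I would write the comparison as a composite of isomorphisms that are already available: first the Gray symmetry $[1]\otimes[k]\cong[k]\otimes[1]$ of \cref{lem:lax_1}, which transposes the representable data from the arrow legs to the chain legs (turning $\arr^{\lax,!}(\corr)^\sim_k$ into the space of lax squares whose chain legs are the $(f_i)_!$); then the decomposition of \cref{lem:lax_2}, reducing this to $k$ elementary squares; and on each of these the mate equivalence $\arr^{\lax,\ladj}(\corr)\cong\twar^{\lax,\radj}(\corr)$ of \cref{lem:twar}, applicable because the legs $(f_i)_!$ are left adjoints. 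This last step simultaneously reverses the direction of the $2$-cells and replaces each $(f_i)_!$ by its conjoint $\phi_i^*$, which is exactly the form in which the functor-legs of a $\corr_k$-morphism are encoded; reading off the resulting twisted squares through the vertex--edge description identifies them, naturally in $[k]$, with $\mor_\cat([1],\corr_k)$, and $(-)^\sim$ finishes the proof. I expect the main obstacle to be precisely this reconciliation: the naive transpose lands a lax square in the ``wrong'' ($\mathrm{oplax}$/twisted) orientation relative to a $\corr_k$-morphism, and it is only the left-adjointness $(f_i)_!\dashv\phi_i^*$, fed coherently through \cref{lem:twar}, that makes the identification hold on $2$-cells, on composites, and across all simplicial levels at once.
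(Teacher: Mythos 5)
Your proposal is correct and follows essentially the same route as the paper's proof: reduce to elementary lax squares (the paper via the Segal condition, restricting to $k\le 1$; you via \cref{lem:lax_2}), then convert each square with representable legs into a map of correspondences by the mate correspondence of \cref{lem:twar}, enabled by the adjunction $f_!\dashv f^*$ of \cref{prop:adj}, and finally identify maps $M\to g^*\circ N\circ f_!$ with morphisms in $\corr_k$ lying over the underlying functors. The only cosmetic differences are your extra appeal to the Gray symmetry of \cref{lem:lax_1} — which the paper handles implicitly as a transposition of the square — and your use of \cref{cor:corr_comp} where the paper instead performs a direct colimit computation in the style of \cref{prop:fact_1}.
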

\begin{proof}
First, it follows from the Segal condition that it would suffice to provide an isomorphism for $k=0$ and $k=1$. Observe that for $k=0$ both sides are obviously isomorphic to $\mor_\cat([1],\cat)^\sim$. For $k=1$ observe that the objects of $\arr^{\lax,!}(\corr)^\sim_1$ are given by diagrams of the form 
\[
\begin{tikzcd}[row sep=huge, column sep=huge]
\cC\arrow[r, "M"]\arrow[d, "f_!"]&\cD\arrow[d, "g_!"]\arrow[dl, Rightarrow, "\alpha"]\\
\cE\arrow[r, "N"]&\cA
\end{tikzcd}
\]
in $\corr$. It follows from the mate correspondence of \cref{lem:twar} that the space of such diagrams is isomorphic to the space 
of natural transformations $\widetilde{\alpha}:M\rightarrow g^*\circ N\circ f_!$. A calculation similar to the one in \cref{prop:fact_1} shows that 
\[g^*\circ N\circ f_!\cong \cC_0\times_{\cE_0}N\times_{\cK_0}\cD_0,\]
so it follows that this data indeed is equivalent to a morphism in $\corr_1$.
\end{proof}
\section{Lax functors to Corr}\label{sect:five}
This section is the technical heart of the paper. Our main result is \cref{thm:lax_corr} that describes the category of lax functors from $\cC$ to $\corr$ in terms of the category $\cat^\wrr_{/\cC}$ whose description can be found in \cref{prop:cat_ic}. Our second main result \cref{thm:lax_colim} provides an explicit description for a lax colimit of a lax functor $\cC\rightsquigarrow\corr$. 
\begin{construction}
Denote by $\ccat^\rightarrow$ the opposite of the category whose objects are intervals $\langle m\rangle$ such that every elementary morphism $\{i,i+1\}$ is marked with either $0$ or $1$ and whose morphisms from $\langle m\rangle$ to $\langle n \rangle$ are given by ordinary morphisms of intervals $([m]\rightarrow[n])$ that carry each elementary morphism of $[m]$ marked with $0$ to a composition of morphisms of $[n]$ all of which are marked with $0$. Call a morphism $i:\langle m\rangle\rightarrowtail \langle n \rangle$ inert if the underlying morphism of intervals $([m]\overset{i}{\rightarrowtail}[n])$ is inert and active if the underlying morphism of intervals is active and in addition only carries morphisms marked with $1$ to the composition of morphisms marked with $1$. Call an object elementary if the underlying object of $\ccat$ is elementary. We will denote by $[1]$ the elementary segment marked with $0$ and by $\langle1\rangle$ the elementary segment marked with 1.
\end{construction}
\begin{lemma}
The above definition endows $\ccat^\rightarrow$ with the structure of an algebraic pattern.
\end{lemma}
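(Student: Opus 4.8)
The plan is to exhibit the factorization system and the elementary objects directly, using the faithful forgetful functor $U\colon\ccat^\rightarrow\to\ccat$ that remembers only the underlying interval and discards the markings. By construction a morphism of $\ccat^\rightarrow$ is inert precisely when $U$ carries it into $\ccat^\inrt$, so the inert morphisms are exactly $U^{-1}(\ccat^\inrt)$ and are therefore closed under composition and contain all equivalences, inheriting these facts from $\ccat$. The active morphisms are \emph{not} the full preimage $U^{-1}(\ccat^\act)$: they are those $\phi$ with $U\phi$ active which in addition carry $1$-marked edges to composites of $1$-marked edges. Closure of this class under composition, and the fact that it contains all equivalences, is again immediate from the corresponding statements in $\ccat$ together with the transitivity of the marking condition. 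Hence the only genuine content is the existence and essential uniqueness of inert--active factorizations.

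For the factorization I would begin with a morphism $\phi\colon\bm\to\bn$ and factor its underlying interval map through $\ccat$ as an active map followed by an inert one, through an interval $[k]$; concretely $[k]$ is the image interval, with the active part the endpoint-preserving surjection onto it and the inert part the subinterval inclusion. The key step is to equip $[k]$ with the unique marking lifting this to $\ccat^\rightarrow$. Here I would exploit surjectivity of the active part: every edge of $[k]$ is covered by a unique edge of $\bn$, and I mark that edge of $[k]$ by the mark of its covering edge, obtaining a marked interval $\langle k\rangle$. This assignment is forced by marking-preservation of the active half (which gives uniqueness), and it makes the active half a legitimate active morphism of $\ccat^\rightarrow$, since it then sends $0$- and $1$-marked edges to composites of $0$- respectively $1$-marked edges. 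It remains to check that the inert half is a legitimate morphism of $\ccat^\rightarrow$, i.e. that it carries $0$-marked edges of $\langle k\rangle$ to $0$-marked edges of $\bm$; this is exactly where I use the hypothesis that $\phi$ is itself a morphism of $\ccat^\rightarrow$, so that a $0$-marked edge of $\bn$ maps under $\phi$ to a composite of $0$-marked edges of $\bm$, whence each edge of $[k]$ marked $0$ sits over a $0$-marked edge of $[m]$.

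Essential uniqueness then follows in two stages: the underlying interval factorization is unique because $\ccat$ is an algebraic pattern, and once the underlying maps are fixed the marking on $[k]$ is pinned down by marking-preservation of the active half, as above. Since the inert and active classes contain all equivalences, are closed under composition, and admit these essentially unique factorizations, they constitute an orthogonal factorization system in the sense required for an algebraic pattern (cf. \cite[Section 5.2.8]{lurie2009higher}); declaring the elementary objects to be those whose underlying object of $\ccat$ is elementary then completes the data. The one real obstacle is precisely the construction of the marking on the intermediate interval: because active morphisms are cut out by an extra condition rather than as a preimage under $U$, the factorization cannot be transported formally from $\ccat$, and one must see both that surjectivity of the active part determines the marking uniquely and that the $0$-edge condition built into $\phi$ being a morphism is exactly what validates the inert part.
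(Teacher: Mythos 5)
Your proof is correct and takes essentially the same route as the paper's: factor the underlying interval map through the active/inert factorization of $\ccat$, equip the intermediate interval $[k]$ with the unique marking pulled back along the active half (forced by the requirement that the active half preserve markings), and then use the $0$-marking condition built into $\phi$ being a morphism of $\ccat^\rightarrow$ to see that the inert half is a legitimate morphism, with uniqueness inherited from $\ccat$. One small imprecision: the active part of the factorization in the interval category is endpoint-preserving but \emph{not} surjective in general, so the property your marking construction actually relies on --- that every edge of $[k]$ lies in the image of a unique edge of the source --- follows from monotonicity plus endpoint-preservation rather than from surjectivity; the property itself is true, so the argument stands.
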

\begin{proof}
We need to demonstrate that active and inert morphism indeed form a factorization system on $\ccat^\rightarrow$. Let $(\langle m \rangle\xrightarrow{f}\langle n\rangle)$ be a morphism in $(\ccat^\rightarrow)^\op$, we will provide an active/inert factorization for it. First, let 
\[[m]\overset{a}{\twoheadrightarrow}[l]\overset{i}{\rightarrowtail}[n]\]
be the usual active/inert factorization of $f$ in $\Delta$. Observe that there is a unique way of promoting $[l]$ to an object $\langle l\rangle$ such that $a$ remains an active morphism in $(\ccat^\rightarrow)^\op$, namely we mark an interval in $[l]$ with $0$ if it is in the image of a $0$-marked interval in $\langle m\rangle$ and with $1$ if it is in the image of a $1$-marked interval in $\langle m \rangle$. The morphism $i$ then automatically lifts to an inert morphism in $(\ccat^\rightarrow)^\op$ and it is obvious that this decomposition is unique.
\end{proof}
\begin{remark}\label{rem:seg_cat}
For $\bm\in\ccat^\rightarrow$ denote by $(\ccat^\rightarrow)^{\el,\mc}_{\bm/}$ the full subcategory of $(\ccat^\rightarrow)^\el_{/[n]}$ on those inert morphisms $\bm\rightarrowtail e$ that take edges marked with $i$ to edges marked with $i$ for $i\in\{0,1\}$, then it is easy to see that $(\ccat^\rightarrow)^{\el,\mc}_{\bm/}$ is coinitial. In other words, a morphism $\cF:\ccat^\rightarrow\rightarrow\cS$ is Segal if and only if
\[\cF(\bm)\cong\overbrace{\cF(e_1)\times_{\cF(1)}\cF(e_2)\times_{\cF(2)}...\times_{\cF(m-1)}\cF(e_m)}^\text{$m$ times},\]
where $\cF(i)$ denotes the value of $\cF$ on the morphism $[0]\xrightarrow{\{i\}}[m]$ and $e_j$ denotes the $j$th elementary edge of $\bm$.
\end{remark}
\begin{construction}
Denote by $\widetilde{\ccat^\rightarrow}$ the full subcategory of $[1]\times\ccat$ containing all objects with the exception of $(0,[0])$. Denote by $i:\widetilde{\ccat^\rightarrow}\hookrightarrow[1]\times\ccat$ the natural inclusion and declare a morphism in $\widetilde{\ccat^\rightarrow}$ active (resp. inert) if its image in $[1]\times \ccat$ is active (resp. inert) and declare an object elementary if its image under $i$ is elementary (here we endow $[1]\times \ccat$ with the structure of an algebraic pattern given by \cref{constr:tensor}).
\end{construction}
\begin{lemma}\label{lem:pat_bo}
$\widetilde{\ccat^\rightarrow}$ is an algebraic pattern such that the category of Segal spaces for it is isomorphic to the category of triples $(\cC,\cD,F:\cC\rightarrow\cD)$ where both $\cC$ and $\cD$ are Segal $\ccat$-spaces and $F$ is an identity on objects functor.
\end{lemma}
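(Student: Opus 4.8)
The plan is to establish the two assertions in turn: first that $\widetilde{\ccat^\rightarrow}$ carries an algebraic pattern structure, and then that its Segal spaces are exactly the claimed triples. Throughout I would work with the product pattern $[1]\times\ccat$ of \cref{constr:tensor}, whose Segal spaces are arrows $F\colon\cC\to\cD$ of Segal spaces (the factor $0$ recording $\cC$, the factor $1$ recording $\cD$), and exploit the inclusion $i\colon\widetilde{\ccat^\rightarrow}\hookrightarrow[1]\times\ccat$ along which all three classes are defined.

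For the pattern structure, the elementary objects are by definition the elementary objects of $[1]\times\ccat$ that survive the deletion, namely $(0,[1])$, $(1,[0])$ and $(1,[1])$. Since inert, active and elementary are all declared by restriction, the only nontrivial axiom is that the active/inert factorization of $[1]\times\ccat$ descends to $\widetilde{\ccat^\rightarrow}$, i.e. that the middle term of the factorization of a morphism between two retained objects is itself retained. I would mirror the factorization argument used for $\ccat^\rightarrow$ in the preceding lemma: factor the $[1]$-component and the $\ccat$-component separately, and observe that the midpoint can only be $(0,[0])$ when the $[1]$-component is the identity of $0$ and the $\ccat$-component collapses to a point. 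The main obstacle is precisely to control these degenerate (point-collapsing) morphisms at level $0$: their naive midpoint is the deleted object, and one must verify that this never obstructs a factorization inside $\widetilde{\ccat^\rightarrow}$. This is where the asymmetry of the deletion is used — the surviving elementary object $(1,[0])$ together with the inert comparison maps $(0,[k])\rightarrowtail(1,[0])$ plays the role of the missing $(0,[0])$ — and I expect the careful bookkeeping here, rather than the Segal identification, to be the delicate heart of the argument.

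For the identification of Segal spaces I would restrict $i$ and compute the elementary slice categories $(\widetilde{\ccat^\rightarrow})^{\el}_{X/}$. For $X=(1,[n])$ these are unaffected by the deletion (there are no morphisms from level $1$ to level $0$), so they coincide with the usual Segal slices of $\ccat$, and the Segal condition says precisely that the restriction $\cD\bydef\cF|_{\{1\}\times\ccat}$ is a Segal space. For $X=(0,[n])$ the deletion of $(0,[0])$ removes the would-be interior vertices, but the inert maps $(0,[n])\rightarrowtail(1,[0])$ supply them from $\cD_0$, so (after passing to a coinitial subcategory as in \cref{rem:seg_cat}) the Segal condition reads
\[
\cF(0,[n])\cong \cF(0,[1])\times_{\cF(1,[0])}\cF(0,[1])\times_{\cF(1,[0])}\cdots\times_{\cF(1,[0])}\cF(0,[1]).
\]
Setting $\cC_n\bydef\cF(0,[n])$ for $n\geq 1$ and $\cC_0\bydef\cF(1,[0])=\cD_0$, this is exactly the Segal condition making $\cC$ a Segal space whose space of objects is identified with $\cD_0$; the inert maps $(0,[n])\rightarrowtail(1,[n])$ then assemble into a functor $F\colon\cC\to\cD$ which, by construction of $\cC_0$, is the identity on objects.

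Finally I would run this correspondence backwards: given a triple $(\cC,\cD,F)$ with $F$ identity on objects, the isomorphism $\cC_0\cong\cD_0$ lets one define a functor $\widetilde{\ccat^\rightarrow}\to\cS$ by $(0,[n])\mapsto\cC_n$ and $(1,[n])\mapsto\cD_n$, the deleted value $\cC_0$ being recovered as $\cD_0$; the two Segal conditions above show it lands in $\seg_{\widetilde{\ccat^\rightarrow}}(\cS)$, and the two constructions are manifestly mutually inverse and natural. The conceptual content, and the point I would emphasize, is that deleting the single object $(0,[0])$ forces the objects of the source category to be borrowed from $\cD_0$ through the comparison maps to $(1,[0])$, which is exactly the identity-on-objects condition; verifying that this deletion is compatible with the factorization system is the step I expect to demand the most care.
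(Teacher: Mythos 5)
Both of the places you yourself single out as delicate are in fact places where the argument breaks, so this is a genuine gap rather than deferred bookkeeping. For the pattern structure: the obstruction you flag cannot be ``verified away'', because the factorization genuinely fails. Consider the endomorphism of $(0,[1])$ in $\widetilde{\ccat^\rightarrow}$ whose $\ccat$-component is opposite to the constant map $[1]\rightarrow[1]$ at $0$ in $\Delta$. Active morphisms of $[1]\times\ccat$ are the identity in the $[1]$-direction, so an inert-then-active factorization of this morphism inside $\widetilde{\ccat^\rightarrow}$ would have middle term $(0,[k])$ with $k\geq1$; in $\Delta$ this means factoring the constant map as $[1]\xrightarrow{\bar a}[k]\xrightarrow{\bar g}[1]$ with $\bar a$ endpoint-preserving and $\bar g$ an injective interval inclusion, whence $\bar g(0)=\bar g(k)=0$ and $k=0$, a contradiction. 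Your proposed repair --- letting $(1,[0])$ and the maps $(0,[k])\rightarrowtail(1,[0])$ stand in for the deleted object --- cannot apply to such morphisms: there are no morphisms from $\{1\}\times\ccat$ back to $\{0\}\times\ccat$, so no factorization of a level-$0$ morphism can pass through level $1$. Thus the restricted classes simply do not form a factorization system on the full subcategory. (The paper's own one-line argument, decomposing morphisms as $(f,\id_{[1]})\circ(\id_\ccat,t)$, silently skips exactly these morphisms, so it shares this defect; any correct treatment must modify either the category or the notion of pattern being used.)

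For the identification of Segal spaces: setting $\cC_n\bydef\cF(0,[n])$ for $n\geq1$ and $\cC_0\bydef\cF(1,[0])$ specifies spaces, not a simplicial space. All operators $\cC_0\rightarrow\cC_n$ (those induced by maps $[n]\rightarrow[0]$ in $\Delta$, in particular the degeneracy $s_0\colon\cC_0\rightarrow\cC_1$) correspond to morphisms out of the deleted object and are not part of the data of $\cF$, since $\widetilde{\ccat^\rightarrow}$ contains no morphisms $(1,[0])\rightarrow(0,[n])$; they would have to be manufactured from the Segal conditions, and your proof never does this. In fact it cannot be done in general: let $\cD$ be any Segal space with $\cD_0\neq\emptyset$ and define $\cF$ by $\cF(1,-)\bydef\cD$ and $\cF(0,[n])\bydef\emptyset$ for all $n\geq1$ (this is a functor, as nothing maps from level $1$ to level $0$). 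Every Segal condition of the pattern holds: $(0,[1])$ is elementary, at $(0,[n])$ with $n\geq2$ both sides are empty because the limit projects to an empty factor $\cF(0,[1])$, and at level $1$ it is the Segal condition for $\cD$. Yet $\cF$ arises from no triple $(\cC,\cD,F)$, since a simplicial space with $\cC_0\simeq\cD_0\neq\emptyset$ cannot have $\cC_1=\emptyset$. So the restriction functor from triples is not essentially surjective and ``manifestly mutually inverse'' is unavailable. Note that this example equally contradicts the paper's own key assertion $i_*\cF((0,[0]))\cong\cF((1,[0]))$, so completing either proof requires adding a unitality condition carving out the essential image (roughly: every point of $\cF(1,[0])$ must carry a compatible fixed point of the idempotent operators on $\cF(0,[1])$ induced by the constant maps), not just more care.
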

\begin{proof}
Observe that every morphism in $\widetilde{\ccat^\rightarrow}$ can be decomposed as $(f,\id_{[1]})\circ (\id_\ccat,t)$ where $t$ is the unique nontrivial morphism in $[1]$. Since $t$ is considered to be inert, we see that $\widetilde{\ccat^\rightarrow}$ is indeed an algebraic pattern. It is elementary to see that $i_*$ restricts to a functor $\seg_{\widetilde{\ccat^\rightarrow}}(\cS)\rightarrow\seg_{[1]\times\ccat}(\cS)$ and moreover $i_*\cF(i(a))\cong\cF(a)$ for every $\cF\in\seg_{\widetilde{\ccat^\rightarrow}}(\cS)$ and $a\in\widetilde{\ccat^\rightarrow}$ and that $i_*\cF((0,[0]))\cong\cF((1,[0]))$. Since $i$ is fully faithful, we see that $i_*$ induces an isomorphism between $\seg_{\widetilde{\ccat^\rightarrow}}(\cS)$ and the full subcategory of $\seg_{[1]\times\ccat}(\cS)$ on those objects $\cE$ such that $i_*i^*\cE\cong\cE$. Finally, observe that an object $\cE$ of $\seg_{[1]\times\ccat}(\cS)$ can be identified with a pair of Segal spaces $\cC$ and $\cD$ together with a functor $F:\cC\rightarrow\cD$ and the condition $i_*i^*\cE\cong\cE$ means precisely that $F$ induces an isomorphism on objects.
\end{proof}
\begin{prop}\label{prop:pat_bo}
The category of Segal spaces for $\ccat^\rightarrow$ is isomorphic to the category of triples $(\cC,\cD,F:\cC\rightarrow\cD)$ where both $\cC$ and $\cD$ are Segal $\ccat$-spaces and $F$ is an isomorphism-on-objects functor.
\end{prop}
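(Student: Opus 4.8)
The plan is to compare $\ccat^\rightarrow$ with the auxiliary pattern $\widetilde{\ccat^\rightarrow}$ of \cref{lem:pat_bo} by means of an explicit morphism of patterns, and then transport the description of Segal spaces obtained there. Write $\langle n\rangle_\epsilon$ for the interval $[n]$ with every edge marked by $\epsilon\in\{0,1\}$, and let $j_0,j_1\colon\ccat\hookrightarrow\ccat^\rightarrow$ be the all-$0$ and all-$1$ embeddings $[n]\mapsto\langle n\rangle_0,\langle n\rangle_1$. I would first define a functor $\Lambda\colon\widetilde{\ccat^\rightarrow}\to\ccat^\rightarrow$ by $(0,[n])\mapsto\langle n\rangle_0$ and $(1,[n])\mapsto\langle n\rangle_1$, sending each morphism to its underlying map of intervals. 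The only nonformal point is that a cross morphism $(0,[m])\to(1,[n])$ of $\widetilde{\ccat^\rightarrow}$, whose underlying interval map is arbitrary, is carried to a legitimate morphism $\langle m\rangle_0\to\langle n\rangle_1$: in the defining category of $\ccat^\rightarrow$ this corresponds to an interval map out of $\langle n\rangle_1$, which carries no $0$-marked edge, so the marking constraint is vacuous. One then checks that $\Lambda$ preserves inert and active morphisms and, crucially, that it restricts to a bijection on elementary objects, matching $(0,[1]),(1,[0]),(1,[1])$ with $[1]_0,[0],\langle1\rangle$.

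Next I would show that the restriction $\Lambda^*\colon\seg_{\ccat^\rightarrow}(\cS)\to\seg_{\widetilde{\ccat^\rightarrow}}(\cS)$ is an equivalence. Since $\Lambda$ is a bijection on elementary objects, the decomposition of \cref{rem:seg_cat} identifies $\Lambda^*\cF$, for a Segal $\ccat^\rightarrow$-space $\cF$, with the triple $(\cC,\cD,F)$ where $\cC=\cF\circ j_0$ and $\cD=\cF\circ j_1$ are Segal spaces and $F\colon\cC\to\cD$ is induced by the natural family of mark-changing morphisms $\langle n\rangle_0\to\langle n\rangle_1$. Because $\ccat^\rightarrow$ has a single object of underlying dimension $0$, the value $\cF([0])$ is the space of objects of both $\cC$ and $\cD$ and $F$ is the identity on it; this forces $F$ to be an isomorphism on objects and reconciles the present statement with the identity-on-objects phrasing of \cref{lem:pat_bo}. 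Full faithfulness of $\Lambda^*$ is then immediate from \cref{rem:seg_cat}, since a morphism of Segal spaces is determined by its components on the elementary objects, on which $\Lambda$ is a bijection.

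The main work — and the step I expect to be the genuine obstacle — is the essential surjectivity of $\Lambda^*$: reconstructing a Segal $\ccat^\rightarrow$-space from a triple $(\cC,\cD,F)$ with $F$ an isomorphism on objects. Guided by \cref{rem:seg_cat} I would set $\cF(\langle m\rangle)$ to be the iterated fibre product over $\cF([0])$ of the edge-spaces $C_1$ or $D_1$ selected by the markings of $\langle m\rangle$, and then verify functoriality in $\ccat^\rightarrow$. The mark-preserving face, degeneracy and composition morphisms pose no difficulty, as on the all-$0$ and all-$1$ subpatterns they merely recover the simplicial structures of $\cC$ and $\cD$. The real content lies in the mark-changing morphisms $\langle m\rangle\to\langle m'\rangle$ that turn $0$-labels into $1$-labels — precisely those realizing $F$ on higher simplices — and in their compatibility with the composition morphisms of $\cC$ and $\cD$; establishing this compatibility is exactly the assertion that $F$ preserves composition and identities, i.e. that $F$ is a functor of Segal spaces. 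Verifying that these constraints are both necessary and sufficient, with no further coherence, is the crux of the argument.

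Once $\cF$ is seen to be a well-defined Segal $\ccat^\rightarrow$-space with $\Lambda^*\cF\simeq(\cC,\cD,F)$, I obtain an equivalence $\Lambda^*\colon\seg_{\ccat^\rightarrow}(\cS)\xrightarrow{\sim}\seg_{\widetilde{\ccat^\rightarrow}}(\cS)$, and composing it with the identification of \cref{lem:pat_bo} yields the description of $\seg_{\ccat^\rightarrow}(\cS)$ as the category of triples $(\cC,\cD,F\colon\cC\to\cD)$ with $F$ an isomorphism on objects.
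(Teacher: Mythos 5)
Your route is the same as the paper's: you compare $\ccat^\rightarrow$ with $\widetilde{\ccat^\rightarrow}$ along the all-$0$/all-$1$ embedding (your $\Lambda$ is the paper's $\widetilde{i}$) and transport \cref{lem:pat_bo}. The problem is at the step you yourself flag as ``the crux'': in the $\infty$-categorical setting you cannot produce a Segal $\ccat^\rightarrow$-space from a triple $(\cC,\cD,F)$ by ``setting $\cF(\langle m\rangle)$ to be the iterated fibre product\dots and then verifying functoriality''. Functoriality over $\ccat^\rightarrow$ is not a property to be checked morphism by morphism; it is an infinite tower of coherence data, and no finite list of compatibilities (``$F$ preserves composition and identities'') produces it. The paper's proof exists precisely to supply this construction: it forms the right Kan extension $\widetilde{i}_*$, which is a functor by construction and lands in Segal spaces because $\widetilde{i}$ is a Segal morphism with unique lifting of active morphisms (\cite[Proposition 6.3]{chu2019homotopy}), and then identifies its values by showing that the inclusion of maximal monochromatic subintervals is coinitial — which yields exactly your iterated-fibre-product formula together with the isomorphism $\widetilde{i}_*\widetilde{i}^*\cong\id$ on Segal spaces. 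Your proposal names the correct formula but provides no mechanism making it functorial, so essential surjectivity is not actually established.

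The full-faithfulness step is also not right as stated. Components on elementary objects determine a morphism of Segal spaces faithfully, but they do not \emph{freely} determine one: a natural transformation must in addition be compatible with all the morphisms of $\ccat^\rightarrow$ not in the image of $\Lambda$ — the active (composition) morphisms and the mark-changing morphisms between mixed objects — and in the $\infty$-setting this compatibility is again data rather than a condition. So ``immediate from \cref{rem:seg_cat}'' gives faithfulness at best, not fullness. In the paper both fullness and essential surjectivity fall out of the single statement that $\widetilde{i}^*$ and $\widetilde{i}_*$ are mutually inverse on Segal spaces; the cleanest repair of your outline is to replace the hand-construction by $\Lambda_*$ and prove the two composites are equivalent to the identity exactly as the paper does (full faithfulness of $\Lambda$ for one composite, the monochromatic-decomposition coinitiality argument for the other).
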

\begin{proof}
Observe that there is a fully faithful functor $\widetilde{i}:\widetilde{\ccat^\rightarrow}\hookrightarrow\ccat^\rightarrow$ that sends $([n],0)$ to the interval of length $n$ where every morphism is marked with $0$ and sends $([m],1)$ to the interval of length $m$ where every morphism is marked with $1$. Also observe that 
\[(\ccat^\rightarrow)^\el_{\widetilde{i}(x)/}\cong(\widetilde{\ccat^\rightarrow})^\el_{x/}\]
for any $x\in\widetilde{\ccat^\rightarrow}$, so $\widetilde{i}$ is indeed a Segal morphism. Since active morphisms are required to preserve the markings, we also see that $\widetilde{i}$ lifts active morphisms uniquely, so by \cite[Proposition 6.3]{chu2019homotopy} $\widetilde{i}_*$ restricts to a functor between the categories of respective Segal objects and moreover 
\[\widetilde{i}_*\cF(a)\cong\underset{(a\rightarrowtail\widetilde{i}(b))\in(\ccat^\rightarrow)^\inrt_{/\widetilde{i}}}{\lim}\cF(b).\]
Since $\widetilde{i}$ is fully faithful, we have $i^*i_*\cong\id$, so it suffices to prove that $i_*i^*\cong\id$. For this, first denote by $(\ccat^\rightarrow)^{\inrt,\maxm}_{/\widetilde{i}}$ the full subcategory of $(\ccat^\rightarrow)^{\inrt}_{/\widetilde{i}}$ on those $(a\rightarrowtail\widetilde{i}(b))$ that correspond to the inclusions of maximal "monochromatic" subintervals, i.e. to inclusions $\langle b\rangle\hookrightarrow\langle a\rangle$ such that every element of $\langle b\rangle$ is marked with the same number and there is no strictly bigger subinterval of $\langle a\rangle$ with this property. Then it is obvious that the inclusion $(\ccat^\rightarrow)^{\inrt,\maxm}_{/\widetilde{i}}\hookrightarrow(\ccat^\rightarrow)^{\inrt}_{/\widetilde{i}}$ is coinitial. Now it suffices to prove that for $\cE\in\seg_{\ccat^\rightarrow}(\cS)$ we have
\[\cE(a)\cong \underset{(a\rightarrowtail\widetilde{i}(b))\in(\ccat^\rightarrow)^{\inrt,\maxm}_{/\widetilde{i}}}{\lim}\cE(b).\]
Using that $\cE$ is a Segal space, we can rewrite the right term of the equation as 
\[\underset{(a\rightarrowtail\widetilde{i}(b))\in(\ccat^\rightarrow)^{\inrt,\maxm}_{/\widetilde{i}}}{\lim}\;(\underset{(b\rightarrowtail e)\in(\ccat^\rightarrow)^\el_{b/}}{\lim}\cE(e))\cong\underset{(\ccat^\rightarrow)^{\el,\maxm}_{a/}}{\lim}\cE(e)\]
where $(\ccat^\rightarrow)^{\el,\maxm}_{a/}$ denotes the category whose objects are composable pairs $a\rightarrowtail b\rightarrowtail e$ where $e$ is elementary and $b$ is a maximal monochromatic subinterval of $a$. Finally, observe that $(\ccat^\rightarrow)^{\el,\maxm}_{a/}\cong (\ccat^\rightarrow)^\el_{a/}$ since maximal monochromatic subintervals cover $\langle a\rangle$ and only intersect along boundary points.
\end{proof}
\begin{construction}\label{constr:overset}
Given a Segal space $\cC$, denote by $\cC^\rightarrow$ the following presheaf on $(\ccat^\rightarrow)^\op$: for $\bn\in\ccat^\rightarrow$ let $\cC^\rightarrow(\bn)$ be the space of morphisms $f:[n]\rightarrow\cC$ in $\cat$ that send edges marked with $0$ to identity morphisms. It is easy to see that, if $g:\bm\rightarrow\bn$ is a morphism in $(\ccat^\rightarrow)^\op$, then $f\circ g$ also satisfies the conditions to be an element of $\cC^\rightarrow(\bm)$.
\end{construction}
\begin{notation}\label{not:nun}
Denote by $\ccat^\nun_{/\cC}$ the full subcategory of $\ccat_{/\cC}$ on those morphisms $[n]\rightarrow\cC$ that take all non-identity morphisms in $[n]$ to non-identity morphisms in $\cC$. Observe that the natural inclusion $\ccat_{/\cC}^\nun\hookrightarrow\ccat_{/\cC}$ is cofinal since every $[n]\xrightarrow{f}\cC$ uniquely factors as $[n]\xrightarrow{s}[m]\xrightarrow{\widetilde{f}}\cC$, where $\widetilde{f}\in\ccat^\nun_{/\cC}$ and $s$ is a surjective morphism.
\end{notation}
\begin{prop}\label{prop:wrr_colim}
The $\cC^\rightarrow$ is a Segal space, so in particular its total space obtains a structure of an algebraic pattern, which we will denote by $\ccat^\wrr_{/\cC}$. Moreover, we have 
\[\cat^\wrr_{/\cC}\cong \underset{([n]\rightarrow\cC)\in\ccat^\nun_{/\cC}}{\lim}\cat^\wrr_{/[n]}.\]
\end{prop}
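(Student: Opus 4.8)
The plan is to treat the two assertions separately: first verify that $\cC^\rightarrow$ satisfies the Segal condition, and then deduce the limit decomposition by a descent argument in the presheaf topos.

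\textbf{Segal condition.} By the explicit criterion of \cref{rem:seg_cat}, it suffices to show that for every $\bm\in\ccat^\rightarrow$ the natural map
\[\cC^\rightarrow(\bm)\to \cC^\rightarrow(e_1)\times_{\cC^\rightarrow(1)}\cC^\rightarrow(e_2)\times_{\cC^\rightarrow(2)}\cdots\times_{\cC^\rightarrow(m-1)}\cC^\rightarrow(e_m)\]
is an isomorphism, where $e_j$ denotes the $j$-th elementary edge. First I would unwind the values: $\cC^\rightarrow(j)\cong\cC_0$; $\cC^\rightarrow(e_j)\cong\cC_1$ when $e_j$ is marked $1$; and $\cC^\rightarrow(e_j)\cong\cC_0$, identified with the degenerate edges $s_0\colon\cC_0\hookrightarrow\cC_1$, when $e_j$ is marked $0$, since then the edge is forced to an identity. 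Because $\cC$ is a Segal space, $\cC_m\cong\cC_1\times_{\cC_0}\cdots\times_{\cC_0}\cC_1$, and $\cC^\rightarrow(\bm)\subset\cC_m$ is exactly the subspace of strings whose $0$-marked components are degenerate, i.e.\ the result of pulling back $s_0$ at the $0$-marked spots. As this pullback commutes with the iterated fibre product, the displayed map is an isomorphism. With $\cC^\rightarrow$ Segal, \cref{prop:slice} endows its total space with the structure of an extendable algebraic pattern, which is by definition $\ccat^\wrr_{/\cC}$.

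\textbf{Reduction of the limit formula to a colimit decomposition.} The key input, again from \cref{prop:slice}, is that $\cat^\wrr_{/\cC}\cong\seg_{\ccat^\rightarrow}(\cS)_{/\cC^\rightarrow}$ and likewise $\cat^\wrr_{/[n]}\cong\seg_{\ccat^\rightarrow}(\cS)_{/[n]^\rightarrow}$, with $[n]^\rightarrow$ as in \cref{constr:overset}. What makes the representables usable is the computation that the presheaf represented by the all-$1$-marked interval $\langle n\rangle$ is precisely $[n]^\rightarrow$: a morphism $\bm\to\langle n\rangle$ in $\ccat^\rightarrow$ must carry each $0$-marked edge of $\bm$ to a composite of $0$-marked edges of $\langle n\rangle$, of which there are none, hence to an identity, so that $h_{\langle n\rangle}(\bm)\cong[n]^\rightarrow(\bm)$; by the first part this presheaf is already Segal. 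Thus each $\cat^\wrr_{/[n]}$ is a slice over a representable Segal sheaf, and it remains to present $\cC^\rightarrow$ as a colimit of these representables over $\ccat^\nun_{/\cC}$.

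\textbf{The colimit decomposition (the main obstacle).} Starting from the tautological Yoneda presentation $\cC^\rightarrow\cong\colim_{(\bn,f)}h_{\bn}$ over the category of elements $(\ccat^\rightarrow)_{/\cC^\rightarrow}$, I would cut the index down in two cofinal steps. Since an element $f\colon[n]\to\cC$ of $\cC^\rightarrow(\bn)$ sends $0$-marked edges to identities, it factors through the interval obtained by collapsing all $0$-marked edges, equipped with the all-$1$ marking; these collapses exhibit the full subcategory of all-$1$-marked objects as (co)reflective, so its inclusion is cofinal, and this subcategory is canonically $\ccat_{/\cC}$ (as $\cC^\rightarrow$ restricts to $\cC$ there). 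Second, \cref{not:nun} gives that $\ccat^\nun_{/\cC}\hookrightarrow\ccat_{/\cC}$ is cofinal. Composing the two yields $\cC^\rightarrow\cong\colim_{([n]\to\cC)\in\ccat^\nun_{/\cC}}[n]^\rightarrow$. I expect verifying these cofinalities — in particular that collapsing the $0$-marked edges really produces contractible comma categories in the marked setting — to be the most delicate point, and it is exactly where the defining constraint on morphisms of $\ccat^\rightarrow$ (preservation of markings) is genuinely used.

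\textbf{Descent.} Finally I would pass to slices as in the proof of \cref{cor:slice}. Since $\cP((\ccat^\rightarrow)^\op)$ is an $\infty$-topos, its colimits are universal, and by \cite{ayala2017fibrations} one has $\cP((\ccat^\rightarrow)^\op)_{/\cC^\rightarrow}\cong\cP((\ccat^\rightarrow)^\op_{/\cC^\rightarrow})$; applied to the colimit presentation above this gives $\cP((\ccat^\rightarrow)^\op_{/\cC^\rightarrow})\cong\lim_{\ccat^\nun_{/\cC}}\cP((\ccat^\rightarrow)^\op_{/[n]^\rightarrow})$. Restricting both sides to the subcategories of Segal sheaves — which the equivalence respects, by the same argument used for \cref{prop:slice} and \cref{cor:slice}, each term being a slice over the Segal sheaf $[n]^\rightarrow$ — yields the desired isomorphism $\cat^\wrr_{/\cC}\cong\lim_{([n]\to\cC)\in\ccat^\nun_{/\cC}}\cat^\wrr_{/[n]}$.
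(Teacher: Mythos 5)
Your proof is correct, and its skeleton coincides with the paper's: check the Segal condition edgewise, reduce the limit formula to the presheaf-level decomposition $\cC^\rightarrow\cong\colim_{([n]\rightarrow\cC)\in\ccat^\nun_{/\cC}}[n]^\rightarrow$, and then descend to Segal objects exactly as in \cref{cor:slice}. Where you genuinely diverge is in the proof of that colimit decomposition. The paper introduces an auxiliary category $\ccat^{\wrr,\nun}_{/\cC}$ of two-step factorizations $\bn\rightarrow[m]\rightarrow\cC$, proves by hand that the composition functor $\ccat^{\wrr,\nun}_{/\cC}\rightarrow\ccat^\rightarrow_{/\cC^\rightarrow}$ is cofinal (the relevant comma categories have an initial object, obtained by collapsing the $0$-marked edges and then applying the factorization of \cref{not:nun}), and then still needs a Fubini step identifying the colimit over $\ccat^{\wrr,\nun}_{/\cC}$ with an iterated colimit over $\ccat^\nun_{/\cC}$ with inner index $\ccat^\rightarrow_{/[m]^\rightarrow}$. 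You instead restrict the tautological Yoneda colimit directly along the composite inclusion $\ccat^\nun_{/\cC}\hookrightarrow\ccat_{/\cC}\hookrightarrow(\ccat^\rightarrow)_{/\cC^\rightarrow}$, observing that each inclusion is reflective --- the two reflections being precisely the two halves of the paper's initial object --- and hence cofinal; the identification of the restricted diagram with $[n]\mapsto[n]^\rightarrow$ is then immediate from your observation that $[n]^\rightarrow$ is the representable presheaf of the all-$1$-marked interval $\langle n\rangle$, a fact the paper uses implicitly but never isolates. The two arguments thus rest on the same combinatorial factorization, but your packaging is more economical: it removes both the auxiliary category and the iterated-colimit manipulation. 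Two cosmetic caveats: your identification of the value on a $0$-marked edge with ``degenerate edges $s_0\colon\cC_0\hookrightarrow\cC_1$'' should simply be the identification with $\cC_0$ (the degeneracy need not be a monomorphism of spaces), and, like the paper, you leave the Segal-respecting property of the descent equivalence at the level of ``same argument as \cref{cor:slice}''; this is legitimate here because, by your own reflection argument, every element of $\cC^\rightarrow$ factors through some $[n]^\rightarrow$ with $[n]\rightarrow\cC$ non-unital, so the Segal condition can be checked after pullback to the pieces $[n]^\rightarrow$.
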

\begin{proof}
The fact that $\cC^\rightarrow$ is a Segal space follows from the isomorphism \[\mor_\cat([n],\cC)\cong\underset{([n]\rightarrowtail e)\in\ccat^\el_{[n]/}}{\lim}\mor_\cat(e,\cC)\] 
together with the observation that if $(\bn\xrightarrow{f}\cC)$ satisfies the condition of \cref{constr:overset}, then so do all the elementary subintervals $\langle1\rangle\hookrightarrow\bn$ of $\bn$.\par
To prove the second claim, it would suffice to prove that 
\[\cC^\rightarrow\cong\underset{([n]\rightarrow\cC)\in\ccat^\nun_{/\cC}}{\colim}[n]^\rightarrow\]
as presheaves on $(\ccat^\rightarrow)^\op$ since then we can use the same argument as in \cref{cor:slice}. To prove this, first observe that both $\cC^\rightarrow$ and each of the $[n]^\rightarrow$ are colimits of their Yoneda cocones in $\cP((\ccat^\rightarrow)^\op)$. Denote by $\ccat^{\wrr,\nun}_{/\cC}$ the category whose objects are morphisms $\bn\rightarrow[m]\rightarrow\cC$, where the first morphism belongs to $\ccat^\wrr_{/[m]^\rightarrow}$ and the second to $\ccat^\nun_{/\cC}$, and the morphisms are diagrams
\[
\begin{tikzcd}[column sep=huge]
\bn\arrow[r]\arrow[dd,"f"]&{[m]}\arrow[dd, "g"]\arrow[dr]\\
{}&{}&\cC\\
\langle n'\rangle\arrow[r]&{[m']}\arrow[ur]
\end{tikzcd},
\]
where $f$ is a morphism in $\ccat^\rightarrow$. Observe that from the definition of $\ccat^\nun_{/\cC}$ it follows that the composition of arrows restricts to a functor $m:\ccat^{\wrr,\nun}_{/\cC}\rightarrow \ccat^\rightarrow_{/\cC^\rightarrow}$ and it now suffices to prove that it is cofinal, since we would then have the following string of isomorphisms
\[\cC^\rightarrow\cong\underset{(\bn\rightarrow\cC)\in\ccat^\rightarrow_{/\cC^\rightarrow}}{\colim}\bn\cong \underset{(\bn\rightarrow[m]\rightarrow\cC)\in\ccat^\wrr_{/\cC}}{\colim}\bn\cong \underset{([m]\rightarrow\cC)\in\ccat^\nun_{/\cC}}{\colim}\underset{(\bn\rightarrow[m])\in\ccat^\rightarrow_{/[m]^\rightarrow}}{\colim}\bn\cong \underset{([m]\rightarrow\cC)\in\ccat^\nun_{/\cC}}{\colim}[m]^\rightarrow.\]
In other words, we need to prove that for every $(\bm\xrightarrow{f}\cC)\in\cC^\rightarrow$ the category of diagrams of the form 
\[
\begin{tikzcd}[row sep=huge, column sep=huge]
\bn\arrow[d]\arrow[r]&{[n']}\arrow[dr,"g"]\\
\bm\arrow[rr,"f"]&{}&\cC
\end{tikzcd},
\]
where $g\in\ccat^\nun_{/\cC}$ with the obvious notion of morphisms between them is contractible. Observe that this category has an initial object given by $(\bm\xrightarrow{s}[m']\xrightarrow{\widetilde{f}}\cC)$, where $[m']$ is obtained from $[m]$ by contracting all intervals marked with $0$ and $f\cong \widetilde{f}\circ s$ is the factorization from \cref{not:nun}.
\end{proof}
\begin{prop}\label{prop:cat_ic}
$\cat^\wrr_{/\cC}\bydef\seg_{\ccat^\wrr_{/\cC}}(\cS)$ is the category of triples \[(F:\cE\rightarrow\cC,G:\cD:\rightarrow\cC,H:\cE\rightarrow\cD),\] 
where $H$ is a functor over $\cC$ that induces an isomorphism on the space of objects and also an isomorphism $(\cE_f)^\sim\overset{\sim}{\rightarrow}(\cD_f)^\sim$ for any non-identity morphism $f$ in $\cC$ (where $(\cD_f)^\sim$ and $(\cE_f)^\sim$ denote the spaces of morphisms lying over $f$).
\end{prop}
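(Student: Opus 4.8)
The plan is to identify $\seg_{\ccat^\wrr_{/\cC}}(\cS)$ with a slice of the category of Segal $\ccat^\rightarrow$-spaces and then read off the triple description from \cref{prop:pat_bo}. By construction $\ccat^\wrr_{/\cC}$ is the slice pattern $(\ccat^\rightarrow)_{/\cC^\rightarrow}$ of \cref{prop:slice}, applied with $\cO=\ccat^\rightarrow$ and $X=\cC^\rightarrow$ (a Segal space by \cref{prop:wrr_colim}); that proposition then gives
\[\seg_{\ccat^\wrr_{/\cC}}(\cS)\cong\seg_{\ccat^\rightarrow}(\cS)_{/\cC^\rightarrow}.\]
By \cref{prop:pat_bo} the source is the category of isomorphism-on-objects functors $K:\cA\to\cB$ of Segal spaces, so an object of the slice is such a $K$ equipped with a morphism to $\cC^\rightarrow$.

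The first substantive step is to compute $\cC^\rightarrow$ under the equivalence of \cref{prop:pat_bo}. Evaluating \cref{constr:overset} on the all-$1$-marked intervals gives the Segal space $[m]\mapsto\cC^\rightarrow(\text{all-}1)=\cC_m$, i.e. $\cC$ itself; on the all-$0$-marked intervals every edge is sent to an identity, which forces the chain to be constant, so $[n]\mapsto\cC^\rightarrow(\text{all-}0)=\cC_0$ is the constant Segal space on the space of objects, and the structural functor is the inclusion of objects $e:\cC_0\hookrightarrow\cC$. Thus $\cC^\rightarrow$ corresponds to the triple $(\cC_0,\cC,e)$ with $\cC_0$ viewed as a constant (discrete) Segal space. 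Unwinding the slice, an object of $\cat^\wrr_{/\cC}$ is then the data of $K:\cA\to\cB$ together with $G:\cB\to\cC$ and a compatible $\cA\to\cC_0$. Setting $\cD:=\cB$ singles out the genuine category over $\cC$, and since the target $\cC_0$ is discrete the morphism $\cA\to\cC_0$ is equivalent to a decomposition of $\cA$ into fibers $\cA_c$ over the points $c\in\cC_0$, together with isomorphism-on-objects maps $\cA_c\to\cD_c$ induced by $K$.

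It remains to repackage this fiberwise data as a single triple $(F:\cE\to\cC,G:\cD\to\cC,H:\cE\to\cD)$ and to verify the two asserted properties of $H$. One takes $\cE$ to be the category over $\cC$ which agrees with $\cD$ on all morphism spaces lying over non-identity arrows of $\cC$ and has $\cA_c$ as its fiber over each object $c$, with $H$ the identity over non-identities and $K_c$ on fibers; the Segal and extendability bookkeeping of \cref{prop:slice} then yields that $H$ is an isomorphism on objects and that it restricts to an isomorphism $(\cE_f)^\sim\xrightarrow{\sim}(\cD_f)^\sim$ exactly for non-identity $f$, and that this assignment is inverse to $(\cE,\cD,H)\mapsto\bigl(\cD,\{\cE_c\to\cD_c\}_c\bigr)$.

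I expect the honest construction of $\cE$ to be the main obstacle. Marking an identity edge with $0$ rather than $1$ produces the fiber $\cA_c$ rather than $\cD_c$, and these two choices cannot be reconciled functorially once $\cC$ carries non-trivial isomorphisms: a composite of two $1$-marked (non-identity) edges whose value in $\cC$ is an identity is forced by the active morphisms of $\ccat^\rightarrow$ to stay $1$-marked, hence to be recorded in $\cD$ and not in the prescribed fiber. To circumvent this I would reduce to the case $\cC=[n]$ through the limit formula $\cat^\wrr_{/\cC}\cong\underset{([n]\to\cC)\in\ccat^\nun_{/\cC}}{\lim}\cat^\wrr_{/[n]}$ of \cref{prop:wrr_colim}: since $[n]$ is a poset no composite of non-identities is an identity, so the canonical-marking functor $\ccat\to\ccat^\wrr_{/[n]}$ is well defined and $\cE$ is built directly and shown to satisfy the claimed conditions. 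The general statement then follows because both the category of triples over $\cC$ and $\cat^\wrr_{/\cC}$ arise as the limits over $\ccat^\nun_{/\cC}$ of their $[n]$-level counterparts, using the cofinality of $\ccat^\nun_{/\cC}\hookrightarrow\ccat_{/\cC}$ from \cref{not:nun}; the restriction to non-identity morphisms $f$ in the statement is precisely the condition ensuring that these two limits coincide.
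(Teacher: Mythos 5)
Your first half is correct, and it takes a genuinely different route from the paper: you realize $\cat^\wrr_{/\cC}$ as $\seg_{\ccat^\rightarrow}(\cS)_{/\cC^\rightarrow}$ via \cref{prop:slice} and correctly compute that, under \cref{prop:pat_bo}, the Segal $\ccat^\rightarrow$-space $\cC^\rightarrow$ corresponds to the triple $(\cC_0,\cC,e:\cC_0\hookrightarrow\cC)$. The paper instead embeds $\ccat^\wrr_{/\cC}$ into $\ccat^\rightarrow_{/\cC}$, where $\cC$ there denotes the triple $(\cC,\cC,\id_\cC)$, and identifies the essential image of the right Kan extension $i_{\cC,*}$; no reduction to $\cC\cong[n]$ occurs anywhere in the paper's argument.

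The gap is your final step. Writing $T(\cC)$ for the category of triples in the statement, you need $T(\cC)\cong\underset{([n]\rightarrow\cC)\in\ccat^\nun_{/\cC}}{\lim}T([n])$; this is asserted but never proved, and it is false in general, for precisely the phenomenon you yourself flagged as ``the main obstacle'' --- passing to $\ccat^\nun_{/\cC}$ does not dissolve that obstacle, it merely removes it from view. Suppose $\cC$ has non-identity morphisms $f:c\rightarrow c'$, $g:c'\rightarrow c$ with $g\circ f\cong\id_c$ (e.g.\ $\cC=B\mathbb{Z}$, one object with morphism space the discrete group $\mathbb{Z}$). Any $(\cE,\cD,H)\in T(\cC)$ carries the composition of $\cE$-morphisms over $f$ with $\cE$-morphisms over $g$, landing in $(\cE_{\id_c})^\sim$; but no non-unital $\phi:[m]\rightarrow\cC$ admits a $2$-simplex mapping to $(f,g)$, since its long edge would be a non-identity of $[m]$ sent to an identity of $\cC$. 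Hence this composition is invisible to every restriction along $\ccat^\nun_{/\cC}$, and the comparison functor $T(\cC)\rightarrow\lim T([n])$ forgets structure. It already fails to be essentially surjective for $\cC=B\mathbb{Z}$: the compatible family assigning to each non-unital $\phi:[m]\rightarrow B\mathbb{Z}$ the triple with $\cD_\phi\cong[m]$ and $\cE_\phi$ the category over $[m]$ having $\mor_{\cE_\phi}(i,j)\cong *$ for $i<j$ and $\mor_{\cE_\phi}(i,i)\cong\mathbb{Z}/2$ has no preimage, because a preimage would be a one-object $\cE$ over $B\mathbb{Z}$ with a single morphism $u_k$ in each degree $k\neq 0$ and endomorphism monoid $\mathbb{Z}/2$ in degree $0$, and associativity $(u_ku_{-k})m\cong u_k(u_{-k}m)\cong u_ku_{-k}$ forces $u_ku_{-k}\in\mathbb{Z}/2$ to absorb every $m\in\mathbb{Z}/2$, which is impossible. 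Note, moreover, that the same example shows the slice description you correctly derived is itself inequivalent to the triple description of the statement for such $\cC$: in the pattern $\ccat^\wrr_{/\cC}$ the composition of two $1$-marked edges over $(f,g)$ lands in $(\cD_{\id_c})^\sim$, never in $(\cE_{\id_c})^\sim$, so no bookkeeping can produce the missing datum. The tension you noticed is therefore not an artifact of your construction; it afflicts the statement itself and also the paper's own proof, whose claim that $i_\cC$ admits unique lifting of active morphisms fails for the same $\cC$ (the active morphism of $\ccat^\rightarrow_{/\cC}$ subdividing a $0$-marked edge over $\id_c$ into an all-$0$-marked string over $(f,g)$ has no lift to $\ccat^\wrr_{/\cC}$). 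A complete argument must either impose a hypothesis on $\cC$ excluding such morphisms or confront this composition-over-identities data directly.
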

\begin{proof}
First, observe that there is a fully faithful inclusion $i_\cC:\ccat^\wrr_{/\cC}\hookrightarrow\ccat^\rightarrow_{/\cC}$, where in the codomain we view $\cC$ as a Segal $\ccat^\rightarrow$-space corresponding to the triple $(\cC,\cC,\id_{\cC})$ using the notation of \cref{lem:pat_bo}. Using \cref{prop:slice} we see that the Segal $\ccat^\rightarrow_{/\cC}$-spaces can be identified with triples $(F,G,H)$ as in the statement of the proposition, except $H$ is only required to be an isomorphism on objects. Observe that since active morphisms preserve markings, $i_\cC$ admits unique lifting of active morphisms, so by \cite[Proposition 6.3.]{chu2019homotopy} $i_{\cC,*}$ restricts to a functor between the categories of Segal spaces. Since $i$ is an inclusion of a full subcategory, we see that $\cat^\wrr_{/\cC}$ can be identified with those Segal spaces $\cF\in\seg_{\ccat^\rightarrow_{/\cC}}(\cS)$ for which $\cF\cong i_{\cC,*}i^*_\cC \cF$. It is easy to see that $i_{\cC,*}i^*_\cC \cF(e)\cong \cF(e)$ for elementary objects $e$ of $\ccat^\rightarrow_{/\cC}$ of the form $([0]\xrightarrow{\{c\}}\cC)$ for $c\in\cC$, $(\langle1\rangle\xrightarrow{g}\cC)$ for all morphisms $g$ in $\cC$ and $([1]\xrightarrow{\id_{c'}}\cC)$ for all $c'\in\cC$, but \[i_{\cC,*}i^*_\cC\cF([1]\xrightarrow{f}\cC)\cong \cF(\langle1\rangle\xrightarrow{\overline{f}}\cC),\]
where $f$ sends an edge marked with $0$ to a non-identity morphism and $\overline{f}$ has the same underlying morphism but for which the edge is marked with 1. In other words, the full subcategory in question consists of those $\cF$ that have the same value on non-identity morphisms $\langle1\rangle\rightarrow\cC$ regardless of the marking, which concludes the proof.
\end{proof}
\begin{notation}\label{not:forget}
Given $\bm\in\ccat^\rightarrow$, denote by $[\widetilde{m}]$ the interval obtained from $[m]$ by contracting all edges marked with $0$. If we are additionally given $(\bm\xrightarrow{f}[n])\in\ccat^\wrr_{/[n]}$, we see that, since those edges are all sent to identity morphisms, $f$ gives rise to $\widetilde{f}:[\widetilde{m}]\rightarrow[n]$ that sends all edges in $[\widetilde{m}]$ to the image in $[n]$ of their unique preimage in $\bm$. Similarly, every morphism $\bm\rightarrow\bl$ induces a morphism $[\widetilde{m}]\rightarrow[\widetilde{l}]$, and so we obtain a functor $U:\ccat^\wrr_{/[n]}\rightarrow\ccat_{/[n]}$. Denote by \[(\ccat_{/[n]})^\inrt_{/U}\bydef \arr^\inrt(\ccat_{/[n]})\times_{\ccat_{/[n]}}\ccat^\wrr_{/[n]}\]
the category whose objects are pairs $(f,g\rightarrowtail U(f))$ and morphisms are given by commutative diagrams of the form 
\[
\begin{tikzcd}[row sep=huge, column sep=huge]
g'\arrow[d,tail]\arrow[r, "\alpha"]&g\arrow[d, tail]\\
U(f')\arrow[r, "U(\beta)"]&f
\end{tikzcd}.
\] Denote by $p:(\ccat_{/[n]})^\inrt_{/U}\rightarrow\ccat_{/[n]}$ the natural projection.
\end{notation}
\begin{prop}\label{prop:forget}
The fiber of $p$ over $g:[k]\rightarrow[n]$ is isomorphic to $\ccat^\cell_{/[k]}$ and moreover for any morphism $h:[k]\rightarrow[s]$ fitting in the commutative diagram
\[
\begin{tikzcd}
{[k]}\arrow[rr, "h"]\arrow[rd, "g"]&{}&{[s]}\arrow[ld, "v" swap]\\
{}&{[n]}
\end{tikzcd}
\]
the fiber 
\[(\ccat^\wrr_{/[n]})_h\bydef\ccat^\wrr_{/[n]}\times_{\ccat_{/[n]}}[1],\] 
when viewed as a correspondence from $(\ccat^\wrr_{/[n]})_{[s]}$ to $(\ccat^\wrr_{/[n]})_{[k]}$, is isomorphic to $h^*_\cell$ in the notation of \cref{rem:corr_corr}.
\end{prop}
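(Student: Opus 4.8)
The plan is to establish the two assertions separately, the second resting on the first. For the fiber $p^{-1}(g)$ over $g\colon[k]\to[n]$, I would first unwind what an object lying over $g$ is: a pair consisting of $(\bm\xrightarrow{f}[n])\in\ccat^\wrr_{/[n]}$ together with an inert arrow $g\rightarrowtail U(f)$ in $\ccat_{/[n]}$. Writing $\pi\colon[m]\twoheadrightarrow[\widetilde m]$ for the contraction of the $0$-marked edges of $\bm$, so that $U(f)=\widetilde f\colon[\widetilde m]\to[n]$, the inert arrow amounts, after passing to underlying simplices, to a subinterval inclusion $j\colon[\widetilde m]\rightarrowtail[k]$ with $g\circ j=\widetilde f$. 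I would send this datum to the composite $c:=j\circ\pi\colon[m]\to[k]$ and check that $c$ is cellular: it is constant exactly on the $0$-marked edges (where $\pi$ is constant) and increases by exactly one on every $1$-marked edge (since $\pi$ identifies each such edge with an edge of $[\widetilde m]$ and $j$ is distance-preserving). Conversely, from a cellular $c\colon[l]\to[k]$ I recover $\bm$ by marking an edge $0$ precisely when $c$ is constant there, recover $j$ as the subinterval inclusion onto the image of $c$ — this is where cellularity is used, the contraction of $c$ along its constant edges being injective with consecutive values differing by one — and set $f:=g\circ c$. The essential point is then that these assignments are mutually inverse and promote to an equivalence of categories: a morphism in the fiber, namely a map $\beta\colon f\to f'$ inducing the identity on $g$ via $U$, corresponds to a morphism of cellular objects over $[k]$, which is a routine but careful check using uniqueness of the active/inert factorization. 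This is essentially the tautology that a marked interval together with an inert inclusion of its $0$-contraction into $[k]$ is the same as a cellular map to $[k]$.

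For the second assertion I would work over the edge $h\colon v\to g$ of $\ccat_{/[n]}$ (with underlying map $h\colon[k]\to[s]$ and $v\circ h=g$), identifying the two ends of the resulting correspondence with $\ccat^\cell_{/[s]}$ and $\ccat^\cell_{/[k]}$ via the equivalence just established. The content is to compute the profunctor: its value at a pair $(c_s\colon[n']\to[s],\,c_k\colon[m']\to[k])$ is the space of morphisms in the total space lying over $h$ from the object corresponding to $c_s$ to the one corresponding to $c_k$. Unwinding the commuting square defining such a morphism, exactly as in the object-level bijection above but relativised over $h$, I expect to extract precisely a filler $f\colon[m']\to[n']$ of the square $c_s\circ f=h\circ c_k$. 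By \cref{rem:corr_corr} this is the value of $h^*_\cell$, so matching the two profunctors together with their functoriality yields the claimed isomorphism. Since $h^*_\cell$ is assembled from $i^\cell_{s,!}$, $h^*$ and $i^{\cell,*}_k$, which are exactly the structure maps of $\corr$ produced in \cref{prop:corr}, one could alternatively reduce to the generating face and degeneracy maps and invoke the cofinality computations already carried out there.

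I expect the main obstacle to lie in the morphism-level bookkeeping rather than the object-level bijection. The delicate points are: confirming that transport along $p$ over $h$ genuinely produces a correspondence (a cellular fibration over $[1]$), so that the phrase ``viewed as a correspondence'' is legitimate and its two ends really are the fibers identified in the first part; and verifying that the identification of morphisms over $h$ with square-fillers is natural in both variables, so that it is an isomorphism of profunctors and not merely a fiberwise bijection. Keeping track of the several opposite-category conventions relating $\ccat$, $\ccat_{/[n]}$ and $\ccat^\rightarrow$, and of which inclusions are inert versus active, will require attention, but I do not anticipate needing any ideas beyond the active/inert factorization and the distance-preserving nature of inert maps.
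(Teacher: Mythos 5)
Your proposal takes essentially the same route as the paper's own proof: your marking/contraction dictionary between pairs $(f,\,g\rightarrowtail U(f))$ and cellular morphisms to $[k]$ is exactly the paper's pair of mutually inverse functors $F$ and $G$, and your identification of the morphisms lying over $h$ with fillers of the square $c_s\circ f = h\circ c_k$, compared against \cref{rem:corr_corr}, is precisely how the paper exhibits the fiber as $h^*_\cell$. The points you flag as delicate (the morphism-level check via uniqueness of the active/inert factorization, and the unique lifting of a filler $\overline{b}$ to a marked morphism $b$) are exactly the ones the paper verifies, so there is no gap.
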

\begin{proof}
Observe that if we view $f:\bm\rightarrow[n]$ as an ordinary morphism in $\cat$, then it factors as $([m]\xrightarrow{s_f}[\widetilde{m}]\xrightarrow{\widetilde{f}}[n])$, where $s_f$ sends all morphisms marked with $0$ to identity morphisms in $[\widetilde{m}]$. It is immediate from the definition that the fiber of $p$ over $g$ has pairs $(\bm\xrightarrow{f}[n],[\widetilde{m}]\overset{i}{\rightarrowtail}[k])$ where $i$ is an inert morphism over $[n]$ in $\Delta$ and $g\circ i\cong \widetilde{f}$ as objects and a morphism from $(\bl\xrightarrow{t}[n],[\widetilde{l}]\overset{j}{\rightarrowtail}[k])$ to $(\bm\xrightarrow{f}[n],[\widetilde{m}]\overset{i}{\rightarrowtail}[k])$ given by a diagram in $\Delta$ of the form
\[
\begin{tikzcd}[column sep=huge]
\bm\arrow[r,"s_f"]\arrow[dd, "u"]&{[\widetilde{m}]}\arrow[dd, "U(u)"]\arrow[rd, tail, "i" swap]\arrow[rrd,"\widetilde{f}"]\\
{}&{}&{[k]}\arrow[r, "g"]&{[n]}\\
\bl\arrow[r, "s_t"]&{[\widetilde{l}]}\arrow[ur, tail, "j"]\arrow[urr, "\widetilde{t}" swap]
\end{tikzcd},
\]
where $u$ is a morphism in $\ccat^\wrr_{/[n]}$. Observe that morphisms $i\circ s_f$ and $j\circ s_t$ in the diagram above can be viewed as elements of $\ccat^\cell_{/[k]}$, and a morphism $u$ can also be viewed as a morphism in this category, so this defines a functor $F:p_g\rightarrow\ccat^\cell_{/[k]}$.\par
Conversely, given a cellular morphism $c:[m]\rightarrow[k]$ we can first decompose it as $([m]\xrightarrow{s_c}[\widetilde{m}]\overset{i_c}{\rightarrowtail}[k])$, where $s_c$ is surjective and $i_c$ is inert. Now we mark an edge of $[m]$ with $1$ if its image under $c$ is a nontrivial edge in $[k]$ and with $0$ if $c$ sends it to an identity morphism. It is then easy to see that with these markings the morphism $g\circ c:\bm\rightarrow[n]$ can be viewed as an element of $\cat^\wrr_{/[n]}$ and the pair $(g\circ c:\bm\rightarrow [n], [\widetilde{m}]\overset{i_c}{\rightarrowtail}[k])$ as an element of $p_g$. Moreover, since edges marked with $0$ are precisely those that are sent to identity by $c$, we see that every morphism $h:[m]\rightarrow[l]$ over $[k]$ can also be uniquely lifted to a morphism in $p_g$, so this construction indeed defines a functor $G:\ccat^\cell_{/[k]}\rightarrow p_g$ and it is easy to see that $F$ and $G$ are mutually inverse.\par
To prove the second claim, first let $(\bm\xrightarrow{f}[n],[\widetilde{m}]\overset{i}{\rightarrowtail}[k])\in p_g$ and $(\langle o\rangle\xrightarrow{q}[n],[\widetilde{o}]\overset{j}{\rightarrowtail}[s])\in p_v$, then the correspondence in question assigns to them the space of morphisms $b$ making the following diagram commute
\[
\begin{tikzcd}[column sep=huge]
\bm\arrow[r, "s_f"]\arrow[dd, dotted, "b"]&{[\widetilde{m}]}\arrow[r, tail, "i"]\arrow[dd, dotted, "U(b)"]&{[k]}\arrow[dd,  "h"]\arrow[dr, "g"]\\
{}&{}&{}&{[n]}\\
\langle o\rangle\arrow[r, "s_o"]&{[\widetilde{o}]}\arrow[r, tail, "j"]&{[s]}\arrow[ur, "v"]
\end{tikzcd}.
\]
Similarly, given $([m]\xrightarrow{\overline{f}}[k])\in\ccat^\cell_{/[k]}$ and $[o]\xrightarrow{\overline{q}}[s]$, it follows from \cref{rem:corr_corr} that the correspondence $h_\cell^*$ sends them to the space of fillers $\overline{b}$ in the commutative diagram
\[
\begin{tikzcd}[row sep=huge, column sep=huge]
{[m]}\arrow[d, dotted, "\overline{b}"]\arrow[r, "\overline{f}"]&{[k]}\arrow[d, "h"]\\
{[o]}\arrow[r, "\overline{q}"]&{[s]}
\end{tikzcd}.
\]
However, it is easy to see that every morphism $\overline{b}$ as above uniquely lifts to $b:\bm\rightarrow\langle o\rangle$ making the other diagram commute, thus providing an isomorphism between the two correspondences.
\end{proof}
\begin{remark}\label{rem:lax_fun}
$\ccat^\wrr_{/[n]}$ can be viewed as a full subcategory of $(\ccat_{/[n]})^\inrt_{/U}$ on objects of the form $(f,\id_{U(f)})$. Using this fact, we can view $\cP((\ccat^\wrr_{/[n]})^\op)$ as a full subcategory of $\cP(((\ccat_{/[n]})^\inrt_{/U})^\op)$ on those $\cF$ for which $\cF(f,g\rightarrowtail U(f))\cong\cF(f,\id_{U(f)})$. This condition is equivalent to requiring that for an inert morphism $i:[k']\hookrightarrow[k]$ we have $\cF|_{p_{[k']\xrightarrow{g\circ i}[n]}}\cong i^*_\cell\cF|_{p_{[k]\xrightarrow{g}[n]}}$. Indeed, if this condition holds then we get \[\cF(f,g\rightarrowtail U(f))\cong\cF(f,\id_{U(f)})\]
by applying it to the inert morphism $g\rightarrowtail U(f)$. Conversely, if we assume $\cF(f,g\rightarrowtail U(f))\cong\cF(f,\id_{U(f)})$, then given $i:h\rightarrowtail g$ and an object of $(\ccat_{/[n]})^\inrt_{/U}$ of the form $(f, h\overset{i}{\rightarrowtail}g\rightarrowtail U(f))$ we see that 
\[\cF(f, h\overset{i}{\rightarrowtail}g\rightarrowtail U(f))\cong \cF(f, g\rightarrowtail U(f)),\] 
which is exactly the condition $\cF|_{p_{[k']\xrightarrow{g\circ i}[n]}}\cong i^*_\cell\cF|_{p_{[k]\xrightarrow{g}[n]}}$.
\end{remark}
\begin{construction}\label{constr:act_right}
Denote by $\ccat^{(\act,\rightarrow)}_{/[n]}$ the full subcategory of $\mor_\cat([2],\cat)$ on objects of the form $[m]\overset{a}{\twoheadrightarrow}[k]\xrightarrow{f}[n]$ and by $\ccat^{(\act,\wrr)}_{/[n]}$ the category with objects given by $\bm\overset{a}{\twoheadrightarrow}[k]\xrightarrow{f}[n]$, where $a$ takes edges marked with $0$ to identity, and morphisms are given by diagrams
\[
\begin{tikzcd}[column sep=huge]
\bm\arrow[r, two heads, "a"]\arrow[dd, "h"]&{[k]}\arrow[dd, "t"]\arrow[dr, "f"]\\
{}&{}&{[n]}\\
\bl\arrow[r, two heads, "a'"]&{[s]}\arrow[ur, "f'"]
\end{tikzcd},
\]
where $h$ is a morphism in $\ccat^\rightarrow$. Denote by $U:\ccat^{(\act,\wrr)}_{/[n]}\rightarrow\ccat^{(\act,\rightarrow)}_{/[n]}$ the functor sending $\bm\overset{a}{\twoheadrightarrow}[k]\xrightarrow{f}[n]$ to $[\widetilde{m}]\overset{\widetilde{a}}{\twoheadrightarrow}[k]\xrightarrow{f}[n]$ where $[\widetilde{m}]$ is obtained from $\bm$ by contracting all edges marked with $0$; the same argument as in \cref{not:forget} shows that $U$ is indeed a functor. Denote by \[(\ccat^{(\act,\rightarrow)}_{/[n]})^\inrt_{/U}\bydef \arr^\inrt(\ccat^{(\act,\rightarrow)}_{/[n]})\times_{\ccat^{(\act,\rightarrow)}_{/[n]}}\ccat^{(\act,\wrr)}_{/[n]}\]
the analog of the category from \cref{not:forget} and by $\widehat{p}:(\ccat^{(\act,\rightarrow)}_{/[n]})^\inrt_{/U}\rightarrow\ccat^{(\act,\rightarrow)}_{/[n]}$ the obvious projection. Finally, observe that there is a fully faithful embedding $i:(\ccat_{/[n]})^\inrt_{/U}\hookrightarrow(\ccat^{(\act,\rightarrow)}_{/[n]})^\inrt_{/U}$ sending $(\bm\xrightarrow{f}[n],[\widetilde{m}]\overset{j}{\rightarrowtail}[q])$ to the pair consisting of $(\bm\overset{s_f}{\twoheadrightarrow}[\widetilde{m}]\xrightarrow{\widetilde{f}}[n])$ and the diagram
\[
\begin{tikzcd}[column sep=huge]
{[\widetilde{m}]}\arrow[dd, tail, "j"]\arrow[r, equal]&{[\widetilde{m}]}\arrow[dr, "\widetilde{f}"]\arrow[dd, tail, "j"]\\
{}&{}&{[n]}\\
{[k]}\arrow[r, equal]&{[k]}\arrow[ur, "g"]
\end{tikzcd}.
\]
We will also denote by $\overline{i}:\ccat_{/[n]}\hookrightarrow\ccat^{(\act,\rightarrow)}_{/[n]}$ its restriction to $\ccat_{/[n]}$.
\end{construction}
\begin{lemma}\label{lem:forget_2}
If $([k]\overset{b}{\twoheadrightarrow}[s]\xrightarrow{t}[n])$ represents an object of $\ccat^{(\act,\rightarrow)}_{/[n]}$, then we have a sequence of isomorphisms
\[\widehat{p}_{([k]\overset{b}{\twoheadrightarrow}[s]\xrightarrow{t}[n])}\cong p_{([k]\xrightarrow{t\circ b}[n])}\cong\ccat^\cell_{/[k]}.\]
Moreover, for any morphism
\[
\begin{tikzcd}[column sep=huge]
{[k]}\arrow[dd, "w"]\arrow[r,two heads, "b"]&{[s]}\arrow[dd, "u"]\arrow[dr, "t"]\\
{}&{}&{[n]}\\
{[k']}\arrow[r, two heads, "b'"]&{[s']}\arrow[ur, "f'"]
\end{tikzcd}
\]
in $\ccat^{(\act,\rightarrow)}_{/[n]}$ we have that the space of morphisms in $(\ccat^{(\act,\rightarrow)}_{/[n]})^\inrt_{/U}$ lying over it, viewed as a correspondence from $\ccat^\cell_{/[k']}$ to $\ccat^\cell_{/[k]}$, is isomorphic to $w^*_\cell$.
\end{lemma}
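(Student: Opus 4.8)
The plan is to treat \cref{lem:forget_2} as the length-two-chain analogue of \cref{prop:forget} and to prove it by the same method, with one extra ingredient: the collapse operation sending a chain $([k]\overset{b}{\twoheadrightarrow}[s]\xrightarrow{t}[n])$ to its composite $([k]\xrightarrow{t\circ b}[n])$. This collapse is what produces the first of the two displayed isomorphisms, after which the second isomorphism $p_{([k]\xrightarrow{t\circ b}[n])}\cong\ccat^\cell_{/[k]}$ is literally \cref{prop:forget}. The key structural point making everything go through is that the surjective active leg $b$ (and, for the objects $F$ of $\ccat^{(\act,\wrr)}_{/[n]}$, the leg $\widetilde a$ together with the middle term) carries no information that is not already recorded by the cellular datum, by uniqueness of the active/inert factorization in $\Delta$.

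For the first isomorphism I would unwind an object of the fibre $\widehat p_{([k]\overset{b}{\twoheadrightarrow}[s]\xrightarrow{t}[n])}$. By \cref{constr:act_right} such an object is a pair consisting of $F=(\bm\overset{a}{\twoheadrightarrow}[k'']\xrightarrow{f}[n])\in\ccat^{(\act,\wrr)}_{/[n]}$ together with an inert morphism $([k]\overset{b}{\twoheadrightarrow}[s]\xrightarrow{t}[n])\rightarrowtail U(F)$, whose two components are inert inclusions fitting into a ladder over $[n]$. I would then observe that, since $U$ contracts exactly the $0$-marked edges and the active/inert factorization is unique, such a datum is equivalent to a single inert inclusion relating the contraction $[\widetilde m]$ of $\bm$ to $[k]$ over $[n]$ — i.e. precisely an object of $p_{([k]\xrightarrow{t\circ b}[n])}$ in the sense of the proof of \cref{prop:forget}; the leg $b$ and the middle object only record where the active part sits and are forced by the composite. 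Running the explicit construction $c\mapsto(g\circ c,\,[\widetilde m]\overset{i_c}{\rightarrowtail}[k])$ of \cref{prop:forget} (with $g=t\circ b$) then exhibits mutually inverse functors between $\widehat p_{([k]\overset{b}{\twoheadrightarrow}[s]\xrightarrow{t}[n])}$ and $\ccat^\cell_{/[k]}$, compatibly with the identification $p_{([k]\xrightarrow{t\circ b}[n])}\cong\ccat^\cell_{/[k]}$, yielding the whole chain of isomorphisms.

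For the ``moreover'' part I would argue by functoriality of the previous identification along the collapse. Given the displayed two-chain morphism, its left square gives $u\circ b\cong b'\circ w$ and its right triangle gives $t\cong f'\circ u$, whence $f'\circ b'\circ w\cong t\circ b$; thus $w\colon[k]\to[k']$ is a morphism of $\ccat_{/[n]}$ from $t\circ b$ to $f'\circ b'$, namely the collapse of the given two-chain morphism. Consequently the space of morphisms of $(\ccat^{(\act,\rightarrow)}_{/[n]})^\inrt_{/U}$ lying over the two-chain morphism is identified, fibrewise, with the space of morphisms of $(\ccat_{/[n]})^\inrt_{/U}$ lying over $w$. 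By \cref{prop:forget}, applied with source $t\circ b$, target $f'\circ b'$ and connecting morphism $w$, this correspondence from $\ccat^\cell_{/[k']}$ to $\ccat^\cell_{/[k]}$ is $w^*_\cell$. To finish I would reproduce the last step of the proof of \cref{prop:forget}: express both correspondences as spaces of fillers in the relevant commuting squares and use \cref{rem:corr_corr} to see that every filler over $w$ lifts uniquely, the surjective legs $b$ and $b'$ contributing no further constraints.

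The step I expect to be the main obstacle is the middle one: verifying cleanly that attaching the surjective active leg and the middle term genuinely adds no data, so that the forgetful comparison is an equivalence on fibres rather than merely a functor. This is exactly where the uniqueness of active/inert factorizations — in $\Delta$ for the legs $b,\widetilde a$ and in $\ccat^\rightarrow$ for the markings — must be used carefully; one must also keep track of the fact that it is the top component $w$, and not the middle component $u$, that governs the resulting correspondence.
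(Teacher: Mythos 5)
Your proposal is correct and follows essentially the same route as the paper: both identify the fibre over the two-chain with the fibre over its composite $t\circ b$ by using uniqueness of the active/inert factorization in $\Delta$ to show that the middle object, the surjective leg, and (on morphisms) the induced map between middle objects carry no extra data, and both prove the ``moreover'' claim by collapsing the given two-chain morphism to $w$ and invoking \cref{prop:forget}. The one step the paper spells out that you defer — that the map $g$ between middle objects is forced to be the inert part of an active/inert decomposition, hence determined — is exactly the obstacle you flagged, and it is handled by the factorization argument you indicate.
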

\begin{proof}
First, observe that to prove the first claim it suffices to prove the isomorphism \[\widehat{p}_{([k]\overset{b}{\twoheadrightarrow}[s]\xrightarrow{t}[n])}\cong p_{([k]\xrightarrow{t\circ b}[n])}\]
since the second isomorphism would then follow from \cref{prop:forget}. Observe that objects of $\widehat{p}_{([k]\overset{b}{\twoheadrightarrow}[s]\xrightarrow{t}[n])}$ are given by diagrams of the form
\[
\begin{tikzcd}[column sep=huge]
\bm\arrow[r, "s_f"]&{[\widetilde{m}]}\arrow[r, two heads, "a"]\arrow[dd, tail, "i"]&{[l]}\arrow[dd, tail, "j"]\arrow[dr, "f"]\\
{}&{}&{}&{[n]}\\
{}&{[k]}\arrow[r, two heads, "b"]&{[s]}\arrow[ur, "t"]
\end{tikzcd}.
\]
Observe that, since active and inert morphisms form a factorization system in $\Delta$,   the object $[l]$ and morphisms $j$ and $a$ are uniquely determined by $i:[\widetilde{m}]\rightarrowtail[k]$; in other words we have an isomorphism between the objects of $\widehat{p}_{([k]\overset{b}{\twoheadrightarrow}[s]\xrightarrow{t}[n])}$ and $p_{([k]\xrightarrow{t\circ b}[n])}$. A morphism in $\widehat{p}_{([k]\overset{b}{\twoheadrightarrow}[s]\xrightarrow{t}[n])}$ is given by a diagram of the form
\[
\begin{tikzcd}[column sep=huge]
\bm\arrow[dd, "h"]\arrow[r, "s_f"]&{[\widetilde{m}]}\arrow[dd, "U(h)"]\arrow[drr, tail, "i" near start, swap]\arrow[r, two heads, "a"]&{[l]}\arrow[dd, "g" swap]\arrow[drr, tail, "\overline{i}" swap]\arrow[drrr, "f"]\\
{}&{}&{}&{[k]}\arrow[r, two heads, "b"]&{[s]}\arrow[r, "t" very near start]&{[n]}\\
\langle p\rangle\arrow[r, "s_v" swap]&{[\widetilde{p}]}\arrow[r, two heads, "c" swap]\arrow[urr, tail, "j" near start]&{[q]}\arrow[urr,tail, "\overline{j}"]\arrow[urrr, "v" swap]
\end{tikzcd}.
\]
The only morphism in this diagram that is not obviously determined by $h$ is $g:[l]\rightarrow[q]$, so to provide the isomorphism between morphisms in $\widehat{p}_{([k]\overset{b}{\twoheadrightarrow}[s]\xrightarrow{t}[n])}$ and $p_{([k]\xrightarrow{t\circ b}[n])}$ it suffices to prove that it is uniquely defined by $h$. To do so, observe that since $j\circ U(h)\cong i$ we have that $U(h)$ is inert (otherwise it would contradict the uniqueness of the active/inert factorization). Similarly, since $\widetilde{j}\circ g\cong \widetilde{i}$ we see that $g$ is also an inert morphism, and so it follows that it must be the inert part of the active/inert decomposition of $c\circ U(h)$.\par
To prove the final claim simply observe that the space of morphisms in question is obviously isomorphic to the space of morphisms lying over 
\[
\begin{tikzcd}
{[k]}\arrow[rr, "w"]\arrow[rd, "t\circ b"]&{}&{[k']}\arrow[ld, "t'\circ b'"]\\
{}&{[n]}
\end{tikzcd}
\]
in $(\ccat_{/[n]})^\inrt_{/U}$ and this is isomorphic to $w^*_\cell$ by \cref{prop:forget}.
\end{proof}
\begin{lemma}\label{lem:lax_fun}
$i:(\ccat_{/[n]})^\inrt_{/U}\hookrightarrow(\ccat^{(\act,\rightarrow)}_{/[n]})^\inrt_{/U}$ identifies $\mor_\cat((\ccat_{/[n]})^\inrt_{/U},\cS)$ with the full subcategory of $\cF\in\mor_\cat((\ccat^{(\act,\rightarrow)}_{/[n]})^\inrt_{/U},\cS)$ such that for any object $([k]\overset{b}{\twoheadrightarrow}[s]\xrightarrow{t}[n])$ we have that
\[\cF|_{\widehat{p}_{([k]\overset{b}{\twoheadrightarrow}[s]\xrightarrow{t}[n])}}\cong b^*_\cell \cF|_{\widehat{p}_{i([s]=\joinrel=[s]\xrightarrow{t}[n])}}.\]
\end{lemma}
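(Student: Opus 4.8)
The plan is to run the exact analogue of the argument sketched in \cref{rem:lax_fun}, replacing the inert fibration $p$ of \cref{prop:forget} by the active fibration $\widehat{p}$ of \cref{lem:forget_2}. Since $i:(\ccat_{/[n]})^\inrt_{/U}\hookrightarrow(\ccat^{(\act,\rightarrow)}_{/[n]})^\inrt_{/U}$ is the inclusion of a full subcategory (\cref{constr:act_right}), the restriction functor
$i^*:\mor_\cat((\ccat^{(\act,\rightarrow)}_{/[n]})^\inrt_{/U},\cS)\to\mor_\cat((\ccat_{/[n]})^\inrt_{/U},\cS)$
admits a fully faithful right adjoint $i_*$ given by right Kan extension, with counit $i^*i_*\cong\id$. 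Hence $i_*$ identifies $\mor_\cat((\ccat_{/[n]})^\inrt_{/U},\cS)$ with the full subcategory of those $\cF$ for which the unit $\cF\to i_*i^*\cF$ is an equivalence, and the whole content of the lemma is to show that this subcategory is exactly the one cut out by the displayed fiberwise condition.

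Next I would reduce the unit map to the fibers of $\widehat{p}$. By \cref{lem:forget_2} the projection $\widehat{p}$ has fiber $\ccat^\cell_{/[k]}$ over $([k]\overset{b}{\twoheadrightarrow}[s]\xrightarrow{t}[n])$, the special objects $([s]=\joinrel=[s]\xrightarrow{t}[n])$ are precisely the image of $\overline{i}:\ccat_{/[n]}\hookrightarrow\ccat^{(\act,\rightarrow)}_{/[n]}$, and the morphism $b:([k]\overset{b}{\twoheadrightarrow}[s]\xrightarrow{t}[n])\to([s]=\joinrel=[s]\xrightarrow{t}[n])$ carries, as a correspondence between the two fibers, the cellular pullback $b^*_\cell$ of \cref{rem:corr_corr}. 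Evaluating the right Kan extension formula $i_*i^*\cF(y)\cong\lim_{(y\to i(x))\in y/i} i^*\cF(x)$ over the comma category $y/i$ for $y$ lying over a general base, I would unwind the limit through the fiber identifications of \cref{lem:forget_2} in the same way that the condition $\cF|_{p_{[k']}}\cong i^*_\cell\cF|_{p_{[k]}}$ was read off in \cref{rem:lax_fun}. The outcome is that the unit is an equivalence at $y$ precisely when $\cF|_{\widehat{p}_{([k]\overset{b}{\twoheadrightarrow}[s]\xrightarrow{t}[n])}}\cong b^*_\cell\,\cF|_{\widehat{p}_{i([s]=\joinrel=[s]\xrightarrow{t}[n])}}$, which is the asserted condition.

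The step requiring genuine work is this unwinding, namely pinning down the comma category $y/i$ and identifying the resulting limit with the value of $b^*_\cell$. Concretely, one must analyse a general object of $(\ccat^{(\act,\rightarrow)}_{/[n]})^\inrt_{/U}$ — a marked interval $\bm$ together with its active/inert datum over the base, in the explicit description of \cref{lem:forget_2} — and exhibit a coinitial family of canonical morphisms from $y$ to objects over the special base, obtained by contracting the $0$-marked edges and splitting off the active part $b$ via the active/inert factorization in $\ccat^\rightarrow$. Because $b$ is active, and therefore $b^*_\cell$ is built from the nontrivial left Kan extension $i^\cell_{s,!}$ followed by pullback and cellular restriction, this bookkeeping is more delicate than the purely inert case of \cref{rem:lax_fun}; it is exactly the point at which the second half of \cref{lem:forget_2}, identifying the transition correspondence over $b$ with $b^*_\cell$, is indispensable. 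Once coinitiality is in place, matching the limit with $b^*_\cell$ and deducing both directions of the equivalence are formal, using the definition of $b^*_\cell$ from \cref{rem:corr_corr} together with the commutation of limits.
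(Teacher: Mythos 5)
There is a genuine gap, and it sits exactly at the step you yourself flag as "the step requiring genuine work": you have picked the wrong Kan extension. Since $i$ is fully faithful, both the left adjoint $i_!$ and the right adjoint $i_*$ of $i^*$ are fully faithful, but their essential images are in general \emph{different} full subcategories of $\mor_\cat((\ccat^{(\act,\rightarrow)}_{/[n]})^\inrt_{/U},\cS)$: the image of $i_!$ consists of functors left Kan extended from the subcategory (a colimit-type condition), while the image of $i_*$ consists of functors right Kan extended from it (a limit-type condition). The condition displayed in the lemma is a \emph{colimit} condition: by \cref{rem:corr_corr} the functor $b^*_\cell$ is the composite $i^{\cell,*}_k\circ b^*\circ i^\cell_{s,!}$, and its first factor $i^\cell_{s,!}$ is itself a left Kan extension, so $b^*_\cell\,\cF|_{\widehat{p}_{i([s]=\joinrel=[s]\xrightarrow{t}[n])}}$ is computed by colimits of values of $\cF$. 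Accordingly, the paper identifies the stated subcategory with $\{\cF\;|\;\cF\cong i_!i^*\cF\}$ and computes $i_!i^*\cF(x)\cong\colim_{(i(y)\rightarrow x)\in i/x}\cF(y)$, shows that the morphisms whose base component is an identity form a cofinal subcategory of $i/x$ (via the unique decomposition $\widehat{h}\cong\widehat{b}\circ i(h)$, which is where \cref{lem:forget_2} enters), and recognizes the resulting colimit as precisely the one defining $b^*_\cell$. Your limit $\lim_{(x\rightarrow i(y))\in x/i}\cF(y)$ has no reason to agree with that colimit, and the subcategory $\{\cF\;|\;\cF\xrightarrow{\sim} i_*i^*\cF\}$ it characterizes is a priori a different one; the image of $i$ is neither a sieve nor a cosieve here, so neither comma category degenerates, and even if the two images happened to coincide in this particular situation, that coincidence would itself require proof, none of which appears in your outline.

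The same misdirection shows up in your cofinality step. The canonical morphisms produced by contracting $0$-marked edges and splitting off the active part $b$ via the active/inert factorization run \emph{from} objects in the image of $i$ \emph{to} a general object; that is, they are objects of the comma category $i/x$ relevant to $i_!$, and this is exactly what makes the paper's cofinality argument for the colimit work. There is no analogous canonical (coinitial) family of morphisms $x\rightarrow i(y)$ with which to control the limit defining $i_*i^*\cF$. The repair is straightforward but changes the skeleton of your argument: replace $i_*$ by $i_!$, replace the unit condition by $\cF\cong i_!i^*\cF$, and replace coinitiality in $x/i$ by cofinality of the identity-base-component morphisms in $i/x$, which is the paper's proof.
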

\begin{proof}
Since $i$ is fully faithful it induces an isomorphism between $\mor_\cat((\ccat_{/[n]})^\inrt_{/U},\cS)$ and the full subcategory of $\cF\in\mor_\cat((\ccat^{(\act,\rightarrow)}_{/[n]})^\inrt_{/U},\cS)$ for which $\cF\cong i_!i^*\cF$, so to prove the claim it suffices to calculate $i_!$. Fix an object $x\bydef([k]\overset{b}{\twoheadrightarrow}[s]\xrightarrow{t}[n],[\widetilde{m}]\rightarrowtail[k])$, then by definition
\[i_!i^*\cF(x)\cong \underset{(i(y)\rightarrow x)\in(i/(\ccat^{(\act,\rightarrow)}_{/[n]})^\inrt_{/U})}{\colim}\cF(y).\]
Observe that any morphism to $([k]\overset{b}{\twoheadrightarrow}[s]\xrightarrow{t}[n])$ from the object in the image of $\overline{i}$ has the form 
\[
\begin{tikzcd}[column sep=huge]
{[k]}\arrow[r, two heads, "b"]\arrow[dd, "h\circ b"]&{[s]}\arrow[dd, "h"]\arrow[dr, "t"]\\
{}&{}&{[n]}\\
{[q]}\arrow[r, equal]&{[q]}\arrow[ur, "g"]
\end{tikzcd},
\]
which we will denote by $\widehat{h}$. We will denote by \[(i/(\ccat^{(\act,\rightarrow)}_{/[n]})^{\inrt}_{/U})^\id\hookrightarrow i/(\ccat^{(\act,\rightarrow)}_{/[n]})^\inrt_{/U}\]
the full subcategory on those morphisms $(i(y)\rightarrow x)$ for which the underlying morphism in $\ccat^{\act,\rightarrow}_{/[n]}$ has $h\cong\id$ in the notation of the diagram above. We will prove that this subcategory is cofinal. To do this, first observe that we can uniquely decompose the morphism from $i([q]\xrightarrow{g}[n])$ to $([k]\overset{b}{\twoheadrightarrow}[s]\xrightarrow{t}[n])$ as
\[
\begin{tikzcd}[column sep=huge]
{[k]}\arrow[r, two heads, "b"]\arrow[d, two heads, "b"]&{[s]}\arrow[d, equal]\arrow[dr, "t"]\\
{[s]}\arrow[r, equal]\arrow[d, "h"]&{[s]}\arrow[d, "h"]\arrow[r, "t"]&{[n]}\\
{[q]}\arrow[r, equal]&{[q]}\arrow[ur, "g"]
\end{tikzcd}.
\]
Observe that the bottom morphism lies in the image of $\overline{i}$, so this gives a decomposition $\widehat{h}\cong \widehat{b}\circ i(h)$ in $\ccat^{(\act,\rightarrow)}_{/[n]}$. Moreover, by \cref{lem:forget_2} the correspondence of morphisms in $(i/(\ccat^{(\act,\rightarrow)}_{/[n]})^\inrt_{/U}$ lying over $\widehat{h}$ is isomorphic to $(h\circ b)^*_\cell$, and this in turn is isomorphic to $h^*_\cell\circ b^*_\cell$. This implies that for a given $(i(y)\xrightarrow{f}x)$ the category of decompositions of the form $(i(y)\xrightarrow{i(\widetilde{f})}z\xrightarrow{a}x)$ where $a$ belongs to $(i/(\ccat^{(\act,\rightarrow)}_{/[n]})^{\inrt})^\id$ is contractible, proving that this category is indeed cofinal.\par
Finally, if we represent $x$ by a cellular morphism $(\bm\xrightarrow{c_m}[k])$, then it is easy to see that $(i/(\ccat^{(\act,\rightarrow)}_{/[n]})^{\inrt,\id}$ is a category of commutative diagrams
\[
\begin{tikzcd}[row sep=huge, column sep=huge]
\bm\arrow[r, "c_m"]\arrow[d, "h"]&{[k]}\arrow[d, "b"]\\
\bl\arrow[r, "c_l"]&{[s]}
\end{tikzcd},
\]
where $c_l$ is a cellular morphism representing an object in $\widehat{p}_{[s]=\joinrel=[s]\xrightarrow{t}[n]}\cong\ccat^\cell_{/[s]}$, and so the claim follows by \cref{rem:corr_corr}.
\end{proof}
\begin{remark}\label{rem:seg_con}
We can view $\seg_{\cat^\wrr_{/[n]}}(\cS)$ as a full subcategory of $\mor_\cat((\ccat_{/[n]}^{(\act,\rightarrow)})^\inrt_{/U},\cS)$ through a composition of full embeddings 
\[\seg_{\cat^\wrr_{/[n]}}(\cS)\hookrightarrow\mor_\cat(\ccat^{\wrr}_{/[n]},\cS)\overset{j_!}{\hookrightarrow}\mor_\cat((\ccat_{/[n]})^\inrt_{/U},\cS)\overset{i_!}{\hookrightarrow}\mor_\cat((\ccat^{\act,\rightarrow}_{/[n]})^\inrt_{/U},\cS)\] 
where $j$ denotes the inclusion of \cref{rem:lax_fun}. Observe that in this case the restriction of $\cF\in\seg_{\cat^\wrr_{/[n]}}(\cS)$ to any $\widehat{p}_{([k]\overset{b}{\twoheadrightarrow}[s]\xrightarrow{t}[n])}\cong\ccat^\cell_{/[k]}$ is an object of $\corr_k$. Indeed, by \cref{lem:lax_fun} we have \[\cF|_{\widehat{p}_{([k]\overset{b}{\twoheadrightarrow}[s]\xrightarrow{t}[n])}}\cong b^*_\cell \cF|_{\widehat{p}_{i([s]=\joinrel=[s]\xrightarrow{t}[n])}},\]
and so, since $b^*_\cell$ takes $\seg_{\ccat^\cell_{/[s]}}(\cS)$ to $\seg_{\ccat^\cell_{/[k]}}(\cS)$, we are reduced to proving the claim for the restriction of $\cF$ to $(\ccat_{/[n]})^\inrt_{/U}$. Similarly, we can use \cref{rem:lax_fun} to reduce further to proving the statement for $\ccat^\wrr_{/[n]}$, in which case it follows from the Segal condition for $\cF$ described in \cref{rem:seg_cat}.
\end{remark}
\begin{theorem}\label{thm:lax_corr}
For a given Segal space $\cC$ denote by $\mor^{\lax,!}_\cat(\cC,\corr)$ the flagged subcategory of (the underlying Segal space of) $\mor^\lax_\cat(\cC,\corr)$ having the same objects, but only those morphisms $L^\lax\cC\rightarrow\arr^\lax(\corr)$ that factor through $\arr^{\lax,!}(\corr)$ in the notation of \cref{cor:lax_adj}. Then there is an equivalence of categories between $\mor^{\lax,!}_\cat(\cC,\corr)$ and $\cat^\wrr_{/\cC}$.
\end{theorem}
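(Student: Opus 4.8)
The plan is to reduce to the representable case $\cC\cong[n]$ and then read both sides off the combinatorial analysis carried out in \cref{prop:forget} through \cref{rem:seg_con}. First I would note that both sides are limits over simplices: \cref{prop:colim_lax} gives $\mor^\lax_\cat(\cC,\corr)\cong\lim_{([n]\to\cC)\in\ccat_{/\cC}}\mor^\lax_\cat([n],\corr)$, and since the condition that a $2$-morphism have all components of the form $f_!$ is stable under restriction along any $[m]\to[n]$, this limit descends to one computing the flagged subcategory $\mor^{\lax,!}_\cat(\cC,\corr)$. On the other side \cref{prop:wrr_colim} gives $\cat^\wrr_{/\cC}\cong\lim_{([n]\to\cC)\in\ccat^\nun_{/\cC}}\cat^\wrr_{/[n]}$. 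Using that $\ccat^\nun_{/\cC}\hookrightarrow\ccat_{/\cC}$ is cofinal (\cref{not:nun}) I would take both limits over $\ccat^\nun_{/\cC}$, so that it suffices to produce an equivalence $\mor^{\lax,!}_\cat([n],\corr)\cong\cat^\wrr_{/[n]}$ natural in $[n]\in\Delta$.

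For the objects, the decisive point is that $\ccat^{(\act,\rightarrow)}_{/[n]}$ of \cref{constr:act_right}, with the projection to $\Delta^\op$ sending $([k]\overset{b}{\twoheadrightarrow}[s]\xrightarrow{t}[n])$ to its source $[k]$, is the underlying category of $L^\lax[n]$; I would check this against the formula $L^\lax[n]\cong[n]\times_{\Delta^\op}\arr^\act(\Delta^\op)$ of \cref{prop:lax}. A lax functor $[n]\rightsquigarrow\corr$ is by definition a map of coCartesian fibrations $L^\lax[n]\to\corr$ over $\Delta^\op$ preserving inert-coCartesian morphisms. Now \cref{lem:forget_2} identifies $(\ccat^{(\act,\rightarrow)}_{/[n]})^\inrt_{/U}$ as the category carrying precisely the coCartesian structure of $\corr$ pulled back to $L^\lax[n]$: its fibre over $([k]\twoheadrightarrow[s]\to[n])$ is $\ccat^\cell_{/[k]}$, so a presheaf restricts there to an object of $\corr_k=\seg_{\ccat^\cell_{/[k]}}(\cS)$, while the transition correspondences are the $w^*_\cell$, which are exactly the coCartesian pushforwards of $\corr$ (\cref{rem:corr_corr}). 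Hence a lax functor $[n]\rightsquigarrow\corr$ is the same datum as a presheaf on $(\ccat^{(\act,\rightarrow)}_{/[n]})^\inrt_{/U}$ whose fibrewise restrictions are $\corr_k$-objects and whose transitions are the $w^*_\cell$. By \cref{rem:seg_con} this is exactly the subcategory into which $\seg_{\cat^\wrr_{/[n]}}(\cS)$ is embedded via $j_!$ and $i_!$ (\cref{rem:lax_fun}, \cref{lem:lax_fun}), so the two categories have the same objects.

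For the $1$- and $2$-morphisms I would run the same argument one level up. \cref{cor:lax_adj} computes $\arr^{\lax,!}(\corr)^\sim_k\cong\mor_\cat([1],\corr_k)^\sim$, so a $!$-morphism of lax functors is levelwise an arrow of $\corr_k$; feeding $\arr^{\lax,!}(\corr)$, and $\arr^{\theta_{1,1},\lax}(\corr)$ for the $2$-morphisms, into the fibre/transition analysis of the previous paragraph presents the whole Segal space $\mor^{\lax,!}_\cat([n],\corr)$ out of the arrow categories of the $\corr_k$, in the same shape in which \cref{prop:cat_ic} presents $\cat^\wrr_{/[n]}$ through morphisms of triples $(\cE,\cD,H)$. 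The translation between a lax-naturality square in $\corr$ and such a morphism of triples is the mate correspondence of \cref{lem:twar} together with the identity $g^*\circ N\circ f_!\cong\cC_0\times_{\cE_0}N\times_{\cK_0}\cD_0$ from the proof of \cref{cor:lax_adj}: imposing the $!$-condition forces the vertical legs to be $f_!$ and $g_!$, which is precisely what renders the induced map of triples an identity on objects and an isomorphism on fibres over non-identity morphisms, i.e. a morphism of $\cat^\wrr_{/[n]}$ in the sense of \cref{prop:cat_ic}.

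I expect this morphism-level matching to be the main obstacle: one must verify that the $!$-restricted $2$-cells of $\mor^\lax_\cat([n],\corr)$ correspond under the mate calculus of \cref{lem:twar} and the formula of \cref{cor:lax_adj} to the morphisms of triples of \cref{prop:cat_ic}, and that this bijection is compatible with the composition (Segal) structure on both sides; the object-level statement is essentially supplied by \cref{rem:seg_con}. Finally I would record naturality in $[n]$, which is immediate because every ingredient — the fibre identifications of \cref{lem:forget_2}, the embeddings $j_!$ and $i_!$, and \cref{cor:lax_adj} — is induced by restriction along maps $[m]\to[n]$, so that the equivalence glues along $\ccat^\nun_{/\cC}$ to yield the statement for arbitrary $\cC$.
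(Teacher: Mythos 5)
Your reduction to $\cC\cong[n]$ and your choice of combinatorial scaffolding match the paper's proof, as does your plan to treat morphisms via \cref{cor:lax_adj} and the mate calculus of \cref{lem:twar}. The gap is in the sentence ``Hence a lax functor $[n]\rightsquigarrow\corr$ is the same datum as a presheaf on $(\ccat^{(\act,\rightarrow)}_{/[n]})^\inrt_{/U}$ whose fibrewise restrictions are $\corr_k$-objects and whose transitions are the $w^*_\cell$'': this is not a consequence of \cref{lem:forget_2} and \cref{rem:seg_con}; it is essentially the theorem itself for $\cC\cong[n]$. Those results identify the fibres of $\widehat{p}$ and the correspondences lying over morphisms of $\ccat^{(\act,\rightarrow)}_{/[n]}$, and \cref{rem:seg_con} shows well-definedness in one direction only (a Segal $\ccat^\wrr_{/[n]}$-space restricts fibrewise to objects of $\corr_k$). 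What they do not give is the converse construction: starting from a lax functor $F:[n]\rightsquigarrow\corr$, i.e.\ fibrewise functors $(L^\lax[n])_m\rightarrow\corr_m$ together with lax compatibility data over $\Delta^\op$, one must assemble a single presheaf on $(\ccat^{(\act,\rightarrow)}_{/[n]})^\inrt_{/U}$, which means producing its values on all morphisms lying over nontrivial maps of $\Delta^\op$ and verifying that these choices cohere. This is where the paper's proof does its real work: it encodes such extensions through the section machinery of \cref{constr:n_string} and \cref{prop:nat}, defines the structure maps $\alpha_g$ as explicit comparison maps between colimits, checks the cocycle identity $\alpha_f\circ f^{-1}\alpha_g\cong\alpha_{g\circ f}$ using \cref{prop:fun_comp}, and only then verifies that the two constructions $H$ and $G$ are mutually inverse. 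None of this is supplied, or even sketched, in your proposal; and your closing claim that ``the object-level statement is essentially supplied by \cref{rem:seg_con}'' misjudges where the difficulty lies --- the object-level equivalence is the bulk of the paper's argument, while the morphism-level matching you flag as the main obstacle is comparatively short once the object-level constructions exist.

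A second, smaller inaccuracy: you say the transitions of the putative presheaf ``are exactly the coCartesian pushforwards of $\corr$''. A lax functor preserves coCartesian lifts only over \emph{inert} morphisms; over active morphisms the comparison cells are genuinely non-invertible, and this laxness is precisely what the correspondence-valued morphism spaces $w^*_\cell$ of \cref{lem:forget_2} are there to record. Conflating the two obscures exactly the data whose construction and coherence constitute the missing step.
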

\begin{proof}
First, observe that by \cref{prop:colim_lax} and \cref{prop:wrr_colim} it suffices to prove the theorem for $\cC\cong[n]$. We will first provide an isomorphism between the underling groupoids of both categories. Observe that the category $(L^\lax[n])_m$ is by definition isomorphic to the  subcategory of $\ccat^{\act,\rightarrow}_{/[n]}$ containing objects of the form $([m]\overset{a}{\twoheadrightarrow}[l]\xrightarrow{f}[n])$ and morphisms given by diagrams
\[
\begin{tikzcd}
{}&{[l']}\arrow[dd, "v"]\arrow[dr, "f'"]\\
{[m]}\arrow[ur, two heads, "a'"]\arrow[dr, two heads,  "a"]&{}&{[n]}\\
{}&{[l]}\arrow[ur, "f"]
\end{tikzcd}.
\]
Denote by $p_1:\ccat_{/[n]}^{\act,\rightarrow}\rightarrow\Delta^\op$ the morphism sending $[k]\overset{b}{\twoheadrightarrow}[s]\xrightarrow{g}[n]$ to $[k]$, then this category is isomorphic to the fiber $p_1^{-1}([m])$. Given an active morphism $[k]\overset{b}{\twoheadrightarrow}[m]$, the induced morphism $b:(L^\lax[n])_m\rightarrow(L^\lax[n])_k$ sends $([m]\overset{a}{\twoheadrightarrow}[l]\xrightarrow{f}[n])$ to $([k]\overset{a\circ b}{\twoheadrightarrow}[l]\xrightarrow{f}[n])$. Similarly, for an inert morphism $i:[s]\rightarrowtail[m]$ the induced morphism sends it to $([m]\overset{\widetilde{a}}{\twoheadrightarrow}[l]\xrightarrow{f\circ \widetilde{i}}[n])$, where $a\circ i\cong \widetilde{i}\circ \widetilde{a}$ is the active/inert decomposition in $\Delta$. Generally we will denote by $f^*:p_1^{-1}([m])\rightarrow p_1^{-1}([l])$ the induced functor for a morphism $f:[l]\rightarrow[m]$ in $\Delta$.\par
Now assume that we are given a Segal $\ccat^\wrr_{/[n]}$-space $\cF$, we will associate to it a functor $L^\lax[n]\rightarrow\corr$. First, observe that using  \cref{lem:lax_fun} and \cref{rem:lax_fun} we can view $\cF$ as a presheaf on $((\ccat^{(\act,\rightarrow)}_{/[n]})^\inrt_{/U})^\op$. Next, observe that a functor $(L^\lax[n])_m\rightarrow\corr_m\cong\seg_{\ccat^\cell_{/[m]}}(\cS)$ is an element of  
\begin{equation}\label{eq:four}
    \mor_\cat((L^\lax[n])_m,\mor_\cat(\ccat^\cell_{/[m]},\cS))\cong\mor_\cat((L^\lax[n])_m\times \ccat^\cell_{/[m]},\cS).
\end{equation}
Denote by $\overline{p}$ the composition $p_1\circ\widehat{p}:(\ccat_{/[n]}^{\act,\rightarrow})^\inrt_{/U}\rightarrow\Delta^\op$, then it follows from \cref{lem:forget_2} that 
\[(L^\lax[n])_m\times \ccat^\cell_{/[m]})\cong (p_1^{-1}([m])\times \ccat^\cell_{/[m]})\cong \overline{p}^{-1}([m]).\]
For each $m$ we then define the required morphism $(L^\lax[n])_m\rightarrow\corr_m$ to be given by the restriction of $\cF$ to $\overline{p}^{-1}([m])$ (the fact that this functor lands in $\corr_m$ follows from \cref{rem:seg_con}). We then send every morphism $f:[l]\rightarrow[m]$ in $\Delta$ to $f^*_\cell:\corr_m\rightarrow\corr_l$. \par
For this definition to be consistent we need to prove that for every morphism $f$ as above the following diagram commutes
\[
\begin{tikzcd}[row sep=huge, column sep=huge]
p_1^{-1}({[m]})\arrow[r, "\cF_m"]\arrow[d, "f^*"]&\corr_m\arrow[d, "f^*_\cell"]\\
p_1^{-1}({[l]})\arrow[r, "\cF_l"]&\corr_l
\end{tikzcd}.
\]
Now fix an object $([k]\overset{b}{\twoheadrightarrow}[s]\xrightarrow{g}[n])\in\ccat^{\act,\rightarrow}_{/[n]}$. Assume we are given an active morphism $a:[l]\twoheadrightarrow[k]$ in $\Delta$, then we need to prove that 
\[\cF|_{\widehat{p}_{([l]\overset{b\circ a}{\twoheadrightarrow}[s]\xrightarrow{g}[n])}}\cong a^*_\cell\cF|_{\widehat{p}_{({[f]\overset{a}{\twoheadrightarrow}[s]\xrightarrow{g}[n]})}},\]
however this follows from 
\[\cF|_{\widehat{p}_{([l]\overset{b\circ a}{\twoheadrightarrow}[s]\xrightarrow{g}[n])}}\cong b^*_\cell( a^*_\cell\cF|_{\widehat{p}_{([s]=\joinrel=[s]\xrightarrow{g}[n])})}\cong a^*_\cell\cF|_{\widehat{p}_{([f]\overset{a}{\twoheadrightarrow}[s]\xrightarrow{g}[n])}}.\]
Now assume that we are given an inert morphism $i:[t]\rightarrowtail[k]$, consider the following commutative diagram
\[
\begin{tikzcd}[row sep=huge, column sep=huge]
{[k]}\arrow[r, two heads, "b"]&{[s]}\arrow[r, "g"]&{[n]}\\
{[l]}\arrow[u, tail, "i"]\arrow[r, two heads, "a"]&{[t]}\arrow[u, tail, "j"]
\end{tikzcd}
\]
obtained from the active/inert factorization in $\Delta$. We need to prove that \[\cF|_{\widehat{p}_{([l]\overset{a}{\twoheadrightarrow}[t]\xrightarrow{g\circ j}[n])}}\cong i^*_\cell\cF|_{\widehat{p}_{([k]\overset{b}{\twoheadrightarrow}[s]\xrightarrow{g}[n])}},\] 
which follows from \cref{rem:lax_fun}, \cref{lem:lax_fun} and the following string of isomorphisms
\[\cF|_{\widehat{p}_{([l]\overset{a}{\twoheadrightarrow}[t]\xrightarrow{g\circ j}[n])}}\cong a^*_\cell(j^*_\cell\cF|_{\widehat{p}_{([s]=\joinrel=[s]\xrightarrow{g\circ j}[n])}})\cong i^*_\cell(b^*_\cell\cF|_{\widehat{p}_{([s]=\joinrel=[s]\xrightarrow{g\circ j}[n])}})\cong i^*_\cell\cF|_{\widehat{p}_{([k]\overset{b}{\twoheadrightarrow}[s]\xrightarrow{g}[n])}}.\]
This defines a morphism \[H:(\cat^\wrr_{/[n]})^\sim\rightarrow\mor_\cat(L^\lax[n],\corr)^\sim.\]\par
Now assume that we are given a functor $F:L^\lax[n]\rightarrow\corr$, we will construct from it an object of $\ccat^\wrr_{/[n]}$ which we identify with a functor $\cF:(\ccat^{(\act,\rightarrow)}_{/[n]})^\inrt_{/U}\rightarrow\cS$. First we define it on $\coprod_{m\geq0}\overline{p}^{-1}([m])$ using \cref{eq:four} above. Observe that, since $F$ was assumed to be a functor to $\corr$, $\cF$ satisfies the conditions of both \cref{lem:lax_fun} and \cref{rem:lax_fun}, so the only thing we need to do is to extend it to $(\ccat^{(\act,\rightarrow)}_{/[n]})^\inrt_{/U}$.  For a morphism $f:[l]\rightarrow[k]$ in $\Delta$ we can view $\overline{p}^{-1}(f)$ as a correspondence from $\overline{p}^{-1}([k])$ to $\overline{p}^{-1}([l])$ which we will denote by $f^{-1}$. It follows from our previous considerations that $f^{-1}$ is isomorphic to 
\[(f^*_\cell,f^*):\corr_k\times p_1^{-1}([k])\rightarrow\corr_l\times p_1^{-1}([l]).\]
In particular, it follows that $(g\circ f)^{-1}\cF\cong f^{-1}\circ g^{-1}\cF$ for any $\cF\in\cat^\wrr_{/[n]}$. Fix an object $(x,y)\in\corr_l\times p_1^{-1}([l])$, then we see that
\[f^{-1}\cF(x,y)\cong \underset{(f^*t\rightarrow y)\in f^*/y}{\colim}f^*_\cell\cF(x,t)\cong \underset{(f^*t\rightarrow y)\in f^*/y}{\colim}\cF(x,f^*t),\]
where the second isomorphism follows from the fact that $F$ was assumed to be a functor to $\corr$.\par
Observe that we have
\[(\ccat^{(\act,\rightarrow)}_{/[n]})^\inrt_{/U}\cong\underset{g:[m]\rightarrow\Delta^\op}{\colim}\overline{p}^{-1}(g),\]
so it suffices to extend $\cF$ to various $\overline{p}^{-1}(g)$ in a compatible way. Recall from \cref{prop:nat} that in order to extend $\cF$ to $\overline{p}^{-1}(g)$ we need to define a section $\ccat^\inj_{/[m]}\rightarrow\corr^\cF_m$.  Observe that, since $s$ is already defined on morphisms in $\ccat^{\infty,\inj}_{/[m]}$, it suffices to define it for inert morphism that preserve the minimal element. First, assume that we are given a morphism $[1]\xrightarrow{[s,t]}[m]$ whose image in $\Delta$ is given by $[a]\xrightarrow{g} [b]$ and let $i:[0]\rightarrowtail[1]$ be the inert morphism given by the inclusion of $\{0\}$. To this data we need to associate a morphism $\alpha_g:g^{-1}\cF_{[b]}\rightarrow\cF_{[a]}$, where $\cF_{[a]}$ and $\cF_{[b]}$ denote the restrictions of $\cF$ to the fibers over $[a]$ and $[b]$. We define it as
\[g^{-1}\cF_{[b]}(x,y)\cong\underset{(g^*t\rightarrow y)\in g^*/y}{\colim}\cF_{[a]}(x,g^*t)\rightarrow \underset{(w\rightarrow y)\in p_1^{-1}([a])/y}{\colim}\cF_{[a]}(x,w)\cong\id_!\cF_{[a]}(x,y)\cong\cF_{[a]}(x,y),\]
where the morphism of colimits is induced by the natural inclusion between the underlying diagrams.\par
Now assume that we are given an inert morphism morphism
\[
\begin{tikzcd}
{[l']}\arrow[rr, tail, "i"]\arrow[dr, "h'"]&{}&{[l]}\arrow[dl, "h" swap]\\
{}&{[m]}
\end{tikzcd}.
\]
Denote by $(x_0\xrightarrow{g_1}x_1\xrightarrow{g_2}...\xrightarrow{g_l}x_l)$ the image of $[l]$ in $(\ccat^{(\act,\rightarrow)}_{/[n]})^\inrt_{/U}$, then we define the required morphism as
\begin{align*}
    g_1^{-1}g_2^{-1}...g_l^{-1}\cF_l\cong& g_1^{-1}...g_{l'}^{-1}(g_l\circ...\circ g_{l'+1})^{-1}\cF_l\xrightarrow{\alpha_{g_l\circ...\circ g_{l'+1}}}g_1^{-1}...g_{l'}^{-1}\cF_{l'}.
\end{align*}
To prove that this indeed defines a section, we need only to show that for any composable pair of morphisms $(x\xrightarrow{f}y\xrightarrow{g}z)$ we have $\alpha_f\circ f^{-1}\alpha_g\cong\alpha_{g\circ f}$. In other words, we need to prove the following isomorphism
\[\underset{(g^*w\rightarrow t)\in g^*/t}{\colim}\underset{(f^*t\rightarrow y)\in f^*/y}{\colim}\cF(x,f^*g^*w)\cong\underset{(f^*g^*w\rightarrow y)\in f^*g^*/y}{\colim}\cF(x,f^*g^*w),\]
which follows from \cref{prop:fun_comp}. It follows from the construction that this system of correspondences on various $\overline{p}^{-1}(g)$ is compatible, i.e. it defines an element of 
\[\underset{g:[m]\rightarrow\Delta^\op}{\lim}\mor_\corr(\overline{p}^{-1}(g),*)\cong\mor_\corr((\ccat^{(\act,\rightarrow)}_{/[n]})^\inrt_{/U},*).\]
This defines a morphism \[G:\mor_\cat(L^\lax[n],\corr)^\sim\rightarrow(\cat^\wrr_{/[n]})^\sim.\]\par
Observe that $H\circ G (F)\cong F$ for any $F\in\mor_\cat(L^\lax[n],\corr)$. Indeed, it follows easily from the construction that the restrictions of $F$ and $H\circ G(F)$ to every $(L^\lax[n])_k$ are isomorphic, which means that $F$ and $H\circ G (F)$ agree on objects and morphisms of $L^\lax[n]$ meaning that they are isomorphic as functors. To prove that $G\circ H\cong\id$, first observe that $\cF$ and $G\circ H(\cF)$ by definition agree on $\coprod_{m\geq0}\overline{p}^{-1}([m])$, so it suffices to prove that they agree on $\overline{p}^{-1}(f)$ for all morphisms $f:[m]\rightarrow[l]$ in $\Delta$. To prove this observe that for a category $\cC\rightarrow\cD$ with a presheaf $\cE$ the restriction of $\cE$ to $\cC_f$ for a morphism $f$ in $\cD$ is given by 
\[\underset{(y\rightarrow x)\in\cC_f}{\colim}\cE(y)\rightarrow\underset{(y\rightarrow x)\in\cC_{/x}}{\colim}\cE(y)\cong \cE(x).\]
Applying it to our case, we see that the required morphism 
\[f^{-1}G\circ H(\cF)_{[l]}\rightarrow G\circ H(\cF)_{[m]}\]
is given by
\begin{align*}
    f^{-1}G\circ H(\cF)(x,y)\cong& \underset{f^*t\rightarrow y}{\colim}f^*_\cell G\circ H(\cF)(x,t)\\
    \cong& \underset{f^*t\rightarrow y}{\colim}f^*_\cell\cF(x,t)\\
    \cong&\underset{f^*t\rightarrow y}{\colim}\cF(x,f^*t) \rightarrow\\
    \rightarrow&\underset{w\rightarrow y}{\colim}\cF(x,w)\cong\cF(x,y),
\end{align*}
where the second isomorphisms follows from the observation that $\cF$ and $G\circ H(\cF)$ coincide on $(L^\lax[n])_l$. This is exactly the same as the corresponding morphism for $\cF$, which proves the claim.\par
To prove the isomorphism on morphisms, observe that it follows from the definition of $\mor^{\lax,!}_\cat([n],\corr)$ and \cref{cor:lax_adj} that a morphism between two functors $\cF,\cG:[n]\rightsquigarrow\corr$ is given by the data of a natural transformation
\[\alpha_i:(L^\lax[n])\rightarrow\mor_\cat([1],\corr_i)\]
between functors $\Delta^\op\rightarrow\cat$ for which the compositions $s\circ \alpha_i$ (resp $t\circ \alpha_i$)  with the source morphism $s:\mor_\cat([1],\corr_i)\rightarrow\corr_i$ (resp. the target morphism $t:\mor_\cat([1],\corr_i)\rightarrow\corr_i$) are isomorphic to the lax functor $\cF$ (resp. $\cG$). The construction considered above then easily identifies this data with a morphism between the corresponding objects $G(\cF)$ and $G(\cG)$ of $\cat^\wrr_{/[n]}$.
\end{proof}
\begin{cor}\label{cor:monad}
Given a Segal space $\cC$ there is an equivalence between the category of monads on $\cC$ in $\corr$ and the category $\cat^{\bo}_{\cC/}$ of morphisms $f:\cC\rightarrow\cD$ that induce an isomorphism on the groupoid of objects. \par
\end{cor}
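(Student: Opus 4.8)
The plan is to read off \cref{cor:monad} from \cref{thm:lax_corr} applied to the terminal Segal space $*$, combined with the identification of monads as lax functors out of a point in \cref{cor:lax_mon}. First I would fix what ``the category of monads on $\cC$ in $\corr$'' means: the category of associative algebras (monoids) in the monoidal $\infty$-category $\edm_\corr(\cC)$, with morphisms the algebra maps. By \cref{cor:lax_mon} such a monoid is exactly a lax functor $*\rightsquigarrow\corr$ sending the unique object of $*$ to $\cC$, and an algebra map is a (lax) natural transformation whose single $1$-cell component is the identity correspondence $\id_\cC=\id_{\cC,!}$. Since $\id_{\cC,!}$ is of the form $f_!$, all these morphisms lie in $\mor^{\lax,!}_\cat(*,\corr)$; thus the category of monads on $\cC$ is precisely the subcategory of $\mor^{\lax,!}_\cat(*,\corr)$ obtained by fixing the underlying object to be $\cC$ and restricting to transformations with component $\id_\cC$.

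Next I would apply \cref{thm:lax_corr} with $\cC\cong*$ to get an equivalence $\mor^{\lax,!}_\cat(*,\corr)\cong\cat^\wrr_{/*}$, and then unwind the target via \cref{prop:cat_ic}. For the point the fibrewise condition in \cref{prop:cat_ic} is vacuous, since $*$ has only its identity morphism, so $\cat^\wrr_{/*}$ is simply the category of isomorphism-on-objects functors $H\colon\cE\to\cD$, recovering the description announced in \cref{sect:one}. Tracing the construction of the equivalence in the proof of \cref{thm:lax_corr} at $n=0$, the monad attached to $(\cE\xrightarrow{H}\cD)$ is the endo-correspondence $H^{*}\circ H_{!}$: this is a monad on the domain $\cE$ because $H_!\dashv H^*$ by \cref{prop:adj}, and its underlying object is therefore $\cE$.

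It then remains to match the two restrictions. On the monad side we fix the underlying object to be $\cC$ and the component to be $\id_\cC$; under the equivalence these become, respectively, the conditions $\cE=\cC$ and ``the $\cE$-component of a morphism is $\id_\cC$''. A morphism of $\cat^\wrr_{/*}$ is a pair $(\phi\colon\cE_1\to\cE_2,\ \psi\colon\cD_1\to\cD_2)$ commuting with the $H_i$, whose corresponding lax datum has component $\phi_!$; imposing $\phi=\id_\cC$ leaves exactly a functor $\psi\colon\cD_1\to\cD_2$ with $\psi\circ H_1\cong H_2$, i.e.\ a triangle under $\cC$. This is precisely a morphism of the coslice $\cat^\bo_{\cC/}$, and together with the object-level identification above this yields the asserted equivalence between monads on $\cC$ and $\cat^\bo_{\cC/}$.

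I expect the only real work to be this last bookkeeping step. Concretely, the main obstacle is to verify that the equivalence of \cref{thm:lax_corr} is compatible with the two projections recording the underlying object, namely the domain $\cE$ of $H$ on the $\cat^\wrr_{/*}$ side and the value $F(*)$ on the lax side, so that \cref{cor:monad} really is obtained by passing to the fibre over $\cC$. This amounts to re-reading the explicit construction of the maps $H$ and $G$ in that proof and checking that a morphism of the form $f_!$ restricts, when the underlying objects are held equal to $\cC$, exactly to the datum $\id_\cC$; once this compatibility on objects and morphisms is confirmed, the corollary follows formally.
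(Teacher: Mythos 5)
Your proposal is correct and follows exactly the route the paper takes: its proof of \cref{cor:monad} is precisely the combination of \cref{thm:lax_corr} (applied with $\cC\cong *$, so that $\cat^\wrr_{/*}$ becomes isomorphism-on-objects functors via \cref{prop:cat_ic}) with \cref{cor:lax_mon}. The only difference is that you spell out the bookkeeping — identifying the monad attached to $H$ as $H^*\circ H_!$ and matching the fibre over $\cC$ with the coslice $\cat^\bo_{\cC/}$ — which the paper leaves implicit.
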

\begin{proof}
This follows from \cref{thm:lax_corr} and \cref{cor:lax_mon}.
\end{proof}
\begin{cor}\label{cor:lax_span}
For a Segal space $\cC$ there is an equivalence
\[\mor_\cat^{\lax,!}(\cC,\spanc)\cong\cat_{/\cC}.\]
\end{cor}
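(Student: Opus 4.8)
The plan is to deduce the statement from \cref{thm:lax_corr} by isolating the correct full subcategory. By \cref{prop:span_subcat}, $\spanc$ is the full flagged subcategory of $\corr$ on the $\infty$-groupoids; being full on objects, $1$-morphisms and $2$-morphisms, a lax functor $\cC\rightsquigarrow\corr$ factors through $\spanc$ if and only if its value on every object $c\in\cC$ is an $\infty$-groupoid, and this objectwise condition is manifestly compatible with the $(-)_!$-restriction imposed on $2$-morphisms in the definition of $\mor^{\lax,!}_\cat$. Hence $\mor^{\lax,!}_\cat(\cC,\spanc)$ is exactly the full subcategory of $\mor^{\lax,!}_\cat(\cC,\corr)$ on the objectwise-groupoid lax functors.

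Next I would transport this condition across the equivalence $\mor^{\lax,!}_\cat(\cC,\corr)\cong\cat^\wrr_{/\cC}$ of \cref{thm:lax_corr}. Under the reconstruction $G$ of that proof the value of a lax functor at $c$ is the fibre over $c$ of the domain $\cE$ of the triple $(\cE\to\cC,\cD\to\cC,H\colon\cE\to\cD)$ supplied by \cref{prop:cat_ic}; this is already visible in the monadic case $\cC\cong *$ of \cref{cor:monad}, where the underlying object of a monad in $\corr$ is precisely $\cE$. Consequently $\mor^{\lax,!}_\cat(\cC,\spanc)$ is identified with the full subcategory $\cat^{\wrr,\mathrm{gpd}}_{/\cC}\subset\cat^\wrr_{/\cC}$ spanned by the triples all of whose fibres $\cE_c$ are $\infty$-groupoids, and it remains to produce an equivalence $\cat^{\wrr,\mathrm{gpd}}_{/\cC}\cong\cat_{/\cC}$.

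For this I would use the functor $\Phi\colon\cat^{\wrr,\mathrm{gpd}}_{/\cC}\to\cat_{/\cC}$ that forgets $\cE$ and $H$ and retains $(\cD\to\cC)$. The forgotten data is essentially unique: the defining conditions of \cref{prop:cat_ic} give $\cE_0\cong\cD_0$ and $(\cE_f)^\sim\cong(\cD_f)^\sim$ for every non-identity $f$, so once each fibre $\cE_c$ is required to be an $\infty$-groupoid it is forced to coincide with the space of objects of $\cD_c$, while the morphisms of $\cE$ over non-identity edges agree with those of $\cD$ and the composition is induced from $\cD$; moreover a morphism of triples is then determined by, and induced from, the underlying map of $\cD$'s over $\cC$. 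This shows $\Phi$ is fully faithful, and essential surjectivity amounts to verifying that, for an arbitrary $(\cD\to\cC)$, this fibrewise discretization genuinely assembles into a Segal space $\cE$ over $\cC$ with groupoid fibres together with $H\colon\cE\to\cD$.

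The existence of $\cE$ is the main obstacle, and I expect it is cleanest to settle it by re-running the proof of \cref{thm:lax_corr} with the inert pattern $\ccat^\inrt_{/[m]}$ in place of $\ccat^\cell_{/[m]}$. Indeed \cref{cor:span} identifies $\spanc([m])$ with $\seg_{\ccat^\inrt_{/[m]}}(\cS)\subset\seg_{\ccat^\cell_{/[m]}}(\cS)=\corr([m])$, and \cref{prop:span} guarantees that the face and degeneracy functors preserve this span condition; thus the fibral identifications $\widehat{p}_{(\dots)}\cong\ccat^\cell_{/[k]}$ of \cref{prop:forget} and \cref{lem:forget_2} may be intersected with the inert condition, and the reconstruction $G$ carried out under this restriction outputs directly an $\cE$ with groupoid fibres. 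The remaining verification is that the span condition is stable under the composition correspondences $f^{-1}$ used in that proof, which again reduces to \cref{prop:span}; this shows that the $\wrr$-marking data collapses—the span condition forces the fibre correspondences to depend only on their underlying spaces, so the domain $\cE$ records nothing beyond $\cD\to\cC$—and thereby completes the equivalence $\cat^{\wrr,\mathrm{gpd}}_{/\cC}\cong\cat_{/\cC}$ and hence the corollary.
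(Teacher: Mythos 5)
Your proposal is correct and takes essentially the same route as the paper: both use \cref{prop:span_subcat} together with \cref{thm:lax_corr} to identify $\mor_\cat^{\lax,!}(\cC,\spanc)$ with the full subcategory of $\cat^\wrr_{/\cC}$ on objects whose first component has constant (groupoid) fibres, and then identify this subcategory with $\cat_{/\cC}$ via the isomorphism-on-objects condition. The only divergence is at the end, where the paper treats the identification as immediate (the constant fibres are forced to be $\cD_0$), while your proposed detour for essential surjectivity --- re-running \cref{thm:lax_corr} with $\ccat^\inrt$ in place of $\ccat^\cell$ --- is sound but heavier than necessary, since pulling back along the contraction functor $U$ of \cref{not:forget} already equips an arbitrary $(\cD\rightarrow\cC)$ with the required groupoid-fibred structure.
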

\begin{proof}
Recall from \cref{prop:span_subcat} that $\spanc$ can be viewed as a full subcategory of $\corr$ on constant Segal spaces. The isomorphism of \cref{thm:lax_corr} then identifies $\mor_\cat^{\lax,!}(\cC,\spanc)$ with the subcategory of $\cat^\wrr_{/\cC}$ on the objects of the form $(X\xrightarrow{f}\cD)$ where $X$ is a constant simplicial space. Since $f$ is supposed to be an isomorphism on objects, it follows that $X\cong\cD_0$, and so it follows that this subcategory is isomorphic to $\cat_{/\cC}$.
\end{proof}
\begin{cor}
For a Segal space $\cC$ there is an equivalence
\[\mor_\cat^{\lax,\un,!}(\cC,\corr)\cong\cat_{/\cC}.\]
\end{cor}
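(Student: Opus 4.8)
The plan is to exhibit $\mor^{\lax,\un,!}_\cat(\cC,\corr)$ as a full flagged subcategory of $\mor^{\lax,!}_\cat(\cC,\corr)$ and then cut out the corresponding subcategory of $\cat^\wrr_{/\cC}$ under the equivalence of \cref{thm:lax_corr}. First I would note that, directly from the definition, a unital lax functor is in particular a lax functor, and that morphisms in both $\mor^{\lax,\un}_\cat(\cC,\corr)$ and $\mor^\lax_\cat(\cC,\corr)$ are given by maps into $\arr^\lax(\corr)$; hence $\mor^{\lax,\un}_\cat(\cC,\corr)$ is the full sub-twofold-Segal-space of $\mor^\lax_\cat(\cC,\corr)$ on the unital objects, and therefore $\mor^{\lax,\un,!}_\cat(\cC,\corr)$ is precisely the flagged subcategory of $\mor^{\lax,!}_\cat(\cC,\corr)$ on the same objects. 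Via \cref{thm:lax_corr} this realizes it as the full subcategory of $\cat^\wrr_{/\cC}$ on those triples $(F\colon\cE\to\cC,\,G\colon\cD\to\cC,\,H\colon\cE\to\cD)$ of \cref{prop:cat_ic} whose associated lax functor $\Phi\colon\cC\rightsquigarrow\corr$ is unital.

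The heart of the argument is to show that $\Phi$ is unital if and only if $H$ is an equivalence. Unitality is an objectwise condition: since $\Phi$ already respects inert-coCartesian morphisms, the extra requirement concerning surjective active morphisms reduces, via the generation of surjections by the degeneracy $[1]\twoheadrightarrow[0]$, to the condition that the self-correspondence $\Phi(\id_c)\colon\Phi(c)\nrightarrow\Phi(c)$, together with its monad structure, be the trivial (identity) monad on $\Phi(c)$ for every object $c\in\cC$. Tracing the construction of \cref{thm:lax_corr}, the value $\Phi(c)$ is the fibre $\cE_c$ and the self-correspondence $\Phi(\id_c)$ is the monad on $\cE_c$ associated by \cref{cor:lax_mon} and \cref{cor:monad} to the isomorphism-on-objects functor $H_c\colon\cE_c\to\cD_c$, namely $H_c^*H_{c,!}$. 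Because $H_c$ is an isomorphism, hence an effective epimorphism, on objects, \cref{prop:monadicity} makes $H_{c,!}\dashv H_c^*$ monadic with $\cD_c$ its category of algebras; so that, by the monad dictionary of \cref{cor:monad}, $\Phi(\id_c)$ is the trivial monad exactly when $H_c$ is an equivalence. Combining this over all $c$ with the constraints of \cref{prop:cat_ic} (that $H$ is already an isomorphism on objects and on the morphism spaces lying over non-identity morphisms of $\cC$), I conclude that $\Phi$ is unital if and only if $H\colon\cE\to\cD$ is an equivalence of Segal spaces over $\cC$.

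It then remains to identify the full subcategory of $\cat^\wrr_{/\cC}$ on triples with $H$ an equivalence with $\cat_{/\cC}$. The forgetful assignment $(F,G,H)\mapsto(F\colon\cE\to\cC)$ lands in $\cat_{/\cC}$, and conversely every $(\cE\to\cC)$ gives the triple $(\cE,\cE,\id_\cE)$, which satisfies \cref{prop:cat_ic}. Since $H$ is an equivalence over $\cC$, the pair $(\cD,H)$ is determined by $\cE$ up to a contractible space of choices, and a morphism of such triples is determined by its $\cE$-component; hence this forgetful functor is fully faithful and essentially surjective. Composing the three equivalences yields $\mor^{\lax,\un,!}_\cat(\cC,\corr)\cong\cat_{/\cC}$, in exact parallel with the span-level statement of \cref{cor:lax_span}.

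The step I expect to be the main obstacle is the objectwise identification in the second paragraph: making rigorous both that unitality of $\Phi$ is equivalent to the triviality of all the self-correspondences $\Phi(\id_c)$, and that under the explicit equivalence of \cref{thm:lax_corr} this self-correspondence is exactly the monad $H_c^*H_{c,!}$ attached to $H_c$, so that its triviality is detected by $H_c$ being an equivalence rather than merely an isomorphism on objects. This requires unwinding the construction of the functors relating $\cat^\wrr_{/[n]}$ and $\mor^\lax_\cat([n],\corr)$ in the proof of \cref{thm:lax_corr}, in particular how that equivalence evaluates a lax functor on a degenerate edge.
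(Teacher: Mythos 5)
Your proposal is correct and lands on the same characterization, but it runs the key middle step differently from the paper. The paper first reduces to $\cC\cong[n]$ (via \cref{prop:colim_lax} and the colimit decomposition of $\cat^\wrr_{/\cC}$), then identifies unital lax functors with those morphisms $L^\lax[n]\rightarrow\corr$ that send the degenerate objects $([k]\twoheadrightarrow[l]\xrightarrow{\{i\}}[n])$, $k>0$, to strings of identity correspondences, translates this into the condition that the associated presheaf $\cF$ is insensitive to the markings, and finally recognizes such $\cF$ as exactly the image of a fully faithful inclusion $i_!:\cat_{/[n]}\hookrightarrow\cat^\wrr_{/[n]}$. You instead work with general $\cC$, reduce unitality to the objectwise triviality of the monads $\Phi(\id_c)\cong H_c^*H_{c,!}$, and then invoke the monad dictionary of \cref{cor:monad} to conclude that triviality of this monad forces $H_c$ to be an equivalence; identifying the equivalence-triples of \cref{prop:cat_ic} with $\cat_{/\cC}$ is then immediate. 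Your route is more conceptual and reuses the machinery the paper has already built (\cref{cor:lax_mon}, \cref{cor:monad}, and the observation, made explicit later in the proof of \cref{thm:art}, that the monad attached to $H_c$ is $H_c^*H_{c,!}$), whereas the paper's argument is more combinatorial but self-contained, and its objectwise reduction is exactly what \cref{cor:uni} records (unital functors out of $[n]$ kill the endomorphism categories $\edm_{\bimod_n}(i)\cong\Delta_a$). One caution: the appeal to \cref{prop:monadicity} does no real work and is potentially misleading, since monadicity of $H_{c,!}\dashv H_c^*$ cannot by itself detect that $H_c$ is an equivalence --- the completion map $p:\cD\rightarrow\widehat{\cD}$ also has $p^*p_!\cong\id$ by \cref{prop:univ} without being an equivalence. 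What makes your argument valid is that \cref{cor:monad} is an equivalence of categories restricted to \emph{isomorphism-on-objects} functors, so triviality of $H_c^*H_{c,!}$ reflects to an isomorphism $H_c\cong\id_{\cE_c}$ in $\cat^\bo_{\cE_c/}$; since you do cite \cref{cor:monad} for the decisive step, the proof stands, but the monadicity sentence should be deleted.
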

\begin{proof}
First of all, observe that by \cref{prop:colim_lax} and \cref{cor:slice} it suffices to prove the claim for $\cC\cong[n]$. Now observe that the unital lax functors $[n]\rightsquigarrow\corr$ can be identified with those morphisms $L^\lax[n]\rightarrow\corr$ that send objects of the form $([k]\overset{a}{\twoheadrightarrow}[l]\xrightarrow{\{i\}}[n])$ where $k>0$ and $\{i\}$ denotes the constant morphism with image $i\in[n]$ to strings of identity correspondences. Untangling the constructions of \cref{thm:lax_corr} and preceding propositions, we see that those functors correspond to $\cF\in\cat^\wrr_{/[n]}$ for which $\cF(\bm\xrightarrow{f}[n])\cong\cF([m]\xrightarrow{\widetilde{f}}[n])$, where $\widetilde{f}$ has the same underlying object in $\ccat_{/[n]}$ except all the edges are marked with $0$. Observe that there is a natural inclusion $i:\ccat_{/[n]}\hookrightarrow\ccat^\wrr_{/[n]}$ sending each $[l]\xrightarrow{g}[n]$ to the corresponding object of $\ccat^\wrr_{/[n]}$ in which all edges are marked with $0$. This morphism induces a fully faithful inclusion $i_!:\cat_{/[n]}\hookrightarrow\cat^\wrr_{/[n]}$, and objects $\cF$ as above are easily seen to be precisely the objects in the image of $i_!$, which concludes the proof.
\end{proof}
\begin{notation}\label{not:comp}
For a given Segal space $\cC$ we denote by $\cat^{\lcomp,\wrr}_{/\cC}$ the full subcategory of $\cat^\wrr_{/\cC}$ on the morphisms over $\cC$ of the form $(\cE\rightarrow\cD)$ where $\cE$ is complete and by $\cat^{\comp,\wrr}_{/\cC}$ the full subcategory on the objects as above for which both $\cE$ and $\cD$ are complete.
\end{notation}
\begin{prop}
The natural inclusion $\cat^{\lcomp,\wrr}_{/\cC}\hookrightarrow\cat^\wrr_{/\cC}$ admits a left adjoint.
\end{prop}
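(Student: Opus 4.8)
The plan is to exhibit the inclusion as a reflective localization by writing down the reflector explicitly. Following \cref{prop:cat_ic} I regard an object of $\cat^\wrr_{/\cC}$ as a triple $(\cE\xrightarrow{H}\cD)$ of Segal spaces over $\cC$ with $H$ an isomorphism on objects and on the spaces of morphisms lying over every non-identity arrow of $\cC$; by \cref{not:comp} such an object is left-complete exactly when the source $\cE$ is a complete Segal space. The candidate reflector is
\[
L^\lcomp(\cE\xrightarrow{H}\cD)\bydef\bigl(\widehat{\cE}\xrightarrow{H'}\cD\sqcup_\cE\widehat{\cE}\bigr),
\]
where $\widehat{\cE}$ is the completion of $\cE$ furnished by \cref{prop:compl}, the pushout is taken in Segal spaces along $H$ and the completion map $p\colon\cE\to\widehat{\cE}$, and $H'$ is the resulting leg; the unit is the map induced by $p$ and $\id_\cD$.

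First I would use \cref{prop:wrr_colim} to write $\cat^\wrr_{/\cC}\cong\lim_{([n]\to\cC)\in\ccat^\nun_{/\cC}}\cat^\wrr_{/[n]}$ and reduce to the case $\cC\cong[n]$. The advantage is that each $[n]$ is itself a complete Segal space, so every equivalence of $[n]$ is degenerate; hence any invertible morphism of the source $\cE$ of an object of $\cat^\wrr_{/[n]}$ lies over an identity of $[n]$, i.e.\ is fibrewise. Consequently the map $\cE_0\to[n]_0$ is constant on equivalence classes of objects, so the structure map $\cE\to[n]$ factors canonically through $\widehat{\cE}$ and the pushout $\cD\sqcup_\cE\widehat{\cE}$ is a genuine object of $\cat_{/[n]}$. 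Granting that left-completeness is detected by the restriction functors, so that $\cat^{\lcomp,\wrr}_{/\cC}$ is the limit of the $\cat^{\lcomp,\wrr}_{/[n]}$, it then suffices to produce the reflectors $L^\lcomp_n$ and to verify that they commute with the restriction functors $g^*$ of \cref{prop:wrr_colim}, whereupon $\lim_n L^\lcomp_n$ is the desired left adjoint over $\cC$.

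Next I would check that $L^\lcomp_n$ lands in $\cat^{\lcomp,\wrr}_{/[n]}$ and has the required universal property. Completeness of $\widehat{\cE}$ is \cref{prop:compl}. On objects the pushout gives $(\cD\sqcup_\cE\widehat{\cE})_0\cong\cD_0\sqcup_{\cE_0}\widehat{\cE}_0\cong\widehat{\cE}_0$ since $H_0$ is an isomorphism, so $H'$ is an isomorphism on objects; over each non-identity arrow $H$ is already an isomorphism, so the pushout only alters the completion direction of $\cE$ and $H'$ remains an isomorphism there. For the adjunction, given a left-complete target $(\cF\xrightarrow{K}\cG)$ the mapping space $\mor_{\cat^\wrr_{/[n]}}((\cE\to\cD),(\cF\to\cG))$ is the fibre product $\mor_{\cat_{/[n]}}(\cE,\cF)\times_{\mor_{\cat_{/[n]}}(\cE,\cG)}\mor_{\cat_{/[n]}}(\cD,\cG)$, with the two legs given by postcomposition with $K$ and precomposition with $H$; since $\cF$ is complete, \cref{prop:compl} replaces $\mor(\cE,\cF)$ by $\mor(\widehat{\cE},\cF)$, and rewriting $\mor(\cD\sqcup_\cE\widehat{\cE},\cG)\cong\mor(\cD,\cG)\times_{\mor(\cE,\cG)}\mor(\widehat{\cE},\cG)$ and cancelling the common factor yields precisely $\mor_{\cat^\wrr_{/[n]}}(L^\lcomp_n(\cE\to\cD),(\cF\to\cG))$.

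The main obstacle I anticipate is controlling the Segal pushout $\cD\sqcup_\cE\widehat{\cE}$: pushouts of Segal spaces are computed by Segalifying the levelwise pushout, so confirming that the result still satisfies the isomorphism-on-non-identity-morphisms condition of \cref{prop:cat_ic} requires an explicit identification of its morphism spaces. Here I expect the relation $\cE_1\cong\cE_0\times_{\widehat{\cE_0}}\widehat{\cE_1}\times_{\widehat{\cE_0}}\cE_0$ of \cref{prop:compl} to be decisive, since it exhibits $\cD\sqcup_\cE\widehat{\cE}$ as obtained from $\cD$ purely by base change of the object space along $\cE_0\to\widehat{\cE}_0$, from which the Segal condition and the two $\wrr$-conditions can be read off directly. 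The remaining, more bookkeeping-heavy point is the naturality needed for the assembly: that completion of the source commutes with the restriction functors $g^*$ of \cref{prop:wrr_colim}, and that left-completeness over $\cC$ indeed matches the limit of the fibrewise conditions, so that the $L^\lcomp_n$ glue into a reflector over an arbitrary $\cC$.
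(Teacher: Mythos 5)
Your reflector (complete the source, push out the target) is, where it makes sense, equivalent to the paper's, and the formal cancellation argument in your third paragraph (fibre-product description of mapping spaces in $\cat^\wrr_{/[n]}$, the completion adjunction against a complete $\cF$, rewriting maps out of the pushout) is sound. But the proof has two genuine gaps, and the first is a step that actually fails as stated. Your assembly over a general $\cC$ rests on the claim that left-completeness is detected by the restriction functors, i.e.\ that $\cat^{\lcomp,\wrr}_{/\cC}\cong\underset{([n]\rightarrow\cC)}{\lim}\,\cat^{\lcomp,\wrr}_{/[n]}$. Take $\cC\cong c_1$, the Segal space with two objects and an invertible morphism $\phi$ between them, and the object $(\id\colon c_1\rightarrow c_1)$ of $\cat^\wrr_{/c_1}$: its restriction along any $[n]\rightarrow c_1$ is $(\id\colon[n]\rightarrow[n])$, which is left-complete, yet $c_1$ is not complete. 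Worse, this object admits no morphism at all to any left-complete object: if $\cE'$ is complete, then by the paper's definition of completeness any map $u\colon c_1\rightarrow\cE'$ factors through $\cE'_0\hookrightarrow\cE'$, so the composite with the structure map $\cE'\rightarrow c_1$ sends $\phi$ to a degenerate edge of $c_1$, contradicting the requirement that this composite be homotopic to $\id_{c_1}$. Hence over $c_1$ no reflection exists and your fibrewise reflectors cannot glue. What this shows is that the reduction needs $\cC$ complete (so that equivalences of the source lie over identities of $\cC$, hence in fibres, which is what makes detection true) or else "left-complete" must be read fibrewise; the paper's own one-line proof --- compose the lax functor of \cref{thm:lax_corr} with $\widehat{(-)}\colon\corr\rightarrow\corr^\comp$ of \cref{cor:comp_corr}, whose unit $p_!$ and its properties were already supplied by \cref{prop:univ} --- is exactly the fibrewise completion and silently adopts that reading. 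Your argument must make this hypothesis explicit and then prove the detection statement.

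The second gap is the point you flag as the main obstacle, and the strategy you sketch for it is not correct as stated. The pushout $\cD\sqcup_\cE\widehat{\cE}$ is not obtained from $\cD$ by base change of object spaces: \cref{prop:compl} exhibits $\cE_1$ as a pullback of $\widehat{\cE}_1$, i.e.\ it presents $\widehat{\cE}_1$ as a descent-type quotient of $\cE_1$ along the effective epimorphism $\cE_0\rightarrow\widehat{\cE}_0$, so the morphism spaces of the pushout are colimits (Cech-style quotients of $\cD_1$ by the two-sided action of the equivalences of $\cE$), and they are only computed after Segalifying the levelwise pushout. To know that $(\widehat{\cE}\rightarrow\cD\sqcup_\cE\widehat{\cE})$ is an object of $\cat^{\lcomp,\wrr}_{/[n]}$ in the sense of \cref{prop:cat_ic} you must show that Segalification preserves the space of objects and creates no new morphisms over non-identity arrows of $[n]$; this is an argument of the same nature as the Quillen Theorem B computation in \cref{lem:fact_cat} (or \cref{prop:monad_v2}), not something that can be read off from \cref{prop:compl}. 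Until that is supplied, the candidate reflector is not known to land in the subcategory, and the universal-property computation has nothing to apply to. This is precisely the technical work the paper's route avoids by constructing the completion functor directly on $\corr$ in Section 3.
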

\begin{proof}
Indeed, the left adjoint in question is easily seen to be the functor sending $\cC\rightsquigarrow\corr$ to $\cC\rightsquigarrow\corr\xrightarrow{\widehat{(-)}}\corr$, where $\widehat{(-)}$ is the functor of \cref{cor:comp_corr}.
\end{proof}
\begin{remark}
It follows from \cref{rem:bicat} that, given a monad $T$ in $\cat$ over a Segal space $\cC$, we have an induced monad $T_!$ in $\corr$. Applying to it the construction of \cref{thm:lax_corr} we obtain a "naive" version of the Kleisli category $\cK(T)$. More specifically, it is a Segal space with the same space of objects as $\cC$ whose morphisms are given by \[\mor_{\cK(T)}(x,y)\bydef\mor_\cC(x,Ty)\]
with composition given by $f\xrightarrow{f}Ty\xrightarrow{Tg}T^2z\xrightarrow{m}Tz$. The problem with this construction is that, even if $\cC$ is a complete Segal space, $\cK(T)$ generally is not.
\end{remark}
\begin{prop}\label{prop:monad_v2}
Given a complete Segal space $\cC$ there is an equivalence between $\cat^{\lcomp,\wrr}_{/\cC}$ and the category $\cat^{\overset{\eso}{\rightarrow}}_{/\cC}$ whose objects are morphisms $\cD\xrightarrow{f}\cE$ over $\cC$ that induce an effective epimorphism on the space of objects and such that for every non-unital morphism $f:x\rightarrow y$ in $\cC$ we have an isomorphism
\[(\cD_f)^\sim\cong \cD_{0,x}\times_{\cE_{0,x}}(\cE_f)^\sim\times_{\cE_{0,y}}\cD_{0,y}.\]
\end{prop}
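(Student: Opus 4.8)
The plan is to realise the claimed equivalence as a fibrewise, over-$\cC$ version of the comparison between a Segal space and its completion recorded in \cref{prop:compl}, whose formula $\cC_1\cong\cC_0\times_{\widehat\cC_0}\widehat\cC_1\times_{\widehat\cC_0}\cC_0$ is literally the pullback condition appearing in the definition of $\cat^{\overset{\eso}{\rightarrow}}_{/\cC}$. Since by \cref{prop:wrr_colim} we have $\cat^\wrr_{/\cC}\cong\underset{([n]\to\cC)\in\ccat^\nun_{/\cC}}{\lim}\cat^\wrr_{/[n]}$, and the left-completeness and $\eso$-conditions are both tested fibrewise over the simplices of $\cC$, I would first construct the comparison for $\cC\cong[n]$ and then check it is natural in $([n]\to\cC)$ so that it descends to the limit. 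At the conceptual level this is the statement that, after \cref{thm:lax_corr} identifies both sides with subcategories of lax functors into $\corr$, left-completeness singles out those lax functors whose values are complete, and \cref{cor:corr_comp} expresses the resulting correspondences purely through completions.

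The forward functor sends a left-complete object, which by \cref{prop:cat_ic} is a map $H\colon\cE\to\cD$ over $\cC$ with $\cE$ complete, inducing an isomorphism on objects and on $(\cE_f)^\sim\wrr(\cD_f)^\sim$ for every non-identity $f$, to the composite $\cE\xrightarrow{p_\cD\circ H}\widehat\cD$ whose domain is the complete $\cE$ and whose codomain is the completion $\widehat\cD$. On objects this is $\cE_0\cong\cD_0\twoheadrightarrow\widehat\cD_0$, which is an effective epimorphism by the fact used in the proof of \cref{prop:monadicity}. For the pullback condition over a non-unital $f\colon x\to y$ I would combine the isomorphism $(\cE_f)^\sim\cong(\cD_f)^\sim$ coming from left-completeness with \cref{prop:compl} applied fibrewise over $f$, namely $(\cD_f)^\sim\cong\cD_{0,x}\times_{\widehat\cD_{0,x}}(\widehat\cD_f)^\sim\times_{\widehat\cD_{0,y}}\cD_{0,y}$, and then replace $\cD_{0,x},\cD_{0,y}$ by the isomorphic $\cE_{0,x},\cE_{0,y}$; this yields exactly the required $(\cE_f)^\sim\cong\cE_{0,x}\times_{\widehat\cD_{0,x}}(\widehat\cD_f)^\sim\times_{\widehat\cD_{0,y}}\cE_{0,y}$, so the image lands in $\cat^{\overset{\eso}{\rightarrow}}_{/\cC}$.

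The inverse functor is the base-change, or ``uncompletion'', reconstruction: from an object $C\to D$ of $\cat^{\overset{\eso}{\rightarrow}}_{/\cC}$ one recovers the non-complete codomain $\cD$ as the base change of $D$ along the effective epimorphism $C_0\to D_0$, the pullback condition guaranteeing that over each non-identity $f$ this reproduces $(\cD_f)^\sim$, and one takes $C$ for the complete domain. The heart of the argument, and the step I expect to be the main obstacle, is showing that these two constructions are mutually inverse: one must verify that base-changing and then completing returns the original $D$ (for which \cref{prop:univ}, giving $p_!p^*\cong\id$ and $p^*p_!\cong\id$, identifies the morphism spaces of the reconstructed object with those of $D$), and conversely that completing the codomain of a left-complete object and then base-changing recovers $\cD$ on the nose. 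The genuinely delicate point is the behaviour over \emph{unital} morphisms of $\cC$, where no pullback constraint is imposed and the two descriptions differ precisely by the fibrewise completion that left-completeness encodes; handling this, together with checking that the whole construction is functorial in $([n]\to\cC)$ so as to glue over $\ccat^\nun_{/\cC}$, is where the real work lies.
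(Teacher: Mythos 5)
Your two functors coincide with the paper's: completion of the target in one direction, base change of the target along $\cD_0\rightarrow\cE_0$ in the other. Your forward direction (effective epi on objects via the argument of \cref{prop:monadicity}, pullback condition via \cref{prop:compl} applied over each morphism of $\cC$) and your treatment of the composite ``complete, then base-change'' agree with the paper's proof, which also handles it by the formula of \cref{prop:compl}. The preliminary reduction to $\cC\cong[n]$ is harmless but unnecessary; the paper argues directly over $\cC$.

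The genuine gap is in the other composite, the one you yourself call the heart of the argument: showing that for $(\cD\xrightarrow{f}\cE)\in\cat^{\overset{\eso}{\rightarrow}}_{/\cC}$, completing the base change $\overline{\cE}\bydef\cD_0\times_{\cE_0}\cE\times_{\cE_0}\cD_0$ returns $\cE$. You propose to deduce this from \cref{prop:univ}, but that proposition cannot do this job: it only asserts that the correspondence $p_!$ associated to a completion map is invertible in $\corr$; it does not compute the completion of $\overline{\cE}$, and --- decisively --- it makes no use of the hypothesis that $\cD_0\rightarrow\cE_0$ is an \emph{effective} epimorphism, which is indispensable here (if $\cD_0\rightarrow\cE_0$ failed to be surjective, the base change would simply lose the objects not hit by $\cD_0$, and no completion could restore them, so the composite could not be the identity). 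The paper's proof instead computes $\widehat{\overline{\cE}}$ explicitly from the realization formula of \cite{ayala2018flagged}: completeness of $\cE$ identifies the core $(\overline{\cE})^\sim$ with the Cech nerve of $\cD_0\rightarrow\cE_0$, so that
\[\widehat{\overline{\cE}}_0\cong\underset{[n]\in\Delta^\op}{\colim}\,(\overline{\cE})^\sim_n\cong\underset{[n]\in\Delta^\op}{\colim}\,\overbrace{\cD_0\times_{\cE_0}\dots\times_{\cE_0}\cD_0}^\text{$n$ times}\cong\cE_0\]
precisely by effectivity of the epimorphism, and an analogous colimit computation gives $\widehat{\overline{\cE}}_b\cong\cE_b$ over every morphism $b$ of $\cC$. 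This Cech-nerve descent is what your outline is missing. Note also that the delicate point is not the behaviour over unital morphisms, as you suggest --- in the other composite those are taken care of by \cref{prop:compl} --- but exactly this identification of the completion of the base change, which lives entirely in the hypotheses (completeness of $\cE$ plus effectivity) that your appeal to \cref{prop:univ} ignores.
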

\begin{proof}
Given an object $\cD\xrightarrow{g}\cK$ of $\cat^{\lcomp,\wrr}_{/\cC}$ observe that since $\cC$ is complete, the morphism $\cK\rightarrow\cC$ factors through $\widehat{\cK}$. Observe that the composite $\cD\rightarrow\cK\rightarrow\widehat{\cK}$ is essentially surjective on objects and moreover we have 
\[\cD_{0,x}\times_{\widehat{\cK}_{0,x}}\widehat{\cK}^\sim_f\times_{\widehat{\cK}_{0,y}}\cD_{0,y}\cong \cD_{0,x}\times_{\cK_{0,x}}\cK^\sim_f\times_{\cK_{0,y}}\cD_{0,y}\cong\cD^\sim_f,\]
where we have used the isomorphism \[\cK_{0,x}\times_{\widehat{\cK}_{0,x}}\widehat{\cK}^\sim_f\times_{\widehat{\cK}_{0,x}}\cK_{0,x}\cong\cK^\sim_f\]
of \cref{prop:compl} and the isomorphisms $\cD_{0,x}\cong\cK_{0,x}$ and $\cD_{0,y}\cong\cK_{0,y}$ which follow from the definition of $\cat^\wrr_{/\cC}$. It follows that $\cD\rightarrow\widehat{\cK}$ is an object of $\cat^{\overset{\eso}{\rightarrow}}_{/\cC}$, and so we have defined a functor $F:\cat^{\lcomp,\wrr}_{/\cC}\rightarrow\cat^{\lcomp,\overset{\eso}{\rightarrow}}_{/\cC}$. Conversely, given an object $\cD\xrightarrow{f}\cE$ of $\cat^{\overset{\eso}{\rightarrow}}_{/\cC}$ we will define a functor $\ccat^\wrr_{/\cC}\rightarrow\cS$ by sending every $0$-marked edge $a:[1]\rightarrow\cC$ to $\cD^\sim_f$ and every $1$-marked edge $b:\langle1\rangle\rightarrow\cC$ to $\cD_{0,s(b)}\times_{\cE_{0,s(b)}}\cE^\sim_b\times_{\cE_{0,t(b)}}\cD_{0,t(b)}$ with the obvious operation of composition. This defines a functor $G:\cat^{\lcomp,\overset{\eso}{\rightarrow}}_{/\cC}\rightarrow\cat^{\lcomp,\wrr}_{/\cC}$.\par
The fact that $G\circ F\cong\id$ once again follows from the isomorphism \[\cK_{0,x}\times_{\widehat{\cK}_{0,x}}\widehat{\cK}^\sim_f\times_{\widehat{\cK}_{0,x}}\cK_{0,x}\cong\cK^\sim_f\]
of \cref{prop:compl}. It remains to prove that $F\circ G\cong\id$. So assume we are given an object $\cD\xrightarrow{f}\cE$ of $\cat^{\overset{\eso}{\rightarrow}}_{/\cC}$, denote by $\cD\xrightarrow{\overline{f}}\overline{\cE}$ the objects $G(\cD\xrightarrow{f}\cE)$. First, observe that the space of morphisms of $\overline{\cE}$ is isomorphic to $\cD_0\times_{\cE_0}\cE_1\times_{\cE_0}\cD_0$. In particular, since $\cE$ is complete we see that the subcategory of invertible morphisms $(\overline{\cE})^\sim$ has $\cD_0\times_{\cE_0}\cD_0$ as the space of morphisms. By examining the constructions of \cite{ayala2018flagged}, we see that 
\[\widehat{\overline{\cE}}_0\cong\underset{[n]\in\Delta^\op}{\colim}(\overline{\cE})^\sim_n\cong \underset{[n]\in\Delta^\op}{\colim}\overbrace{\cD_0\times_{\cE_0}...\times_{\cE_0}\cD_0}^\text{n times}\cong \cE_0,\]
where the last isomorphism follows since $\cD_0\rightarrow\cE_0$ is an epimorphism. Similarly we see that for a morphism $b:x\rightarrow y$ in $\cC$ we have
\begin{align*}
        \widehat{\overline{\cE}}_b\cong&\underset{([n],[m])\in\Delta^\op\times\Delta^\op}{\colim}(\overline{\cE})^\sim_n\times_{\cD_0}\overline{\cE}_b\times_{\cD_0} (\overline{\cE})^\sim_m\\
        \cong&\underset{([n],[m])\in\Delta^\op\times\Delta^\op}{\colim}(\overline{\cE})^\sim_n\times_{\cD_0}\cD_0\times_{\cE_0}\cE_b\times_{\cE_0}\cD_0\times_{\cD_0}(\overline{\cE})^\sim_m\\
        \cong&\underset{([n],[m])\in\Delta^\op\times\Delta^\op}{\colim}(\overline{\cE})^\sim_n\times_{\cE_0}\cE_b\times_{\cE_0}(\overline{\cE})^\sim_m\\
        \cong& \cE_0\times_{\cE_0}\cE_b\times_{\cE_0}\cE_0\cong\cE_b
\end{align*}
which concludes the proof that $F\circ G\cong \id$.
\end{proof}
\begin{theorem}\label{thm:lax_colim}
Given an object $(\cD\xrightarrow{f}\cE)$ of $\cat^{\wrr}_{/\cC}$, the category $\cE$ can be characterized as the lax colimit of the corresponding lax functor $F:\cC\rightsquigarrow\corr$.
\end{theorem}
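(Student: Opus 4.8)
The plan is to prove the characterization by reducing to the case $\cC\cong[n]$ and there identifying both the lax cocone space and the correspondence space with one and the same explicit category of sections.

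First I would set up the reduction. Let $F:\cC\rightsquigarrow\corr$ be the lax functor corresponding to $(\cD\xrightarrow{f}\cE)$ under \cref{thm:lax_corr}; we must produce an isomorphism $\coc^\lax_\corr(F,b)\cong\mor_\corr(\cE,b)$ natural in $b\in\corr$, which by \cref{def:coc} is exactly the assertion that $\cE\cong\colim^\lax_{c\in\cC}F(c)$. By \cref{prop:colim_cocone} we have $\coc^\lax_\corr(F,b)\cong\lim_{f:[n]\to\cC}\coc^\lax_\corr(F\circ f,b)$. On the other side, writing $\cE_{[n]}\bydef[n]\times_\cC\cE$ for the pullback, the Segal space $\cE$ is the colimit $\colim_{([n]\to\cC)\in\ccat^\nun_{/\cC}}\cE_{[n]}$ by universality of colimits computed on underlying simplicial spaces (this is the object-level content of \cref{prop:wrr_colim}, the passage to $\ccat^\nun_{/\cC}$ being justified by the cofinality of \cref{not:nun}); since by \cref{cor:corr_comp} we have $\mor_\corr(-,b)\cong\cP(\widehat b^\op\times\widehat{(-)})$, and both completion and $\cP$ carry colimits to limits, this gives $\mor_\corr(\cE,b)\cong\lim_{[n]\to\cC}\mor_\corr(\cE_{[n]},b)$. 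Because restriction of $F$ corresponds to pullback by the naturality of \cref{thm:lax_corr}, it suffices to treat $\cC\cong[n]$, naturally in $b$.

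For $\cC\cong[n]$ the Segal space $\cE\to[n]$ has fibers $\cE_i$ and correspondences $\cE_{i,j}$ as in \cref{constr:n_string}, and $F$ sends $i$ to $\cE_i$ and the edge $i\to j$ to $\cE_{i,j}$. I would identify $\mor_\corr(\cE,b)$ with the evident $b$-relative analog $M_n^b$ of the category $M_n$ of \cref{constr:n_string}, whose objects are tuples $(\cF_i)$ with $\cF_i\in\mor_\corr(\cE_i,b)$ together with a section encoding coherent structure maps $\cF_j\circ\cE_{i,j}\to\cF_i$. The argument of \cref{prop:nat} applies essentially verbatim: the inclusion $j:\coprod_i\cE_i\hookrightarrow\cE$ is an isomorphism on objects, so by \cref{cor:dual} and \cref{prop:monadicity} the functor $\mor_\corr(\cE,b)\to\mor_\corr(\coprod_i\cE_i,b)\cong\prod_i\mor_\corr(\cE_i,b)$ is monadic, and the relevant monad $j^*j_!$ involves only the source $\cE$, hence is computed exactly as in \cref{prop:nat}; its algebras are precisely $M_n^b$. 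For the cocone side, a lax cocone of $F$ over $b$ is a lax functor $[n]^\triangleright\rightsquigarrow\corr$ extending $F$ and sending the cone point to $b$, whose structure $1$-morphisms $\cE_i\nrightarrow b$ are arbitrary correspondences (so \cref{thm:lax_corr} does not apply to it directly). Unwinding \cref{def:coc} via \cref{prop:colim_cocone} and the description $L^\lax[n]\cong\bimod_n$ of \cref{lem:bimod}, such a cocone is exactly the data of correspondences $\cF_i\in\mor_\corr(\cE_i,b)$ with coherent lax $2$-cells $\cF_j\circ\cE_{i,j}\to\cF_i$, i.e. an object of $M_n^b$; this is the many-object generalization of \cref{rem:monad_mod}, which is the case $n=0$. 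Combining the two identifications yields $\coc^\lax_\corr(F,b)\cong M_n^b\cong\mor_\corr(\cE,b)$ naturally in $b$.

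The main obstacle is the cocone identification: matching the lax-functor combinatorics of $\bimod_n$ and the coherence conditions of \cref{def:coc} with the section data of \cref{constr:n_string} requires careful bookkeeping of the composites $\cF_{f(l)}\circ\cE_{\bullet,\bullet}\circ\cdots\to\cF_{g(m)}$ and their associativity constraints, which is exactly the work that \cref{rem:monad_mod} performs in the monadic case $n=0$ and that must now be done uniformly across all simplices of $[n]$. By contrast, the $\mor_\corr(\cE,b)\cong M_n^b$ step and the passage from general $\cC$ to $[n]$ are formal consequences of \cref{prop:nat}, \cref{cor:corr_comp}, and \cref{prop:colim_cocone}.
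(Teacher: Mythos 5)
Your overall skeleton is sound --- reduce to $\cC\cong[n]$ and compare both sides through an explicit category of ``fibers plus module structure'' --- and the correspondence-side identification $\mor_\corr(\cE,b)\cong M_n^b$ via \cref{cor:dual}, \cref{prop:monadicity} and the monad computation of \cref{prop:nat} is fine. The genuine gap is on the cocone side, exactly where you flag ``careful bookkeeping'': the identification $\coc^\lax_\corr(F,b)\cong M_n^b$ is asserted rather than proved, and the route you propose for it starts from a misreading. You decline to use \cref{thm:lax_corr} on the grounds that the structure $1$-morphisms of a cocone are arbitrary correspondences, but the restriction in \cref{thm:lax_corr} concerns only the \emph{natural transformations} (the morphisms of $\mor^{\lax,!}_\cat(-,\corr)$), not the values of the lax functors: the objects of $\mor^{\lax,!}_\cat([n+1],\corr)$ are \emph{all} lax functors $[n]^\triangleright\cong[n+1]\rightsquigarrow\corr$, and morphisms of cocones have identity components (\cref{def:coc}), which are of the form $(\id)_!$. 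So \cref{thm:lax_corr} applies verbatim and converts a lax cocone into an object of $\cat^\wrr_{/[n+1]}$ restricting to $(\cD\xrightarrow{f}\cE)$ over $[0,n]$ and to the trivial object over the cone point; this is precisely how the paper proceeds. By bypassing it you commit yourself to unwinding the full lax-functor coherence of $\bimod_n$ by hand, which is exactly the content that \cref{thm:lax_corr} exists to package; ``generalizing \cref{rem:monad_mod} uniformly across all simplices'' is not bookkeeping but the heart of the theorem, and nothing in your proposal carries it out.

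There is a second, related omission. The cocone's structure $1$-morphisms are correspondences out of $F(i)=\cD_i$, not out of $\cE_i$ (cf.\ the proof of \cref{thm:art}: the monad obtained by restricting $F$ to $\{i\}$ is $f_i^*f_{i,!}$, a monad on $\cD_i$), and each comes with a right module structure over that monad arising from the degenerate simplices. Your description of a cocone as ``correspondences $\cF_i\in\mor_\corr(\cE_i,b)$ with lax $2$-cells $\cF_j\circ\cE_{i,j}\to\cF_i$'' silently performs the conversion from $\cD_i$-data with module structure to $\cE_i$-data and discards the extra compatibilities involving the fibers of $\cD$. Both points require proof, and they are where the paper spends its effort: first the special case where $F$ lands in $\spanc$ (so $\cD$ is constant and the monads are trivial), handled via \cref{cor:lax_span}, the auxiliary category $\widetilde{\ccat}_{/[n+1]}$, a cofinality computation, and Lurie's monadicity theorem, identifying the cocone category with the algebras for the monad $j^*j_!$ of \cref{prop:nat}; then the general case, where the defining isomorphism of $\cat^\wrr_{/\cC}$ between the fibers of $\cD$ and $\cE$ over non-identity morphisms is used to show that the $\cD$-level action data is uniquely determined by the $\cE$-level data. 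Your proposal contains neither step, so as it stands it establishes the formal half of the argument and restates the substantive half as a goal.
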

\begin{proof}
First, observe that since 
\[\mor_\corr(\cE,\cK)\cong\mor_\corr(\cE\times\cK^\op,*)\]
by \cref{cor:dual}, it suffices to prove that
\[\coc_\corr^\lax(F,*)\cong\mor_\corr(\cE,*).\]
For $g:[n]\rightarrow\cC$ denote by $\cE_g$ the pullback $\cE\times_\cC [n]$, then we have
\[\cE\cong\underset{g:[n]\rightarrow\cC}{\colim}\cE_g.\]
Combining this with \cref{prop:colim_cocone}, we see that it suffices to prove the proposition for $\cC\cong[n]$.\par
We first prove the claim under the assumption that the functor $F:[n]\rightsquigarrow\corr$ actually lands in $\spanc$. Then it follows from the definition of the lax cocone together with \cref{prop:span_subcat} and the fact that $*\in\spanc$ that $\coc_\corr^\lax(F,*)\cong\coc_\spanc^\lax(F,*)$. It now follows from \cref{cor:lax_span} that giving a lax cocone under $F$ is equivalent to giving an object $\cF\in\ccat_{/[n+1]}$ such that its restriction to $[0,n]\subset[n+1]$ is isomorphic to $\cE\in\ccat_{/[n]}$ and such that $\cF_{n+1}\cong *$.\par
Denote by $\widetilde{j}:\widetilde{\ccat}_{/[n+1]}\hookrightarrow\ccat_{/[n+1]}$ the subcategory having the same objects and such that for $h:[m]\rightarrow[n]$ and $g:[l]\rightarrow[n]$ we have \[\mor_{\widetilde{\ccat}_{/[n]}}(h,g)\cong \mor_{\ccat_{/[n]}}(h,g)\]
if $h(m)<n+1$ and if $h(m)=n+1$ denote by $k_g$ the object of $[l]$ for which $g(k_g)=j<n+1$ but $g(k_g+1)=n+1$ if such an object exists (and introduce similar notation for $h$), then $\mor_{\widetilde{\ccat}_{/[n]}}(h,g)$ is given by those morphisms $g\rightarrow h$ over $[n]$ in $\Delta$ for which $g(l)=n+1$ and $g(k_g)=h(k_f)$. Observe that the category of functors $\widetilde{\ccat}_{/[n+1]}\rightarrow\cS$ that satisfy the Segal condition and whose restriction to $\ccat_{/[n]}\subset\widetilde{\ccat}_{/[n+1]}$ is given by $\cE$ is isomorphic to $\mor_\corr(\coprod_{i=1}^n\cE_i,*)$. Indeed, it is easy to see from the definition of $\widetilde{\ccat}_{/[n+1]}$ that the functors $\widetilde{\cF}$ satisfying these conditions are determined by their values $\cF_i$ on $[1]\xrightarrow{[i,n+1]}[n+1]$ an those $\cF_i$ have no additional structure except for morphisms $(\cE_{i})_n\times_{(\cE_i)_0}\cF_i\rightarrow\cF_i$ that assemble to give them the structure of an object of $\mor_\corr(\cE_i,*)$. It will follow from the calculation below that $\widetilde{j}$ takes $\mor_\corr(\coprod_{i=1}^n\cE_i,*)$ to $\coc_\spanc^\lax(F,*)$ and that $\widetilde{j}^*\widetilde{j}_!\widetilde{\cF}\cong j^*j_!\widetilde{\cF}$ in the notation of \cref{prop:nat}. Since $\widetilde{j}$ is surjective on objects, it will then follow from \cite[Theorem 4.7.3.5.]{luriehigher} that $\coc_\spanc^\lax(F,*)$ is isomorphic to the category of algebras for the monad $\widetilde{j}^*\widetilde{j}_!\cong j^*j_!$, which is in turn isomorphic to $\mor_\corr(\cE,*)$ by the proof of \cref{prop:nat}.\par
Finally, observe that 
\[(\widetilde{j}^*\widetilde{j}_!\cF)_i\cong\underset{g\in\ccat_{i,/[n]}}{\colim}\cE_{i,g(1)}\times_{(\cE_{g(1)})_0}\cE_{g(1),g(2)}\times_{(\cE_{g(1)})_0}...\times_{(\cE_{g(m)})_0}\cF_{g(m)}\]
where $\ccat_{i,/[n]}$ is the category whose objects are morphisms $g:[m+1]\rightarrow[n+1]$ for which $g(m+1)=n+1$ and whose morphisms are morphisms $g\rightarrow f$ in $\widetilde{\ccat}_{/[n]}$. Denote by $\overline{\ccat}_{i,/[n]}$ the full subcategory of $\ccat_{i,/[n]}$ on those $g$ for which $g(1)=g(k_g)$, then $\overline{j}:\overline{\ccat}_{i,/[n]}\hookrightarrow\ccat_{i,/[n]}$ is a cofinal subcategory. Indeed, for every $g\in\ccat_{i/[n]}$ the category $g/\overline{j}$ contains the final object given by the natural morphism $\overline{g}\rightarrow g$ over $[n]$ in $\Delta$, where $\overline{g}$ is obtained from $g$ by deleting all the points $k>0$ of $[m+1]$ such that $g(k)<g(k_g)$. It follows that
\begin{align*}
    (\widetilde{j}^*\widetilde{j}_!\cF)_i\cong&\underset{g\in\overline{\ccat}_{i,/[n]}}{\colim}\cE_{i,g(1)}\times_{\cE_{g(1)}}\cE_{g(1),g(2)}\times_{\cE_{g(1)}}...\times_{\cE_{g(m)}}\cF_{g(m)}\\
    \cong&\underset{(g:[1]\rightarrow[n], g(0)=i)}{\colim}\underset{l\in\Delta^\op}{\colim}\;\cE_{i,g(1)}\times_{(\cE_{g(1)})_0}(\cE_{g(1)})_l\times_{(\cE_{g(1)})_0}\cF_{g(1)}\\
    \cong&\coprod_{g:[1]\rightarrow[n],g(0)=i}\cF_{g(1)}\circ \cE_{g(0),g(1)},
\end{align*}
where in the last line we view $\cE_{g(0),g(1)}$ as a correspondence and $\circ$ denotes the composition of correspondences. It now follows from the proof of \cref{prop:nat} that $\widetilde{j}^*\widetilde{j}_!\cF\cong j^*j_!\cF$ which concludes the proof in this special case.\par
Now assume that $(\cD\xrightarrow{f}\cE)$ is a general object of $\ccat^\wrr_{/[n]}$. Just like in the special case treated above, it follows from \cref{thm:lax_corr} that giving an object of $\coc^\lax_\corr(F,*)$ is equivalent to providing an object of $\ccat^\wrr_{/[n]}$ of the form $((\cD,*)\xrightarrow{(f,\id)}(\cE,*))$. We will now show that this object is uniquely determined by its restriction $(\cE,*)$ to $\ccat_{/[n]}$. Assume that we are given such an object, so in particular we are given the spaces $\cF_i$ lying over $[i,n+1]$ together with compatible system of actions $\cE_{i,j}\times_{(\cE_j)_0}\cF_j\rightarrow\cF_i$. We will prove that this uniquely determines the action of $\cD_{i,j}$ on $\cF_j$ for all $i$ and $j$, which would conclude the proof. Indeed, it follows from the definition of $\ccat^\wrr_{/[n]}$ that we have the following commutative diagram
\[
\begin{tikzcd}[row sep=huge, column sep=huge]
\cE_{i,j}\times_{(\cE_j)_0}\cF_j\arrow[r]&\cF_i\\
\cD_{i,j}\times_{(\cD_j)_0}\cF_j\arrow[u, "f"]\arrow[r]&\cF_i\arrow[u, "\sim"]
\end{tikzcd}
\]
in which the right vertical arrow is an isomorphism, which obviously proves our claim and concludes the proof.
\end{proof}
\begin{remark}
Since $\corr\cong\corr^\op$, the category $\cE$ is also a lax limit of the corresponding functor.
\end{remark}
\begin{lemma}\label{lem:fact_cat}
Fix a Segal space $\cC$ together with an object $(\cD\rightarrow\cE)$ of $\cat^\wrr_{/\cC}$ such that the image of all morphisms of $\cD$ is invertible in $\cE$. Then the left Kan extension $p_!:\mor_\cat((\ccat_{/\cC})^\inrt_{/U},\cS)\rightarrow\mor_\cat(\ccat_{/\cC},\cS)$ (where $U$ is the functor of \cref{not:forget}) takes $(\cD\rightarrow\cE)$ to an object of $\cat_{/\cC}$, where we view $\cat^\wrr_{/\cC}$ as a full subcategory of $\mor_\cat((\ccat_{/\cC})^\inrt_{/U},\cS)$ using \cref{rem:lax_fun}.
\end{lemma}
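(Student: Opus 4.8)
The plan is to reduce immediately to the case $\cC\cong[n]$. By \cref{prop:wrr_colim} we have $\cat^\wrr_{/\cC}\cong\lim_{([n]\to\cC)\in\ccat^\nun_{/\cC}}\cat^\wrr_{/[n]}$, and the target category decomposes compatibly as $\cat_{/\cC}\cong\lim_{([n]\to\cC)}\cat_{/[n]}$; since the invertibility hypothesis is local (it is preserved by base change $\cE\mapsto\cE\times_\cC[n]$, invertibility being detected fibrewise), and $p_!$ commutes with the restriction functors, it suffices to prove that $p_!$ sends $(\cD\to\cE)\in\cat^\wrr_{/[n]}$ with all $\cD$-morphisms invertible in $\cE$ to a Segal $\ccat_{/[n]}$-space. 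Throughout I view $\cF\bydef(\cD\to\cE)$ as a presheaf on $\bigl((\ccat_{/[n]})^\inrt_{/U}\bigr)^\op$ through \cref{rem:lax_fun}, and recall from \cref{rem:seg_con} that its restriction $\cF_g$ to each fibre $p_g\cong\ccat^\cell_{/[k]}$ (for $g\colon[k]\to[n]$, \cref{prop:forget}) is an object of $\corr_k\cong\cat^\cell_{/[k]}$.

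Next I would compute $p_!\cF(g)$ explicitly. Writing $E\bydef(\ccat_{/[n]})^\inrt_{/U}$ and $B\bydef\ccat_{/[n]}$, the left Kan extension is the colimit of $\cF$ over the comma category $E\times_{B}B_{/g}$. The slice $B_{/g}$ has terminal object $(g,\id_{[k]})$, whose fibre in $E$ is exactly $p_g\cong\ccat^\cell_{/[k]}$; a general object is a factorization $[k]\xrightarrow{\psi}[k']\xrightarrow{g'}[n]$ of $g$ together with a cellular $f\in\ccat^\cell_{/[k']}$. Carrying out the inner colimit over each fibre identifies it with $i^\cell_{k',!}\cF_{g'}(\id_{[k']})$ by \cref{prop:cell_def}, so that
\[
p_!\cF(g)\cong\underset{\bigl([k]\xrightarrow{\psi}[k']\xrightarrow{g'}[n]\bigr)}{\colim}\ i^\cell_{k',!}\cF_{g'}(\id_{[k']}),
\]
the transition maps along $\psi$ being governed by the correspondences $h^*_\cell$ of \cref{prop:forget} and \cref{rem:corr_corr}. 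Because $p$ is a \emph{correspondence}-fibration rather than a coCartesian one, the inclusion of the terminal fibre is not formally cofinal, and this outer colimit is where all the content sits.

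The heart of the argument, and the step I expect to be the main obstacle, is to show that this outer colimit collapses to the terminal-factorization value $i^\cell_{k,!}\cF_g(\id_{[k]})$, and that \emph{this is precisely where the invertibility hypothesis is used}. A refinement $\psi$ that subdivides an edge of $[k]$ inserts, through the correspondence $h^*_\cell$ and the composition formula for correspondences (\cref{prop:fun_comp}), a bar-type simplicial object built from the fibres of $\cD$ over the collapsed edges. The hypothesis that every morphism of $\cD$ becomes invertible in $\cE$ forces each such simplicial direction to be group-complete, so that — using that $\cS$ is an $\infty$-topos and colimits are universal — its realization contributes no new data and the corresponding comma category is contractible. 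This yields cofinality of $p_g\hookrightarrow E\times_B B_{/g}$ and hence $p_!\cF(g)\cong i^\cell_{k,!}\cF_g(\id_{[k]})$. Without invertibility these degenerate directions survive, $p_!\cF$ computes a genuinely lax (rather than honest) colimit, and need not be Segal — which is exactly why the hypothesis appears.

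Finally I would deduce the Segal condition. By \cref{prop:cell_def}, $i^\cell_{k,!}\cF_g$ is a Segal space over $[k]$, so its value at the top cell $\id_{[k]}$ is the spine limit of its values on the edges $e_i\colon[1]\to[k]$ and vertices $v_i\colon[0]\to[k]$. Unwinding the identification of the previous step, these edge- and vertex-values agree with $p_!\cF(g\circ e_i)$ and $p_!\cF(g\circ v_i)$ computed directly (both reduce to $i^\cell_{1,!}\cF_{g\circ e_i}(\id_{[1]})$ and $(\cD_{g(i)})_0$ respectively, by compatibility of the fibre restrictions). Hence $p_!\cF(g)\cong\lim_{(g\rightarrowtail e)\in(\ccat_{/[n]})^\el_{g/}}p_!\cF(e)$, which is exactly the Segal condition for a $\ccat_{/[n]}$-space, so $p_!\cF\in\cat_{/[n]}$ and, reassembling, $p_!(\cD\to\cE)\in\cat_{/\cC}$. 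The only genuinely hard input is the collapse in the third paragraph; the rest is bookkeeping with the fibrewise cellular Kan extension already established in \cref{prop:cell_def} and \cref{prop:forget}.
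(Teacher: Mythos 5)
Your overall scaffolding (reduce to $\cC\cong[n]$, compute $p_!\cF$ fibrewise, deduce the Segal condition) matches the paper's, but two of your central claims are false, and together they invert where the actual content of the lemma lies. First, the identification $p_!\cF(g)\cong i^\cell_{k,!}\cF_g(\id_{[k]})$ cannot hold: $i^\cell_k$ is fully faithful, so $i^\cell_{k,!}\cF_g(\id_{[k]})\cong\cF_g(\id_{[k]})$ is just the spine value of $\cF_g$, whereas the colimit that the cofinality argument actually produces is the colimit of $\cF_g$ over the \emph{entire} fibre $p^{-1}(g)\cong\ccat^\cell_{/[k]}$, a bar-type realization. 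Test this with $\cC\cong *$ and $g=\id_{[0]}$: the fibre is $\ccat^\cell_{/[0]}\cong\Delta^\op$ with $\cF_g([l]\rightarrow[0])\cong\cD_l$, so $p_!\cF(\id_{[0]})\cong\underset{\Delta^\op}{\colim}\,\cD_\bullet$, the geometric realization of $\cD$ --- exactly what \cref{rem:fact_cat} describes and what \cref{prop:bo} relies on (there $L^\bo\cD_0\cong\cC_0$ holds because one realizes the Cech nerve) --- while your formula would give $\cD_0$. These differ even under the invertibility hypothesis (take $\cD$ the walking isomorphism). The same error propagates into your last paragraph: the vertex values of $p_!\cF$ are the realizations $\underset{\Delta^\op}{\colim}\,(\cD_{g(i)})_\bullet$, not $(\cD_{g(i)})_0$, so your ``bookkeeping'' comparison of spine limits compares the wrong objects.

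Second, you attribute the invertibility hypothesis to the cofinality of $p^{-1}(g)\hookrightarrow(p/g)$, but that cofinality is purely combinatorial: the paper deduces it from the cofinality statements already proved in \cref{prop:corr}, with no hypothesis on $(\cD\rightarrow\cE)$ whatsoever (and your justification --- ``group-completeness'' forcing the comma categories to be contractible --- is not an argument). Where invertibility genuinely enters is precisely the step you dismiss as bookkeeping. After the cofinal reduction, $p_!\cF(g)$ is the realization of a multisimplicial bar construction in which adjacent edge factors are glued over the \emph{varying} inserted objects $\cD^{i}_{m_i}$; the Segal condition is then the assertion that such realizations commute with fibre products over varying bases, which is \cref{eq:five}. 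That is not formal --- contrast \cref{prop:cell_def}, where the gluing is over the \emph{fixed} vertex spaces $\cF_i$ and universality of colimits in $\cS$ suffices; here the base of each fibre product moves with the colimit variable. The paper proves \cref{eq:five} using Quillen's Theorem B in the form of \cite{ayala2017fibrations}, and the hypothesis that all $\cD$-morphisms become invertible in $\cE$ is exactly what verifies the Theorem B condition that the simplicial structure maps induce equivalences on fibres. Without the hypothesis the lemma is false (e.g.\ $\id:\Delta^1\rightarrow\Delta^1$ over $\cC\cong *$: the levelwise bar construction computing the contraction fails the Segal condition), so any proof in which invertibility only feeds a cofinality argument, with the Segal condition following formally afterwards, cannot be repaired as stated.
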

\begin{proof}
First, observe that it is enough to prove the lemma for $\cC\cong[n]$. Indeed, it follows from \cref{prop:wrr_colim} and \cref{cor:slice} that a functor $\cF:\ccat^\wrr_{/\cC}\rightarrow\cS$ (resp. $\cE:\ccat_{/\cC}\rightarrow\cS$) satisfies the respective Segal condition if and only if this is true for all $v^*\cF$ (resp. $v^*\cE$) for $v\in\mor_\cat([n],\cC)$ and $n\geq0$. The claim now follows from the Beck Chevalley isomorphism for the pullback square
\[
\begin{tikzcd}[row sep=huge, column sep=huge]
(\ccat_{/[n]})^\inrt_{/U}\arrow[d, "v"]\arrow[r, "p"]&\ccat_{/[n]}\arrow[d, "v"]\\
(\ccat_{/\cC})^\inrt_{/U}\arrow[r, "p"]&\ccat_{/\cC}
\end{tikzcd}.
\]
The value of the left Kan extension on a given $w:[m]\rightarrow[n]$ is given by 
\[p_!\cF(w)\cong\underset{(p(x)\rightarrow w)\in(p/w)}{\colim}\cF(x).\]
Observe that there is a natural inclusion $i:p^{-1}(w)\hookrightarrow(p/w)$, we will prove that it is cofinal. To do this, first recall from \cref{prop:forget} that the fiber of $p$ over $w$ is given by $\ccat^\cell_{/[m]}$. It follows that the objects of $(p/w)$ are given by pairs $y\bydef(f:[m]\rightarrow[l],h:[k]\rightarrow[l])$ where $f$ is a morphism over $[n]$ and $h$ is a cellular morphism. The objects of $(y/i)$ are then given by pairs $(g:[k]\rightarrow[m], \alpha:f\circ g\cong h)$. Observe that we then have the following commutative diagram 
\[
\begin{tikzcd}[row sep=huge, column sep=huge]
(y/i)\arrow[r, "m"]\arrow[d, hook, "i"]&\{h\}\arrow[d, hook]\\
i^\cell_l/f/g\arrow[r, "m"]&i^\cell_l/f\circ g
\end{tikzcd}
\]
in the notation of \cref{prop:corr}. Now observe that since $h$ and $g$ are assumed to be cellular, the vertical maps are homotopy equivalences. The bottom horizontal map is cofinal by the proof of \cref{prop:corr}, so in particular it is a homotopy equivalence. It follows that the top horizontal map is also a homotopy equivalence, which means exactly that $(y/i)$ is contractible, so $i$ is cofinal.\par
Now observe that the natural inclusion $i:\ccat^\wrr_{/[n]}\hookrightarrow(\ccat_{/[n]})^\inrt_{/U}$ is also cofinal, so it suffices to take colimit over the fiber of $U$. Finally, observe that this fiber is isomorphic to $(\Delta^\op)^{m}$, in particular it follows that \[U_w\cong\underset{(w\rightarrowtail e)\in(\ccat_{/[n]})^\el_{w/}}{\lim}U_e.\] 
Moreover, the Segal condition for $\ccat^\wrr_{/[n]}$ implies that \[\cF(x_w)\cong\underset{(w\overset{i}{\rightarrowtail} e)\in(\ccat_{/[n]})^\el_{w/}}{\lim}\cF(i_!x),\]
where $x_w$ is an element of $U_w$ and $\{i_!x\}$ are its projections to various $U_e$ under the isomorphism above. The only thing that remains to be proved is the fact that $U_w$-colimits distribute over $(\ccat_{/[n]})^\el_{w/}$-limits. We first make the condition more explicit. Denote by $\cE^i_m$ (resp. by $\cD_m^i$) for $i\in\{0,1,...,n\}$ the value of this space the constant morphism $\bm\rightarrow[n]$ (resp. $[m]\rightarrow[n]$) with value $i$ and by $\cE_1^{i,j}$ the value on $\langle1\rangle\rightarrow[n]$ that sends $0$ to $i$ and $1$ to $j$. In these notations we need to prove the following isomorphism
\begin{tiny}
\begin{align}\label{eq:five}
\begin{split}
    &\underset{\{m_i\}_{i\in\{0,1,...,n\}}\in(\Delta^\op)^n}{\colim} (\cD^0_{m_0}\times_{\cD_0^0}\cE^{0,1}_1\times_{\cD_0^1}\cD^1_{m_1})\times_{\cD^1_{m_1}}...\times_{\cD^{n-1}_{m_{n-1}}}(\cD^{n-1}_{m_{n-1}}\times_{\cD_0^{n-1}}\cE^{n-1,n}_1\times_{\cD_0^{n}}\cD^n_{m_n})\\
    \cong&\underset{(m_0,m_1)\in(\Delta^\op)^2}{\colim}(\cD^0_{m_0}\times_{\cD_0^0}\cE^{0,1}_1\times_{\cD_0^1}\cD^1_{m_1})\times_{\underset{m_1\in\Delta^\op}{\colim}\cD^1_{m_1}}...\times_{\underset{m_{n-1}\in\Delta^\op}{\colim}\cD^{n-1}_{m_{n-1}}}\underset{(m_{n-1},m_n)\in(\Delta^\op)^2}{\colim}(\cD^{n-1}_{m_{n-1}}\times_{\cD_0^{n-1}}\cE^{n-1,n}_1\times_{\cD_0^{n}}\cD^n_{m_n}).
\end{split}    
\end{align}
\end{tiny}
We first assume that $n=2$. Denote by \[p_{0,1}:\cD^{0,1}_{m_{0,1}}\times_{\cD_0^{0,1}}\cE^{(0,1),(1,2)}_1\times_{\cD_0^{1,2}}\cD^{1,2}_{m_{1,2}}\rightarrow\cD^{1}_{m_{1}}\]
and by
\[p_2:\cD^0_{m_0}\times_{\cD_0^0}\cE^{0,1}_1\times_{\cD_0^1}\cD^1_{m_1}\times_{\cD_0^1}\cE^{1,2}_1\times_{\cD_0^2}\cD^2_{m_2}\rightarrow\cD^0_{m_0}\times_{\cD_0^0}\cE^{0,1}_1\times_{\cD_0^1}\cD^1_{m_1}\] 
the natural projections, where we view $p_{0,1}$ (resp. $p_2$) as morphisms of the corresponding right fibrations over $(\Delta^\op)^2$ and $\Delta^\op$ (resp. over $(\Delta^\op)^3$ and $(\Delta^\op)^2$). With these notations we need to prove that the following commutative diagram
\[
\begin{tikzcd}[row sep=huge, column sep=huge]
{|\cD^0_{m_0}\times_{\cD_0^0}\cE^{0,1}_1\times_{\cD_0^1}\cD^1_{m_1}\times_{\cD_0^1}\cE^{1,2}_1\times_{\cD_0^2}\cD^2_{m_2}|}\arrow[r]\arrow[d, "|p_2|"]&{|\cD^0_{m_0}\times_{\cD_0^0}\cE^{0,1}_1\times_{\cD_0^1}\cD^1_{m_1}|}\arrow[d, "|p_0|"]\\
{|\cD^1_{m_1}\times_{\cD_0^1}\cE^{1,2}_1\times_{\cD_0^2}\cD^2_{m_2}|}\arrow[r, "|p_1|"]&{|\cD^1_{m_1}|}
\end{tikzcd}
\]
is a pullback square, where $|-|$ denotes the functor of geometric realization of an $\infty$-category. To prove it we will make use of Quillen's theorem B in the form it is presented in \cite[Theorem 5.16.]{ayala2017fibrations}. Under suitable conditions it allows us to identify the fiber of $|p_0|$ over $\overline{\alpha}$ in $|\cD^1_{m_1}|$ with $|p_0/\overline{\alpha}|$. Observe that, since $p_0$ is a coCartesian fibration, we have an isomorphism 
\begin{align}\label{eq:six}
    |p_0/\overline{\alpha}|\cong|p_0^{-1}(\overline{\alpha})|
\end{align}
and that similar statement holds for $p_2$. To prove that the commutative square above is a pullback it suffices to prove that $|p_2|^{-1}(\overline{\beta})\cong|p_0|^{-1}(|p_1|(\overline{\beta}))$, however this easily follows from \cref{eq:six} and Quillen's theorem B since $p_2$ is a pullback of $p_0$.\par
It now remains to prove that the conditions of Quillen's theorem B are satisfied for $p_0$. More explicitly, we need to prove that for any morphism $a:\overline{\alpha}\rightarrow\overline{\alpha}'$ the induced functor $|p_0^{-1}(\overline{\alpha})|\rightarrow|p_0^{-1}(\overline{\alpha}')|$ is an isomorphism. An object $\overline{\alpha}$ of $\cD^1_{m_1}$ can be identified with a string of morphisms 
\[(d_0\xrightarrow{\alpha_1}d_1\xrightarrow{\alpha_2}...\xrightarrow{\alpha_m}d_m)\]
in $\cD^1$, the objects of the fiber $p_0^{-1}(\overline{\alpha})$ are given by strings 
\begin{align}\label{eq:seven}
(d'_0\xrightarrow{\beta_1}d'_1\xrightarrow{\beta_2}...\xrightarrow{\beta_l}d'_l\xrightarrow{c}d_0\xrightarrow{\alpha_1}d_1\xrightarrow{\alpha_2}...\xrightarrow{\alpha_m}d_m),
\end{align}
where $c$ is a morphism in $\cE^{0,1}$ and $d'_i$ are objects of $\cD^0$. Observe that any morphism in $\cD^1_{m_1}$ lying over an active morphism in $\Delta^\op$ acts as an identity on the fibers of $p_0$, the same is true for inert morphisms that preserve the minimal element. For an inert morphism $[k]\rightarrowtail[m]$ in $\Delta$ that sends $0$ to $s$ the corresponding action on the fiber sends the string (\ref{eq:seven}) above to 
\[(d'_0\xrightarrow{\beta_1}d'_1\xrightarrow{\beta_2}...\xrightarrow{\beta_l}d'_l\xrightarrow{f(\alpha_s)\circ f(\alpha_{s-1})\circ...\circ f(\alpha_1)\circ c}d_s\xrightarrow{\alpha_{s+1}}d_{s+1}\xrightarrow{\alpha_{s+2}}...\xrightarrow{\alpha_m}d_m).\]
However, since the images under $f$ of all morphisms in $\cD$ were assumed to be invertible in $\cE$, we see that this morphism also induces an isomorphism on fibers. Thus we have proved that the conditions of Quillen's theorem B apply, which suffices to prove the statement for $n=2$ by our previous considerations. Finally, the claim for $n>2$ follows by iterated application of the same argument.
\end{proof}
\begin{remark}\label{rem:fact_cat}
Given an object $(\cD\xrightarrow{f}\cE)\in\cat^\wrr_{/\cC}$ satisfying the conditions of \cref{lem:fact_cat}, the category $p_!(\cD\rightarrow\cE)$ over $\cC$ is obtained by taking the images under $f$ of the morphisms in the fibers $\cD_x$ and contracting them.
\end{remark}
\begin{construction}\label{constr:free_cat}
For a given Segal space $\cC$ denote by $\widetilde{\ccat}^\wrr_{/\cC}$ the category whose objects are morphisms $f:\bn\rightarrow\cC$, where $\bn$ is an interval of length $n$ with edges marked with $0$ or $1$ and $f$ sends all edges marked with $0$ to identities, and morphisms are morphisms in $\ccat_{/\cC}$ that preserve the markings. We will call a morphism active (resp. inert) if the underlying morphism in $\ccat_{/\cC}$ is active (resp. inert) and we will call an object elementary if the underlying object of $\ccat_{/\cC}$ is elementary, this gives $\widetilde{\ccat}^\wrr_{/\cC}$ the structure of an algebraic pattern. Observe that there is a natural faithful inclusion \[i:\widetilde{\ccat}^\wrr_{/\cC}\hookrightarrow\ccat^\wrr_{/\cC}.\]
\end{construction}
\begin{remark}
The Segal spaces for $\widetilde{\ccat}^\wrr_{/\cC}$ are easily seen to be given by a pair of morphisms $(\cD\rightarrow\cC,\cE\rightarrow\cC^\sim)$ in $\cat$ whose spaces of objects are isomorphic.
\end{remark}
\begin{lemma}\label{lem:free_cat}
The left Kan extension along $i$ induces a functor between the categories of respective Segal spaces.
\end{lemma}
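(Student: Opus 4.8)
The plan is to show that $i$ is an \emph{extendable} morphism of algebraic patterns, so that the preservation of Segal objects under $i_!$ is furnished directly by \cite[Proposition 9.5]{chu2019homotopy}. By construction $i$ is a faithful functor carrying active, inert and elementary morphisms of $\widetilde{\ccat}^\wrr_{/\cC}$ to the corresponding morphisms of $\ccat^\wrr_{/\cC}$, so it is at least a morphism of patterns; all the content lies in verifying extendability, after which no separate Segalification $L_\seg$ is needed.

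First I would reduce to the case $\cC\cong[n]$. Exactly as in \cref{prop:wrr_colim}, both $\ccat^\wrr_{/\cC}$ and $\widetilde{\ccat}^\wrr_{/\cC}$ are assembled as limits of their analogues over $([n]\to\cC)\in\ccat^\nun_{/\cC}$, and the inclusions $i$ are compatible with the restriction functors $v^*$ for $v\colon[n]\to\cC$. The Beck--Chevalley isomorphism for the evident pullback squares relating these slices, of the same shape as the one used in \cref{lem:fact_cat}, then shows that it suffices to prove that each $i_{[n]}\colon\widetilde{\ccat}^\wrr_{/[n]}\hookrightarrow\ccat^\wrr_{/[n]}$ is extendable.

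Next I would check that $i_{[n]}$ is a Segal morphism. The subtlety is that the inert morphisms of $\ccat^\wrr_{/[n]}$ are allowed to turn a $1$-marked edge into a $0$-marked one, so the two patterns do not have literally the same elementary slices. However, by \cref{rem:seg_cat} the mark-preserving elementary inert maps $\bm\rightarrowtail e$ are coinitial among all elementary maps out of $\bm$ in either pattern, and these mark-preserving maps \emph{do} coincide for the two patterns; hence $i_{[n]}$ induces an equivalence on the relevant elementary slices and is a Segal morphism. For extendability it then remains to analyze the active slices, and here the key observation is that the active morphisms of the two patterns literally agree: an active morphism in either $\widetilde{\ccat}^\wrr_{/[n]}$ or $\ccat^\wrr_{/[n]}$ is precisely an endpoint-preserving, marking-preserving map over $[n]$. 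Consequently the comparison functor $(\widetilde{\ccat}^\wrr_{/[n]})^\act_{/f}\to(\ccat^\wrr_{/[n]})^\act_{/f}$ is an isomorphism, and extendability of $i_{[n]}$ reduces to extendability of the pattern $\widetilde{\ccat}^\wrr_{/[n]}$ itself. The latter I would establish exactly as in \cref{prop:cell_def}, by showing that the active-slice colimits distribute over the elementary-slice limits appearing in the Segal condition; concretely one uses that $\cS$ is an $\infty$-topos together with \cite[Lemma 7.15]{chu2019homotopy} applied to the relevant $\times$-admissible slice.

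The main obstacle will be this last distributivity step. Unlike in \cref{prop:cell_def}, the left Kan extension $i_!$ must freely compose the genuine ($1$-marked) morphisms with the fiber ($0$-marked) morphisms recorded by a Segal object of $\widetilde{\ccat}^\wrr_{/\cC}$, and the bookkeeping of these mixed composites, governed precisely by the extra inert morphisms that change a $1$-marking into a $0$-marking, is what makes the required cofinality and distributivity computation delicate. I expect it to run parallel to the Quillen's theorem B argument of \cref{lem:fact_cat}, with the fibers of the relevant projection controlled by the explicit description of the active morphisms over $[n]$.
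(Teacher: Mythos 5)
Your reduction of the extendability of $i$ to the extendability of the pattern $\widetilde{\ccat}^\wrr_{/[n]}$ is where the argument breaks. It is true that the pattern-active morphisms of $\widetilde{\ccat}^\wrr_{/[n]}$ and $\ccat^\wrr_{/[n]}$ coincide (both are marking-preserving maps with active underlying map), but this is not the relevant comparison: the left Kan extension $i_!\cF(x)$ is a colimit over the full comma category $(i/x)$, and the pattern-active objects are \emph{not} cofinal in it, because $i$ has no lifting of precisely those inert morphisms of $\ccat^\wrr_{/[n]}$ that change a $1$-marking into a $0$-marking. Concretely, take $x$ to be a $1$-marked edge $\langle1\rangle$ lying over an identity of $[n]$ and $y$ the $0$-marked edge $[1]$ over the same identity; the identity of the underlying interval defines a morphism $\phi\colon i(y)\to x$ in $\ccat^\wrr_{/[n]}$ that does not preserve markings. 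Any pattern-active $\psi\colon i(z)\to x$ forces every edge of $z$ to be $1$-marked, and then any marking-preserving $g\colon y\to z$ must collapse $z$ to a point, so $\psi\circ i(g)$ has constant underlying map and can never equal $\phi$: the category of factorizations of $(y,\phi)$ through a pattern-active object is empty, not contractible. Hence no verification of extendability-type conditions on pattern-active slices can control $i_!$. Indeed, if the pattern-active comma categories did compute $i_!$, then $i_!(\cD,\cE)$ evaluated on a $1$-marked edge would only see composites of morphisms of $\cD$ and never the alternating $\cD$/$\cE$-strings of \cref{rem:free_cat}; your own closing paragraph, which correctly identifies these mixed composites as the heart of the matter, is in direct tension with the reduction you propose, since under that reduction they would never arise.

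This is exactly why the paper's proof abandons the pattern structure at this point: it introduces the \emph{underlying}-active and underlying-inert morphisms (those whose image in $\ccat_{/[n]}$ is active resp.\ inert, with no condition on markings, explicitly flagged in the proof as different from the pattern notions), shows via the active/inert factorization in $\ccat_{/[n]}$ (not in $\ccat^\wrr_{/[n]}$) that the underlying-active part $(i/x)^\act$ of the comma category is cofinal, and then proves a decomposition $(i/x)^\act\cong\prod_{x\rightarrowtail e}(i/e)^\act$ indexed by the mark-preserving elementary restrictions, after which the Segal property of $i_!\cF$ follows from the distributivity argument of \cref{prop:cell_def}. Your preliminary steps (reduction to $\cC\cong[n]$, and the Segal-morphism property of $i$ via coinitiality of the mark-preserving elementary slices) do agree with the paper, but the central cofinality statement has to be proved for this larger, marking-agnostic class of morphisms, and that step is both missing from and incompatible with your proposed reduction.
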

\begin{proof}
First, observe that we can reduce to the case $\cC\cong[n]$ using the same argument as in \cref{lem:fact_cat}. For a given $x\in\ccat^\wrr_{/[n]}$ denote by $(i/x)^\act\hookrightarrow(i/x)$ the full subcategory of $(i/x)$ on active morphisms $i(y)\twoheadrightarrow x$ (for the duration of the proof we will use the words active morphism and inert morphism to refer to morphisms in $\ccat^\wrr_{/[n]}$ whose underlying morphisms in $\ccat_{/[n]}$ are active or inert, this is \textit{different} to active and inert morphisms in $\ccat^\wrr_{/[n]}$ itself). The active/inert factorization in $\ccat_{/[n]}$ shows that $(i/x)^\act$ is a cofinal subcategory. Observe that although $i$ does not have unique lifting of inert morphism, it does have unique lifting for those morphisms that preserve the markings. In particular, it has unique lifting for all morphisms in $(\ccat^\wrr_{/[n]})^{\el,\mc}_{x/}$, which is a coinitial subcategory in $(\ccat^\wrr_{/[n]})^\el_{x/}$. It follows from this and the properties of $\ccat_{/[n]}$ that we have an isomorphism 
\[(i/x)^\act\cong\underset{(x\rightarrowtail e)\in(\ccat^\wrr_{/[n]})^{\el,\mc}_{x/}}{\lim}(i/e)^\act.\]
Moreover, the Segal condition for $\widetilde{\ccat}^\wrr_{/[n]}$ implies that for a Segal space $\cF\in\seg_{\widetilde{\ccat}^\wrr_{/[n]}}(\cS)$ and an active morphism $i(y)\twoheadrightarrow x$ we have \[\cF(y)\cong\underset{(x\overset{i}{\rightarrowtail}e)\in(\ccat^\wrr_{/[n]})^{\el,\mc}_{x/}}{\lim}\cF(i_! y).\]
So it remains to prove that $(i/x)^\act$-colimits distribute over $(\ccat^\wrr_{/[n]})^{\el,\mc}_{x/}$-limits. To do this, first observe that $(i/\{j\})^\act\cong*$ where $\{j\}$ denotes any morphism of the form $([0]\xrightarrow{\{j\}}[n])$. It follows from this and the observations above  that
\[(i/x)^\act\cong\underset{(x\rightarrowtail e)\in(\ccat^\wrr_{/[n]})^{\el,\mc}_{x/}}{\lim}(i/e)^\act\cong\prod_{x\rightarrowtail e,e\in\{[1],\langle1\rangle\}}(i/e)^\act.\]
Now the claim follows just as at the end of the proof of \cref{prop:cell_def}.
\end{proof}
\begin{remark}\label{rem:free_cat}
Given an object $(\cD\rightarrow\cC,\cE\rightarrow\cC^\sim)$, the category $i_!(\cD,\cE)$ has the same space of objects as $\cD$ and $\cE$ and its morphisms are given by strings $(x_1\xrightarrow{f_1}x_2\xrightarrow{f_2}...\xrightarrow{f_n}x_{n+1})$ where each $f_i$ is a morphism either in $\cD$ or in $\cE$ modulo the equivalence relation that identifies \[(y_1\xrightarrow{g_1}...\xrightarrow{g_i}y_{i+1}\xrightarrow{g_{i+1}}y_{i+2}\xrightarrow{g_{i+2}}y_{i+3}\xrightarrow{g_{i+3}}...\xrightarrow{g_n}y_{n+1})\] with \[(y_1\xrightarrow{g_1}...\xrightarrow{g_i}y_{i+1}\xrightarrow{g_{i+2}\circ g_{i+1}}y_{i+3}\xrightarrow{g_{i+3}}...\xrightarrow{g_n}y_{n+1})\]
if both $g_{i+1}$ and $g_{i+2}$ belong to the same category.
\end{remark}
\begin{prop}\label{prop:bo}
Denote by $\cat^\eso_{/\cC}$ the full subcategory of $\cat_{/\cC}$ on those morphisms $\cD\xrightarrow{f}\cC$ that induce an epimorphism on objects and by $\cat^\bo_{/\cC}$ the full subcategory on morphisms that induce an isomorphism o objects. Then the natural inclusion $i:\cat^\eso_{/\cC}\hookrightarrow\cat^\bo_{/\cC}$ admits a left adjoint $L^\bo$. Moreover, if $\cD$ is a complete Segal space, then so is $L^\bo\cD$.
\end{prop}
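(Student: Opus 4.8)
The plan is to construct $L^\bo$ explicitly, by first freely adjoining the isomorphisms that witness the effective epimorphism $f_0\colon\cD_0\to\cC_0$ on objects and then contracting them, using the machinery of \cref{lem:free_cat} and \cref{lem:fact_cat}. Fix $(\cD\xrightarrow{f}\cC)\in\cat^\eso_{/\cC}$, so that $f_0$ is an effective epimorphism. Let $E$ denote the \v{C}ech nerve of $f_0$, regarded as a groupoidal Segal space with $E_0\cong\cD_0$ and $\lvert E\rvert\cong\cC_0$ (the latter because $\cS$ is an $\infty$-topos and effective epimorphisms are precisely the colimits of their \v{C}ech nerves); since every edge of $E$ joins two points of $\cD_0$ with the same image in $\cC_0$, the map $f_0$ promotes to a map $E\to\cC^\sim$ sending all edges to constant paths. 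The pair $(\cD\to\cC,\,E\to\cC^\sim)$ is then an object of $\seg_{\widetilde{\ccat}^\wrr_{/\cC}}(\cS)$ in the notation of \cref{constr:free_cat}. First I would apply $i_!$ of \cref{lem:free_cat} to obtain $i_!(\cD,E)\in\cat^\wrr_{/\cC}$; by \cref{rem:free_cat} its morphisms are strings of arrows of $\cD$ and isomorphisms of $E$, so the $E$-part consists of invertible morphisms lying over identities of $\cC$. Hence \cref{lem:fact_cat} applies, and I would set
\[ L^\bo\cD\bydef p_!\,i_!(\cD,E), \]
which lies in $\cat_{/\cC}$ by that lemma. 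By \cref{rem:fact_cat} this is obtained from $i_!(\cD,E)$ by contracting the invertible $E$-edges inside each fibre $f_0^{-1}(c)$; since $\lvert E\rvert\cong\cC_0$, the resulting space of objects is $\cC_0$, so $L^\bo\cD\in\cat^\bo_{/\cC}$ and the canonical map $\cD\to L^\bo\cD$ is the unit of the prospective adjunction.

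For the universal property I would show that precomposition with the unit induces an equivalence $\mor_{\cat^\bo_{/\cC}}(L^\bo\cD,\cE)\xrightarrow{\sim}\mor_{\cat_{/\cC}}(\cD,\cE)$ for every $\cE\in\cat^\bo_{/\cC}$. Combining the adjunctions $i_!\dashv i^*$ and $p_!\dashv p^*$, a map out of $L^\bo\cD$ is the same as a map $(\cD,E)\to i^*p^*\cE$ in $\widetilde{\ccat}^\wrr_{/\cC}$, whose first component is exactly a map $\cD\to\cE$ over $\cC$ and whose second component records where the $E$-edges go. The point is that, because $p_!$ contracts the $E$-edges to degenerate identities, any map out of $L^\bo\cD$ must send them to the identities $\id_c$ — here one uses that $\cE_0\cong\cC_0$, so the two endpoints of a contracted edge have the same image $c$ — and this pins down the second component uniquely, yielding the bijection. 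Naturality together with the analogous argument applied to $\arr^\lax$ on the level of morphism and $2$-morphism spaces then promotes this to the adjunction $L^\bo\dashv i$.

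The hard part is the final assertion that $L^\bo\cD$ is complete whenever $\cD$ is. Since the space of objects of $L^\bo\cD$ is already $\cC_0$, I would reduce completeness to showing that its space of equivalences is degenerate, i.e. $(L^\bo\cD)^\sim\cong\cC_0$; equivalently, by \cref{prop:compl}, that the canonical map $\cC_0\cong(L^\bo\cD)_0\to\widehat{(L^\bo\cD)}_0\cong\colim_{[n]\in\Delta^\op}(L^\bo\cD)^\sim_n$ is an isomorphism. The key technical claim is that the space of equivalences of $L^\bo\cD$ is the descent of the space of equivalences of $\cD$ along $f_0$: every equivalence $c\to c'$ of $L^\bo\cD$ is represented, up to the contracted \v{C}ech isomorphisms, by an equivalence of $\cD$. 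Granting this, completeness of $\cD$ gives $\cD^\sim\cong\cD_0$, so its equivalences are degenerate and lie over identities of $\cC$; descending along $f_0$ then collapses them to $\cC_0$, which is precisely completeness of $L^\bo\cD$. I expect the proof of this key claim to be the main obstacle: it amounts to presenting $(L^\bo\cD)^\sim$ as an iterated bar construction over the \v{C}ech nerve of $f_0$ and commuting that realization past the relevant fibre products, which I would carry out with Quillen's Theorem B in the form of \cite[Theorem 5.16]{ayala2017fibrations}, exactly as in the distributivity argument of \cref{lem:fact_cat}. Completeness of $\cD$ is used precisely to rule out an equivalence of $L^\bo\cD$ between distinct objects of $\cC_0$, which is what would otherwise obstruct completeness.
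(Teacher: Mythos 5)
Your proposal matches the paper's proof essentially step for step: the same construction $L^\bo\cD\bydef p_!\,i_!(\cD,C_\bullet(f_0))$ via \cref{lem:free_cat} and \cref{lem:fact_cat}, the same universal-property argument using $\cE_0\cong\cC_0$ to make the \v{C}ech component of a map out of $L^\bo\cD$ unique, and the same reduction of completeness to the claim that every equivalence in $L^\bo\cD$ is represented by an equivalence of $\cD$. The only divergence is in how that last claim is justified: where you propose an iterated bar construction plus Quillen's Theorem B, the paper reads it off directly from the explicit string description of the morphisms of $L^\bo\cD$ supplied by \cref{rem:fact_cat} and \cref{rem:free_cat}.
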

\begin{proof}
Given a morphism $\cD\xrightarrow{f}\cC$ first consider the object $(\cD,C_\bullet(f_0))$ where $C_\bullet(f_0)$ denotes the Cech nerve of $f_0:\cD_0\rightarrow\cC_0$, i.e. it is a groupoid whose space of objects is given by $\cD_0$ and morphisms are $\cD_0\times_{\cC_0}\cD_0$. We then define
\[L^\bo\cD\bydef p_! i_! (\cD,C_\bullet(f_0)).\]
First, observe that $L^\bo\cD$ indeed belongs to $\cat_{/\cC}$ since by \cref{rem:free_cat} the images of morphisms in $C_\bullet(f_0)$ in $ i_! (\cD,C_\bullet(f_0))$ are invertible, and so this follows from \cref{lem:fact_cat}. Moreover, it even lies in $\cat^\bo_{/\cC}$ since
\[L^\bo\cD_0\cong\underset{\Delta^\op}{\colim}\overbrace{\cD_0\times_{\cC_0}...\times_{\cC_0}\cD_0}^\text{$n$ times}\cong\cC_0\]
because $f$ was assumed to be an effective epimorphism. To prove the required universal property observe that for any $\cE\in\cat^\bo_{/\cC}$ we have 
\[\mor_{\cat^\bo_{/\cC}}(L^\bo\cD,\cE)\cong\mor_{\widetilde{\cat}^\wrr_{/\cC}}((\cD,C_\bullet(f_0)),i^*p^*\cE)\cong\mor_{\widetilde{\cat}^\wrr_{/\cC}}((\cD,C_\bullet(f_0)),(\cE,\cE_0)).\]
The objects of the latter space are given by pairs of morphisms  $(\cD\xrightarrow{g}\cE,C_\bullet(f_0)\xrightarrow{h}\cE_0)$ over $\cC$. Observe that by definition $\cE_0\cong\cC_0$, so there is a unique morphism $h:C_\bullet(f_0)\rightarrow\cE_0$ which proves the first claim.\par
To prove the second claim observe using \cref{rem:fact_cat} and \cref{rem:free_cat} that the morphisms in $L^\bo\cD$ are given by strings $(d_1\xrightarrow{f_1}d_2\xrightarrow{f_2}...\xrightarrow{f_{n-1}}d_n)$ of not necessarily composable morphisms in $\cD$ such that their images in $\cC$ form a composable string of morphisms modulo the equivalence relation identifying \[(d_1\xrightarrow{f_1}d_2\xrightarrow{f_2}...\xrightarrow{f_i}d_{i+1}\xrightarrow{f_{i+1}}d_{i+2}\xrightarrow{f_{i+2}}d_{i+3}\xrightarrow{f_{i+3}}...\xrightarrow{f_{n-1}}d_n)\] with 
\[(d_1\xrightarrow{f_1}d_2\xrightarrow{f_2}...\xrightarrow{f_i}d_{i+1}\xrightarrow{f_{i+2}\circ f_{i+1}}d_{i+3}\xrightarrow{f_{i+3}}...\xrightarrow{f_{n-1}}d_n)\]
if $f_{i+1}$ and $f_{i+2}$ are actually composable in $\cD$. In particular, it follows easily that invertible morphisms in $L^\bo\cD$ are given by strings $(d\xrightarrow{f}e)$ where $f$ is an invertible morphism in $\cD$, and those are all identities by assumption.
\end{proof}
\begin{cor}\label{cor:adj_comp}
The natural inclusion $i:\cat^{\lcomp,\wrr}_{/\cC}\hookrightarrow\cat^{\comp,\wrr}_{/\cC}$ admits a left adjoint $L^\comp$.
\end{cor}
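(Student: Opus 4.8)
The plan is to construct the reflector $L^\comp\colon\cat^{\lcomp,\wrr}_{/\cC}\to\cat^{\comp,\wrr}_{/\cC}$ explicitly and to verify the adjunction
\[\mor_{\cat^{\comp,\wrr}_{/\cC}}(L^\comp X,Y)\cong\mor_{\cat^{\lcomp,\wrr}_{/\cC}}(X,Y),\qquad Y\in\cat^{\comp,\wrr}_{/\cC},\]
which is the content of the corollary. The whole argument is meant to be a combination of \cref{prop:monad_v2} and \cref{prop:bo}. Throughout I assume $\cC$ complete, as required by \cref{prop:monad_v2} and as is the case in the applications of \cref{sect:six}.

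First I would transport the problem along the equivalence of \cref{prop:monad_v2}, which identifies $\cat^{\lcomp,\wrr}_{/\cC}$ with $\cat^{\overset{\eso}{\rightarrow}}_{/\cC}$, sending an object to a map $\cA\xrightarrow{f}\cB$ over $\cC$ with $\cA$ and $\cB$ complete and $f$ an effective epimorphism on objects. The first key step is to identify the essential image of $\cat^{\comp,\wrr}_{/\cC}$ under this equivalence: I claim it is the full subcategory $\cat^{\overset{\bo}{\rightarrow}}_{/\cC}$ of those $(\cA\xrightarrow{f}\cB)$ for which $f$ is an isomorphism on objects. This can be read off from the inverse functor $G$ of \cref{prop:monad_v2}: the uncompleted object $\overline{\cB}$ that $G$ reconstructs from $\cA\xrightarrow{f}\cB$ has objects $\cA_0$ and, since $\cB$ is complete, invertible-morphism space $\cA_0\times_{\cB_0}(\cB_1)^\sim\times_{\cB_0}\cA_0\cong\cA_0\times_{\cB_0}\cA_0$, so its degeneracy $\cA_0\to\cA_0\times_{\cB_0}\cA_0$ is an equivalence—that is, $\overline{\cB}$ is complete—exactly when $f$ is a monomorphism on objects; combined with the effective epimorphism this forces $f$ to be an isomorphism on objects. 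Hence it suffices to reflect $\cat^{\overset{\eso}{\rightarrow}}_{/\cC}$ onto $\cat^{\overset{\bo}{\rightarrow}}_{/\cC}$.

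The reflector is then supplied by \cref{prop:bo}: given $\cA\xrightarrow{f}\cB$ in $\cat^{\overset{\eso}{\rightarrow}}_{/\cC}$, I apply $L^\bo$ of \cref{prop:bo} with base $\cB$ to the structure map $f$, producing $L^\bo\cA\to\cB$. By \cref{prop:bo} this is an isomorphism on objects, and since $\cA$ is complete so is $L^\bo\cA$. It remains to check that $L^\bo\cA\to\cB$ again lies in $\cat^{\overset{\bo}{\rightarrow}}_{/\cC}$, i.e. satisfies the morphism condition of \cref{prop:monad_v2}. Because the map is an isomorphism on objects the defining pullback degenerates to $((L^\bo\cA)_\phi)^\sim\cong(\cB_\phi)^\sim$, and I would verify this by exactly the computation that closes the proof of \cref{prop:monad_v2}: by the explicit description of $L^\bo$ in \cref{rem:fact_cat} and \cref{rem:free_cat} the invertible morphisms of $L^\bo\cA$ over a non-identity $\phi$ are assembled from the invertibles of $\cA$ over $\phi$ together with the identifications coming from the Cech nerve, whose realization recovers $\cB_0$ precisely because $f$ is an effective epimorphism on objects.

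Finally I would upgrade this pointwise recipe to a functor and establish the adjunction. The universal property of $L^\bo$ in \cref{prop:bo} already yields the required bijection on mapping spaces whenever the target lies over the same base $\cB$; the genuine difficulty is that a morphism in $\cat^{\overset{\eso}{\rightarrow}}_{/\cC}$ also moves the base $\cB$, so I must know that $L^\bo$ is natural under base change. I expect this to be the main obstacle, and I would settle it exactly as in \cref{lem:fact_cat}: the formula $L^\bo\cA\cong p_!\,i_!(\cA,C_\bullet(f))$ is built from left Kan extensions satisfying the Beck--Chevalley condition along the relevant pullback squares, so $L^\bo$ commutes with pullback of the base and the units are natural. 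Transporting back through \cref{prop:monad_v2} then produces $L^\comp$ together with the displayed adjunction, the preservation of completeness being guaranteed by the \emph{moreover} clause of \cref{prop:bo}.
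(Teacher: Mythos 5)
Your construction agrees with the paper's: in both cases $L^\comp$ sends $(\cE\xrightarrow{f}\cD)$ to $\bigl(L^\bo(\cE\to\widehat{\cD})\to\widehat{\cD}\bigr)$, completing the base and then reflecting via \cref{prop:bo}, with completeness of the first component supplied by the \emph{moreover} clause there. Where you genuinely diverge is in the verification of the universal property, and the comparison is instructive. The paper's proof of \cref{cor:adj_comp} never invokes \cref{prop:monad_v2}: given a target $(\cK\xrightarrow{g}\cL)\in\cat^{\comp,\wrr}_{/\cC}$, it factors the base component through $\widehat{\cD}$ using completeness of $\cL$, and then computes in one stroke, from the formula $L^\bo = p_!i_!(\cE,C_\bullet(p_0\circ f_0))$, that the space of fillers $\widetilde{\cE}\to\cK$ is the space of maps $C_\bullet(p_0\circ f_0)\to\cK_0$ compatible with $g_0$, which is contractible because $g_0$ is an isomorphism; this handles the moving base implicitly and avoids any base-change statement for $L^\bo$. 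Your route—transporting along \cref{prop:monad_v2}, identifying the image of $\cat^{\comp,\wrr}_{/\cC}$ with the iso-on-objects arrows (your mono-plus-effective-epi argument for this is correct), and reflecting inside $\cat^{\overset{\eso}{\rightarrow}}_{/\cC}$—is sound, and it has the virtue of making explicit the $\wrr$-fiber condition for $L^\bo\cA\to\cB$, a membership check the paper's proof leaves tacit (note only that $(\cdot)^\sim$ in \cref{prop:monad_v2} denotes the space of \emph{all} morphisms lying over $\phi$, not just invertible ones; the colimit computation you cite is unaffected). One simplification: you do not need the full Beck--Chevalley/base-change commutation of $L^\bo$ that you flag as the main obstacle, and which would require verifying that the relevant squares of marked slice patterns are pullbacks. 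For a morphism $(\alpha,\beta)\colon(L^\bo\cA\to\cB)\to(\cA'\to\cB')$ it suffices to fix the base component $\beta$ and pull the target back: $\beta^*\cA'\to\cB$ is again iso-on-objects, since $(\beta^*\cA')_0\cong\cA'_0\times_{\cB'_0}\cB_0\cong\cB_0$, so the fixed-base universal property of \cref{prop:bo} applies fiberwise over the space of base maps $\beta$, which is exactly what the paper's contractibility argument achieves directly. Finally, your standing hypothesis that $\cC$ is complete is genuinely needed (otherwise $\widehat{\cD}$ carries no map to $\cC$); the paper states the corollary for an arbitrary Segal space but its proof tacitly makes the same assumption, as does \cref{prop:monad_v2}.
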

\begin{proof}
Let $(\cE\xrightarrow{f}\cD)$ be an element of $\cat^{\lcomp,\wrr}_{/\cC}$, so in particular $\cE$ is a complete Segal space. Let $(\cK\xrightarrow{g}\cL)$ be some element of $\cat^{\comp,\wrr}_{/\cC}$, then we see that for every morphism $(\cE,\cD)\xrightarrow{(u,v)}(\cK,\cL)$ the morphism $v$ factors as $\cD\xrightarrow{p}\widehat{\cD}\xrightarrow{\widetilde{v}}\cL$ since $\cL$ is complete. We will define $L^\comp(\cE,\cD)$ to be given by $(\widetilde{\cE}\xrightarrow{\widetilde{f}}\widehat{\cD})$, where 
\[\widetilde{\cE}\bydef L^\bo(p\circ f)\]
in the notation of \cref{prop:bo}. Then we have the following commutative diagram over $\cC$
\[
\begin{tikzcd}[row sep=huge, column sep=huge]
{}&\widetilde{\cE}\arrow[dr, "\widetilde{f}"]\arrow[ddl, dotted, near end, "\exists!" swap]\\
\cE\arrow[ur, "\eta"]\arrow[r, "f"]\arrow[d, "u"]&\cD\arrow[r, "p"]\arrow[d, "v"]&\widehat{\cD}\arrow[dl, "\widetilde{v}" swap]\\
\cK\arrow[r, "g"]&\cL
\end{tikzcd}.
\]
We need to prove that the space of dotted arrows making the diagram commute is contractible. Observe that from the definition of $L^\bo$ it follows that this space can be described as the space of morphisms 
\[C_\bullet(p_0\circ f_0)\xrightarrow{h}\cK_0\]
such that $g_0\circ h\cong v_0\circ f_0$. Observe that this space is contractible since $g_0$ is an isomorphism by the definition of $\cat^\wrr_{/\cC}$.
\end{proof}
\section{Completeness for monads and theories}\label{sect:six}
In this short final section we reap the fruits of our labours and prove the main results of this paper - \cref{thm:art} and \cref{thm:comp}. The former of these provides an isomorphism between the category of $\cC$-theories for a given category $\cC$ and the category of lax functors from $\cC$ to the flagged bicategory of correspondences with arities. This specializes to the main result of \cite{berger2012monads} in the case $\cC\cong*$.\par
The latter has to do with the notion of completeness. This notion is unique to higher category theory and was first introduced in a closely related context in \cite{chu2019homotopy}. More specifically, we introduce the notion of complete $\cC$-theory in \cref{def:thr} and \cref{thm:comp} states that for a certain class of theories (which we call good) described in \cref{def:good} we can introduce the functor of completion that is left adjoint to the inclusion of the category of complete theories into the category of good theories. In particular, it specializes to the completion functor of \cite{chu2019homotopy} for the theories associated to algebraic patterns.
\begin{defn}
Denote by $\corr^\art$ the following twofold Segal space: its objects are given by complete Segal spaces $\cC$ equipped with a full subcategory $i_\cC:\cE_\cC\hookrightarrow\mor_\cat(\cC,\cS)$. We define the category of morphisms from $\cC$ to $\cD$ to be the full subcategory of $\corr(\cC,\cD)$ on those correspondences $K:\cC\nrightarrow\cD$ that take $\cE_\cC$ to $\cE_\cD$ (under the equivalence $\corr(*,\cC)\cong\mor_\cat(\cC,\cS)$). We will call the objects of $\corr^\art$ the \textit{categories with arities} and we will say that the correspondences in $\corr^\art$ \textit{respect arities}. For a complete Segal space $\cK$ we will denote by $\mor^{\lax,\art}_\cat(\cK,\corr^\art)$ the category whose objects are the lax functors $F:\cK\rightsquigarrow\corr^\art$ and whose morphisms are natural transformations in $\mor^{\lax,!}_\cat(\cK,\corr^\art)$ for which we in addition require that any component $\alpha_{c,!}:F(k)\rightarrow G(k)$ for $c\in\cC$ respects the arities.
\end{defn}
\begin{defn}\label{def:thr}
For a complete Segal space $\cC$ denote by $\thr(\cC)$ the category whose objects $\cT$ are given by morphisms $(\cT_0\xrightarrow{t}\cT_1)$ over $\cC$ that belong to $\cat^{\lcomp,\overset{\eso}{\rightarrow}}_{/\cC}$ such that in addition every $\cT_{0,c}$ is a category with arities $\cE_c$, for every non-unital morphism $f:c\rightarrow c'$ in $\cC$ the correspondence $\cT_{0,f}:\cT_{0,c}\nrightarrow\cT_{0,c'}$ respects arities and for every $c$ the correspondence $t_c^* t_{c,!}:\cT_{0,c}\nrightarrow\cT_{0,c}$ also respects arities. We will refer to the objects of $\thr(\cC)$ as \textit{$\cC$-theories}. We will call a $\cC$-theory $\cT$ \textit{complete} if the underlying object of $\cat^{\lcomp,\overset{\eso}{\rightarrow}}_{/\cC}$ belongs to $\cat^{\comp,\wrr}_{/\cC}$ and denote the corresponding subcategory by $\thr^\comp(\cC)$. We will call a functor $\cF:\cT_1\rightarrow\cS$ a \textit{model} for the theory if for every $c\in\cC$ its restriction to $\cT_{0,c}$ belongs to $\cE_c$; we will denote the full subcategory of $\mor_\cat(\cT_1,\cS)$ on models by $\modl_\cT(\cS)$.
\end{defn}
\begin{remark}
For the theories $\cT$ of the type described in \cref{ex:preshv} one can more generally define for any presentable category $\cV$ a subcategory $\cE_\cV\bydef\mor_\cat(\cD,\cV)\overset{i_*}{\hookrightarrow}\mor_\cat(\cC_0,\cV)$ and further define the category $\modl_\cT(\cV)$ of $\cV$-models of $\cT$ to be the category of functors $\cC_1\rightarrow \cV$ whose restriction to $\cC_0$ belongs to $\cE_\cV$, however we will not need this construction.
\end{remark}
\begin{theorem}\label{thm:art}
For every complete Segal space $\cC$ there is an equivalence 
\[\mor^{\lax,\art}_\cat(\cC,\corr^\art)\cong\thr(\cC).\]
Moreover, under this equivalence the category of models for a theory $\cT$ is equivalent to $\con_{\corr^\art}^\lax(*,F_\cT)$ for the corresponding lax functor $F_\cT$, where $*$ denotes the terminal category viewed as a category with arities by declaring $\cE_*\bydef \cS$.
\end{theorem}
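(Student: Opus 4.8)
The plan is to deduce both halves of the theorem from the structural results of \cref{sect:five}, principally \cref{thm:lax_corr} and \cref{thm:lax_colim}, by cutting out the arity-respecting subobjects on each side. For the equivalence of categories I would start from \cref{thm:lax_corr} (together with \cref{prop:colim_lax} and \cref{prop:wrr_colim}, which reduce everything to $\cC\cong[n]$), giving an equivalence $\mor^{\lax,!}_\cat(\cC,\corr)\cong\cat^\wrr_{/\cC}$ under which a lax functor $F$ corresponds to an object $(\cT_0\xrightarrow{t}\cT_1)$ with $F(c)\cong\cT_{0,c}$ on objects and $F(f)\cong\cT_{0,f}$ on non-identity morphisms. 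Restricting to left-complete objects and invoking \cref{prop:monad_v2} identifies the left-complete part with $\cat^{\lcomp,\overset{\eso}{\rightarrow}}_{/\cC}$, the ambient category of \cref{def:thr}. Since the forgetful functor $\corr^\art\to\corr$ is fully faithful on each morphism category (objects merely carrying the extra datum of a chosen system of arities), lifting the lax functor $\bar F\colon\cC\rightsquigarrow\corr$ attached to $(\cT_0\to\cT_1)$ to a lax functor $\cC\rightsquigarrow\corr^\art$ amounts to equipping each $\bar F(c)\cong\cT_{0,c}$ with a category-with-arities structure $\cE_c$ in such a way that every $1$-morphism in the image of $\bar F$ respects the arities; the accompanying $2$-cells then lie in $\corr^\art$ automatically.

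The content of the arity condition is to see that the $1$-morphisms carried by a (non-unital) lax functor out of $\cC$ split exactly into the two families of \cref{def:thr}. For a non-identity $f\colon c\to c'$ the value $\bar F(f)$ is the correspondence $\cT_{0,f}$, giving the condition that each $\cT_{0,f}$ respect arities. For an identity $\id_c$, however, laxness forces $\bar F(\id_c)$ to be a \emph{monad} on $\bar F(c)\cong\cT_{0,c}$ rather than an identity; by \cref{cor:monad} this monad is the one attached to the isomorphism-on-objects map $t_c\colon\cT_{0,c}\to\cT_{1,c}$, which by the adjunction of \cref{prop:adj} is $t_c^*t_{c,!}$. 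Thus the requirement that $\bar F(\id_c)$ respect arities is precisely the requirement that $t_c^*t_{c,!}$ respect arities. (The case $\cC\cong*$ is the sanity check: a lax functor $*\rightsquigarrow\corr$ is a monad, i.e.\ an isomorphism-on-objects $t\colon\cT_0\to\cT_1$, with monad $t^*t_!$.) Matching morphisms is then immediate, since in both $\mor^{\lax,\art}_\cat(\cC,\corr^\art)$ and $\thr(\cC)$ the $1$-morphisms are exactly the natural transformations of $\mor^{\lax,!}$ (as described via \cref{cor:lax_adj}) whose components $\alpha_{c,!}$ respect arities.

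For the statement about models I would use that $\cT_1$ is the lax colimit, hence also the lax limit, of $F_\cT$ (\cref{thm:lax_colim} and the remark following it, via $\corr\cong\corr^\op$). The lax-limit universal property of \cref{def:coc} evaluated at the terminal object gives
\[
\con^\lax_\corr(*,F_\cT)\cong\mor_\corr(*,\cT_1)\cong\mor_\cat(\cT_1,\cS),
\]
which is the upper right corner of the pullback defining $\modl_\cT(\cS)$. Tracing this identification through \cref{prop:colim_cocone}, the leg of a lax cone at $c\in\cC$ is a correspondence $*\nrightarrow F(c)\cong\cT_{0,c}$, and under the isomorphism above it is precisely the restriction of the corresponding $\cF\colon\cT_1\to\cS$ along $\cT_{0,c}\to\cT_{1,c}\hookrightarrow\cT_1$, i.e.\ the image of $\cF$ under the right vertical map of the pullback square. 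Passing from $\corr$ to $\corr^\art$ then imposes exactly that each such leg respect arities, i.e.\ lie in $\cE_c$; comparing with the pullback over $\coprod_c\cE_c\to\coprod_c\mor_\cat(\cT_{0,c},\cS)$ yields $\con^\lax_{\corr^\art}(*,F_\cT)\cong\modl_\cT(\cS)$.

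The main obstacle is this last identification of cone legs with fiberwise restrictions, together with its compatibility with the arity conditions: it requires unwinding the lax-limit description of \cref{thm:lax_colim} and \cref{prop:colim_cocone} and keeping careful track of the variance introduced by the self-duality $\corr\cong\corr^\op$ and by the convention $\corr(*,\cC)\cong\mor_\cat(\cC,\cS)$. Once the naturality of the assignment ``$c$-th leg $=$ restriction to $\cT_{0,c}$'' is verified, both the equivalence $\mor^{\lax,\art}_\cat(\cC,\corr^\art)\cong\thr(\cC)$ and the identification of models follow uniformly by restricting the equivalences of \cref{thm:lax_corr} and \cref{thm:lax_colim} to the arity-respecting subobjects.
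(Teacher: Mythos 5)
Your proposal is correct and follows essentially the same route as the paper: the first equivalence is obtained by restricting the equivalence of \cref{thm:lax_corr} to arity-respecting data, with the key observation that the monad induced at each $c\in\cC$ (i.e.\ the restriction of the lax functor along $\{c\}\colon *\to\cC$) is $t_c^*t_{c,!}$, and the model statement is obtained from \cref{thm:lax_colim} by identifying lax cones from $*$ with functors $\cT_1\to\cS$ and noting that the cone lies in $\corr^\art$ exactly when the restrictions to the $\cT_{0,c}$ land in $\cE_c$. Your write-up is somewhat more explicit than the paper's (e.g.\ invoking \cref{prop:monad_v2}, \cref{cor:monad}, and \cref{prop:adj} where the paper says ``elementary observation''), but the underlying argument is the same.
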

\begin{proof}
The first statement follows immediately from \cref{thm:lax_corr} after an elementary observation that for every functor $F:\cC\rightsquigarrow\corr$ with the corresponding object $(\cD\xrightarrow{f}\cE)\in\cat^\wrr_{/\cC}$ and for every object $c\in\cC$ the monad corresponding to $*\xrightarrow{\{c\}}\cC\overset{F}{\rightsquigarrow}\corr$ is given by $f_c^*f_{c,!}$.\par
To prove the the second claim, first observe that to every lax cone from $*$ to $F_\cT$ we can attach a functor $\cF:\cT_1\rightarrow\cS$ using \cref{thm:lax_colim}. Observe that it follows directly from the definitions that this cone lies in $\corr^\art$ if and only if the restrictions $\cF|_{\cT_{0,c}}$ belong to $\cE_c$ for all $c\in\cC$, which is exactly the definition of a model for $\cT$.
\end{proof}
\begin{cor}
For a complete Segal space $\cD$ with arities $\cE_\cD$ the category of monads in $\corr^\art$ with the underlying object $\cD$ is equivalent to the category of functors $F:\cD\rightarrow\cE$ that are epimorphisms on objects and such that $F^*F_!\cE\subset\cE$. 
\end{cor}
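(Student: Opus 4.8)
The plan is to derive this as the specialization of \cref{thm:art} to the terminal complete Segal space $\cC\cong *$, combined with the identification of monads with lax functors out of a point. First I would invoke \cref{cor:lax_mon}: a monad in the twofold Segal space $\corr^\art$ is exactly a lax functor $*\rightsquigarrow\corr^\art$, and its underlying object is the image of the unique object of $*$. Requiring that image to be the category with arities $(\cD,\cE_\cD)$ selects precisely the monads with underlying object $\cD$. Thus the category we want is the fiber over $(\cD,\cE_\cD)$ of the forgetful functor $\mor^{\lax,\art}_\cat(*,\corr^\art)\to\corr^\art$, $F\mapsto F(*)$, sending a lax functor to its value at the point.

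Next I would apply \cref{thm:art} with $\cC\cong *$ to get $\mor^{\lax,\art}_\cat(*,\corr^\art)\cong\thr(*)$ and unwind the right-hand side via \cref{def:thr}. An object of $\thr(*)$ is a morphism $(\cT_0\xrightarrow{t}\cT_1)$ in $\cat^{\lcomp,\overset{\eso}{\rightarrow}}_{/*}$ such that $\cT_0$ is a category with arities, each $\cT_{0,f}$ for a non-unital $f$ respects arities, and each $t^*_c t_{c,!}$ respects arities. Since $*$ has only its identity morphism, the conditions indexed by non-unital morphisms are vacuous, and the defining isomorphism of $\cat^{\overset{\eso}{\rightarrow}}_{/\cC}$ in \cref{prop:monad_v2} becomes empty over $\cC\cong *$; hence $\cat^{\lcomp,\overset{\eso}{\rightarrow}}_{/*}$ is simply the category of functors $t:\cT_0\to\cT_1$ with $\cT_0$ complete and $t$ an effective epimorphism on objects. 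Fixing the underlying object amounts to fixing $\cT_0\cong\cD$ with arities $\cE_\cD$, so the surviving data is exactly a functor $F=t:\cD\to\cE$ (with $\cE=\cT_1$) that is an effective epimorphism on objects, subject to the single remaining arity condition that $t^*t_!=F^*F_!$ preserve $\cE_\cD$ under $\corr(*,\cD)\cong\mor_\cat(\cD,\cS)$, i.e. $F^*F_!\cE_\cD\subset\cE_\cD$.

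It then remains to reconcile the two notions of morphism and to note that an effective epimorphism of spaces is the same as a $\pi_0$-surjection, i.e. an epimorphism on objects. Under the equivalence of \cref{thm:lax_corr} the value of a lax functor at an object $c$ has underlying Segal space $\cT_{0,c}$ (cf. \cref{prop:cat_ic}), so the forgetful functor ``underlying object of a monad'' corresponds to $\cT\mapsto\cT_0$; taking the fiber over $(\cD,\cE_\cD)$ on the monad side is therefore compatible with fixing $\cT_0\cong\cD$ on the theory side, and morphisms of monads on $\cD$ correspond to morphisms in $\thr(*)$ whose $\cT_0$-component is the identity. Reading off the $\art$-decoration on natural transformations from \cref{thm:art}, such a morphism is a functor $\cE\to\cE'$ under $\cD$ whose induced component respects arities, which is precisely the notion of morphism on the right-hand side of the statement.

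The main obstacle I anticipate is the bookkeeping around morphisms rather than objects: one must check that ``fixing the underlying object $\cD$'' is transported correctly through both equivalences, and that the arity-respecting requirement placed on the single component $\alpha_{*,!}$ in the definition of $\mor^{\lax,\art}_\cat$ matches the arity condition on morphisms of the functors $F:\cD\to\cE$. All of this is a restriction of the already-established equivalence of \cref{thm:art} to a fiber, so once the forgetful functors are matched it follows formally; everything at the object level is a direct reading-off of \cref{thm:art} in the degenerate case $\cC\cong *$, where the genuinely $\cC$-dependent hypotheses attached to non-unital morphisms of $\cC$ simply disappear.
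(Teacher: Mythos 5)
Your proposal is correct and takes essentially the same route as the paper, whose entire proof is the citation ``this follows immediately from \cref{cor:monad} and \cref{thm:art}'': identifying monads with lax functors out of the point (\cref{cor:lax_mon}, which is also the engine behind \cref{cor:monad}) and then specializing \cref{thm:art} to $\cC\cong *$. Your additional unwinding of $\thr(*)$ via \cref{def:thr} and \cref{prop:monad_v2} --- in particular noting that the non-unital-morphism conditions vanish over the point and that the passage from isomorphism-on-objects to epimorphism-on-objects is built into the $\overset{\eso}{\rightarrow}$ formulation --- is exactly the content the paper leaves implicit in the word ``immediately.''
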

\begin{proof}
This follows immediately from \cref{cor:monad} and \cref{thm:art}.
\end{proof}
\begin{ex}\label{ex:preshv}
A category with arities can be associated to the data of a complete Segal space $\cC_0$, an isomorphism-on-objects functor $\cC_0\xrightarrow{f}\cC_1$ and a full subcategory $i:\cD\hookrightarrow\cC_0$. In this case $\cE_\cC\bydef\mor_\cat(\cD,\cS)\overset{i_*}{\hookrightarrow}\mor_\cat(\cC_0,\cS)$. The construction of \cite[Section 13]{chu2019homotopy} allows us to attach a theory to every cartesian monad $T$ on a presheaf category, namely the pair $(\cU(T)\xrightarrow{j}\cW(T))$ with the arities given by $\cI\hookrightarrow\cU(T)$ in the notation of \cite{chu2019homotopy}.
\end{ex}
\begin{defn}\label{def:good}
Given a $\cC$ theory $\cT$ denote by $u$ the natural morphism $(\cT_0\rightarrow L^\bo\cT_0)$. We will cal the theory \textit{good} if it satisfies the following list of conditions:
\begin{itemize}
    \item For every object $c\in\cC$ we have $u_{c,!}u^*_c\cong\id$.
    \item For every non-unital morphism $f:c_1\rightarrow c_2$ we have $\cT_{0,f}\cong u^*_{c_2}\circ L^\bo\cT_{0,f}\circ u_{c_1,!}$.
    \item For every object $c\in\cC$ denote by $L^\comp\cE_c$ the essential image of $\cE_c$ under $u_!$. Then $u_c^*$ takes $L^\comp\cE_c$ to $\cE_c$ and moreover the following commutative square
    \[
    \begin{tikzcd}[row sep=huge, column sep=huge]
    L^\comp\cE_c\arrow[r, hook,"L^\comp i_c"]\arrow[d, "u_c^*"]&\mor_\cat(L^\comp\cT_{0,c},\cS)\arrow[d, "u_c^*"]\\
    \cE_c\arrow[r, hook, "i_c"]&\mor_\cat(\cT_{0,c},\cS)
    \end{tikzcd}
    \]
    is a pullback square.
    \item For every object $c\in\cC$ the monad $\widetilde{t}^*_c\widetilde{t}_{c,!}$ respects $L^\comp\cE_c$, where $\widetilde{t}:L^\bo\cT_0\rightarrow\cT_1$ is the image of $t$ under $L^\bo$.
\end{itemize}
We will denote by $\thr^\good(\cC)$ the full subcategory on good $\cC$-theories. Observe that every complete theory is good.
\end{defn}
\begin{remark}
It follows from \cite[Corollary 13.4.]{chu2019homotopy} and \cite[Corollary 13.8.]{chu2019homotopy} that the theory of \cref{ex:preshv} is good.
\end{remark}
\begin{theorem}\label{thm:comp}
The natural inclusion $i_\comp:\thr^\comp(\cC)\hookrightarrow\thr^\good(\cC)$ admits a left adjoint $L^\comp$. Moreover, there is an isomorphism $\modl_\cT(\cS)\cong\modl_{L^\comp\cT}(\cS)$ for every good $\cC$-theory $\cT$.
\end{theorem}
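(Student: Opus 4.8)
The goal is to produce a left adjoint $L^\comp$ to the inclusion $i_\comp:\thr^\comp(\cC)\hookrightarrow\thr^\good(\cC)$ and to identify the model categories. The plan is to construct $L^\comp$ on the level of the underlying objects of $\cat^{\lcomp,\overset{\eso}{\rightarrow}}_{/\cC}$ using the adjunction $L^\comp$ of \cref{cor:adj_comp}, and then to verify that the resulting object is again a good (in fact complete) $\cC$-theory and that it enjoys the correct universal property against complete theories. First I would set, for a good theory $\cT=(\cT_0\xrightarrow{t}\cT_1)$, the completed theory to be $(\widetilde{\cT_0}\xrightarrow{\widetilde{t}}\widehat{\cT_1})$ where $\widetilde{\cT_0}\bydef L^\bo(p\circ t_0)$ is built as in \cref{cor:adj_comp} and $\widehat{\cT_1}$ is the completion of $\cT_1$. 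The first two conditions in \cref{def:good} are exactly what is needed to guarantee that the correspondences $L^\bo\cT_{0,f}$ and the monads $\widetilde{t}^*_c\widetilde{t}_{c,!}$ recover the original data after restriction along $u_c^*$, so that $L^\comp\cT$ is genuinely an object over $\cC$ lying in $\cat^{\comp,\wrr}_{/\cC}$; the construction is essentially forced by \cref{prop:monad_v2} and \cref{cor:adj_comp}.

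Second, I would check that $L^\comp\cT$ is again a $\cC$-theory in the sense of \cref{def:thr} — i.e. that each $\widetilde{\cT_{0,c}}$ carries arities $L^\comp\cE_c$ (the essential image of $\cE_c$ under $u_{c,!}$), that each $L^\bo\cT_{0,f}$ respects these arities, and that each monad $\widetilde{t}^*_c\widetilde{t}_{c,!}$ respects $L^\comp\cE_c$. The last of these is precisely the fourth bullet of \cref{def:good}, and the arity-respecting property of the correspondences $L^\bo\cT_{0,f}$ follows from the second bullet together with the fact that $u_{c,!}$ and $u_c^*$ are mutually inverse on the relevant subcategories (first bullet). The pullback square in the third bullet is what ensures $L^\comp\cE_c$ is a genuine full subcategory of $\mor_\cat(L^\comp\cT_{0,c},\cS)$ equivalent to $\cE_c$ via $u_c^*$.

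Third, for the universal property I would argue that any morphism of theories $\cT\rightarrow\cS$ with $\cS$ complete factors uniquely through $L^\comp\cT$. Here the key is that the target $\cS_0\rightarrow\cS_1$ already lies in $\cat^{\comp,\wrr}_{/\cC}$, so the map $\cT_1\rightarrow\cS_1$ factors through $\widehat{\cT_1}$ by completeness and the map $\cT_0\rightarrow\cS_0$ factors through $\widetilde{\cT_0}$ by the universal property of $L^\bo$ established in \cref{cor:adj_comp}; one then checks compatibility of these two factorizations with the arity structures, which reduces to the contractibility of the relevant space of lifts exactly as in the proof of \cref{cor:adj_comp} since $g_0$ is an isomorphism on objects.

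**The isomorphism of models and the main obstacle.** The remaining claim $\modl_\cT(\cS)\cong\modl_{L^\comp\cT}(\cS)$ is where I expect the real work to lie. By \cref{thm:art}, $\modl_\cT(\cS)\cong\con_{\corr^\art}^\lax(*,F_\cT)$, so it suffices to produce an equivalence of lax cones $\con^\lax(*,F_\cT)\cong\con^\lax(*,F_{L^\comp\cT})$. The plan is to use \cref{prop:univ} — which says the completion map $p$ induces an \emph{isomorphism} $p_!$ in $\corr$ — to conclude that $F_\cT$ and $F_{L^\comp\cT}$ differ only by composition with invertible correspondences, so that the lax cones agree. Concretely, the third bullet of \cref{def:good} furnishes a pullback square relating $\cE_c$ to $L^\comp\cE_c$ through $u_c^*$, and I would combine this componentwise with the cartesian description of models afforded by the defining pullback square for $\modl_\cT(\cS)$ in the introduction. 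The hard part will be showing that passing from $\cT$ to $L^\comp\cT$ does not change the \emph{models} even though it genuinely changes the underlying categories: one must verify that the induced functor on model categories, assembled from the $u_c^*$ on each fiber via \cref{prop:colim_cocone}, is an equivalence rather than merely a map. This hinges on the goodness conditions being exactly strong enough that $u_c^*$ restricts to an equivalence $L^\comp\cE_c\xrightarrow{\sim}\cE_c$ compatibly with the monad actions $\widetilde{t}^*_c\widetilde{t}_{c,!}$ and $t^*_c t_{c,!}$, which is precisely the content of the final bullet combined with the pullback square; I would verify this compatibility monad-by-monad and then glue over $\cC$ using \cref{prop:colim_cocone}.
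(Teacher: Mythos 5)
Your treatment of the first claim matches the paper's: the underlying object of $L^\comp\cT$ is $L^\bo(\cT_0\xrightarrow{t}\cT_1)$ as in \cref{cor:adj_comp}, the arities $L^\comp\cE_c$ are the essential images of $\cE_c$ under $u_{c,!}$, the correspondences $L^\bo\cT_{0,f}$ respect arities by the first two bullets of \cref{def:good}, the monads $\widetilde{t}_c^*\widetilde{t}_{c,!}$ by the fourth, and the universal property comes from \cref{cor:adj_comp}. The gap is in your proof of $\modl_\cT(\cS)\cong\modl_{L^\comp\cT}(\cS)$.

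Your primary mechanism there --- that \cref{prop:univ} makes the components $u_{c,!}$ invertible in $\corr$, so that $F_\cT$ and $F_{L^\comp\cT}$ differ by composition with invertible correspondences and their lax cones agree --- does not work. \Cref{prop:univ} applies only to the Rezk completion map $p:\cD\rightarrow\widehat{\cD}$, and $u_c:\cT_{0,c}\rightarrow L^\bo\cT_{0,c}$ is not such a map: $L^\bo$ identifies objects of $\cT_{0,c}$ that become isomorphic in $\widehat{\cT_1}$ (not in $\cT_{0,c}$) and adjoins the corresponding strings of morphisms. What goodness supplies is only one of the two composites, $u_{c,!}u_c^*\cong\id$ (the first bullet of \cref{def:good}); by the computation of \cref{prop:adj} the other composite $u_c^*u_{c,!}$ is the correspondence $(\cT_{0,c})_0\times_{(L^\bo\cT_{0,c})_0}(L^\bo\cT_{0,c})_1\times_{(L^\bo\cT_{0,c})_0}(\cT_{0,c})_0$, which differs from $\id_{\cT_{0,c}}$ exactly when $L^\bo$ adds new morphisms between old objects --- which is the generic situation, and the entire point of completion being a nontrivial operation. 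A sanity check that something is off: if $u_{c,!}$ were invertible, the model comparison would follow for every theory satisfying the first two bullets alone, making the third and fourth bullets of \cref{def:good} redundant, whereas the paper's proof uses the third bullet as its crucial input.

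The paper's actual argument is a short pasting of pullbacks, with no detour through lax cones: stack the defining pullback square of $\modl_\cT(\cS)$ on top of the square that is literally the third bullet of \cref{def:good} (taken as a coproduct over $c\in\cC$), identify the top-right corner $\mor_\cat(\cT_1,\cS)$ with $\mor_\cat(\widehat{\cT_1},\cS)$ using \cref{prop:compl}, and conclude that the top square --- which is the defining square for $\modl_{L^\comp\cT}(\cS)$ --- is a pullback. Your fallback sketch (a fiberwise equivalence $L^\comp\cE_c\simeq\cE_c$ via $u_c^*$, compatible with the monads, glued by \cref{prop:colim_cocone}) names the right ingredients, but it is left as a plan precisely at the point where the work lies; carried out, it reduces to the same pullback pasting, so the passage through \cref{thm:art} buys nothing here.
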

\begin{proof}
We define the underlying $\cat^\wrr_{/\cC}$-object of $L^\comp\cT$ to be given by $L^\bo(\cT_0\xrightarrow{t}\cT_1)$. Observe that to provide the left adjoint it now suffices to endow $L^\bo(\cT_0\xrightarrow{t}\cT_1)$ with the structure of a theory and the natural morphism $u:(\cT_0\xrightarrow{t}\cT_1)\rightarrow L^\bo(\cT_0\xrightarrow{t}\cT_1)$ with the structure of a morphism in $\corr^\act$. For every $c\in\cC$ we will denote by $L^\comp\cE_c$ the essential image of $\cE_c$ under $u_{c,!}$. We need to show that this actually gives us the object of $\thr(\cC)$. The fact that $\widetilde{t}^*_c\widetilde{t}_{c,!}$ respects $L^\comp\cE_c$ follows directly from \cref{def:good}. Now we need to prove that for every $f:c\rightarrow c'$ the correspondence $L^\bo\cT_{0,f}:L^\bo:\cT_{0,c}\rightarrow L^\bo\cT_{0,c'}$ takes $L^\comp\cE_c$ to $L^\comp\cE_{c'}$. To do this it suffices to prove that the following diagram in $\corr$ commutes
\[
\begin{tikzcd}[row sep=huge, column sep=huge]
\cT_{0,c}\arrow[r, "\cT_{0,f}"]\arrow[d, "u_{c,!}"]&\cT_{0,c'}\arrow[d, "u_{c',!}"]\\
L^\bo\cT_{0,c}\arrow[r, "L^\bo\cT_{0,f}"]&L^\bo\cT_{0,c'}
\end{tikzcd}.
\]
Observe that, since $u_{c',!}u^*_{c'}\cong\id$, it suffices to prove that $\cT_{0,f}\cong u^*_{c'}\circ L^\bo\cT_{0,f}\circ u_{c,!}$, which is a part of \cref{def:good}. This proves that $L^\comp\cT$ indeed belongs to $\thr(\cC)$ thus finishing the proof of the first claim.\par
To prove the second claim first consider the following commutative diagram
\[
\begin{tikzcd}[row sep=huge, column sep=huge]
\modl_\cT(\cS)\arrow[r]\arrow[d]&\mor_\cat(\cT_1,\cS)\arrow[d]\\
\coprod_{c\in\cC}L^\comp\cE_c\arrow[r, "\coprod_{c\in\cC} L^\comp i_c"]\arrow[d, "\coprod_{c\in\cC} u^*_c"]&\coprod_{c\in\cC}\mor_\cat(L^\comp\cT_{0,c},\cS)\arrow[d, "\coprod_{c\in\cC}u^*_c"]\\
\coprod_{c\in\cC}\cE_c\arrow[r, "\coprod_{c\in\cC} i_c"]&\coprod_{c\in\cC}\mor_\cat(\cT_{0,c},\cS)
\end{tikzcd}.
\]
The outer square in this diagram is a pullback by the definition of $\modl_\cT(\cS)$, the lower square is a pullback by \cref{def:good}, so it follows that the top square is also a pullback, which is exactly what we needed to prove.
\end{proof}
\medskip
\bibliographystyle{alpha}
\bibliography{ref.bib}
\bigskip
Roman Kositsyn, \textsc{ Department of Mathematics, National Research University Higher School of Economics, Moscow}, \href{mailto:roman.kositsin@gmail.com}{roman.kositsin@gmail.com}

\end{document}